\newtheorem{theorem}{Theorem}
\numberwithin{theorem}{section}
\newtheorem*{theorem*}{Theorem}
\newtheorem {lemma}[theorem] {Lemma}
\newtheorem {prop}[theorem]      {Proposition}
\newtheorem* {prop*}     {Proposition}
\newtheorem {corollary}[theorem]      {Corollary}
\theoremstyle{definition}
\newtheorem {definition}[theorem] {Definition}
\newtheorem {remark} [theorem]         {Remark}
\newtheorem{example}[theorem]{Example}
\newtheorem*{example*}{Example}
\newtheorem{notation}[theorem]{Notation}
\def\R{\mathbb{R}}
\def\Z{\mathbb{Z}}
\def\N{\mathbb{N}}
\def\C{\mathbb{C}}
\def\S{\mathbb{S}}
\newcommand{\pp}[2]{\frac{\partial#1}{\partial#2}}
\newcommand{\norm}[1]{\|#1\|}
\newcommand{\cA}{\mathcal{A}}
\newcommand{\cF}{\mathcal{F}}
\newcommand{\cK}{\mathcal{K}}
\newcommand{\cD}{\mathcal{D}}
\newcommand{\cL}{\mathcal{L}}
\newcommand{\cB}{\mathcal{B}}
\newcommand{\cM}{\mathcal{M}}
\newcommand{\cJ}{\mathcal{J}}
\newcommand{\cVF}{\mathcal{KVF}}
\newcommand{\cDK}{\mathcal{DK}}
\newcommand{\cP}{\mathcal{P}}
\newcommand{\omegacK}{\omega_{\cK}}
\newcommand{\omegacKU}{\omegacK^{U}}
\newcommand{\cMline}{\overline{\cM}}
\newcommand{\T}{\mathbb{T}}
\DeclareMathOperator{\CS}{CS}
\DeclareMathOperator{\CpS}{CpS}
\DeclareMathOperator{\Id}{Id}
\DeclareMathOperator{\ind}{ind}
\DeclareMathOperator{\ev}{ev}
\DeclareMathOperator{\rk}{rk}
\DeclareMathOperator{\image}{Im}
\DeclareMathOperator{\vol}{vol}
\DeclareMathOperator{\supp}{supp}
\renewcommand{\d}{\mathrm{d}}
\newcommand{\wdt}[1]{\widetilde{#1}}
\newcommand{\barJ}{\overline{J}}
\newcommand{\baromega}{\overline{\omega}}
\title{Fillability obstructions for high-dimensional confoliations}
\author{Robert Cardona and Fabio Gironella}
\newcommand{\Addresses}{{
  \bigskip
  \footnotesize

  R.~Cardona, \textsc{Departament de Matem\`atiques i Inform\`atica, Universitat de Barcelona, Gran Via de Les Corts Catalanes 585, 08007 Barcelona, Spain; Centre de Recerca Matemàtica, Campus de Bellaterra, Edifici C, 08193, Barcelona, Spain.}\par\nopagebreak
  \textit{E-mail address}: \texttt{robert.cardona@ub.edu}

  \medskip

  F.~Gironella, \textsc{CNRS - Laboratoire de Mathématiques Jean Leray, Nantes Université, 2 Chem. de la Houssinière, 44322 Nantes, France.}\par\nopagebreak
  \textit{E-mail address}: \texttt{fabio.gironella@cnrs.fr}

}}
\date{}
\begin{document}

\maketitle

\begin{abstract}
    In this paper, we study confoliations in dimensions higher than three mostly from the perspective of symplectic fillability.
    Our main result is that Massot--Niederkrüger--Wendl's bordered Legendrian open book, an object that obstructs the weak symplectic fillability of contact manifolds, admits a generalization for confoliations equipped with symplectic data. Applications include
    the non-fillability of the product of an overtwisted contact manifold and a class of symplectic manifolds, and the fact that Bourgeois contact structures associated with overtwisted contact manifolds admit no weak symplectic fillings for which the symplectic structure 
    restricts at the boundary to a positive generator of the second cohomology of the torus factor. In addition, along the lines of the original $3$-dimensional work of Eliashberg and Thurston, we give a new definition of approximation and deformation of confoliations by contact structures and describe some natural examples.
\end{abstract}

{
  \hypersetup{linkcolor=black}
  \tableofcontents
}



\section{Introduction}
\label{sec:intro}

\subsection{Context}
In their pioneering work \cite{ET_confoliations}, Eliashberg and Thurston proved that, besides the foliation by $2$-spheres on $S^1\times S^2$, any (codimension $1$) $C^2$-foliation on a $3$-dimensional manifold can be $C^0$-approximated, in the space of plane fields, by contact structures.
The striking nature of this result comes from the fact that foliations and contact structures have very different local behaviors. The first ones are \emph{integrable} plane fields, i.e.\ admit through every point a local surface which is everywhere tangent to the plane field. In contrast, contact structures are \emph{maximally non-integrable}, i.e.\ the maximal dimension of a local submanifold everywhere tangent to the plane field is one. 
Equivalently, (coorientable) foliations and (positive) contact structures on an ambient (oriented) $3$-manifolds can be defined as plane fields defined by a one-form $\alpha$ satisfying, respectively, $\alpha\wedge \d\alpha = 0$ and $\alpha\wedge \d\alpha>0$.

In \cite{ET_confoliations}, the authors also studied the relationships between the properties of a foliation $\cF$ and those of the approximating contact structures $\xi_n$.
For instance, it is proven that if $\cF$ is \emph{taut}, i.e.\ admits a closed transverse loop through each point of the manifold, then $\xi_n$ must be, for $n$ big enough, \emph{tight}, and in fact even \emph{weakly symplectically semi-fillable}.
Here, ``tight'' means that it admits no embedded \emph{overtwisted disk}, as defined in \cite{Eli89}; ``weakly symplectically semi-fillable'' means that it lies at the boundary of a symplectic manifold $(W,\omega)$ with disconnected boundary, and that $\omega\vert_{\xi_n}>0$. 
This result then says that the rigid objects in foliation theory, namely taut foliations, can only be approximated by the rigid objects in contact topology, namely tight contact structures.

In order to make the bridge between foliations and contact structures, Eliashberg--Thurston introduce in their work the notion of \emph{confoliation}: a plane field which is defined by a one-form $\alpha$ satisfying $\alpha\wedge\d\alpha\geq 0$.
What the authors prove in \cite{ET_confoliations} is, in fact, that, besides the aforementioned exceptional example of the foliation by spheres, $C^2$-confoliations can be $C^0$-approximated by contact structures.

After their introduction, there has been work by many authors aimed at a better understanding both of this approximation phenomenon \cite{Col02,Vog16,Bow16a,Bow16a,Bow16c,CKR19,ColHon22} and of the topological properties of confoliations themselves \cite{Hin97,Vog11}.
Especially related to the content of this paper is the work \cite{Hin97}, where the author uses the strategy of ``filling by holomorphic disks'', that goes back to \cite{Gro85}, to study fillings of embedded spheres in the confoliated boundary of a symplectic $4$-manifold.
The strategy relies on the approximation result \cite{ET_confoliations}, which allows leveraging the well-understood case of contact boundary and ``passing to the limit''.

\subsection{High-dimensional confoliations}
The present work is concerned with the high-dimensional picture, namely, on ambient manifolds of arbitrary dimension $2n+1$.
Foliations and contact structures in high dimensions are analogously defined as hyperplane fields defined by a one-form $\alpha$ satisfying, respectively, $\alpha\wedge\d\alpha =0 $ and $\alpha\wedge \d\alpha^n\neq 0$.
Generalizing the notion of confoliation is, however, less straightforward, as the naive condition $\alpha\wedge\d\alpha^n\geq0$ does not seem to be very useful.
Even though the rest of their work focuses on the $3$-dimensional setup, Eliashberg--Thurston propose the following generalization in \cite[Section 1.1.6]{ET_confoliations}: they call \emph{confoliation} in high odd dimensions a hyperplane field $\xi = \ker\alpha$ such that there is a complex structure $J_\xi$ on $\xi$ (i.e.\ a vector bundle endomorphism $J_\xi\colon \xi \to \xi$ such that $J_\xi^2=-\Id$) satisfying that $\d\alpha(X,J_\xi X)\geq 0$ for all $X\in \xi$. We will refer to this as $J_{\xi}$ being \emph{weakly tamed} by $\d\alpha$.
This definition is naturally motivated by a discussion about (weak) pseudo-convexity in the CR setting \cite[Section 1.1.5]{ET_confoliations}, which is a natural language to reformulate the confoliation condition in dimension $3$.
There is no further study of this high-dimensional notion in \cite{ET_confoliations} and, to the knowledge of the authors, in any other work in the literature, besides the works \cite{AltWu00,DatRuk04,DatKho12}, which unfortunately all have a rather limited applicability. 

The first work \cite{AltWu00} deals with deformations of confoliations to contact structures in any ambient odd dimensions, constructing explicit PDEs that are used to flow the confoliated forms to contact ones.
These methods, which are of course very analytical in nature, apply only under the assumption called ``conductiveness'', namely when the corank of $\d\alpha\vert_{\xi}$ is \emph{exactly} $2$, for a chosen (and hence any) defining one-form $\alpha$ for the confoliation $\xi$. One could hope for a possible analogue of this strategy to more general setups, but a constant-rank assumption seems necessary in order to find the desired PDEs.

The work \cite{DatRuk04} studies explicit necessary and sufficient conditions for the existence of \emph{linear deformations}\footnote{In \cite{ET_confoliations} a \emph{contact deformation} is a smooth family $(\alpha_t)_{t\in[0,\epsilon)}$ with $\alpha_0$ defining the given (con)foliation and $\alpha_t$ contact for all $t>0$, and a contact deformation is \emph{linear} if it satisfies $\frac{\d}{\d t}\vert_{t=0}\alpha_t\wedge \d \alpha_t^n>0$. 
However, in \cite{DatRuk04}, a linear deformation means instead a contact deformation $(\alpha_t)_{t\in[0,\epsilon)}$ of the form $\alpha_0 + t\beta$.} of foliations to contact structures in high dimensions, but in the case of a foliation defined by a closed one-form.
Lastly, \cite{DatKho12} considers \emph{affine deformations}, namely contact deformations $(\alpha_t)_{t\in[0,\epsilon)}$ with $\alpha_t = c_t \alpha_0 + d_t\beta$ for some $c,d\colon [0,\epsilon)\to \R$ with $c(0)=1$ and $d(0)=0$, restricting to the case where $\beta$ is a contact form.

\medskip

In this paper, we take the approach of equipping smooth confoliations (by which we simply mean hyperplane fields $\ker\alpha$ with $\alpha\wedge \d \alpha^n\geq 0$) with additional geometric structures, namely symplectic and complex; in the second case, our notion is a more restrictive version of the proposed definition in \cite{ET_confoliations}.  
Moreover, in ambient dimension three, our definitions are completely equivalent to the one in \cite{ET_confoliations}; i.e.\ in that ambient dimension, geometric structures always exist.

The first reason for considering such additional geometric structure is that doing so will allow us to develop the technical tools to obstruct symplectic fillability via pseudo-holomorphic curves theory (for curves that are closed or have a totally-real boundary condition). 
The second reason, which is in fact related to the first one, is that the original definition in high dimensions given in \cite{ET_confoliations} (namely, that there is an almost complex structure $J_\xi$ on $\xi$ such that $d\alpha(v,J_\xi v)\geq 0$ for all $v\in \xi$) leads to counter-intuitive situations like positive confoliations that can be locally negative contact structures, or, even if we add the condition $\alpha\wedge \d\alpha^n\geq 0$ to the original definition, one can have positive confoliations which are locally the product of a negative contact structure with some other even-dimensional factor.
The motivation in this paper to add the aforementioned additional complex and symplectic structures (compatible in a way which we will detail below) comes exactly from these two reasons. We are now going to describe first the additional complex data.

\medskip

The first one is that of a \textbf{tame confoliation}. This is a hyperplane field $\xi=\ker\alpha$ on an odd dimensional manifold $M$, satisfying $\alpha\wedge (d\alpha)^n\geq 0$, and for which there is a complex structure $J_\xi$ such that for any one-form $\alpha$ defining $\xi$ we have
\begin{equation}
\label{eqn:strictly_tamed_intro}
\d\alpha\vert_{\xi}(v,J_\xi v) \geq 0 \, ,
\text{ and equality holds if and only if }
v\in \cK_\xi, \,
\end{equation}
where $\cK_\xi$ is the \textbf{characteristic distribution} $\cK_\xi=\ker(\d\alpha\vert_{\xi})\subset \xi$ of $\xi$. 
We say in this case that $J_\xi$ is tamed by the partial conformal symplectic class induced by $\xi$ and denoted by $\CpS_{\xi}$. 
The additional information that tameness guarantees is that vectors in $\cK_\xi$ are the \emph{only ones} for which $\d\alpha(v,J_\xi v)=0$, and in particular, this implies that $\cK_\xi$ is $J_\xi$-invariant. 
Such a complex structure captures much better the geometry of the confoliation than one for which one simply requires $\d\alpha(v,J_\xi v)\geq 0$ as proposed in \cite{ET_confoliations}, since it distinguishes more effectively between contact and integrable directions; as we will describe below, in the context of symplectic fillings this will translate in a topological control of pseudo-holomorphic curves touching tangentially the confoliated boundary.

We point out that $\cK_\xi$ is a distribution in a generalized sense, namely, it's a closed subset of $\xi$ that is the union of vector subspaces $(\cK_\xi)_p$ of $\xi_p$ for each $p\in M$.
The rank of $(\cK_\xi)_p$ is not necessarily constant, but it is at least upper semi-continuous, and always even. 
Moreover, $\cK_\xi$ is involutive, i.e.\ closed under Lie brackets, and hence will be referred to as the \textbf{characteristic foliation} of $\xi$.
Regular points of $\cK_\xi$, namely points of the underlying manifold $M$ around which the rank of $\cK_\xi$ is constant, form an open and dense subset of $M$, and it's not hard to see that $\cK_\xi$ is in fact a regular foliation over this set of regular points.

The introduction of this stronger notion in \eqref{eqn:strictly_tamed_intro} is motivated as well by the need for additional control on pseudo-holomorphic curves in symplectic fillings.
Due to the integrability of $\cK_\xi$, there might be $J_\xi$-holomorphic curves $u\colon \Sigma\to M$ whose differential has values in $\xi$. The issue with the definition given in \cite{ET_confoliations} is that the behavior of these curves is overly dependent on the choice of almost complex structure.
For instance, even if $\alpha$ is a contact structure, one could have a $J_\xi$ that maps a vector $v$ to another vector for which $\langle v, J_{\xi}(v)\rangle$ is an isotropic subspace with respect to $d\alpha|_{\xi}$\footnote{
An explicit local example is, for instance, as follows. Consider the standard contact structure $\xi = \ker(\alpha)$ with $\alpha=\d z + x_1\d y_1 + x_2 \d y_2$ on $\R^5$, and $J_\xi$ so that at $0\in\R^5$ satisfies $(J_\xi)_0(\partial_{x_1}) = \partial_{y_2}$ and $(J_\xi)_0(\partial_{y_1}) = \partial_{x_2}$.
}. This is, of course, not a desirable property, and one indeed usually chooses in the contact case a complex structure $J_\xi$ which is tamed by $\d\alpha$.
The advantage of the notion of tameness in \eqref{eqn:strictly_tamed_intro} over that of weak-tameness in \cite{ET_confoliations} is that one gains control of the behavior of pseudo-holomorphic curves tangent to $\xi$, which now only depends on the geometric properties of the (singular) distribution $\cK_\xi$, possibly endowed with additional symplectic data. 

The way we implement the notion of almost complex structure tamed by $\CpS_{\xi}$ in relation to fillings is as follows. First, we say that a two-form $\mu$ on $M$ (possibly not of maximal rank) is \textbf{symplectically compatible}\footnote{Note that this symplectic compatibility is a well-defined property for $\xi$, i.e. does not depend on the choice of defining one-form.} with the conformal partial-symplectic class $\CpS_\xi$ of $\xi$ if $f\mu + g\d\alpha$ is positively non-degenerate on $\xi$ for all $f$ and $g$ positive functions in $M$.

Extending the notion of weak fillings of contact structures from \cite[Definition 4]{MNW}, we call a symplectic manifold $(W,\Omega)$ a \emph{symplectic filling of a confoliation}
$\xi$ on its smooth connected boundary $M=\partial W$ if 
$\Omega\vert_\xi$ is symplectically compatible with $\CpS_\xi$.
We point out that this implies in particular that $\Omega\vert_{\cK_\xi}$ is non-degenerate.
We also talk about \emph{symplectic filling of a tame confoliation} if moreover
there is a complex structure $J_\xi$ on $\xi$ which is at the same time tamed by $\CpS_\xi$ and tamed by $\Omega\vert_\xi$.
(Note that, technically speaking, such a $J_\xi$ implies in particular that $\Omega\vert_\xi + t\d\alpha\vert_\xi > 0$ for all $t\geq 0$.) We mention as well that the existence of a $J_{\xi}$ tamed by $\CpS_{\xi}$ implies the existence of a symplectically compatible form $\mu$.
The converse, i.e.\ obtaining a taming $J_{\xi}$ from a symplectically compatible $\mu$, is true in the contact case \cite{MNW} but is in general false in the confoliated case, because $\cK_\xi$ doesn't have constant rank and can a priori change from point to point in a way that prohibits the existence of a $J_\xi$ that leaves it invariant (see \Cref{sec:confoliations} for a more detailed discussion, and more specifically \Cref{exa:non-tame_confoliation}, due to an anonymous referee).

\medskip

The obstruction to fillability that we will introduce relies on pseudo-holomorphic methods. Since the behavior of pseudo-holomorphic curves depends on the symplectic data in $\cK_{\xi}$, this data has to be fixed a priori, and the symplectic form $\Omega$ of a filling is required to satisfy some compatibility with it.
In particular, our obstruction is naturally formulated in terms of a confoliation $\xi$ directly equipped with a symplectically compatible $\mu$, whose existence is guaranteed by the very definition of tame confoliation. 
The only relevant data that we need from the two-form $\mu$ is the open cone $\CS_{\xi,\mu}\coloneqq \{f \mu + g\d\alpha \; \vert f,g\colon M \to \R_{>0}\}$, which is made entirely of non-degenerate two-forms on $\xi$. 
If in addition $\mu\vert_{\cK_\xi}$ is non-degenerate, we will call the pair $(\xi, \CS_{\xi,\mu})$ a \textbf{cone almost-symplectic confoliation}, or \textbf{c-symplectic confoliation} in short.
(See \Cref{def:c-sympl_confoliation} for more details.) 
It suffices to say here that this symplectic cone approach is motivated by the fact that the fundamentally important additional data is the conformal class of the symplectic structure on the characteristic distribution $\cK_{\xi}$.
Moreover, we will call a c-symplectic confoliation $(\xi,\CS_{\xi,\mu})$ \textbf{tame} if for any element $\omega\in \CS_{\xi,\mu}$ there is a complex structure $J_\xi$ on $\xi$ tamed by $\CpS_\xi$ and by $\omega$. 
This simultaneous tameness is the one required for controlling pseudo-holomorphic curves using both the geometry of the confoliation and its symplectic properties when endowed with the additional symplectic data. 
If by ``positive confoliation'' one now means ``tame c-symplectic confoliation'', we also solved the aforementioned ``issue'' that a split confoliation given by the product of a negative contact manifold with a symplectic manifold should not be a ``positive confoliation'' (which could happen with the notion in \cite{ET_confoliations}). A note of caution regarding this notion: in general, \emph{a c-symplectic confoliation is not necessarily tame}. 
(An explicit example, due to the anonymous referee, of a c-symplectic confoliation that does not even admit an almost complex structure that simply preserves $\cK_\xi$ will be described in \Cref{exa:non-tame_confoliation}. Note that if the underlying hyperplane field of a c-symplectic confoliation is itself a contact structure, then tameness is automatic; see \cite[Theorem 2.4]{MNW}.)

As in the contact setting, different notions of fillability can then be defined for these objects,
thus speaking of a \textbf{weak or quasi-strong (semi-)filling of a (tame) c-symplectic confoliation}, or strong (semi-)filling of a strong symplectic confoliation. Here, each of these types of filling is stronger than the previous ones, and the distinction between semi-filling and filling is that the former is allowed to have a disconnected boundary; see Definition \ref{def:symplectic_filling_confoliation}.
To give an example of one of these notions, a quasi-strong symplectic (semi-)filling is one for which $\Omega\vert_{\xi}\in \CS_{\xi,\mu}$.
We will also talk of \emph{semi-filling} of $(M,\xi,\CS_{\xi,\mu})$ when the boundary of the filling is possibly disconnected.

\subsection{Main results}

We are ready to state the main result of the paper, namely a symplectic fillability obstruction for c-symplectic confoliations.
The obstruction takes the form of a $(n+1)$-dimensional submanifold (if the ambient dimension is $2n+1$), which generalizes the \emph{bordered Legendrian open book}, or \emph{bLob} in short, introduced in the contact setting in \cite{MNW} (that in turn was a generalization of the contact Plastikstufe introduced in \cite{Nie06});
we hence call it \textbf{confoliated bLob}.
The generalization from \cite{MNW} to our confoliated setup is fairly natural, but quite technical nonetheless; we hence postpone a precise definition to \Cref{sec:confoliated_bLob}.
For the purpose of presenting the results, it is enough to think of a confoliated bLob in a given c-symplectic confoliation $(\xi,\CS_{\xi,\mu})$ as a submanifold $N^{n+1}\subset M^{2n+1}$ equipped with an open book decomposition whose pages are tangent to $\xi$ and transverse to $\partial N$, with the latter also being tangent to $\xi$; we also require some further compatibility conditions with respect to the characteristic distribution $\cK_\xi$ (which is in particular assumed to be of constant rank near $N$) and the c-symplectic structure $\CS_{\xi,\mu}$ (see Section \ref{sec:def_preconfol_bLob} and \Cref{def:confoliated_bLob} for details).
We then have the following result:
\begin{theorem}
    \label{thm:obstruction_fillability}
    If a tame c-symplectic confoliation $(\xi,\CS_{\xi,\mu})$ on a closed connected oriented $(2n+1)$-dimensional manifold $M$ admits a confoliated bLob $N$, then it admits no semi-positive weak symplectic semi-filling $(W,\Omega)$ such that $N$ is transversely-exact with respect to $\Omega$.
\end{theorem}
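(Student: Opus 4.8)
The plan is to run the classical ``filling by pseudo-holomorphic disks'' argument, in the form developed by Niederkrüger and by Massot--Niederkrüger--Wendl in \cite{MNW}, adapted to the confoliated setting. Arguing by contradiction, suppose $(W,\Omega)$ is a semi-positive weak symplectic semi-filling with $N$ transversely-exact. The first step is to choose an almost complex structure $J$ on $W$ that is tamed by $\Omega$, that restricts on $\xi\subset TM=T(\partial W)$ to a complex structure $J_\xi$ which is simultaneously tamed by $\xi$ in the sense of \eqref{eqn:strictly_tamed_intro} and by $\Omega\vert_\xi$ (such a $J_\xi$ exists by the tameness hypothesis on $(\xi,\CS_{\xi,\mu})$), and that makes the boundary suitably $J$-convex. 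Since we only have a weak filling, I would first replace $\Omega$ near $\partial W$ by $\Omega+t\,\d\alpha$ for small $t>0$, which stays symplectic precisely because $\Omega\vert_\xi+t\,\d\alpha\vert_\xi>0$ for all $t\ge 0$ (as noted after the definition of tame filling); this local deformation upgrades the weak boundary to one carrying a bounding plurisubharmonic-type function, hence a maximum principle.

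With this $J$ fixed, the second step is to set up the moduli space $\cM$ of $J$-holomorphic disks $u\colon(D^2,\partial D^2)\to(W,N)$ whose boundary maps into the pages of the open book of the bLob $N$, together with the standard reparametrization and marked-point data that cuts the expected dimension down to one. The binding of the open book provides, exactly as in the contact case, a one-parameter Bishop family of small holomorphic disks shrinking onto the binding; this family is the distinguished end of $\cM$. Using the semi-positivity of $(W,\Omega)$ to achieve transversality of the relevant Fredholm and evaluation problems, so as to avoid multiply-covered and non-regular configurations in dimension larger than four, $\cM$ becomes a smooth one-dimensional manifold with boundary, one of whose boundary points is the Bishop end.

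The heart of the argument, and the main place where the confoliated setting diverges from the contact one, is the compactness of $\cM$. Two a-priori controls are needed. First, the maximum principle coming from the $J$-convexity arranged above keeps the disks away from $\partial W$ and from the remaining semi-filling boundary components, confining them to a compact region; the transverse-exactness of $N$ with respect to $\Omega$ supplies the uniform energy bound that Gromov compactness requires for a totally-real boundary condition. Second, and this is the genuinely new point, one must rule out degenerations into curves tangent to $\xi$. In the contact case $\d\alpha\vert_\xi$ is non-degenerate and no non-constant curve is tangent to $\xi$; here the characteristic foliation $\cK_\xi$ is nontrivial, and the taming condition \eqref{eqn:strictly_tamed_intro} forces any $J$-holomorphic curve tangent to $\xi$ to have image in $\cK_\xi$, where its behaviour is governed solely by $\mu\vert_{\cK_\xi}$. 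The generalized symplectic asphericity of $(\cK_\xi,\mu)$ then prohibits $J$-holomorphic spheres tangent to $\xi$, while the totally-real page condition together with transverse-exactness prohibits the corresponding disk bubbles; combined with the semi-positivity, which kills the usual sphere bubbling in the higher-dimensional range, this yields a compact $\cM$.

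Finally, a compact one-dimensional manifold with boundary has an even number of boundary points, whereas the analysis above exhibits exactly one, the Bishop end at the binding, every other potential boundary stratum having been excluded by the compactness and transversality arguments. This contradiction shows that no such filling $(W,\Omega)$ can exist, proving the theorem. I expect the control of curves tangent to $\cK_\xi$, namely the interplay between the taming condition \eqref{eqn:strictly_tamed_intro}, the asphericity of $(\cK_\xi,\mu)$, and the transverse-exactness of $N$, to be the principal difficulty, since it is precisely the phenomenon with no counterpart in the contact theory of \cite{MNW} and the one motivating the strengthened notion of tameness introduced in this paper.
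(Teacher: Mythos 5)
Your proposal follows essentially the same route as the paper's proof: a Bishop family emanating from the binding, a moduli space of $J$-holomorphic disks with totally real boundary on $N$, energy bounds and compactness from transverse-exactness and semi-positivity, the strengthened taming condition \eqref{eqn:strictly_tamed_intro} together with generalized asphericity and weak exactness to kill bubbles and curves tangent to $\cK_\xi$, and a final contradiction from a one-dimensional moduli space (cut out by evaluating along a path from the binding to $\partial N$) having only the Bishop end as boundary. The paper additionally spends considerable effort on normal forms near the binding and near $\partial N$ (to get the uniqueness of Bishop disks and the ``wall'' behaviour at $\partial N$), which your sketch takes for granted but which are the technical backbone of the argument you describe.
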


    \noindent
    In the case where $\xi$ is a contact structure, the transversely-exact assumption above just means that $\Omega$ is exact over $N$;
    in particular, \Cref{thm:obstruction_fillability} is a generalization of \cite[Theorem 4.4]{MNW}.
    In the more general confoliated case, it includes some additional geometric assumptions on the behavior of $\Omega$ on the terms that contribute to making $\cK_\xi$ a symplectic foliation (see \Cref{def:transversely-exact_bLob} for a precise definition).

\begin{remark}
    \label{rmk:semipositivity}
    \emph{Semi-positivity} means that the first Chern class $c_1$ of the given symplectic manifold $(X^{2n},\Omega)$ satisfies the following condition: $c_1(A)\geq 0$ for every spherical $A\in H_2(X;\Z)$ such that $c_1(A)\geq 3-n$ and $\Omega(A)>0$.
    (Note that semi-positivity is then automatic in dimension $6$.)
    This assumption allows us to deduce that bubbling of spheres is a ``codimension $2$ phenomenon'', where the latter can be formalized using the theory of \emph{pseudo-cycles} \cite[Section 6.5]{McDSalBook}.
    
    The semi-positivity assumption can likely be weakened in \Cref{thm:obstruction_fillability} (and the following corollaries) using polyfold theory, similarly to what is done in \cite{SSZ} for the case of weak fillings of contact structures, where the authors substituted semi-positivity with much weaker assumptions on the second Stiefel-Whitney class of $N$ minus its binding and boundary.
    As this goes beyond the scope of this work, we stick to the semi-positive case throughout the paper.
\end{remark}

The proof of \Cref{thm:obstruction_fillability} follows the strategy in \cite{MNW} (and in \cite{Nie06} before that for the Plastikstufe case), which can be traced back to \cite{Gro85,Eli90}.
More precisely, the geometry of the confoliated bLob allows one to find (for a well-chosen $J$ near the binding) a Bishop family of pseudo-holomorphic disks near the binding, having boundary conditions in the confoliated bLob itself, which is a totally real submanifold in the filling.
One can then propagate such a Bishop family to a moduli space of pseudo-holomorphic disks with boundary on the confoliated bLob, and again the geometry of the latter allows us to reach a contradiction by studying the possible degenerations, as described by the Gromov compactness theorem, of families of disks in this moduli space.

\begin{remark}
    \label{rmk:maximum_principle_and_hypothesis_on_Jxi}
    For this last step, it is of fundamental importance to have a \emph{maximum principle} in the confoliated setup, which we explain in Section \ref{sec:weak_plurisubharm_max_princ}. 
    This essentially follows from two ingredients.
    First, the ``weak maximum principle'' and the ``boundary point lemma'' described in \cite[Section 3.1]{NieNotes} in the setting of weakly pluri-subharmonic functions, which says, for example, that a closed pseudo-holomorphic curve touching the confoliated boundary in an interior point must be tangent to the boundary itself.
    Then, the second assumption in \eqref{eqn:strictly_tamed_intro} guarantees more control over the behavior of a curve; in this closed curve example, we would know that the curve is not only tangent to the boundary, but also to $\cK_\xi$ at each point. 
    A similar control can be obtained for curves with boundaries, such as disks.

    While tangency to the boundary can be deduced via the arguments in \cite[Section 3.1]{NieNotes} also for a complex structure $J_\xi$ on $\xi$ which are just weakly tamed, i.e.\ as in the definition given by \cite{ET_confoliations}, controlling the curve inside the boundary just by using the information on $\xi$ is impossible unless some additional compatibility of $J_\xi$ with $\xi$ is asked, such as in the tameness condition in \eqref{eqn:strictly_tamed_intro}.
\end{remark}

\bigskip

We now describe some applications of \Cref{thm:obstruction_fillability} above. 
The first immediate consequence concerns \textbf{fillability of split products of a contact manifold and a symplectic one}, which, even though not surprising, to the knowledge of the authors has never been explicitly stated in the literature:
\begin{corollary}
    \label{cor:split_strong_sympl_conf}
    Let $(M,\eta)$ be a $(2n+1)$-dimensional \emph{overtwisted} contact manifold, and $(X,\Omega)$ a symplectic $2k$-dimensional one with $k>0$; both are assumed closed and connected.
    Assume moreover that $(X,\Omega)$ is symplectically aspherical and admits some weakly exact Lagrangian.
    Consider on $M\times X$ the hyperplane distribution $\xi = \pi_M^*\eta$ and the two-form $\omega =\pi_X^*\Omega_X$, where $\pi_M\colon M\times X \to M$ and $\pi_X\colon M\times X \to X$ are the projections into each factor.
    Then the following hold:
    \begin{itemize}
        \item[-] the strong symplectic confoliation $(\xi,\omega)$ admits no strong symplectic semi-filling,
        \item[-] the c-symplectic confoliation $(\xi,\CS_{\xi,\omega})$ admits no quasi-strong symplectic semi-filling if $\dim M\geq 5$,
        \item[-] the c-symplectic confoliation $(\xi,\CS_{\xi,\omega})$ admits no semi-positive symplectic weak semi-filling $(W,\Omega)$ 
        for which a confoliated bLob of the form $N=N_c\times L$, where $N_c \subset M$ is a contact bLob and $L\subset X$ is a weakly exact Lagrangian $L\subset X$, is transversely exact with respect to $\Omega$.
    \end{itemize}
    
\end{corollary}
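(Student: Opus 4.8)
The strategy is to reduce all three statements to \Cref{thm:obstruction_fillability} by exhibiting an explicit confoliated bLob in $(M\times X,\xi,\CS_{\xi,\omega})$ and verifying that the hypotheses of the theorem are met. The natural candidate is the product $N = N_c\times L$, where $N_c\subset M$ is a contact bLob in the overtwisted contact manifold $(M,\eta)$ — such an $N_c$ exists precisely because $(M,\eta)$ is overtwisted, by the high-dimensional theory of \cite{MNW} (overtwistedness guarantees a bLob, indeed a small plastikstufe/bLob can be built inside any Darboux-like overtwisted chart) — and $L\subset X$ is the given weakly exact Lagrangian. First I would check that $N$ is genuinely a confoliated bLob for $\xi=\pi_M^*\eta$: since $\xi_N = \xi\cap TN$ and the characteristic distribution $\cK_\xi$ here is exactly the tangent distribution to the $X$-factor (because $\d\alpha$ for $\alpha=\pi_M^*\alpha_\eta$ degenerates precisely along $TX$), the open book decomposition on $N_c\setminus\partial N_c$ coming from the contact bLob structure pulls back to an open book on $N\setminus\partial N$ whose pages are $(\text{page of }N_c)\times L$, and the binding is $(\text{binding of }N_c)\times L$. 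The compatibility with $\cK_\xi$ is automatic because $L$ sits inside the $\cK_\xi$-directions and $\omega=\pi_X^*\Omega_X$ restricts non-degenerately there; the weak exactness of $L$ is what will feed into the transverse-exactness and asphericity hypotheses.

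Next I would verify tameness of the c-symplectic confoliation and the transverse-exactness of $N$. Tameness of $(\xi,\CS_{\xi,\omega})$ should follow from the product structure: one builds $J_\xi$ as a direct sum $J_\eta\oplus J_X$, where $J_\eta$ is tamed by $\d\alpha_\eta$ on $\eta$ (contact case) and $J_X$ is $\Omega_X$-compatible on $TX=\cK_\xi$; condition \eqref{eqn:strictly_tamed_intro} then holds because $\d\alpha\vert_\xi(v,J_\xi v)$ sees only the $\eta$-component and vanishes exactly on $\cK_\xi=TX$. For the third (most general) bullet the transverse-exactness of $N=N_c\times L$ with respect to a filling form $\Omega$ is the hypothesis, so there is nothing to prove there; for the first two bullets I must instead \emph{deduce} transverse-exactness from the stronger filling assumption. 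In the strong case, $\Omega\vert_\xi$ is (conformally) $\d\alpha$, so over $N$ it is exact in the transverse directions precisely because the contact bLob $N_c$ is exact for $\d\alpha_\eta$ and $L$ is weakly exact for $\Omega_X$; in the quasi-strong case ($\Omega\vert_\xi\in\CS_{\xi,\omega}$, so $\Omega\vert_\xi = f\omega + g\,\d\alpha$) the $\d\alpha$-part is again transversely exact and the $\omega$-part restricted to $L$ is exact by weak-exactness of $L$, giving transverse-exactness once $\dim M\ge 5$ (the dimension hypothesis ensures the binding and page topology of $N_c$ behaves as in \cite{MNW}).

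Finally I would invoke semi-positivity and conclude. Symplectic asphericity of $(X,\Omega_X)$ is exactly what guarantees the ``generalized symplectic asphericity'' of $\cK_\xi$ mentioned after \eqref{eqn:strictly_tamed_intro}, ruling out pseudo-holomorphic spheres tangent to $\xi$; combined with overtwistedness this makes the Bishop-family/moduli-space machinery of \Cref{thm:obstruction_fillability} applicable. I expect the main obstacle to be the \textbf{verification that $N=N_c\times L$ satisfies \emph{all} the technical compatibility axioms of a confoliated bLob} — in particular the precise matching of the open-book/binding structure of the contact bLob with the behavior of $\cK_\xi$ and with the transverse-exactness condition — rather than the filling-theoretic conclusion, which is a direct application of the theorem. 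A secondary subtlety is ensuring that the product almost complex structure can be chosen to be simultaneously tamed by $\xi$ \emph{and} by the filling's $\Omega$ (as required in the definition of a symplectic filling of a tame confoliation), which relies on the compatibility of $\Omega\vert_\xi$ with $\CS_{\xi,\omega}$ along $\cK_\xi$ in each of the three filling regimes.
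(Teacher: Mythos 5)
Your overall architecture is exactly the paper's: produce a contact bLob $N_c\subset M$ from overtwistedness, form the product confoliated bLob $N=N_c\times L$, and feed it to \Cref{thm:obstruction_fillability}, with the third bullet being immediate (transverse-exactness is hypothesized) and the first two bullets reduced to showing that the respective filling conditions force transverse-exactness. The strong case is also essentially right: the paper's \Cref{lem:strfill} writes $\Omega=\mu_0+\d\alpha$ with $\mu_0$ closed and restricts to the characteristic foliation of $\mu_0$, where $\Omega$ becomes $\d\alpha$ with $\alpha$ vanishing on $\cK_\xi$; your phrasing conflates this with exactness properties of $N_c$ and $L$, but the substance is there.

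The genuine gap is in the quasi-strong bullet. Transverse-exactness (\Cref{def:transversely-exact_bLob}) is a condition on the \emph{ambient} closed two-form $\Omega\vert_{TM}$ near $N$, whereas the quasi-strong hypothesis only constrains its restriction to $\xi$: a priori $\Omega\vert_{TM}=f\,\pi_X^*\Omega_X+g\,\d\alpha+\alpha\wedge\eta$ for some unknown one-form $\eta$, and the whole difficulty is to kill $\eta$ on the foliation $\mathcal{F}$ by leaves $M\times\{\mathrm{pt}\}$. Your argument never addresses this mixed term, and your explanation of where $\dim M\geq 5$ enters (``binding and page topology of $N_c$ behaves as in MNW'') is not the actual mechanism. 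The paper's \Cref{lem:astrfill} handles this by a McDuff-type computation: differentiating the identity for $\omega\vert_{T\mathcal{F}}$, contracting with a leafwise Reeb field, and using that $\d\beta$ is non-degenerate on the $2n$-dimensional space $\xi\cap T\mathcal{F}$ to conclude $\d\beta\wedge\eta\vert_{T\mathcal{F}}=0\Rightarrow\eta\vert_{T\mathcal{F}}=0$ --- and it is precisely this last implication that fails when $2n=2$, which is why $\dim M\geq 5$ is required. Note also that weak exactness of $L$ plays no role in this step (it is used only to make $N$ a confoliated bLob); invoking it to get transverse-exactness of the $\omega$-part is a red herring, since $\pi_X^*\Omega_X$ already vanishes identically on $T\mathcal{F}$.
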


\Cref{cor:split_strong_sympl_conf} implies in turn the following \textbf{weak fillability obstruction for Bourgeois contact structures} \cite{Bou02} in high dimensions:
\begin{corollary}
    \label{cor:obstruction_weak_filling_bourgeois}
    Let $(M,\eta)$ be a $(2n-1)$-dimensional \emph{overtwisted} contact manifold, and $\xi$ the Bourgeois contact structure on $M\times \T^2$ associated to any choice of open book decomposition on $M$ supporting $\eta$.
    Then, $(M\times \T^2,\xi)$ constructed as in \cite{Bou02} admits no semi-positive weak filling or semi-filling $(W,\Omega)$ (in the contact sense) such that the $2$-cohomology class $[\Omega\vert_{M\times \T^2}]$ is a \emph{positive} (w.r.t.\ the orientation on $\T^2$) generator of the factor $H^2(\T^2;\R)\simeq H^0(M;\R)\otimes  H^2(\T^2;\R)\subset H^2(M\times \T^2;\R)$.
\end{corollary}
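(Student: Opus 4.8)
The plan is to deduce \Cref{cor:obstruction_weak_filling_bourgeois} from \Cref{cor:split_strong_sympl_conf} by reinterpreting a weak filling of the Bourgeois contact structure as a weak symplectic filling of the product confoliation on $M\times\T^2$. First I would take $X=\T^2$ with an area form $\Omega_X$ and check that it satisfies the hypotheses of \Cref{cor:split_strong_sympl_conf}: it is symplectically aspherical since $\pi_2(\T^2)=0$, and any embedded non-contractible circle $L=S^1\times\{\mathrm{pt}\}$ is a weakly exact Lagrangian because $\pi_2(\T^2,L)=0$. In Bourgeois' construction one may write the contact form as $\beta=\alpha+f_1\,\d q_1+f_2\,\d q_2$, where $(q_1,q_2)$ are coordinates on $\T^2$, $\alpha=\pi_M^*\alpha_M$ is pulled back from a contact form for $\eta$, and $(f_1,f_2)$ encode the supporting open book; the defining one-form of the product confoliation $\xi_{\mathrm{prod}}=\pi_M^*\eta$ is then simply $\alpha$. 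A direct computation gives $\xi_{\mathrm{prod}}=\eta\oplus T\T^2$, with $\d\alpha\vert_{\xi_{\mathrm{prod}}}$ non-degenerate on $\eta$ and vanishing on $T\T^2$, so that $\cK_{\xi_{\mathrm{prod}}}=T\T^2$ has constant rank $2$; together with $\omega=\pi_X^*\Omega_X$ this exhibits $(\xi_{\mathrm{prod}},\CS_{\xi_{\mathrm{prod}},\omega})$ as a tame c-symplectic confoliation of exactly the type in \Cref{cor:split_strong_sympl_conf}.

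Since $(M,\eta)$ is overtwisted it contains a contact bLob $N_c$, and then $N=N_c\times L$ is a confoliated bLob in $(M\times\T^2,\xi_{\mathrm{prod}},\CS_{\xi_{\mathrm{prod}},\omega})$, precisely the product form appearing in the third bullet of \Cref{cor:split_strong_sympl_conf}. The argument then proceeds by contradiction: assuming a semi-positive weak filling $(W,\Omega)$ of the Bourgeois contact structure with the stated cohomological normalisation, I would convert it into a semi-positive weak symplectic semi-filling of $(\xi_{\mathrm{prod}},\CS_{\xi_{\mathrm{prod}},\omega})$ for which $N$ is transversely exact, and invoke \Cref{cor:split_strong_sympl_conf} to rule this out.

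The crux, and the step I expect to be the main obstacle, is this conversion, where the cohomological hypothesis enters in a decisive way. The assumption that $[\Omega\vert_{M\times\T^2}]$ is a positive generator of the Künneth summand $H^0(M)\otimes H^2(\T^2)$ means, on the one hand, that it is positive on the $\T^2$-fibre class, hence realisable by a positive area form $\pi_X^*\sigma$, and, on the other hand, that its $H^2(M)$ and $H^1(M)\otimes H^1(\T^2)$ components vanish. I would use this to deform $\Omega$ in a collar of $\partial W$ by the standard Massot--Niederkrüger--Wendl procedure, adding $\d(\chi(r)\zeta)+C\,\d(\psi(r)\alpha)$ for cutoffs $\chi,\psi$, a large constant $C$, and a primitive $\zeta$ with $\d\zeta=\pi_X^*\sigma-\Omega\vert_{M\times\T^2}$, obtaining a symplectic form $\Omega'$ that agrees with $\Omega$ in the interior and satisfies $\Omega'\vert_{M\times\T^2}=\pi_X^*\sigma$. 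The compatibility inequality required for $\Omega'$ to remain symplectic throughout the collar is exactly $\pi_X^*\sigma\vert_{\xi_{\mathrm{prod}}}+s\,\d\alpha\vert_{\xi_{\mathrm{prod}}}>0$ for all $s\geq 0$, which holds because $\pi_X^*\sigma$ is positive on $\cK_{\xi_{\mathrm{prod}}}=T\T^2$ and $\d\alpha$ is positive on $\eta$ with no cross terms; this is precisely where positivity of the generator is used, while the vanishing of the off-diagonal components is what lets $\zeta$ be chosen so as not to disturb $N$. The delicate points here are verifying non-degeneracy of $\Omega'$ on the whole collar rather than just at the boundary, and pinning down the transverse-exactness bookkeeping.

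With $\Omega'\vert_{M\times\T^2}=\pi_X^*\sigma$ in hand, the two remaining conditions follow immediately: $\Omega'\vert_{\xi_{\mathrm{prod}}}$ is positive on $\cK_{\xi_{\mathrm{prod}}}$ and vanishes on $\eta$, so $f\,\Omega'\vert_{\xi_{\mathrm{prod}}}+g\,\d\alpha\vert_{\xi_{\mathrm{prod}}}>0$ for all positive $f,g$, which is exactly symplectic compatibility of $\Omega'\vert_{\xi_{\mathrm{prod}}}$ with $\CpS_{\xi_{\mathrm{prod}}}$; and $\Omega'\vert_{N}=\pi_X^*\sigma\vert_{N_c\times L}=0$ since $L$ is one-dimensional in $\T^2$, so $N$ is trivially transversely exact. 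Because the deformation is supported in a collar and changes $\Omega$ only by exact forms, one has $[\Omega']=[\Omega]$, so $\Omega'(A)=\Omega(A)$ on every spherical class and semi-positivity of $(W,\Omega')$ is inherited from $(W,\Omega)$. Thus $(W,\Omega')$ would be a semi-positive weak symplectic semi-filling of $(\xi_{\mathrm{prod}},\CS_{\xi_{\mathrm{prod}},\omega})$ admitting the transversely exact confoliated bLob $N=N_c\times L$, contradicting the third bullet of \Cref{cor:split_strong_sympl_conf}. The conceptual heart is that the positivity/vanishing dichotomy of the prescribed cohomology class matches exactly the positivity on $\cK_{\xi_{\mathrm{prod}}}$ and the transverse-exactness demanded by the obstruction.
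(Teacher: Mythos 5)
Your overall strategy coincides with the paper's: reduce to \Cref{cor:split_strong_sympl_conf} by modifying the given weak filling in a collar so that the boundary data becomes a filling of the product confoliation $\ker\alpha$ on $M\times\T^2$ equipped with the cone generated by an area form on the torus factor, and then invoke the non-existence of such fillings. The setup (asphericity of $\T^2$, $L=S^1\times\{\mathrm{pt}\}$ weakly exact, $N=N_c\times L$ a confoliated bLob, normalizing $[\Omega\vert_{\partial W}]$ to $C\,\d x\wedge\d y$ with $C>0$) is all correct and matches the paper.

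However, the step you yourself flag as the crux is where the proof actually lives, and your description of it contains an error that hides the real difficulty. You claim that the condition for $\Omega'=\Omega+\d(\chi\zeta)+C\,\d(\psi\alpha)$ to remain symplectic throughout the collar is ``exactly $\pi_X^*\sigma\vert_{\xi_{\mathrm{prod}}}+s\,\d\alpha\vert_{\xi_{\mathrm{prod}}}>0$ for all $s\geq 0$.'' That is only the condition at the \emph{end} of the collar. In the interior, where the Bourgeois terms $\phi_1\,\d x-\phi_2\,\d y$ are only partially turned off, $\Omega'^{\,n+1}$ acquires a cross term of the form $g\,e^t(\rho'+\rho)\rho\,\vol_M\wedge\d x\wedge\d y$ with $g\geq 0$ determined by the supporting open book and $(\rho'+\rho)\rho$ genuinely negative for any decreasing cutoff $\rho$; this term is \emph{not} controlled by any pointwise compatibility at the boundary slices. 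The paper's proof consists precisely of choosing $\rho(t)=\frac{e^{-t+T}-\epsilon}{1-\epsilon}$ over a collar of length $\ln(1/\epsilon)$ and verifying that the resulting negative contribution is bounded below by $-\epsilon e^T/(1-\epsilon)$, hence dominated by the constant $C$ coming from the positive generator hypothesis once $\epsilon$ is small. Note also that a generic ``make $\chi'$ small'' argument does not suffice here, because the collar length must grow as the cutoff flattens while the conformal weight $e^t$ grows with it; nor does \Cref{lem:deformation_lemma} apply directly, since the hyperplane field itself changes from $\ker\beta$ to $\ker\alpha$ along the collar. So the architecture of your argument is the paper's, but the estimate that makes it work --- and that is the only place the positivity of the cohomology class is actually used quantitatively --- is missing, and the inequality you propose in its place would not detect the failure mode.
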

This should be compared with the known fact that if $(M,\eta)$ is strongly fillable then $(M\times \T^2,\xi)$ admits a weak symplectic filling $(W,\Omega)$ for which $[\Omega_{M\times \T^2}]$ is a positive generator of the factor $H^2(\T^2;\R)$; see (the proofs of) \cite[Theorem A.(a)]{LMN} or \cite[Proposition 5.1]{Gir20_constructions}.
\Cref{cor:obstruction_weak_filling_bourgeois} gives a ``partial converse'', i.e.\ if $(M\times \T^2,\xi)$ has a (semi-positive) weak symplectic filling $(W,\Omega)$ for which $[\Omega_{M\times T^2}]$ is a positive generator of the factor $H^2(\T^2;\R)$ then $(M,\eta)$ must be tight.
Note that the requirement on the cohomology class is important: \cite[Theorems A.(a) and B]{LMN} imply that the Bourgeois contact manifold associated with an overtwisted contact structure in the sphere supported by the open book with page $T^*\S^n$ and monodromy the inverse of the Dehn--Seidel twist is weakly fillable, but following the proofs one can see that the cohomology class of the symplectic form at the boundary is a \emph{negative} generator of $H^2(\T^2;\R)$.

\medskip

A further application concerns the \textbf{Weinstein conjecture}. 
Namely, we obtain the following existence result for (in fact, contractible) closed Reeb orbits of stable Hamiltonian structures, which extends a well-known result of \cite{Hof93,AlbHof09} in the overtwisted contact case and its generalization in \cite{MNW} to the case of contact bLob's:
\begin{theorem}
    \label{thm:contractible_reeb_orbit}
    Let $(\xi,\CS_{\xi,\mu})$ be a tame c-symplectic confoliation on a closed connected oriented $(2n+1)$-dimensional manifold $M$. 
    Assume that $(\xi,\CS_{\xi,\mu})$ admits a confoliated bLob.
    Then any stable Hamiltonian structure $(\lambda,\omega)$, admitting a semi-positive weak symplectic cobordism to $(\xi,\CS_{\xi,\mu})$ for which the confoliated bLob is transversely-exact, must have a contractible closed Reeb orbit.
\end{theorem}

Here, \emph{symplectic cobordism from a stable Hamiltonian structure to a confoliation} means a symplectic manifold $(W,\Omega)$ with boundary $\partial W = M \sqcup \overline{M'}$, with $\Omega\vert_M$ satisfying the compatibility condition with $(\xi,\CS_{\xi,\mu})$ as in the definition of weak symplectic filling of c-symplectic confoliations, and $\Omega\vert_{M'}$ satisfying the compatibility condition at the negative end of a strong symplectic cobordism between stable Hamiltonian structures as defined in \cite[Definition 6.2]{CV}.
Moreover, by a stable Hamiltonian structure $(\lambda,\omega)$ being \emph{tame}, we mean that it is tame in the confoliated sense, i.e.\ that there is a complex structure $J_\eta$ on $\eta=\ker\lambda$ that is tamed by $\omega$ and by $\eta$, where the latter is intended in the sense of \eqref{eqn:strictly_tamed_intro}.
Lastly, with the stable Hamiltonian structure $(\lambda,\omega)$ being \emph{semi-positive} we simply mean that we require its symplectization $((-\epsilon,\epsilon)\times M , \d(t\lambda)+\omega)$ to be semi-positive (in the symplectic sense).

We point out that the reason why a stable Hamiltonian structure is needed here is because SFT-type breakings at the lower end of the symplectic cobordism need to be considered to deduce the existence of contractible closed Reeb orbits.
Similarly, the cotameness assumption of the stable Hamiltonian structures ensures a maximum principle which also prevents some undesired type of SFT-type breaking from happening.

\medskip

To further motivate the general study of confoliations in high dimensions, we also discuss some explicit examples and give a notion of contact approximations, which is natural from the perspective of c-structures adopted in this paper. 

First, we should point out that there is already a very natural candidate notion of convergence, that is with respect to the $C^k$-topology in the space of almost contact structures, i.e.\ in the space of hyperplane fields equipped with conformal classes of non-degenerate two-forms.
This approach is formalized, for instance, in \cite[Chapter 2]{TouThesis}, where several explicit examples and counterexamples are also presented in detail (e.g., \ sequences of contact structures converging in the space of hyperplane fields but not in that of almost contact structures).
The drawback of this definition is that, at all points where the conformal partial-symplectic class $\CpS_\xi$ of a confoliation $\xi$ is well defined (i.e.\ where $\d\beta\vert_\xi$ doesn't vanish, with $\beta$ being any defining one-form for $\xi$), $C^k$-convergence with $k\geq 1$ of $\xi_n$ towards $\xi$ in the space of hyperplane fields already implies that $\CS_{\xi_n}\to \CpS_\xi$.
In particular, all information is lost about ``what conformal symplectic structure $\xi_n$ gives at the limit along $\cK_\xi$''.
We hence look at a new notion of \textbf{contact $C^k$-approximations} and \textbf{contact $C^k$-deformations}, which is exactly meant to require control on this ``limit behavior of $\xi_n$ along $\cK_\xi$''. Roughly speaking, we require that the leading term of $\d\alpha_n\vert_{\cK_\xi}$ as $n\to \infty$ defines a conformal class that is compatible with the c-symplectic structure in $\xi$.
The precise definition is slightly technical, hence we omit it in this introduction (see \Cref{def:contact_approximation_deformation}). 

We further give, in the following two propositions, several classes of natural examples in support of this new notion (i.e.\ of \Cref{def:contact_approximation_deformation}).

\begin{prop}
    \label{prop:examples_contact_deformation}
    \begin{enumerate}

    \item\label{item:examples_contact_deformation__fiber_sum_branched_cover} Contact branched covers and contact fibered sums defined in \cite{Gei97} are naturally time-$1$ structures of contact $C^\infty$-deformations of a c-symplectic confoliation on the underlying smooth branched covers and fibered sums.
    
    \item\label{item:examples_contact_deformation__bourgeois} Let $\xi$ be a contact structure on $M^{2n-1}$.
    Then, Bourgeois contact structures \cite{Bou02} on $M\times \T^2$, associated to any choice of supporting open book for $\xi$, naturally give a contact $C^\infty$-deformation of the c-symplectic confoliation $(\xi \oplus T\T^2,\CS_{\eta,\omega})$ on $M\times \T^2$, where $\omega$ is any area form on $\T^2$.

    \item\label{item:examples_contact_deformation__mori}  Mori's $(0,1]$-family of contact structures \cite{Mor19} on $\S^5$ is a contact $C^\infty$-deformation of Mitsumatsu's symplectic foliation \cite{Mit18}.

    \item\label{item:examples_contact_deformation__bertelson_meigniez} The linear deformations to contact structures of locally conformal symplectic foliations considered in \cite{BerMei23}, as well as their natural generalization to the case of hyperplane fields with constant order, are contact $C^k$-deformations of the c-symplectic structure given by the leafwise locally conformal symplectic structure on the underlying smooth foliation.

    \item\label{item:examples_contact_deformation__giroux_torsion} There is a c-symplectic confoliation of the form $(\xi,\CS_{\xi,\omega})$, where $\xi=\ker(\d s)$ with $(s,t)$ coordinates on $\T^2$, such that each of the contact structures $\xi_k$ on the manifold of the form $M\times\T^2$ constructed as in \cite[Theorem A]{MNW} is the time-$1$ structure of a contact $C^\infty$-deformation $\xi_k^t$ for all $k$.
    \end{enumerate}
\end{prop}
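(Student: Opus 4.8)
The plan is to treat the five items uniformly: for each one, exhibit an explicit smooth family $(\alpha_t)_{t\in[0,\epsilon)}$ of defining one-forms whose time-zero member $\alpha_0$ defines the stated confoliation $\xi$ and whose positive-time members $\alpha_t$ are contact, and then verify the two requirements of \Cref{def:contact_approximation_deformation}: that $\xi_t=\ker\alpha_t$ converges to $\xi$ in the $C^\infty$ (resp.\ $C^k$) topology of hyperplane fields, and that the leading-order term in $t$ of the symplectic datum that $\xi_t$ induces along $\cK_\xi$ lies in the conformal class cut out by $\CS_{\xi,\mu}$. In each case the contact property for $t>0$ is exactly the content of the cited construction (\cite{Gei97,Bou02,Mor19,BerMei23,MNW}), so no new work is needed there, and $C^\infty$/$C^k$-convergence of hyperplane fields is immediate from smoothness of the family. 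The only genuinely new verification is the leading-term compatibility along $\cK_\xi$, and the way it is computed splits the five items into two groups.

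In the first group the confoliation is a genuine foliation --- items \ref{item:examples_contact_deformation__mori}, \ref{item:examples_contact_deformation__bertelson_meigniez} and \ref{item:examples_contact_deformation__giroux_torsion} --- so $\cK_\xi=\xi$ and $\d\alpha_0\vert_\xi=0$, and there is no non-characteristic complement to reduce against; hence the induced symplectic form on $\cK_\xi$ is, to leading order, simply the restriction $\d\alpha_t\vert_\xi$. For item \ref{item:examples_contact_deformation__bertelson_meigniez} (and its constant-order generalisation) the deformation is the linear one $\alpha_t=\alpha_0+t\beta$ of \cite{BerMei23}, so the leading term is $t\,\d\beta\vert_\xi$, and I would check directly that $\d\beta\vert_\xi$ represents the leafwise locally conformal symplectic conformal class; the finite-order control inherent to the constant-order setting is precisely what forces the statement to be $C^k$ rather than $C^\infty$. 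For item \ref{item:examples_contact_deformation__mori} I would use Mori's explicit $(0,1]$-family on $\S^5$, reparametrised so that $t\to0$ recovers Mitsumatsu's foliation, and verify in Mitsumatsu's coordinates that the first non-vanishing $t$-coefficient of $\d\alpha_t\vert_\xi$ is a positive conformal multiple of the leafwise symplectic form. For item \ref{item:examples_contact_deformation__giroux_torsion} the confoliation $\ker(\d s)$ on $M\times\T^2$ carries a natural leafwise symplectic form $\omega$, and one reads off the leading term of the family $\xi_k^t$ along the leaves, observing that it is independent of $k$ because all the $\xi_k$ degenerate to the same $\ker(\d s)$.

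In the second group --- items \ref{item:examples_contact_deformation__fiber_sum_branched_cover} and \ref{item:examples_contact_deformation__bourgeois} --- the characteristic foliation $\cK_\xi$ is a proper sub-distribution of $\xi$ along which the naive restriction $\d\alpha_t\vert_{\cK_\xi}$ vanishes identically, so the leading symplectic datum must be extracted by a symplectic reduction inside $\xi_t$. Concretely, for the Bourgeois family $\beta_t=\pi_M^*\alpha+t(f_1\,\d\varphi_1+f_2\,\d\varphi_2)$ one has $\cK_\eta=T\T^2$, and transporting $\partial_{\varphi_1},\partial_{\varphi_2}$ into $\xi_t$ and taking the symplectic orthogonal of the non-degenerating pulled-back contact distribution yields an induced area form on $T\T^2$ equal, to leading order, to a non-zero multiple of $t^2\{f_1,f_2\}\,\d\varphi_1\wedge\d\varphi_2$, where $\{\cdot,\cdot\}$ denotes the Poisson bracket of $\d\alpha\vert_\xi$; the Bourgeois contact condition forces this leading coefficient to be non-vanishing of constant sign, so that it is a positive conformal multiple of the area form $\omega$, as required by $\CS_{\eta,\omega}$. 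For item \ref{item:examples_contact_deformation__fiber_sum_branched_cover} I would run the identical scheme in Geiges's explicit local models near the branch locus (resp.\ the gluing submanifold), where the coupling to the normal circle/torus directions plays the role of the $\d\varphi_i$ above and produces the leading symplectic datum on $\cK_\xi$ after the same reduction.

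The main obstacle I expect lies in the reduction-type verification for the examples in which the confoliation is prescribed \emph{independently} of the contact family, namely Mori--Mitsumatsu (item \ref{item:examples_contact_deformation__mori}) and Bertelson--Meigniez (item \ref{item:examples_contact_deformation__bertelson_meigniez}): there one cannot simply read the leading term off a built-in scaling parameter, but must genuinely check that the given contact deformation is asymptotically aligned with the pre-existing leafwise (locally conformal) symplectic conformal class, which requires unwinding the explicit coordinate descriptions of \cite{Mor19,Mit18,BerMei23}. A secondary but pervasive technical point is the bookkeeping of the correct power of $t$ at which the degeneracy of $\d\alpha_0\vert_\xi$ is resolved --- order $t$ in the foliation cases, order $t^2$ for Bourgeois --- and confirming that the resulting leading coefficient is genuinely non-degenerate on $\cK_\xi$, which is exactly where the contact condition from the cited references re-enters and closes the argument.
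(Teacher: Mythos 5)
Your overall architecture matches the paper's: exhibit explicit families $\alpha_t$, note that contactness for $t>0$ and $C^k$-convergence of hyperplane fields come for free from the cited constructions, and concentrate on the leading-order symplectic datum along $\cK_\xi$. But there are two concrete gaps. The first, and most serious, is that you repeatedly ask for the wrong conclusion: you propose to verify that the leading coefficient of $\d\alpha_t$ along $\cK_\xi$ is ``a positive conformal multiple of the leafwise symplectic form.'' \Cref{item:contact_approximation_deformation__char_distr__sympl_compat} of \Cref{def:contact_approximation_deformation} only requires the limit conformal class $[\mu_i]$ to be \emph{symplectically compatible} with $\CS_{\eta,\omega}$, i.e.\ that $\beta\wedge\d\beta^{n-i}\wedge(\omega+s\d\beta+t\mu_i)^i>0$ for all $s,t>0$, and the paper states explicitly (in the discussion preceding the definition) that the stronger ``coincides with $[\omega\vert_{\cK_\eta}]$'' condition \emph{fails} in precisely some of these examples. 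In the Mori--Mitsumatsu case (\Cref{item:examples_contact_deformation__mori}) the limit of $\iota_{\overline X}(\alpha_t\wedge\d\alpha_t)$ is a complicated two-form that is \emph{not} conformally equal to Mitsumatsu's leafwise symplectic form $\tau$; the paper's proof instead carries out the positivity computation $\alpha_0\wedge(s\mu_t+\tau)^2>0$ for all $s,t>0$. The same issue arises in \Cref{item:examples_contact_deformation__giroux_torsion}, where the limit $\mu$ is not a multiple of the chosen $\omega$ and the cone condition $\d s\wedge(a\omega+b\mu)^{n+1}>0$ must be checked by hand on the cover $\R^n\times\R^{n+1}$. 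As written, your plan for item \ref{item:examples_contact_deformation__mori} would fail at exactly this point.

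The second gap is the bookkeeping of extensions required by \Cref{def:convergence_conformal_class_partitioned_forms}. The strata $C_i$ on which $\rk\cK_\eta$ is constant are generally not open (e.g.\ the branch locus $\hat B$ in \Cref{item:examples_contact_deformation__fiber_sum_branched_cover}, or the sets $\{f_0=0\}$, $\{g_0=0\}$ in the open book construction), so conformal convergence must be verified for suitably chosen $C^k$-\emph{extensions} of $\alpha_t\wedge\d\alpha_t^{n-i+1}\vert_{C_i}$ over open neighborhoods $O_i$; the naive extension by $\alpha_t\wedge\d\alpha_t^{n-i+1}$ itself typically does not conformally converge on $O_i\setminus C_i$, because its dominating term there has a different order in $t$. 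The paper handles this by subtracting off the non-vanishing zeroth-order part (for branched covers, one rescales $\beta_s\wedge\d\beta_s^n-\pi^*(\alpha\wedge\d\alpha^n)$, which restricts correctly to $\hat B$ since $\pi^*(\alpha\wedge\d\alpha^n)\vert_{\hat B}=0$). Your proposal never addresses this, and for item \ref{item:examples_contact_deformation__fiber_sum_branched_cover} the ``identical scheme'' to Bourgeois also gets the order wrong: the branched-cover degeneracy is resolved at order $s$, not $s^2$. A final, smaller omission: for \Cref{item:examples_contact_deformation__giroux_torsion} the contact structures $\xi_k$ of \cite[Theorem A]{MNW} must first be replaced, via Gray stability, by isotopic structures $\ker(\d s+\alpha_k)$ before the scaling family $\d s+r\alpha_k$ can be formed; the family $\xi_k^t$ is part of what has to be constructed, not read off.
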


{It has already been proved in \cite{Gir20_constructions,GirThesis} that contact fiber sums and contact branched covers can be realized respectively as time-$1$ structures of families $(\xi_t)_{t\in[0,1]}$ and $(\xi_t')_{t\in[0,1]}$ with $\xi_0$ a natural confoliation (in the sense of \cite{ET_confoliations}) on the underlying smooth branched cover and $\xi_0'$ a natural confoliation (again in the sense of \cite{ET_confoliations}) on the underlying smooth fibered sum.
\Cref{item:examples_contact_deformation__fiber_sum_branched_cover} above then simply follows from the observation that $\xi_0,\xi_0'$ are c-symplectic confoliations, and that the two families $(\xi_t)_t$ and $(\xi_t')_t$ from \cite{Gir20_constructions,GirThesis} are in fact contact $C^\infty$-deformations (in the sense introduced above). 
}

\begin{remark}
    Recall that \cite[Proposition 2.3.2]{ET_confoliations} states that there is a foliation on the $3$-torus that is deformable to infinitely many pairwise-distinct contact structures, distinguished by their Giroux torsion.
    In fact, the contact structures $\xi_k$ on $M\times \T^2$ from \cite[Theorem A]{MNW} are a natural generalization to higher dimensions of these universally tight contact structures on $\T^3$.
    Indeed, as explained in \cite{MNW}, the contact structures $\xi_k$'s are also pairwise distinct, as they have different cylindrical contact homologies.
    In particular, \Cref{item:examples_contact_deformation__giroux_torsion} above gives the first examples of a c-symplectic confoliation which is deformable to infinitely many pairwise distinct contact structures in high dimensions, thus generalizing \cite[Proposition 2.3.2]{ET_confoliations}.
\end{remark}

The second proposition in this direction also gives a new explicit construction of high dimensional tamed c-symplectic confoliations which are not everywhere contact.
\begin{prop}
    \label{prop:open_book_construction}
    Let $(B,\pi\colon M\setminus B \to S^1)$ be a smooth open book decomposition of a $(2n+1)$-dimensional oriented closed manifold $M$.
    Assume also that there is a c-symplectic confoliation $(\xi_B,\CS_{\xi_B,\omega_B})$ on $B$, as well as a symplectic structure $\Omega_W$ on a fiber $W=\pi^{-1}(\mathrm{pt})$ such that the monodromy of $(B,\pi)$ is by a symplectomorphism of $(W,\Omega_W)$, and such that $(W,\Omega_W)$ is a symplectic filling of $B=\partial W$ endowed with $(\xi_B,\CS_{\xi_B,\omega_B})$.
    Then, there are a tamed c-symplectic confoliation $(\xi,\CS_{\xi,\omega})$ and a closed two-form $\Omega_M$ on $M$ with $\Omega_M\in \CS_{\xi,\omega}$ with the following properties. They coincide with $(\xi_B,\CS_{\xi_B,\omega_B})$ and $\Omega_W\vert_B$ on $B$, and away from a neighborhood of $B$, $\xi$ coincides with $\ker \pi$ and and the restriction of $\Omega_M$ to the fibers of $\pi$ is $\Omega_W$.

    Moreover, the following can be arranged.
    \begin{enumerate}
        \item If there is a one-form $\lambda_B$ on $B$ such that $\xi_B=\ker \lambda_B$ and that $(\lambda_B,\Omega_B)$ is a stable Hamiltonian structure on $B$, then one can construct one-form $\lambda$ and two-form $\Omega$ on $M$ which, in addition to define a c-symplectic confoliation, satisfy that $(\lambda,\Omega)$ is a stable Hamiltonian structure.
        \item\label{item:open_book_construction__deformation} In the case where the pages of the given open book are Liouville manifolds and the monodromy is an exact symplectomorphism, the c-symplectic structure $(\xi,\CS_{\xi,\omega})$ can be chosen to be $C^\infty$-deformable to contact structures, all of which are supported by the open book in the contact sense \cite{Gir02}.
    \end{enumerate}
\end{prop}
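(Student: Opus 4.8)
The plan is to build the desired structures separately on the two standard pieces of the open book --- the binding region $B\times D^2$ and the mapping-torus region $M\setminus(B\times D^2)$ --- and then glue them along the overlap. On the mapping-torus region the construction is forced and elementary: realizing this piece as the mapping torus of the monodromy $\phi$ of $(W,\Omega_W)$ and letting $\theta\in S^1$ be the base coordinate, I set $\xi=\ker\d\theta=\ker\pi$ and let $\Omega_M$ be the closed two-form obtained by descending $\Omega_W$ to the mapping torus, which is possible precisely because $\phi^*\Omega_W=\Omega_W$. Here $\d(\d\theta)=0$, so $\cK_\xi=\xi$, the restriction $\Omega_M\vert_\xi=\Omega_W$ is leafwise symplectic, and tameness is immediate from any $\Omega_W$-compatible complex structure on the pages; this already realizes the stated behaviour ``away from a neighborhood of $B$'', and setting $\omega\coloneqq \Omega_M\vert_\xi$ gives $\Omega_M\in\CS_{\xi,\omega}$ on this region by construction.

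The core of the argument is the binding region $B\times D^2$, with polar coordinates $(r,\theta)$ on $D^2$ and a defining one-form $\lambda_B$ for $\xi_B$ on $B$. The plan is to use the Giroux-type ansatz
\[
\alpha \;=\; h_1(r)\,\lambda_B \,+\, h_2(r)\,\d\theta ,
\]
with $(h_1,h_2)$ an interpolating path in $\R^2\setminus\{0\}$ from $(1,0)$ at $r=0$ to $(0,1)$ near the gluing locus, chosen with $h_2(r)=O(r^2)$ and $h_1$ even in $r$ so that $\alpha$ extends smoothly across the binding. This forces $\xi\vert_B\cap TB=\xi_B$, matching $(\xi_B,\CS_{\xi_B,\omega_B})$ on $B$, and matches the page foliation $\ker\d\theta$ near the overlap. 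Simultaneously I would take
\[
\Omega_M \;=\; \beta_1(r)\,\omega_B \,+\, \beta_2(r)\, r\,\d r\wedge \d\theta \,+\, (\text{closed correction}) ,
\]
where $r\,\d r\wedge\d\theta$ is the smooth closed area form on $D^2$ supplying non-degeneracy in the $(r,\theta)$-plane, and the correction term is tuned so that $\Omega_M$ is closed, restricts to $\Omega_W$ on each page (using a collar normal form of $\Omega_W$ near $\partial W=B$ coming from the filling hypothesis), and satisfies $\Omega_M\vert_B=\Omega_W\vert_B$.

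I would then verify, throughout the interpolation region, the four conditions making $(\xi,\CS_{\xi,\omega})$ a tame c-symplectic confoliation with $\Omega_M\in\CS_{\xi,\omega}$: that $\alpha$ defines a confoliation; that the cone generator is symplectically compatible with $\CpS_\xi$; that $\Omega_M\vert_\xi$ lies in this cone; and that a taming $J_\xi$ exists, for which I would invoke the practical criterion of \Cref{rmk:def_tame_sympl_cone,rmk:def_tame_sympl_cone_bis}. The main obstacle is exactly this simultaneous control in the binding region, where the rank of the characteristic distribution $\cK_\xi$ transitions between that of $\cK_{\xi_B}$ near $r=0$ and all of $\xi$ (the page-tangencies) toward the mapping-torus region: one must choose $h_1,h_2,\beta_1,\beta_2$ and the correction so that $\Omega_M$ stays non-degenerate along $\cK_\xi$ and so that a single $J_\xi$ can be kept tamed by both $\d\alpha\vert_\xi$ and $\Omega_M\vert_\xi$ at every intermediate $r$. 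I expect to need a monotonicity (positive angular-velocity) condition on the path $(h_1,h_2)$, relaxed from the strict contact inequality to allow the foliated limit, together with the non-degeneracy of $\omega_B$ on $\cK_{\xi_B}$ and the filling compatibility of $\Omega_W\vert_{\xi_B}$ to seed $J_\xi$ near the binding and extend it outward.

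For the refinement (1), if $(\lambda_B,\Omega_B)$ is already a stable Hamiltonian structure on $B$, I would keep $\lambda=\alpha$ and $\Omega=\Omega_M$ but choose the interpolating data so that $\ker\Omega\subset\ker\d\lambda$ and $\lambda\wedge\Omega^n>0$ persist across the gluing; in the mapping-torus region this is automatic since $\d\lambda=0$ there and $\ker\Omega_M$ is the one-dimensional flow direction, while near the binding it is arranged from the given stability of $(\lambda_B,\Omega_B)$. For the refinement (2), when the pages are Liouville with exact monodromy, the same open book supports, by Giroux's construction \cite{Gir02}, a family of contact structures whose contact forms differ from the page-foliation form by a term whose differential converges, along $\cK_\xi$, to the leafwise Liouville symplectic form; I would then check that this family is a contact $C^\infty$-deformation of $(\xi,\CS_{\xi,\omega})$ in the sense of \Cref{def:contact_approximation_deformation}, the required compatibility of the leading term of $\d\alpha_t\vert_{\cK_\xi}$ being precisely the statement that these contact structures are supported by the open book.
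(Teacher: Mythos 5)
Your proposal follows essentially the same route as the paper: the same decomposition into mapping-torus and binding pieces, the same Giroux-type ansatz $\alpha=h_1(r)\lambda_B+h_2(r)\d\theta$ with $\Omega_M$ built from $\omega_B$, an area form $r\,\d r\wedge\d\theta$, and a closed correction of the form $-\d(l(r)\lambda_B)$ matching the collar normal form of the filling, and the same strategy of a split $J=J_B\oplus\hat J$ whose taming is checked via monotonicity conditions on the interpolating functions (the paper's conditions $g\cdot l'\neq 0$ when $f\cdot h=0$, $g\neq 0$ when $f'\neq0$, $f\neq0$ when $g'\neq0$ are exactly the constraints you anticipate). The explicit inequalities and the staged choice of functions making $(\lambda,\Omega)$ stable Hamiltonian, as well as the detailed conformal-convergence check for item (2), are carried out in the paper and confirm that your outlined verifications go through.
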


\noindent
In order to construct finite energy pseudo-holomorphic rank $2$ foliations on the symplectization of a given contact manifold, the works \cite{ACH05,Wen10} (using contact open books in dimension $3$) and \cite{Mor19} (using special contact spinal open books in high dimensions) already considered ``deformations to contact structures'' of certain stable Hamiltonian structures. 
Namely, they constructed families $\xi_t$ that are contact for $t>0$ and $\xi_0=\ker\lambda_0$ with $(\lambda_0,\omega_0)$ the given stable Hamiltonian structure; moreover, in their case there is $J_t$ tamed by $\xi_t$ converging to a $J_0$ tamed by $\omega_0$. 
The above \Cref{item:open_book_construction__deformation} offers a similar construction for open books in any dimension, but from the perspective of contact $C^k$-deformations as defined in this paper.

Recall that, in ambient dimension $3$, Etnyre \cite{Et} proved that any contact structure is a $C^\infty$-deformation of a foliation. 
Using the existence of contact supporting open books for any contact manifold \cite{Gir02}, \Cref{item:open_book_construction__deformation} above implies in particular that any contact structure is a (small) $C^\infty$-deformation of a confoliation ``supported'' by the same open book (where the latter is to be interpreted in the sense of \Cref{prop:open_book_construction}).
However, improving this to a contact $C^\infty$-deformation starting from a symplectic foliation is, in general, a very hard task.
Indeed, already the example treated in \cite{Mit18,Mor19} (and reinterpreted according to our notion of contact deformation in \Cref{item:examples_contact_deformation__mori} in \Cref{prop:examples_contact_deformation}) shows that to construct symplectic foliations from contact open books one needs very special geometry on the binding (c.f.\ \cite{GirTou24}).

\bigskip

\subsection*{Organization of the paper}
In \Cref{sec:confoliations_high_dim}, we start by introducing some notions from the theory of distributions, the definition of confoliation, and the different symplectic data that we can equip them with. 
We proceed by defining what symplectic fillability means in our context and give a construction of c-symplectic confoliations using open books. 
In \Cref{sec:restrictions_J-hol_from_strict_cotameness} we show how the maximum principles adapt from pluri-subharmonic functions to tame weakly pluri-subharmonic functions, a notion directly related to tame c-symplectic confoliations. In
\Cref{sec:confoliated_bLob} we define the main geometric object of this work, the confoliated bLob. 
We give normal forms near its singularities and study pseudo-holomorphic curves in those neighborhoods. 
\Cref{sec:proof_obstruction_fillability} contains the proof of \Cref{thm:obstruction_fillability}, and \Cref{sec:applications} the proof of \Cref{cor:split_strong_sympl_conf,cor:obstruction_weak_filling_bourgeois}, and \Cref{thm:contractible_reeb_orbit}. 
The last \Cref{sec:contact_approx} is devoted to defining the new notion of contact approximation of c-symplectic confoliations and to analyzing several examples of this phenomenon.

\subsection*{Acknowledgments}
The authors are grateful to Klaus Niederkr\"uger for useful discussions concerning the strategy of filling by holomorphic disks, as well as to the anonymous referees for their various and very useful comments, for pointing out several inaccuracies in the original manuscript, and for the explicit \Cref{exa:non-tame_confoliation} of a c-symplectic confoliation that is not tame.

The first author acknowledges partial support from the AEI grant PID2023-147585NA-I00, the Departament de Recerca i Universitats de la Generalitat de Catalunya (2021 SGR 00697), and the Spanish State Research Agency, through the Severo Ochoa and María de Maeztu Program for Centers and Units of Excellence in R\&D (CEX2020-001084-M).
The second author benefits, or has benefited, from the support of: the region Pays de la Loire, via the project Étoile Montante 2023 SymFol; the ANR grant SiSyFo ``Singular Symplectic Foliations in Contact, Symplectic and Poisson topology'', bearing the reference ANR-25-ERCS-0010; of the ANR Grant COSY ``New challenges in symplectic and contact topology'', bearing the reference ANR-21-CE40-0002; and of the French government's ``Investissements d’Avenir'' program integrated to France 2030, bearing the following reference ANR-11-LABX-0020-01.


\section{Confoliations in high dimensions}
\label{sec:confoliations_high_dim}

In this section, we review some background notions for hyperplane distributions and introduce our definition of confoliation in high dimensions and different symplectic data that one can equip it with.


\subsection{Characteristic and kernel distributions of a hyperplane field}
\label{sec:char_distr_normal_form}

We first describe the well-known normal form for a hyperplane distribution near its points of ``constant order'', using
the notion of characteristic distribution.
This part follows closely the presentation in \cite[Chapter II, Section 3]{ExtDiffSyst_book}, rephrasing things in a way that is more suited to our purposes and including the necessary proofs for completeness.
Afterward, we introduce a second distribution, which we call \emph{kernel distribution}, associated with a given hyperplane field, which is ``differentiable'' in a suitable sense and coincides with the characteristic distribution at points of constant order.
This second part follows the short presentation in \cite[Appendix 4]{LibMarBook}, adding related considerations that will be necessary for the later sections.

\medskip

Let $\xi$ be a cooriented\footnote{From now on, all manifolds will be implicitly assumed to be oriented, and all hyperplane fields to be cooriented. This induces, in particular, an orientation on the latter, and all additional geometric structure we will consider on them later on (symplectic forms, complex structures), will be implicitly assumed to be compatible with this induced orientation.} hyperplane distribution on $M$ and $\alpha$ be a defining one-form for $\xi$.
\begin{definition}
    \label{def:char_distr}
    The \emph{characteristic distribution} of $\xi$ is 
    \[
    \cK_\xi \coloneqq \{\,v\in \xi \; \vert \;
    (\iota_v \d\alpha)\wedge\alpha = 0 \,\} \, .
    \]
\end{definition}

\noindent
Note that the condition $(\iota_vd\alpha)\wedge\alpha=0$ only depends on $\xi$ and not on the auxiliary choice of $\alpha$, so that the definition is well-posed.
Moreover, such a condition is just equivalent to the fact that $\iota_vd\alpha$ is proportional to $\alpha$ at the point $p=\pi(v)$ with $\pi\colon TM\to M$ the natural projection.
We also point out that, in the case of a closed one-form $\alpha$, its characteristic distribution is nothing else than the distribution given by its kernel.

\begin{remark}
\label{rmk:char_distr_xi_exterior_ideals}
This notion finds its motivation in the theory of exterior differential systems.
A subset $I\subset \Omega^*(M)$ is called \emph{exterior differential ideal} if it is an ideal with respect to the wedge product, and closed under exterior derivative. 
For such an $I$, one can similarly define its \emph{characteristic distribution} 
\[
\cK_I:=\{v\in T_pM \mid \iota_v I_p \subset I_p  \} \, .
\]
A question of interest in this general setup, and that has to do with the nature of the singularities of $\cK_I$, is whether vectors in the latter extend locally to vector fields that are in $\cK_I$ at every point. 
We won't be concerned with this question here, but still introduce the distribution that is made of pointwise evaluation of such vectors below in \Cref{def:kernel_distribution} in order to point out the differences with $\cK_\xi$.
\end{remark}

The general identities $\cL_X = \d\iota_X + \iota_X \d$ and $\iota_{[X,Y]} = [\cL_X,\iota_Y]$, for any choice of vector fields $X$ and $Y$ on $M$ (and hence in particular for those in $\cK_\xi$),
immediately imply the following key properties of characteristic distributions:
\begin{lemma}
    \label{lem:char_distr_involutive_and_invariant}
    The characteristic distribution $\cK_\xi$ is involutive: namely, if $X,Y$ are vector fields tangent to $\cK_\xi$ then their Lie bracket $[X,Y]$ is also tangent to it.
    Moreover, $\xi$ is $\cK_\xi$-invariant, i.e.\ for any defining one-form $\alpha$ and any vector field $X$ tangent to $\cK_\xi$, $\cL_X\alpha = f \alpha$ for some $f\colon M \to \R_{>0}$.
\end{lemma}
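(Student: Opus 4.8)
The plan is to reduce everything to a pointwise characterization of tangency to $\cK_\xi$ and then run Cartan calculus, exactly using the two identities $\cL_X = \d\iota_X + \iota_X\d$ and $\iota_{[X,Y]} = \cL_X\iota_Y - \iota_Y\cL_X$ quoted just above. First I would record that a vector field $X$ is tangent to $\cK_\xi$ precisely when $\alpha(X) = 0$ and $\iota_X\d\alpha$ is pointwise proportional to $\alpha$. Fixing a global vector field $R$ transverse to $\xi$ with $\alpha(R) = 1$ (which exists since $\alpha$ is nowhere vanishing: take the metric dual of $\alpha$, suitably rescaled), the proportionality factor is the \emph{smooth} function $\lambda_X \coloneqq \d\alpha(X,R)$, and one checks the identity of one-forms $\iota_X\d\alpha = \lambda_X\,\alpha$ by evaluating both sides on $R$ and using that the left-hand side is already known to be a multiple of $\alpha$.

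For the invariance claim I would apply Cartan's formula to $\alpha$: since $\iota_X\alpha = \alpha(X) = 0$, it gives $\cL_X\alpha = \iota_X\d\alpha = \lambda_X\,\alpha$, so that the flow of $X$ scales $\alpha$ and hence preserves $\xi = \ker\alpha$, with $f = \lambda_X$.

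For involutivity, let $X,Y$ be tangent to $\cK_\xi$, with proportionality functions $\lambda_X,\lambda_Y$. I would first check $[X,Y]\in\xi$: from $\d\alpha(X,Y) = X(\alpha(Y)) - Y(\alpha(X)) - \alpha([X,Y])$ together with $\alpha(X) = \alpha(Y) = 0$ we get $\alpha([X,Y]) = -(\iota_X\d\alpha)(Y) = -\lambda_X\,\alpha(Y) = 0$. Then I would compute $\iota_{[X,Y]}\d\alpha$ via the bracket identity. The first term is $\cL_X\iota_Y\d\alpha = \cL_X(\lambda_Y\alpha) = (X\lambda_Y)\alpha + \lambda_Y\cL_X\alpha = (X\lambda_Y + \lambda_X\lambda_Y)\alpha$, using the invariance just proved. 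For the second, $\cL_X\d\alpha = \d\cL_X\alpha = \d(\lambda_X\alpha) = \d\lambda_X\wedge\alpha + \lambda_X\d\alpha$, and contracting with $Y$ (using $\alpha(Y) = 0$ and $\iota_Y\d\alpha = \lambda_Y\alpha$) yields $\iota_Y\cL_X\d\alpha = (Y\lambda_X + \lambda_X\lambda_Y)\alpha$. Subtracting gives $\iota_{[X,Y]}\d\alpha = (X\lambda_Y - Y\lambda_X)\,\alpha$, which is proportional to $\alpha$; hence $(\iota_{[X,Y]}\d\alpha)\wedge\alpha = 0$, i.e.\ $[X,Y]$ is tangent to $\cK_\xi$, as desired.

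The only genuinely delicate point is the singular nature of the distribution: since $\cK_\xi$ need not have constant rank, ``tangent to $\cK_\xi$'' must be read as pointwise membership $X_p \in (\cK_\xi)_p$ for \emph{all} $p$, and one must ensure the proportionality function $\lambda_X$ stays smooth across the locus where the rank jumps. The explicit formula $\lambda_X = \d\alpha(X,R)$ guarantees exactly this without any constant-rank hypothesis, so the computation above is valid globally. Everything else is a clean, if routine, application of the two Cartan-type identities. (I note in passing that the function $f$ in the invariance statement is the $\R$-valued $\lambda_X$, which may vanish, rather than strictly positive.)
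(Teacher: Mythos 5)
Your proof is correct and is exactly the argument the paper has in mind: the paper does not write out a proof at all, stating only that the identities $\cL_X = \d\iota_X + \iota_X\d$ and $\iota_{[X,Y]} = [\cL_X,\iota_Y]$ "immediately imply" the lemma, and your computation is precisely the routine Cartan-calculus verification behind that claim, with the smoothness of $\lambda_X = \d\alpha(X,R)$ handled cleanly. Your parenthetical remark is also well taken: $f = \lambda_X$ is merely real-valued (it vanishes identically when $\alpha$ is closed, for instance), so the "$\R_{>0}$" in the statement is a slip in the paper rather than a defect of your argument.
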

\begin{remark}
\label{rmk:char_distr_involutive_invariant}
    Involutivity and invariance under the flow of tangent vector fields is a general property of characteristic distributions of differential ideals of the graded algebra $\Omega^*(M)$.
\end{remark}
\begin{remark}
    \label{rmk:transverse_contact}
    Near a point $p\in M$ where $\rk\cK_\xi$ is constant, $\xi$ induces a well defined \emph{contact structure} $\xi/\cK_\xi$ on $TM/\cK_\xi$.
\end{remark}

There is also a useful numeric invariant associated with non-vanishing one-forms:
\begin{definition}
    \label{def:order}
    Let $\xi$ be a hyperplane distribution on $M$, and $\alpha$ a defining one-form.
    The \emph{order of $\xi$ at a point $p\in M$} is the integer $k=k(p)$ such that $\alpha\wedge (\d\alpha)^k\neq 0$ and $\alpha\wedge\d\alpha^{k+1}=0$ at $p$.
\end{definition}
Note that the above conditions involving $\alpha$ actually only depend on $\xi$, so the definition is indeed well-posed.

\begin{remark}
    \label{rmk:corank_char_fol}
    Basic linear algebra tells us that, if $k$ is the order of $\xi$ at $p\in M$, then the corank of $(\cK_\xi)_p$ in $T_pM$ is $2k+1$.
    In particular, if $k$ is constant near a point $p$, $\cK_\xi$ has constant rank near $p$, equal to $\dim(M)-2k-1$, and is hence a foliation by \Cref{lem:char_distr_involutive_and_invariant} and Frobenius' theorem.
\end{remark}

For later use, we also introduce the following notion:
\begin{definition}
    \label{def:accessible_set}
    Let $\xi$ be a hyperplane field on $M$, and $X\subset M$ a subset.
    We call the \emph{accessible set from $X$}, and denote it $\cA(X)$, the subset of $M$ made of endpoints $\gamma(1)$ of smooth curves $\gamma\colon [0,1]\to M$ such that $\gamma(0)\in X$ and $\gamma'\in \cK_\xi$.
\end{definition}
For example, when $\cK_\xi$ has constant rank and is hence a regular foliation, one has that $\cA(X)$ is just the union of leaves of $\cK_\xi$ that intersect $X$.

\medskip

We are now ready to give the desired normal form near points of constant order:
\begin{prop}
    \label{prop:normal_form_xi}
    Let $\xi$ be a hyperplane field on an $m$-dimensional smooth manifold $M$ and $\cK_\xi$ its characteristic distribution. 
    Assume that the order $k$ of $\xi$ is constant near $p\in M$.
    Then, there are local coordinates $(z,x_1,y_1,\ldots, x_k,y_k, w_1 , \ldots w_{m-2k-1})$ centered at $p$ where 
    \[
    \xi = \ker(\d z + \sum_{i=1}^k x_i \d y_i) 
    \quad \text{and} \quad
    \cK_\xi = \langle \partial_{w_{1}},\ldots,\partial_{w_{m-2k-1}} \rangle_\R \, .
    \]
    Alternatively, the first $2k+1$ coordinates $(z,x_1,y_1,\ldots, x_k,y_k)$ can be chosen so that the following other equivalent normal form for $\xi$ holds:
    \[
    \xi=\ker(\d z + \sum_{i=1}^k x_i \d y_i) \, .
    \]
\end{prop}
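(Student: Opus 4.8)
The plan is to reduce the statement to the classical Pfaff--Darboux normal form for contact forms, using the characteristic foliation to first straighten out the ``degenerate directions''.

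First I would invoke \Cref{rmk:corank_char_fol}: since the order $k$ is constant near $p$, the distribution $\cK_\xi$ has constant rank $m-2k-1$ there, and by \Cref{lem:char_distr_involutive_and_invariant} together with Frobenius' theorem it is a regular foliation near $p$. I would then choose Frobenius coordinates $(u_1,\dots,u_{2k+1},w_1,\dots,w_{m-2k-1})$ centered at $p$ in which the leaves of $\cK_\xi$ are the slices $\{u=\const\}$, so that $\cK_\xi=\langle\partial_{w_1},\dots,\partial_{w_{m-2k-1}}\rangle$.

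Next I would show that a defining one-form can be chosen ``basic'' with respect to this foliation, i.e.\ depending only on the $u$-variables. Writing $\alpha=\sum_i a_i(u,w)\,\d u_i$ (there are no $\d w_j$ components, since $\partial_{w_j}\in\ker\alpha$), the invariance part of \Cref{lem:char_distr_involutive_and_invariant} gives $\cL_{\partial_{w_j}}\alpha=f_j\alpha$, which unwinds via $\cL_{\partial_{w_j}}\alpha=\iota_{\partial_{w_j}}\d\alpha$ into the relations $\partial_{w_j}a_i=f_j a_i$ for all $i,j$. Fixing an index $i_0$ with $a_{i_0}(p)\neq 0$ and setting $\tilde\alpha\coloneqq\alpha/a_{i_0}$, a direct computation shows $\partial_{w_j}(a_i/a_{i_0})=0$, so that $\tilde\alpha=\sum_i(a_i/a_{i_0})(u)\,\d u_i$ is independent of $w$ and still defines $\xi$.

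Finally I would observe that $\tilde\alpha$ is a contact form in the $u$-variables: the constant-order conditions $\alpha\wedge\d\alpha^k\neq 0$ and $\alpha\wedge\d\alpha^{k+1}=0$ are conformally invariant (hence unaffected by passing from $\alpha$ to $\tilde\alpha$) and, since $\tilde\alpha$ carries no $\d w_j$, they are equivalent to $\tilde\alpha\wedge\d\tilde\alpha^k\neq 0$ on the $(2k+1)$-dimensional $u$-slice. Applying the classical contact Darboux theorem to $\tilde\alpha$ on this slice produces coordinates $(z,x_1,y_1,\dots,x_k,y_k)$ in which $\tilde\alpha=\d z+\sum_{i=1}^k x_i\,\d y_i$; keeping the $w_j$ untouched then yields simultaneously the claimed normal form for $\xi$ and the identification $\cK_\xi=\langle\partial_{w_1},\dots,\partial_{w_{m-2k-1}}\rangle$, while the ``alternative'' form is the same statement read off directly from the contact coordinates. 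The step I expect to require the most care is the middle one, namely verifying that a single rescaling renders the defining form genuinely $w$-independent rather than merely $\cK_\xi$-invariant up to a conformal factor, together with the bookkeeping needed to check that the non-degeneracy (contact) condition descends cleanly to the $u$-slice; the appeals to Frobenius and to contact Darboux are standard and can be cited.
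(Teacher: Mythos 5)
Your proposal is correct and follows essentially the same route as the paper's (sketched) proof: straighten $\cK_\xi$ via Frobenius, observe that a defining one-form can be taken to be a pullback from the transverse $(2k+1)$-dimensional factor, and apply the contact Darboux theorem there. The only difference is that you supply the explicit rescaling computation ($\partial_{w_j}a_i=f_ja_i$, hence $a_i/a_{i_0}$ is $w$-independent) that the paper leaves implicit when asserting that $\cK_\xi$-invariance of $\xi$ yields a basic defining form.
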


\noindent
We provide here just a sketch of proof, assuming the reader is familiar with Darboux's theorem for contact structures.

\begin{proof}[Sketch of proof]
    According to \Cref{rmk:corank_char_fol}, $\cK_\xi$ is a foliation of rank $m-2k-1$ near $p$.
    Near $p$ one can then find a chart $U$ diffeomorphic to $\R^{2k+1}\times \R^{m-2k-1}$, with $p$ identified with $\{(0,0)\}$, in such a way that the leaves of $\cK_\xi\vert_U$ are just $\{pt\}\times \R^{m-2k-1}$.
    
    Note now that, by \Cref{lem:char_distr_involutive_and_invariant}, $\xi$ is invariant under the flow of vector fields tangent to $\cK_\xi$. 
    In this coordinate chart, this amounts to say that $\xi$ is defined by a one-form $\alpha$ which is a pullback $\pi^*\alpha_0$ to $\R^{2k+1}\times \R^{m-2k-1}$ of a (non-vanishing) one-form $\alpha_0$ on the first factor $\R^{2k+1}$, via the natural projection $\pi\colon \R^{2k+1}\times \R^{m-2k-1} \to \R^{2k+1}$.
    
    At this point, note that $\alpha_0$ is in fact a contact form on $\R^{2k+1}$ because $\alpha_0$ has order $k$.
    Then, a direct application of Darboux's theorem for contact forms to $\alpha_0$ gives the desired conclusion of the first part of the statement.
\end{proof}

For later purposes, we also give an explicit proof of the following version of Gray's stability for hyperplanes of constant order:
\begin{lemma}
    \label{lem:gray_stability}
    Let $M$ be a (possibly open) manifold, and $(\xi_t)_{t\in [0,1]}$ a smooth family of hyperplane fields sharing the same characteristic foliation $\cK$, which is also assumed to be regular.
    Let also $(\nu_t)_{t\in[0,1]}$ be any smooth family of sub-bundles of $TM$ that are complementary to $\cK$ for each $t\in[0,1]$.
    Then, there is a time-dependent vector field $X_t$ on $M$, which at time $t$ is tangent to $\nu_t\cap \xi_t$, and so that its flow (wherever defined) satisfies $\psi_t^*\xi_t=\xi_0$.

    Moreover, if $Q\subset M$ is a subset where $\xi_t\cap TM\vert_Q$ is independent of $t$, $\psi_t$ can be chosen to be the identity on $Q$.
\end{lemma}

\begin{remark}
    \label{rmk:gray_stability}
    The flow $\psi_t$ in \Cref{lem:gray_stability} preserves $\cK$.
    Moreover, if $\nu_t$ is independent of $t$ and given by a foliation, denoted $\cF$, then the flow $\psi_t$ also preserves $\cF$.
\end{remark}

\begin{proof}
    This follows from a direct application of Moser's trick.
    More precisely, let $\alpha_t$ be an auxiliary smooth family of defining one-forms for $\xi_t$.
    We then search for $X_t\subset \nu_t$ such that $\frac{\d}{\d t}(\psi_t^*\alpha_t-f_t\alpha_0)=0$ for all $t$, for some smooth family of positive functions $f_t$.
    Using the definition of Lie derivative $\cL_{X_t}$, Cartan's identity and reusing that we want $\psi_t^*\alpha_t=f_t\alpha_0$ for all $t$, this gives 
    \[
        \psi_t^*(\d \iota_{X_t}\alpha_t+\iota_{X_t}\d\alpha_t+\dot\alpha_t) -\frac{\dot f_t}{f_t}\psi_t^*\alpha_t=0 \, .
    \]
    Precomposing with $(\psi_t)_*$, denoting $\mu_t:=(\psi_t)_*(\frac{\dot f_t}{f_t})$ and using $X_t\subset \xi_t$, we get 
    \begin{equation}\label{eq:gray}
        \iota_{X_t}\d\alpha_t + \dot\alpha_t - \mu_t\alpha_t=0 \, .
    \end{equation}
    Now, $\alpha_t$ is a linear contact structure on $\nu_t$, by which we mean $\alpha_t\wedge \d\alpha_t^k>0$ on $\nu_t$, where $k=(\dim M - \rk \cK -1)/2$.
    Hence, we can define a time-dependent vector field $R_t$ tangent to $\nu_t$ by requiring $\alpha_t(R_t)=1$ and $\iota_{R_t}\d\alpha_t\vert_{\nu_t}=0$.
    In particular, contracting the above equation by $R_t$ gives $\mu_t=\dot\alpha_t(R_t)$, which then determines uniquely $f_t$ \emph{once $\psi_t$ is determined}.
    On the other hand, restricting the above identity to $\nu_t\cap \xi_t$ gives
    \[
        \iota_{X_t}\d\alpha_t \vert_{\nu_t\cap \xi_t} = -\dot\alpha_t\vert_{\nu_t\cap \xi_t} \, ,
    \]
    and this equation determines $X_t\subset \nu_t\cap \xi_t$ uniquely as $\alpha_t$ is linear-contact on $\nu_t$ as previously pointed out.
    Notice that the vector field $X_t$ satisfies equation \ref{eq:gray} in all $TM$, since $\iota_{X_t}d\alpha_t|_\cK=0$, $\alpha_t|_\cK=0$ and $\dot \alpha_t|_\cK=0$. This also shows that the resulting flow $\psi_t$ and functions $f_t$, where defined, satisfy indeed that $\psi_t^*\alpha_t=f_t\alpha_0$, as desired.

    The second part of the statement is clear from the above proof. This concludes the proof.
\end{proof}

\medskip

We point out that there is another natural distribution, in general strictly contained in $\cK_\xi$, which is naturally associated with a hyperplane field.
As this will not be used in what follows, we just briefly recall its definition and relationship with the characteristic distribution, and this will make it a good occasion to point out some properties of the latter, which will be useful in the rest of the paper.

Given an open subset $U\subset M$, we denote by $\cVF(U)$ the vector space made of those vector fields tangent to $\cK_\xi$ which are defined over all $U$.
The association $U\mapsto \cVF(U)$ then gives a sheaf $\cVF$ on $M$, which one can see as valued in the natural category whose objects are the vector spaces of sections of $TM\vert_U$, for every $U\subset M$ open.

What's more, by \Cref{lem:char_distr_involutive_and_invariant}, if $X,Y\in \cVF(U)$ then $[X,Y]\in \cVF(U)$ as well.
In other words, the functor $\cVF$ takes values in the category of Lie algebras of sections of $TM\vert_U$ for all open $U\subset M$.

From this functor $\cVF$ one can easily get a (singular, i.e.\ not constant-rank) distribution $\cDK_\xi\subset TM$ on $M$:
\begin{definition}
    \label{def:kernel_distribution}
    The \emph{differentiable characteristic distribution}
    $\cDK_\xi$ on $M$ is the (singular) distribution made of all those $v\in TM$ such that $v=X(p)$ for some $U\subset M$ open with $p\in U$ and $X\in \cVF(U)$.
\end{definition}

The relationship between $\cK_\xi$ and $\cDK_\xi$ is simply as follows:
\begin{lemma}
    \label{lem:charact_distr_vs_kernel_distr}
    The following properties hold.
    \begin{enumerate}
    \item $\rk(\cK_\xi)\colon M \to \N$ is upper semi-continuous, while $\rk(\cDK_\xi)\colon M \to \N$ is lower semi-continuous.
    In particular, both of them are locally constant on an open and dense subset of $M$.
    
    \item If the rank $\rk(\cK_\xi)$ is locally constant near $p\in M$, then the same holds for $\rk(\cDK_\xi)$, and viceversa; in this case, $\cK_\xi=\cDK_\xi$ near $p$.
    In particular, $\cK_\xi$ and $\cDK_\xi$ coincide over a dense and open subset of $M$.
    \end{enumerate}
\end{lemma}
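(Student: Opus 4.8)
The plan is to handle item~(1) by semicontinuity and item~(2) by combining the Frobenius theorem with the inclusion $\cDK_\xi\subseteq\cK_\xi$, keeping the converse implication of~(2) as the delicate point.

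For item~(1) I would begin from the pointwise identity $(\cK_\xi)_p=\ker\bigl(\d\alpha\vert_{\xi_p}\bigr)$, immediate from \Cref{def:char_distr}, since for $v\in\xi$ the condition $(\iota_v\d\alpha)\wedge\alpha=0$ means exactly that $\iota_v\d\alpha$ vanishes on $\xi_p$. As $\d\alpha\vert_\xi$ is a smooth field of skew bilinear forms on the rank-$(\dim M-1)$ bundle $\xi$, its rank is lower semi-continuous (a non-vanishing minor persists on a neighborhood), so $\rk\cK_\xi=(\dim M-1)-\rk(\d\alpha\vert_\xi)$ is upper semi-continuous. For $\cDK_\xi$ I would argue straight from \Cref{def:kernel_distribution}: if $\dim(\cDK_\xi)_p=s$, choose $X_1,\dots,X_s\in\cVF(U)$ with $X_1(p),\dots,X_s(p)$ linearly independent; independence persists near $p$ and each $X_i(q)\in(\cDK_\xi)_q$, so $\rk\cDK_\xi\ge s$ nearby, i.e.\ $\rk\cDK_\xi$ is lower semi-continuous. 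Finally, a bounded integer-valued semi-continuous function is locally constant on an open dense set: on any nonempty open $V$ the value $\min_V\rk\cK_\xi$ (resp.\ $\max_V\rk\cDK_\xi$) is attained on a nonempty open subset of $V$ by upper (resp.\ lower) semi-continuity, where the function is then locally constant; intersecting the two resulting open dense sets gives the assertion.

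For the forward implication in item~(2) I would first record that $\cDK_\xi\subseteq\cK_\xi$ pointwise, as every $X\in\cVF(U)$ is valued in $\cK_\xi$ by definition. If $\rk\cK_\xi$ is locally constant near $p$, then by \Cref{rmk:corank_char_fol} and the involutivity of \Cref{lem:char_distr_involutive_and_invariant}, $\cK_\xi$ is a regular foliation near $p$; a local frame of it (for instance the coordinate fields $\partial_{w_i}$ of \Cref{prop:normal_form_xi}) consists of sections in $\cVF(U)$ whose values span $\cK_\xi$ pointwise, so $\cK_\xi\subseteq\cDK_\xi$ and hence $\cK_\xi=\cDK_\xi$ near $p$; in particular $\rk\cDK_\xi$ is locally constant there. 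Since this equality holds on the open dense set of regular points of $\cK_\xi$ produced in item~(1), it also yields the final ``in particular'' statement.

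The main obstacle I expect is the converse implication. The natural approach is to use that, when $\rk\cDK_\xi\equiv r$ near $p$, spanning sections together with the involutivity of $\cK_\xi$ make $\cDK_\xi$ a regular involutive distribution, hence a foliation by Frobenius; as $\cDK_\xi\subseteq\xi$ and $\xi$ is invariant under flows tangent to $\cK_\xi$ (\Cref{lem:char_distr_involutive_and_invariant}), in leaf coordinates $\xi$ descends to a pullback $\xi=\pi^*\bar\xi$, and one computes $\rk\cK_\xi(q)=r+\rk\cK_{\bar\xi}(\pi(q))$ together with $\cDK_{\bar\xi}\equiv0$ on the leaf space. This reduces the converse to the case $r=0$, namely that $\cDK_\xi\equiv0$ near $p$ should force $\xi$ to be contact there. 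The genuinely delicate point is precisely here: regularity of $\cDK_\xi$ does not by itself exclude a nowhere-dense locus on which $\rk\cK_\xi$ jumps while carrying no nonzero local section tangent to it—such a jump is invisible to $\cDK_\xi$, and it is exactly the gap that upper semi-continuity of $\rk\cK_\xi$ and density of its regular set leave open. I would therefore either invoke the standing regularity hypotheses under which the lemma is applied, or---which is all that the later sections require---retain from item~(2) only the coincidence $\cK_\xi=\cDK_\xi$ on the open dense set of regular points, which already follows from the forward implication.
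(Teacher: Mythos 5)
Your treatment of item (1) and of the forward half of item (2) is exactly the paper's (much terser) argument: upper semi-continuity of $\rk\cK_\xi$ from lower semi-continuity of the rank of $\d\alpha\vert_\xi$, lower semi-continuity of $\rk\cDK_\xi$ from persistence of linearly independent local sections, the general fact about integer-valued semi-continuous functions, and, for the forward direction of (2), the coordinate frame $\partial_{w_1},\ldots,\partial_{w_{m-2k-1}}$ from \Cref{prop:normal_form_xi} showing $\cK_\xi\subseteq\cDK_\xi$ wherever $\rk\cK_\xi$ is locally constant.

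Your hesitation about the converse is justified; you have identified a genuine defect in the statement rather than left a gap in your own argument. The paper dismisses the converse in a parenthetical, claiming it follows from an analogue of \Cref{prop:normal_form_xi} under the assumption that $\cDK_\xi$ has constant rank, ``with exactly the same proof''. But that proof uses the constancy of the \emph{order} precisely at the step where the transverse one-form $\alpha_0$ is shown to be contact, and there is no counterpart of this step when only $\rk\cDK_\xi$ is assumed constant: the nowhere-dense jump locus of $\rk\cK_\xi$ carrying no nonzero local section is exactly what you describe as invisible to $\cDK_\xi$, and it really occurs. Concretely, on $\R^3$ take $\xi=\ker(\d z+x^3\,\d y)$. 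Then $\rk\cK_\xi$ equals $2$ on $\{x=0\}$ and $0$ elsewhere, so it is not locally constant near any point of $\{x=0\}$; yet every local section of $\cK_\xi$ vanishes on the dense set $\{x\neq 0\}$ and hence everywhere, so $\cDK_\xi\equiv 0$ has locally constant rank. (This $\xi$ is even a confoliation in the sense of \Cref{def:confoliation}, with $\mu=\d x\wedge \d y$.) So the ``viceversa'' is false as stated, and your proposed fallback---retaining the forward implication together with the coincidence $\cK_\xi=\cDK_\xi$ on the open dense set of regular points, which is all the later sections use---is the correct resolution.
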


\begin{proof}[Sketch of proof]
    In the first point of the lemma, the statement about the continuity properties of the ranks can be explicitly checked from the definitions of the two distributions, while the second sentence is a general property of upper- and lower-semicontinuous functions valued in a discrete set.
    The last point is a direct consequence of the first point and \Cref{prop:normal_form_xi}.
    (Technically, the vice versa part in the first sentence follows from an analog statement to \Cref{prop:normal_form_xi} under the assumption that $\cDK_\xi$ is of constant rank near $p$, with exactly the same proof.)
\end{proof}

\begin{remark}
    \label{rmk:charact_distr_vs_kernel_distr_qualitatively}
    Singular distributions defined by sheaves of Lie algebras of local vector fields as in the case of $\cDK_\xi$ are oftentimes called \emph{differentiable}.
    This is the reason for the notation $\cDK_\xi$, as it is the biggest differential distribution contained in $\cK_\xi$.
    The ``differentiability'' of $\cDK_\xi$ means, more precisely, that for any $v\in (\cDK_\xi)_p$ and any sequence $p_n$ in $M$ converging to $p$ there is a sequence $v_n\in (\cDK_\xi)_{p_n}$ converging to $v$ in the topology induced by $TM$.
    The characteristic distribution $\cK_\xi$ is, on the contrary, \emph{not} differentiable, exactly because its rank is upper-semicontinuous and not lower-semicontinuous.
    However, this is the distribution that we will need for our purposes, as it detects exactly the directions in which $\xi$ is not contact.
\end{remark}

\medskip

As we will need it later on, we introduce also the following notion:
\begin{definition}
    \label{def:conformal_partially-symplectic_class}
    Let $\xi$ be a hyperplane field on $M$.
    We call \emph{conformal partial-symplectic class} of $\xi$, and denote it $\CpS_\xi$, the equivalence class of $\d\alpha\vert_\xi$ under the conformal equivalence relation $\sim$ identifying any alternating two-form $\mu$ on $\xi$ with $f \mu$, for any $f\colon M\to \R_{>0}$.
\end{definition}
Again, note that this class only depends on $\xi$. We also point out that the name ``partially-symplectic'' is here justified by the fact that $\d\alpha\vert_\xi$ induces a non-degenerate two-form (i.e.\ a symplectic one in the vector bundles sense) only on $\xi/\cK_\xi$, or equivalently on any (auxiliary) complementary subspace to the characteristic distribution $\cK_\xi$ in $\xi$.
Note that $\d\alpha$ is \emph{not} symplectic on $\xi/\cDK_\xi$ at those points where $\cDK_\xi$ is strictly contained in $\cK_\xi$; according to \Cref{lem:charact_distr_vs_kernel_distr}, this happens only on a subset with an empty interior.

\begin{notation}
    Throughout the work, to lighten the notation a bit and be consistent with the one used for the characteristic distribution, we will use the following convention: 
    $\mathcal{F}$ will denote at the same time a regular foliation, intended as a partition in leaves, and the distribution given by the tangent spaces to said leaves.
\end{notation}


\subsection{Confoliations in high dimensions}
\label{sec:confoliations}

As explained in the introduction, we propose a new take on confoliations in high dimensions based on auxiliary geometric structures on them. 
First, let us point out that there is no consensus on what the definition of confoliation in high dimensions should be. 
We hence stick to the following candidate notion, which is seemingly the simplest one:
\begin{definition}
\label{def:smooth_confoliation}
    Let $\xi$ be a hyperplane distribution on a manifold $M$ of dimension $2n+1$. We say that $\xi$ is a (positive) \textbf{confoliation} if $\xi=\ker \alpha$ with $\alpha\wedge (d\alpha)^n\geq 0$.
\end{definition}
While being quite natural, this notion has the clear drawback of allowing for some undesired (e.g.\ when studying symplectic fillability questions) situations such as $\xi \oplus TN$ being a positive confoliation in $M\times N$, but $\xi$ being a negative contact structure on $M$. 
For this reason, these objects are not likely to lead to any theory that is as interesting as the $3$-dimensional one.

As discussed in the introduction, Eliashberg and Thurston \cite{ET_confoliations} already take the approach of considering an auxiliary geometric structure on such a $\xi$ in the hope of having more meaningful properties: namely, they require the existence of an almost complex structure $J$ on $\xi$ such that $\d\alpha(v, Jv)\geq 0$ for all $v\in \xi$. 
Even with this requirement, there is the drawback of allowing a negative contact structure, or the previous product example, to be a positive confoliation.
Solving this ``issue'' is one of the motivations to add additional geometric structures on $\xi$, namely a complex structure and a (linear) symplectic structure (which are required to be compatible in a sense which we will describe). We start by describing the additional complex structure in the rest of this section, and then focus on the symplectic one in the next section.

\medskip

The key notion that plays an important role throughout the work is that of tameness, which is a strengthening of the requirement in \cite{ET_confoliations}.
First, let us introduce the following definition:
\begin{definition}
    \label{def:tamed}
    Let $\xi$ be a hyperplane distribution on $M$. 
    An almost complex structure $J_\xi$ on $\xi$ is called \textbf{tamed by $\CpS_\xi$} if 
    \begin{equation}
        \label{eqn:strictly_tamed}
        \CpS_\xi(v,J_\xi v)\geq 0 \, \text{ with equality if and only if } \, v\in \cK_\xi \, ,
    \end{equation}
    where $\cK_\xi$ is the characteristic distribution of $\xi$.
\end{definition}
\begin{remark}
    \label{rmk:tamed_invariance_Kxi}
    For all $J_\xi$ complex structure on $\xi$, if $v\in \cK_{\xi}$ or $J_\xi v\in \cK_{\xi}$ then necessarily $\CpS_\xi(v,J_\xi v) = 0$.
    Then, the condition above just means that $\CpS_\xi(v,J_\xi v) = 0$ happens for the smallest set of vectors possible, i.e.\ $\cK_\xi$.
    In particular, \eqref{eqn:strictly_tamed} implies 
 that \emph{$\cK_\xi$ is $J_\xi$ invariant}.
\end{remark}
\begin{remark}
    \label{rmk:strictly_tamed_contractible_possibly_empty}
    Note that in general, the space of tamed complex structures can be empty.
    For instance (forgetting for an instant the compatibility of all involved (co-)orientations that we assume implicitly since the footnote right before \Cref{def:char_distr}), this must be the case if $\xi$ is a hyperplane distribution on a connected manifold which is a positive contact structure on some open subset, and a negative contact structure on another open subset.
    Another explicit example, where orientations are all compatible, will be given below in \Cref{exa:non-tame_confoliation}; there, the absence of tamed complex structures on $\xi$ is due to how $\cK_\xi$ changes near a point of locally-non-constant rank.
\end{remark}
Requiring the existence of a tamed almost complex structures leads to the following notion, which is more restrictive than that proposed in \cite{ET_confoliations}\footnote{While Eliashberg-Thurston simply refer to the objects they defined as confoliations, we will always refer to our notion as ``tame confoliation'' in this paper, in order to avoid confusion with \Cref{def:smooth_confoliation}, that doesn't rely on any additional geometric data.}.
\begin{definition}\label{def:tameconfo}
    A confoliation $\xi$ is \textbf{tame} if there exists an almost complex structure $J_\xi$ on $\xi$ tamed by $\CpS_{\xi}$. 
\end{definition}
\begin{remark}\label{rmk:tameimpliesconfo}
    Admitting a tamed $J_\xi$ implies that $\xi$ is a confoliation; in fact, this is true already for a $J_\xi$ such as in the definition of \cite{ET_confoliations}, and does not rely on the fact that the characteristic foliation is preserved. 
    Indeed, at any point where $\alpha\wedge (d\alpha)^n$ is not zero, one can use an oriented complex framing of $\xi$ to deduce that its sign must be positive.
\end{remark}
Tamed almost complex structures are adapted to the ``non-integrable" directions of $\xi$, forbidding the existence of negative contact ones. Such an almost complex structure also captures the characteristic distribution (see Remark \ref{rmk:tamed_invariance_Kxi}), and thus the geometry of the confoliation. This will be key in order to have more control when using pseudo-holomorphic curves methods.
\begin{example*}
    Let $(M,\eta)$ be a contact manifold, and $(X,J_X)$ be an almost complex one.
    Consider the hyperplane field $\xi = {\pi_M}^* \eta$ on $M\times X$, where $\pi_M\colon M\times X \to M$ is the natural projection.
    For any choice $J_\eta$ of almost complex structure on $\eta$ tamed by $\CS_\eta$, $J_\xi = J_\eta \oplus J_X$ is tamed by $\CpS_\xi = \pi^*_M \CS_\eta$. While it is easy to adapt this product example using cutoff functions to construct, say on $\R^{2n+1}$, further examples which are of non-constant-rank, we do not go into the details as we will give later on, specifically in \Cref{sec:construction}, some more interesting examples on ambient closed manifolds of such objects (and in fact of symplectic confoliations, as defined below in \Cref{def:sympl_confoliation}).
\end{example*}
\medskip
Finally, let us also mention an important difference between three dimensions and the higher-dimensional picture. 
In dimension 3, any hyperplane field inherits a homotopically unique conformal symplectic structure on it. 
This is not the case for hyperplane fields in general, and thus symplectic fillability questions might have to involve fixing additional symplectic data on the hyperplane field. 
We will do this in the next section, but before, we first introduce the following auxiliary notion, which has already appeared in \cite{MNW} in the case of contact structures $\xi$, even if not explicitly given a name:
\begin{definition}
    \label{def:symplectically_compatible}
    Let $\xi$ be a hyperplane distribution on an odd-dimensional manifold $M$, and $\CpS_\xi$ its conformal partial-symplectic class. 
    An alternating two-form $\mu$ on $\xi$ is called \textbf{symplectically compatible} with $\CpS_\xi$ if, at each point $p\in M$, the ray of alternating two-forms $\mu_p + \CpS_\xi(p)$ on $\xi_p$ consists of non-degenerate alternating two-forms.
\end{definition}
More explicitly, the requirement is that $\mu+t\d\alpha$ is a non-degenerate two-form on $\xi$ for all $t>0$.
Note also that this is equivalent to asking that the cone of alternating two-forms $[\mu_p] + \CpS_\xi(p)$ on $\xi_p$ consists of non-degenerate alternating two-forms, where $[\mu_p]$ is the conformal class of $\mu_p$.
 We also point out that in general the restriction of ${\mu}$ to $\cK_\xi$ need \emph{not} be non-degenerate, as the following local example shows. 
\begin{example}
    \label{exa:sympl_compat_not_non-deg_on_K}
    On $\R^5$ with coordinates $(z,x_1,y_1,x_2,y_2)$, consider $\alpha = \d z + x_1 \d x_2$, $\xi=\ker\alpha$, and $\mu=\d x_1 \wedge \d y_1 + \d x_2\wedge \d y_2$.
    Then, for all $t> 0$, $\alpha \wedge (\mu + t\d\alpha)^2 = 2\d z \wedge \d x_1 \wedge \d y_1 \wedge \d x_2 \wedge \d y_2 $, that is non-degenerate.
    However, $\cK_\xi=\langle \partial y_1,\partial y_2\rangle$, and $\mu\vert_{\cK_\xi}$ vanishes.
\end{example}

This being said, we have the following nice situation where non-degeneracy is guaranteed.

\begin{remark}
\label{rmk:non-deg_on_K_if_tame}
In the case of a tame confoliation $(\xi,J_\xi)$, if there is a two-form $\mu$ that is non-degenerate on $\xi$ and also tamed by $J_{\xi}$, then necessarily $\mu$ is symplectically compatible with $\CpS_\xi$, as required in \Cref{def:symplectically_compatible}. 
\end{remark}

This leads to the following definition.
\begin{definition}
\label{def:cotamed_complex_structure}
    Let $\xi$ be a confoliation, and $\omega$ a non-degenerate two-form on $\xi$.
    A complex structure $J_{\xi}$ in $\xi$ is \textbf{cotamed} by $\CpS_\xi$ and $\omega$ if it is tamed by $\CpS_{\xi}$ in the sense of \Cref{def:tamed} and by $\omega$ in the symplectic sense.
\end{definition}
These cotamed almost complex structures will play an important role later.

\bigskip

Ideally, we would like to say that for a hyperplane field, admitting a non-degenerate two-form symplectically compatible with $\CpS_{\xi}$,
and admitting a tamed complex structure is equivalent. This is the case in the context of contact structures \cite{MNW}.
However, this seems to be far from obvious in general, due to the fact that $\cK_\xi$ might have a complicated locus of non-constant rank.
Some things are still true, however, and we now explain in more detail what can be said.

First, one can always go from a tamed complex structure to a compatible two-form:
\begin{prop}
    \label{prop:from_alm_cplx_to_sympl}
    Let $\xi$ be a hyperplane field, and $J_\xi$ a complex structure tamed by $\CpS_\xi$. 
    Then, there is a non-degenerate alternating two-form on $\xi$ that is symplectically compatible with $\CpS_\xi$ and is compatible with $J_\xi$ (and hence in particular non-degenerate on $\cK_\xi$).
\end{prop}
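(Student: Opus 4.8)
The plan is to exploit the fact that this is the ``easy'' of the two directions discussed just before the statement: taming two-forms constitute an open convex cone that is preserved under adding any pointwise non-negative form, and the taming hypothesis \eqref{eqn:strictly_tamed} says precisely that $\d\alpha\vert_\xi$ is such a form, vanishing only along $\cK_\xi$. First I would produce, by standard symplectic linear algebra applied fiberwise to the vector bundle $\xi \to M$, a non-degenerate two-form $\omega$ compatible with $J_\xi$; then I would check that $\omega + t\,\d\alpha\vert_\xi$ tames $J_\xi$ for every $t \ge 0$, which forces it to be positively non-degenerate and thereby gives symplectic compatibility with $\CpS_\xi$.

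For the construction of $\omega$, I would choose any smooth bundle metric $g_0$ on $\xi$ (which exists by a partition-of-unity argument) and set $g(v,w) := g_0(v,w) + g_0(J_\xi v, J_\xi w)$. This $g$ is smooth, symmetric, positive definite, and $J_\xi$-invariant by construction. Defining $\omega(v,w) := g(J_\xi v, w)$ then yields, via the usual computation using $J_\xi^2 = -\Id$ and the $J_\xi$-invariance of $g$, a smooth alternating two-form on $\xi$ with $\omega(J_\xi v, J_\xi w) = \omega(v,w)$ and $\omega(v, J_\xi v) = g(v,v) > 0$ for $v \neq 0$; that is, $\omega$ is compatible with $J_\xi$ and tames it strictly.

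The key step is then the elementary observation that, for every $t \ge 0$ and every nonzero $v \in \xi$,
\[
(\omega + t\,\d\alpha\vert_\xi)(v, J_\xi v) = \omega(v, J_\xi v) + t\,\d\alpha\vert_\xi(v, J_\xi v) \ge \omega(v, J_\xi v) > 0 \, ,
\]
where I used $\d\alpha\vert_\xi(v, J_\xi v) \ge 0$ from \eqref{eqn:strictly_tamed}. Hence $\omega + t\,\d\alpha\vert_\xi$ tames $J_\xi$, and a form taming an almost complex structure is automatically non-degenerate (a kernel vector $v$ would in particular make it vanish on the pair $(v, J_\xi v)$) and induces the complex orientation determined by $J_\xi$. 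Rescaling, $f\omega + g\,\d\alpha\vert_\xi = f\bigl(\omega + (g/f)\,\d\alpha\vert_\xi\bigr)$ is positively non-degenerate for all positive functions $f,g$, which is exactly symplectic compatibility of $\omega$ with $\CpS_\xi$ as in \Cref{def:symplectically_compatible}. Finally, since $\cK_\xi$ is $J_\xi$-invariant by \Cref{rmk:tamed_invariance_Kxi} and $\omega$ tames $J_\xi$, the restriction $\omega\vert_{\cK_\xi}$ is non-degenerate, because $\omega(v, J_\xi v) > 0$ holds for $v \in \cK_\xi$ as well.

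I do not expect a serious obstacle here: unlike the converse direction (recovering a taming $J_\xi$ from a symplectically compatible $\mu$), which is genuinely delicate precisely because $\cK_\xi$ may fail to have constant rank, this direction is purely pointwise and the non-constant rank of $\cK_\xi$ never enters. The one point to keep in mind is that it is \emph{taming}, rather than mere non-degeneracy, that survives the addition of $t\,\d\alpha\vert_\xi$: since $\d\alpha\vert_\xi$ is only weakly positive, taming is the correct condition to carry through the argument, as it is stable under adding non-negative forms whereas plain non-degeneracy is not.
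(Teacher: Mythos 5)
Your proof is correct, and the first half coincides with the paper's: the paper also antisymmetrizes an auxiliary bundle metric to get a $J_\xi$-compatible non-degenerate two-form $\mu_g$ (your $J_\xi$-invariant averaging of $g_0$ produces the same object). Where you diverge is in how symplectic compatibility is established. The paper forms $\mu_C=\mu_g+C\,\d\alpha$, expands $\mu_C^n\vert_\xi$ at each point in terms of the local order $k$, and invokes compactness of $M$ to find a $C_0$ beyond which $\mu_C$ is non-degenerate, then uses $\mu_C+t\,\d\alpha=\mu_{C+t}$ to conclude. You instead observe that taming is stable under adding the pointwise non-negative form $\d\alpha(\cdot,J_\xi\cdot)$ and that a taming form is automatically non-degenerate, so $\omega+t\,\d\alpha\vert_\xi$ is non-degenerate for \emph{all} $t\geq 0$ with no threshold $C_0$ and no compactness assumption on $M$ (which the proposition does not actually hypothesize, so your route is slightly more general as well as shorter). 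A further small advantage of your version: the form you exhibit is $\omega$ itself, which is genuinely $J_\xi$-compatible, whereas the form $\mu_{C_0}$ the paper ends with is a priori only tamed by $J_\xi$ (since $\d\alpha\vert_\xi$ need not be $J_\xi$-invariant), so your argument matches the compatibility clause of the statement more directly. What the paper's leading-order computation buys is an explicit picture of how $\d\alpha^k$ and $\mu_g^{n-k}$ interact along $\cK_\xi$, which echoes computations used elsewhere in the paper (e.g.\ in \Cref{lem:deformation_lemma}), but it is not needed for this proposition.
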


\begin{proof}
    Consider an auxiliary metric $g$ on $\xi$, and define an alternating two-form $\mu_g$ on $\xi$ by the identity
    \[
    \mu_g = \frac{1}{2}(g(J_\xi\cdot , \cdot) - g(\cdot , J_\xi \cdot)) \,
    \]
    As $\mu_g(\cdot , J_\xi \cdot) = \frac{1}{2}(g + J_\xi^*g)$, the two-form $\mu_g$ is obviously non-degenerate on $\xi$ and is compatible with $J_\xi$.
    What's more, as $\cK_\xi$ is $J_\xi$-stable, this identity also shows that the restriction of $\mu_g$ to $\cK_\xi$ is also non-degenerate.
    
    For any $C>0$ define $\mu_C = \mu_g + C \d\alpha$.
    We claim that there is $C_0>0$ such that, for every $C>C_0$, $\mu_C$ is non-degenerate on $\xi$.
    This simply follows from the fact that at every point $p\in M$ we have
    \[
    \mu_C^n\vert_\xi = \binom{n}{k} C^k \d\alpha^k\vert_\xi \wedge \mu_g^{n-k} \vert_\xi + O(C^{k-1}) \, ,
    \]
    where $k$ is the order of $\xi$ at the point $p$, i.e.\ $k=\frac{1}{2}[\dim(M)-1-\rk((\cK_\eta)_p)]$.
    Note that $\mu_C^n$ depends smoothly in $C$, regardless of $k$ just being lower semi-continuous in the point $p\in M$.
    By compactness of $M$ and the fact that $\mu_g$ is non-degenerate on $\cK_\xi$, the claim then follows.

    Lastly, notice that for a given $\alpha$ defining one-form for $\xi$ and $t\geq 0$ we have the identity $\mu_C + t \d\alpha = \mu_{C+t}$. 
    In particular, it follows from the above claim that $\mu_{C_0}$ is symplectically compatible with $\d\alpha$.
    This concludes the proof.
\end{proof}

{
The following explicit example, due to the anonymous referee, shows that 
the converse \emph{does not hold} in full generality.

\begin{example}
    \label{exa:non-tame_confoliation}
    Consider an auxiliary smooth function $f\colon \R\to \R$ that vanishes on $\R_{\leq 0}$ and is strictly positive on $\R_{>0}$, and let $F\colon \R\to \R$ be given by $F(r)=\int_0^r f(s)\d s$.
    We then consider, on the ambient manifold $M=\R^5$ with coordinates $(z,x_1,y_1,x_2,y_2)$, the $1$-form $\alpha=\d z + \lambda$, where $\lambda = F(x_1)\d x_2 + F(x_2)\d x_3 + F(x_3)\d x_4$.
    Note that $\d\alpha = f(x_1)\d x_1 \wedge \d x_2 + f(x_2)\d x_2\wedge \d x_3 + f(x_3)\d x_3 \wedge \d x_4$. 
    Then, the $2$-form $\mu= \d x_1 \wedge \d x_2 + \d x_3 \wedge \d x_4$ is symplectically compatible with $\CpS_\xi$: indeed, for all $t\geq 0$,
    \[
        \alpha \wedge (\mu + t\d \alpha)^2 = 2[1+t(f(x_1)+f(x_3))+t^2f(x_1)f(x_3)] \, \d z \wedge \d x_1 \wedge  \d x_2 \wedge\d x_3 \wedge\d x_4 >0 \, .
    \]
    We now claim that there is no complex structure $J_\xi$ on $\xi$ that preserves $\cK_\xi$.
    Indeed, $\cK_\xi$ is given by $\langle \partial_{x_1},\partial_{x_2}\rangle$ at each point 
    in $X:=\{x_3>0, x_1=x_2=0\}$, and by $\langle \partial_{x_1},\partial_{x_4}\rangle$ at each 
    point in $X':=\{x_2>0, x_1=x_3=0\}$. 
    As there are points in $X$ and $X'$ accumulating onto the origin of $\R^5$, if there was a $J_\xi$ preserving $\cK_\xi$ at all points, the complex structure $J_\xi\vert_0 \colon \xi_0\to \xi_0$ induced on $\xi_0=\langle\partial_{x_1},\partial_{x_2},\partial_{x_3},\partial_{x_4}\rangle$ at the origin, would have to send $\partial_{x_1}$ to $\langle \partial_{x_1},\partial_{x_2}\rangle \cap \langle \partial_{x_1},\partial_{x_4}\rangle=\langle \partial_{x_1}\rangle$, which is impossible.

    We also point out that this explicit example can easily be adapted to be on an ambient closed manifold, for instance $M=S^1\times T^{4}$ with coordinates again $z\in S^1$ and $(x_1,\ldots,x_4)\in T^4$
\end{example}

\begin{remark}
The converse to \Cref{prop:from_alm_cplx_to_sympl} does hold for c-symplectic confoliations $(\xi,\CpS_{\xi,\omega})$ where $\xi$ is a contact structure, as shown in \cite[Theorem 2.4]{MNW}.
\end{remark}

}

The behavior exploited in the example above, and which makes the converse to \Cref{prop:from_alm_cplx_to_sympl} to not hold, is the following: for sequences $p_n$ of points in $M$ converging to a certain $p$ such that $\rk (\cK_\xi)_{p_n}$ is not equal to $\rk (\cK_\xi)_{p}$ for infinitely many $n$'s, the sequence $(\cK_\xi)_{p_n}$ might in general accumulate on several different subspaces of $(\cK_\xi)_{p}$.
This being said, a converse to \Cref{prop:from_alm_cplx_to_sympl} does hold under the hypothesis that one can find a $J_\xi$ as desired near the discontinuity points of the order function $k(p)=\frac{1}{2}[\dim(M)-1-\rk((\cK_\eta)_p)]$ of $\xi$, i.e.\ of $\rk\cK_\xi$:
\begin{lemma}
    \label{lem:from_sympl_to_alm_cplx}
    Let $\xi$ be a hyperplane field on $M$, and $\mu$ a non-degenerate alternating two-form on $\xi$ that is symplectically compatible with $\CpS_\xi$.
    Assume that there is a neighborhood $U$ of the discontinuity points of $\rk\cK_\xi$  and a complex structure $J_U$ on $\xi\vert_U$ that is tamed by $\CpS_\xi\vert_U$.
    Then, for any open neighborhood $O$ of the discontinuity points of $\rk\cK_\xi$ that is compactly contained in $U$, there is $J_\xi$ on $\xi$ {extending $J_U\vert_O$ on $O$ and} tamed by $\CpS_\xi$, i.e.\ $\xi$ is a tame confoliation.

    Moreover, if $J_U$ is also tamed by $\mu\vert_U$, the $J_\xi$ can also be arranged to be tamed by $\mu$.
\end{lemma}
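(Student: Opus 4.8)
The plan is to cover $M$ by the given open set $U\supseteq D$, where $D$ denotes the (closed, nowhere dense) discontinuity locus of $\rk\cK_\xi$, together with the open set $R:=M\setminus D$ of regular points. Over $R$ the distribution $\cK_\xi$ is a genuine subbundle of $\xi$, so a tamed (resp.\ cotamed) complex structure can be built by hand there; the pathological set $D$, where the $(\cK_\xi)_{p_n}$ may accumulate onto proper subspaces of their limit, is exactly the obstruction that the hypothesis removes by handing us $J_U$. Since $U\cup R=M$, the whole argument reduces to constructing $J$ over $R$ and gluing it to $J_U$ along the overlap $U\cap R\subseteq R$.

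For the construction over $R$: since $\mu$ is non-degenerate on $\cK_\xi$, the subbundle $\cK_\xi\subset\xi$ is symplectic for $\mu$ and we obtain a smooth splitting $\xi|_R=\cK_\xi\oplus C$ with $C:=\cK_\xi^{\perp_\mu}$. Because $\d\alpha$ vanishes whenever one of its arguments lies in $\cK_\xi=\ker(\d\alpha|_\xi)$, the subspace $C$ is simultaneously the $(\mu+t\,\d\alpha)$-orthogonal complement of $\cK_\xi$ for every $t$; hence $\cK_\xi$ is symplectic for each $\mu+t\,\d\alpha$, and its complement $C$ inherits the non-degenerate forms $(\mu+t\,\d\alpha)|_C=\mu|_C+t\,\d\alpha|_C$ for all $t\ge 0$, with $\d\alpha|_C$ itself symplectic. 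Set $J_R:=J_{\cK}\oplus J_C$ with $J_{\cK}$ any $\mu|_{\cK_\xi}$-compatible complex structure. For $v=v_{\cK}+v_C$ the vanishing of $\d\alpha$ on $\cK_\xi$-directions gives $\d\alpha(v,J_Rv)=\d\alpha|_C(v_C,J_Cv_C)$; choosing $J_C$ compatible with $\d\alpha|_C$ makes this $\ge 0$ with equality exactly when $v_C=0$, i.e.\ when $v\in\cK_\xi$, so $J_R$ is tamed by $\CpS_\xi$. For the moreover part, $(C,\d\alpha|_C,\mu|_C)$ is precisely a linear weak-filling datum, so the contact-case construction of \cite{MNW} produces a $J_C$ tamed simultaneously by $\d\alpha|_C$ and $\mu|_C$; since $C=\cK_\xi^{\perp_\mu}$ also kills the mixed $\mu$-terms, the resulting $J_R$ is cotamed by $\CpS_\xi$ and $\mu$.

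For the gluing: over $R$ the fibers of the bundle $\cJ$ of $\CpS_\xi$-tamed (resp.\ cotamed) complex structures are non-empty, by the previous step, and contractible, by \Cref{lem:strictly_cotamed_contractible} (applied with an auxiliary taming for the first statement). Choosing nested neighborhoods $D\subset U_1\Subset U_2\Subset U$, the closed collar $\overline{U_2}\setminus U_1$ lies in $R$ and carries the section $J_U$; by the standard extension property of sections of a fibration with contractible fibers, $J_U|_{\overline{U_2}\setminus U_1}$ extends to a section of $\cJ$ over the closed set $M\setminus U_1\subseteq R$, agreeing with $J_U$ on a neighborhood of $\partial U_1$. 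Gluing this extension to $J_U$ on $U_1$ yields a global $J_\xi$ that is tamed (resp.\ cotamed) everywhere.

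The main, and essentially the only, obstacle is the set $D$ itself: without control there one cannot produce a tamed $J$, precisely because the order of $\xi$ jumps and $\cK_\xi$ accumulates wildly, which is why the hypothesis on $U$ is imposed and why the statement becomes almost formal once $J_U$ is granted. In the constant-rank regime over $R$ the most delicate point to verify carefully is the identity $\cK_\xi^{\perp_\mu}=\cK_\xi^{\perp_{\mu+t\,\d\alpha}}$, as it is exactly what lets the \cite{MNW} contact-case construction be imported verbatim on the complement $C$.
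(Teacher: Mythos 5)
Your proposal is correct and follows essentially the same route as the paper's proof: split $\xi$ over the constant-rank region as $\cK_\xi\oplus(\cK_\xi)^{\perp_\mu}$, take a split complex structure (tamed by $\mu$ on $\cK_\xi$ and by $\d\alpha$, or cotamed via \cite{MNW}, on the complement), and interpolate with $J_U$ using the contractibility from \Cref{lem:strictly_cotamed_contractible}. Your write-up is in fact more detailed than the paper's sketch, in particular in verifying that the $\mu$-orthogonal complement is simultaneously the $(\mu+t\,\d\alpha)$-orthogonal complement and in spelling out the gluing over nested neighborhoods.
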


\begin{proof}
    The function $\rk \cK_\xi$ is constant on each connected component $C$ of {$M\setminus O$}; in other words, $\cK_\xi$ is a \emph{honest} foliation on each $C$.
    Moreover, over each such $C$, we have a constant-rank splitting $\xi\vert_C = \cK_\xi \oplus V_C$, where $V_C$ is the $\mu$-orthogonal to $\cK_\xi$.
    We can then define $J_C$ as the direct sum of two complex structures: one on $V_C$ tamed by $\CpS_\xi\vert_{V_C}$, and one on $\cK_\xi$ tamed by $\mu$ (which is non-degenerate on $\cK_\xi$ by assumption). 
    Using \Cref{prop:space_cotaming_Js_contractible}, one can then interpolate between $J_U\vert_O$ and the $J_C$'s (with an interpolation that is compactly supported in $U\setminus \overline{O}$) in order to find $J_\xi$ satisfying all the desired properties.
\end{proof}

A natural question is then that of finding a characterization, or at least a reasonable sufficient assumption, for the presence of a $J_U$ as in the second part of \Cref{lem:from_sympl_to_alm_cplx}, i.e. an almost complex structure in a neighborhood of the set of discontinuity points of $\rk \mathcal{K}_\xi$ tamed by $\CpS_{\xi}$.
The following is, for instance, (almost trivially) a sufficient condition:
\begin{lemma}
    \label{lem:sufficient_condition_J_near_discontinuity_points}
    Let $\xi$ be a hyperplane field on $M$ and $\mu$ be a non-degenerate alternating two-form on $\xi$.
    Consider also a point $p\in M$ of discontinuity for the function $\rk(\cK_\xi)$. 
    Assume that there is a neighborhood $U$ of $p$ and a splitting $\xi= E_1\oplus\ldots\oplus E_N$, where the $E_i$'s are \emph{constant-rank} distributions over $U$ equipped with conformal symplectic structures $\CS_{i}$ such that:
    \begin{itemize}
        \item[-] the $E_i$'s are pairwise $\CpS_\xi$- and $\mu$-orthogonal;
        \item[-] for every $q\in U$, $(\cK_\xi)_q$ is sum of some of the $(E_i)_q$'s;
        \item[-]\label{item:CS_on_Ei} if $p_n \to p$ in $M$ and $(E_i)_{p_n}$ is not a subspace of $(\cK_\xi)_{p_n}$, then $\CpS_\xi\vert_{(E_i)_{p_n}}$ converges to $(\CS_i)_p$ on $(E_i)_p$;
        \item[-] $\mu$ is symplectically compatible with $\CS_i$ on the $E_i$'s.
    \end{itemize}
    Then, there is a complex structure $J_U$ on $\xi\vert_U$ tamed by $\CpS_\xi\vert_U$, i.e.\ $\xi\vert_U$ is a confoliation, and tamed by $\mu$.
\end{lemma}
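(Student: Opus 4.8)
The plan is to build $J_U$ as a direct sum of complex structures on the summands $E_i$, exploiting the two orthogonality hypotheses to reduce both taming conditions to statements on each individual $E_i$. First I would, on each constant-rank summand $E_i$, produce a complex structure $J_i$ on $E_i\vert_U$ that is simultaneously tamed by the conformal symplectic class $\CS_i$ and by $\mu\vert_{E_i}$. Since $\CS_i$ is a genuine conformal symplectic structure, i.e.\ non-degenerate on the even-rank bundle $E_i$, and $\mu\vert_{E_i}$ is symplectically compatible with it by hypothesis, this is exactly the constant-rank situation in which a complex structure tamed by both a conformal symplectic class and a symplectically compatible two-form is known to exist, as in the contact case treated in \cite{MNW} (cf.\ the discussion preceding \Cref{prop:from_alm_cplx_to_sympl}); each $J_i$ is smooth since $E_i$ is a smooth subbundle of constant rank. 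I then set $J_U=\bigoplus_i J_i$, which is a smooth complex structure on $\xi\vert_U$ because the splitting $\xi=E_1\oplus\dots\oplus E_N$ is smooth and each $J_i$ preserves $E_i$.

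Taming by $\mu$ is then immediate: writing $v=\sum_i v_i$ with $v_i\in E_i$, the $\mu$-orthogonality of the summands gives $\mu(v,J_U v)=\sum_i \mu(v_i,J_i v_i)$, and each term is positive for $v_i\neq 0$ since $J_i$ is tamed by $\mu\vert_{E_i}$; hence $\mu(v,J_U v)>0$ whenever $v\neq 0$. For the taming by $\CpS_\xi$ I would argue pointwise, using $\CpS_\xi$-orthogonality to again split $\CpS_\xi(v,J_U v)=\sum_i \CpS_\xi(v_i,J_i v_i)$. At a point $q\in U$, the hypothesis that $(\cK_\xi)_q$ is a sum of some of the $(E_i)_q$ means, by directness of the splitting, that each summand is either entirely contained in $(\cK_\xi)_q$ or meets it only at $0$; in the first case $\CpS_\xi\vert_{(E_i)_q}=0$ and the corresponding term vanishes, while in the second case $(E_i)_q\cap(\cK_\xi)_q=0$, so $\CpS_\xi\vert_{(E_i)_q}$ is non-degenerate.

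The remaining point, which is the only real content, is to check that on the summands with $(E_i)_q\not\subset(\cK_\xi)_q$ the chosen $J_i$ actually tames $\CpS_\xi\vert_{(E_i)_q}$, and not merely $\CS_i$. Here I would invoke openness of the taming condition together with the convergence hypothesis: at $p$ the structure $J_i(p)$ tames $(\CS_i)_p$, an open condition in the pair (complex structure, two-form), and by assumption $\CpS_\xi\vert_{(E_i)_q}\to(\CS_i)_p$ as $q\to p$ among the points where $(E_i)_q\not\subset(\cK_\xi)_q$, while $J_i(q)\to J_i(p)$ by continuity; a contradiction argument then shows that, after shrinking $U$ around $p$, $J_i(q)$ tames $\CpS_\xi\vert_{(E_i)_q}$ at every such $q$. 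Granting this, $\CpS_\xi(v,J_U v)=\sum_{i:\,(E_i)_q\not\subset(\cK_\xi)_q}\CpS_\xi(v_i,J_i v_i)\geq 0$, with equality exactly when all these $v_i$ vanish, i.e.\ when $v\in\bigoplus_{(E_i)_q\subset(\cK_\xi)_q}(E_i)_q=(\cK_\xi)_q$. This is precisely \eqref{eqn:strictly_tamed}, so $J_U$ is tamed by $\CpS_\xi\vert_U$ and by $\mu$, and in particular $\xi\vert_U$ is a confoliation. I expect the shrinking-of-$U$ step governed by the convergence hypothesis to be the main subtlety, since it is what forces the taming to persist uniformly on a neighborhood of the discontinuity point rather than only at $p$ itself.
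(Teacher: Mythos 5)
Your proposal is correct and follows essentially the same route as the paper: construct on each constant-rank summand $E_i$ a complex structure $J_i$ cotamed by $\CS_i$ and $\mu\vert_{E_i}$ (via the linear-algebra results of \cite[Appendix A]{MNW}), take $J_U=\bigoplus_i J_i$, and use the two orthogonality hypotheses to split both taming conditions summand by summand. The paper leaves the final verification as "not hard to check explicitly"; your elaboration of it — in particular isolating the role of the convergence hypothesis together with openness of taming to pass from taming $\CS_i$ to taming $\CpS_\xi\vert_{E_i}$ on the summands not contained in $\cK_\xi$ — is the right way to fill that in.
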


\begin{remark}
    \label{rmk:char_distr_is_closed}
    The characteristic distribution $\cK_\xi$ is ``closed'', in the sense that if $p_n \to p$ in $M$ and $v_n\in (\cK_\xi)_{p_n}$ is of norm $1$, then the sequence of the $v_n$'s has an accumulation point that (is a vector over $p$ and) lies inside $(\cK_\xi)_{p}$.
    The splitting assumption above is morally just an additional control on how this accumulation behavior occurs at discontinuity points of $\rk \cK_\xi$.
    In particular, \Cref{item:CS_on_Ei} implies that, even if for any defining one-form $\alpha$ for $\xi$ the differential $\d\alpha\vert_{(E_i)_{p_n}}$ goes to $0$, whenever $(E_i)_p\subset \cK_\xi$ then $\d\alpha\vert_{(E_i)_{p_n}}$ has a well-defined limit as a conformal symplectic structure, i.e.\ there is a sequence of constants $c_n>0$ such that $c_n\d\alpha\vert_{(E_i)_{p_n}}$ converges to a well defined alternating two-form on $(E_i)_p$.
\end{remark}

\begin{proof}
    According to the results about almost complex structures cotaming non-degenerate alternating two-forms on vector spaces in \cite[Appendix A]{MNW}, one can find complex structures $J_i$'s on each of the $E_i$ which are tamed by both $\mu\vert_{E_i}$ and $\CS_i$ at the same time.

    By the properties of the splitting, it is not hard to check explicitly that $J_U := J_1\oplus \ldots \oplus J_N$ is tamed by $\CpS_\xi\vert_U$ and tamed by $\mu$.
    This concludes the proof.
\end{proof}


\subsection{Symplectic data on confoliations}
\label{sec:symplectic_confoliations}

We proceed to define several kinds of symplectic data that one can equip confoliations with.

\medskip

The first and weakest notion is motivated by the fact that we want to look at confoliations on hypersurfaces in symplectic manifolds, and we need to ensure some additional compatibility between the two structures.
This can be formalized using the notion of ``c-symplectic structures'', defined as follows.

\begin{definition}
    \label{def:c-sympl_class}
    Let $\xi=\ker\alpha$ be a hyperplane distribution on $M$, and $\mu$ an alternating two-form on $\xi$ which is symplectically compatible with $\CpS_\xi$.
    We call \textbf{symplectic cone directed by $[\mu]$ and $\CpS_\xi$}, and denote it by $\CS_{\xi,\mu}$, the open cone of non-degenerate alternating two-forms of the form $f\,\mu + g\, \d\alpha|_{\xi}$, for any $f,g\colon M\to \R_{>0}$.
    We will also say that $\CS_{\xi,\mu}$ is a \textit{$c$-symplectic structure} on $\xi$.
\end{definition}

\begin{definition}
    \label{def:cotamed_sympl_cone}
    Let $\xi$ be a hyperplane distribution. A symplectic cone $\CS_{\xi,\mu}$ is \textbf{tame} if for any element $\omega= f \mu +g \d\alpha|_{\xi}$ of the cone, there is an almost complex structure $J_\xi$ on $\xi$ cotamed by $\CpS_\xi$ and $\omega$.
\end{definition}
Note that the existence of a tame symplectic cone implies that $\xi$ is, in particular, a tame confoliation.
\begin{remark}
\label{rmk:def_tame_sympl_cone}
In practice, a way to show that a symplectic cone is tame is to find a complex structure $J$ on $\xi$ that is at the same time tamed by $\CpS_{\xi}$ and also by the partial symplectic conformal class $[\mu]$, in the sense that $\mu(v,Jv)|_p \geq 0$ with equality exactly if $v\in \ker \mu|_p$. 
\end{remark}

 \begin{remark}
 \label{rmk:def_tame_sympl_cone_bis}
 Let $\CS_{\xi,\mu}$ be a tame symplectic cone on $\xi$.
 If $\omega\in \CS_{\xi,\mu}$, by \Cref{def:cotamed_sympl_cone} there is a complex structure $J$ on $\xi$ cotamed by $\CpS_\xi$ and $\omega$. 
 Then not only the symplectic sub-cone $\CS_{\xi,\omega}\subset \CS_{\xi,\mu}$ is obviously tame just by the fact that $\CS_{\xi,\mu}$ is tame, but $J$ itself cotames $\xi$ and \emph{every} element $\omega'\in \CS_{\xi,\omega}$. We point out that, in our study of fillings below (where one can attach a piece of symplectization on top of the boundary), we will in a sense be interested at the ``behavior of the cone at infinity''; hence, in practice, up to passing to a sub-cone, when $\mu$ is non-degenerate one can simply think that the stronger assumption in \Cref{rmk:def_tame_sympl_cone} is satisfied, i.e.\ that there is a complex structure $J$ cotamed by $\CpS_\xi$ and $\mu$ directly.
 \end{remark}

\begin{remark}
    \label{rmk:strictly_cotamed_contractible_possibly_empty}
    {Similarly to what was said in \Cref{rmk:strictly_tamed_contractible_possibly_empty}, the space of almost complex structures that are cotamed by a non-degenerate two-form and $\xi$ can potentially be empty: \Cref{exa:non-tame_confoliation} above, due to the anonymous referee, is an example. This being said, recall there is a sufficient condition for non-emptiness of this space in \Cref{lem:from_sympl_to_alm_cplx} above.}
\end{remark}

With these notions, we can define the first type of fiberwise symplectic data on a confoliation that we are interested in:
\begin{definition}
    \label{def:c-sympl_confoliation}
    A \textbf{cone almost-symplectic confoliation}, or \textbf{c-symplectic confoliation} in short\footnote{We name it cone almost-symplectic instead of cone-symplectic since the latter might suggest some integrability condition (as e.g.\ in the case when one speaks of a symplectic foliation, where the two-form is required to be closed when restricted to each leaf). 
    To ease notation, we drop the ``almost" and use the shorter c-symplectic throughout the rest of the paper, even though in general we make no closedness requirements; we hope this will not confuse the reader.}, is a pair $(\xi,\CS_{\xi,\mu})$ of a confoliation $\xi$ and a symplectic cone $\CS_{\xi,\mu}$ (as in Definition \ref{def:c-sympl_class}) on it 
    {such that $\CS_{\xi,\mu}\vert_{\cK_{\xi}}$ is non-degenerate}\footnote{
    {While this condition on non-degeneracy of $\CS_{\xi,\mu}\vert_{\cK_{\xi}}$ is strictly speaking not necessary for some of the properties that we will show in the following sections, it is necessary for some of them, and it automatically holds for the tame case by \Cref{rmk:non-deg_on_K_if_tame}. We hence include it in this definition directly.}
    }.
    Moreover, if $\CS_{\xi,\mu}$ is tame, the c-symplectic confoliation is called \textbf{tame}.
\end{definition}

{More explicitly, the condition that $\CS_{\xi,\mu}\vert_{\cK_{\xi}}$ is non-degenerate amounts to the fact that $\mu\vert_{\cK_\xi}$ is non-degenerate, as $\CpS_\xi$ vanishes on $\cK_\xi$ (by definition).}

\medskip

When fixing a specific symplectically compatible non-degenerate two-form on $\xi$, one can speak of almost-symplectic confoliation. 
Even though we will not need it explicitly in this paper, we point out that, in that case, one can then naturally add integrability conditions to this fiberwise symplectic data. 
For instance, we can speak of symplectic confoliation if the two-form restricted to each leaf of the characteristic foliation is closed (as it happens for symplectic foliations), or of strong symplectic confoliation if there is an additional global condition generalizing the definition of strong symplectic foliations, considered by many other authors, e.g.\ \cite{Mit18,Mar13,TorThesis,TouThesis,PreVen,Ven,MTdPP18,GirTou,GirTou24,GNT}.
In other words, we can give the following definition:
\begin{definition}
    \label{def:sympl_confoliation}
    A \textbf{symplectic confoliation} is the data $(\xi,\omega)$ of a hyperplane field $\xi$ and a two-form $\omega$ on $\xi$ such that:
    \begin{itemize}
        \item[-] $(\xi,\CS_{\xi,\omega})$ is a c-symplectic confoliation;
        \item[-] at every point $p\in M$ at which the characteristic foliation $\cK_\xi$ is regular, $\d(\omega\vert_{L_p})=0$ on $TL_p$, where $L_p$ is a local leaf of $\cK_\xi$ passing through $p$.
    \end{itemize}
    A symplectic confoliation $(\xi,\omega)$ is moreover called \textbf{strong} if there is a $2$-form $\Omega$ on $M$ which is closed (as form on $M$), symplectically compatible with $\omega$ and $\CpS_{\xi}$, and such that $\Omega\vert_{\cK_\xi}=\omega\vert_{\cK_\xi}$. 
    One defines similarly \textbf{tame symplectic} and \textbf{tame strong symplectic confoliations}.
\end{definition}
\noindent
The notion of strong symplectic confoliation can be understood as an extension not only of strong symplectic foliations (hence of taut $3$-dimensional foliations) but also that of taut $3$-dimensional confoliations defined in \cite{ET_confoliations}.
One could moreover call \textbf{taut} any high-dimensional confoliation $\xi$ for which there exists a closed codegree-one form that is non-vanishing along $\xi$; strong symplectic confoliations are then in particular taut.


\subsection{Symplectic fillability of confoliations with symplectic data}
\label{sec:sympl_fillability_confoliations}

We start by giving an analog of the notion of weak fillability for contact structures from \cite{MNW} in our confoliated setting. 
\begin{definition}
    \label{def:symplectic_filling_confoliation}
    A symplectic manifold $(W,\Omega)$ with non-empty boundary $M=\partial W$ is said to be a \textbf{symplectic semi-filling} of a confoliation $\xi$ on $M$ if the two-form $\omega=\Omega\vert_M$ is such that $(\xi, \CS_{\xi,\omega})$ defines a c-symplectic confoliation. 
    Whenever $\partial W=M$ is connected, the symplectic semi-filling will be simply called \textbf{symplectic filling}.
\end{definition}

{
\begin{remark}
\label{rmk:semi-filling_convention}
 Concerning the amount of boundary components that a filling is allowed to have, the literature seem to follow multiple conventions.
 Sometimes, fillings are required to have a single boundary component, and those with disconnected boundaries are called \emph{co-fillings}.
 Here we stick to the convention in the above definition.
\end{remark}
}

For our purposes, which is to give intrinsic statements depending only on the data at the boundary, we have to specify a priori some symplectic data on the confoliation at the boundary. 
We hence define the following more tractable notions from a symplectic viewpoint, which will be the ones used in the rest of the paper:
\begin{definition}
\label{def:symplectic_filling_sympl_confoliation}
    Let $M$ be equipped with a c-symplectic confoliation $(\xi,\CS_{\xi,\mu})$. A symplectic manifold $(W,\Omega)$ with non-empty connected\footnote{We stick here to the case of fillings for simplicity; completely analogous notions can be defined in the case of semi-fillings.} boundary $M=\partial W$, with $\omega=\Omega|_{TM}$, is:
    \begin{enumerate}
        \item a \textbf{weak symplectic filling} of $(\xi,\CS_{\xi,\mu})$ if $\omega|_{\cK_\xi}=f\mu|_{\cK_{\xi}}$ for some positive function $f$ and $[\omega]$ together with $\CS_{\xi,\mu}$ direct a symplectic cone in $\xi$;
        \item a \textbf{quasi-strong symplectic filling} of $(\xi,\CS_{\xi,\mu})$ if $\omega\vert_\xi\in\CS_{\xi,\mu}$.
    \end{enumerate}
    If we work instead with a strong symplectic confoliation $(\xi,\eta)$, we can then speak of a strong symplectic filling.
    \begin{enumerate}
        \item[3.] $(W,\Omega)$ is a \textbf{strong symplectic filling} of $(\xi,\eta)$ if $\omega=c\eta + d\alpha$ for some defining form $\alpha$ of $\xi$ and positive constant $c>0$.
    \end{enumerate}
    We will call it just a \textbf{symplectic filling} whenever its type among these three is clear from the context or does not need to be specified.
\end{definition}

\noindent
Each item in \Cref{def:symplectic_filling_confoliation} implies those that are above. For the sake of being explicit, we recall from \Cref{def:c-sympl_class} that $\Omega\vert_\xi\in \CS_{\xi,\mu}$ simply means that, for any auxiliary defining one-form $\alpha$ for $\xi$, there are functions $f,g\colon M\to\R_{>0}$ such that $\Omega\vert_\xi=f\mu + g\d\alpha$. 
We point out that if $\xi$ is a contact structure then a weak symplectic filling of $(\xi, \CS_{\xi,0})$ in the confoliated sense is the same as a weak symplectic filling in the contact sense \cite{MNW} if $\dim M\geq 5$. 
Similarly, a quasi-strong symplectic (or a strong symplectic filling) of $(\xi,\CS_{\xi,0})$ {(respectively of $(\xi,0))$} is a strong symplectic filling of $\xi$ in the contact sense if $\dim M\geq 5$; in the quasi-strong case, this follows from \cite[Lemma 2.1]{McD}. 
In dimension three, for $\xi$ a contact structure, strong fillability of {$(\xi, 0)$} (in the confoliated sense) is equivalent to strong fillability of $\xi$ in the contact sense, while weak symplectic and quasi-strong symplectic fillability of $(\xi, \CS_{\xi,0})$ (in the confoliated sense) both coincide with weak fillability of the 3D contact structure $\xi$.

\begin{example}
 Let $(M,\xi)$ be a contact manifold and $(W,\Omega)$ be a weak (respectively strong) symplectic filling of $M$. Let $(N,\omega)$ be another symplectic manifold. Then $(W\times N, \Omega + \omega)$ is a weak symplectic filling (respectively strong symplectic filling) of $\xi$ (understood as a confoliation in $M\times N$) equipped with $\CS_{\xi, \omega}$ {(respectively with $\omega$)}.
\end{example}

\medskip

As in the contact case \cite[Lemma 2.6]{MNW}, and with exactly the same proof (which works more generally in the framed Hamiltonian setting), one naturally has the following normal form near a c-symplectic confoliated boundary:
\begin{lemma}
    \label{lem:weak_sympl_fill_near_boundary}
    Let $(W,\Omega)$ be a weak symplectic filling of a c-symplectic confoliation $(\xi,\CS_{\xi,\mu})$ on $M=\partial W$.
    Then, for any auxiliary choice of defining one-form $\alpha$ for $\xi$, $M$ admits a collar neighborhood $(-\epsilon,0]\times M$ inside $W$ where 
    \[
    \Omega = \d(t\alpha) + \Omega\vert_M \, .
    \]
\end{lemma}

We also have the following analogue of \cite[Lemma 2.9]{MNW} in our confoliated setup, which will allow us to arrange the necessary almost complex structures in \Cref{lem:deformation_lemma} below.
 
\begin{lemma}
    \label{lem:cotamed_for_T_big_enough}
    Let $\xi$ be a tame confoliation on $M$, with $J_\xi$ a tamed complex structure, and $\Omega_M$ be a two-form on $M$ such that $\Omega_M\vert_{\cK_\xi}$ is non-degenerate and tames $J_\xi\vert_{\cK_\xi}$.
    Then, for any given defining $1$-form $\alpha$ for $\xi$, there is a large $T>0$ such that $J_\xi$ is tamed by $(\Omega_M+T\,\d\alpha)\vert_\xi$.
\end{lemma}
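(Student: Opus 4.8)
The plan is to prove \Cref{lem:cotamed_for_T_big_enough} by a compactness argument over the unit sphere bundle of $\xi$, reducing the taming inequality for $(\Omega_M + T\,\d\alpha)\vert_\xi$ to a pointwise estimate that holds for $T$ large. Recall that taming means $(\Omega_M + T\,\d\alpha)\vert_\xi(v, J_\xi v) > 0$ for every nonzero $v\in\xi$; equivalently, by bilinearity, that
\[
\Omega_M(v, J_\xi v) + T\,\d\alpha(v, J_\xi v) > 0
\]
for all $v$ of unit norm (with respect to some auxiliary metric $g$ on $\xi$, fixed once and for all). The key structural input is that $J_\xi$ is tamed by $\CpS_\xi$ in the sense of \Cref{def:tamed}, so that $\d\alpha\vert_\xi(v, J_\xi v)\geq 0$ with equality \emph{exactly} when $v\in\cK_\xi$. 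Thus the only directions where the coefficient of $T$ fails to help are those in the characteristic distribution $\cK_\xi$, and on those directions the hypothesis that $\Omega_M\vert_{\cK_\xi}$ tames $J_\xi\vert_{\cK_\xi}$ (which uses the $J_\xi$-invariance of $\cK_\xi$ recorded in \Cref{rmk:tamed_invariance_Kxi}) gives us a \emph{strictly positive} contribution from the $\Omega_M$ term.

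Concretely, first I would set $a(v) := \d\alpha(v, J_\xi v)$ and $b(v) := \Omega_M(v, J_\xi v)$, both continuous functions on the unit sphere bundle $S\xi := \{v\in\xi : g(v,v)=1\}$, which is compact since $M$ is closed. The function $a$ is nonnegative and vanishes precisely on $S\xi\cap\cK_\xi$, while $b$ is strictly positive on the closed set $S\xi\cap\cK_\xi$ by the taming hypothesis on $\cK_\xi$. Let $m := \min_{S\xi\cap\cK_\xi} b > 0$. By continuity of $b$, there is an open neighborhood $\cU$ of $S\xi\cap\cK_\xi$ in $S\xi$ on which $b > m/2 > 0$; then on all of $\cU$ the required inequality $b + T a > 0$ already holds for every $T\geq 0$, since $a\geq 0$ there. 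On the complement $S\xi\setminus\cU$, which is compact and disjoint from $\cK_\xi$, the function $a$ is strictly positive, so it attains a positive minimum $\delta := \min_{S\xi\setminus\cU} a > 0$; moreover $b$ is bounded below, say $b\geq -B$, on this same compact set. Choosing $T > B/\delta$ then forces $b + T a \geq -B + T\delta > 0$ on $S\xi\setminus\cU$ as well, which completes the argument by homogeneity.

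The one point that deserves care, and which I would flag as the main obstacle, is the interaction between the non-constant rank of $\cK_\xi$ and the continuity arguments: the characteristic distribution is only upper semi-continuous (see \Cref{lem:charact_distr_vs_kernel_distr} and \Cref{rmk:char_distr_is_closed}), so $S\xi\cap\cK_\xi$ is a closed but possibly singular subset of the sphere bundle, and one must check that the neighborhood $\cU$ can indeed be chosen so that $b$ stays positive on it. This is exactly where the ``closedness'' of $\cK_\xi$ from \Cref{rmk:char_distr_is_closed} is used: any sequence $v_n\in S\xi$ with $a(v_n)\to 0$ has a subsequence converging to some $v_\infty\in S\xi\cap\cK_\xi$ (since $a$ is continuous and vanishes only on $\cK_\xi$, and $\cK_\xi$ is closed), and along such a sequence $b(v_n)\to b(v_\infty)\geq m > 0$; this rules out the pathological possibility that $b$ dips to zero or below as $a$ degenerates. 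Formulated contrapositively, it shows that $\inf\{b(v) : v\in S\xi,\ a(v)\leq\epsilon\} > 0$ for $\epsilon$ small, which is the precise statement needed to define $\cU$. With this compactness-plus-closedness observation in hand, the rest is the routine two-region estimate described above, and the conclusion follows for all $T$ larger than the explicit threshold.
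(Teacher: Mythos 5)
Your proof is correct. It reaches the same conclusion as the paper's but by a different route: you run a direct two-region compactness estimate on the unit sphere bundle $S\xi$, splitting it into a neighborhood $\cU$ of $S\xi\cap\cK_\xi$ (where $\Omega_M(v,J_\xi v)$ is bounded below by $m/2>0$ and $T\,\d\alpha(v,J_\xi v)\geq 0$ only helps) and its compact complement (where $\d\alpha(v,J_\xi v)\geq\delta>0$ beats the bounded negative part of $\Omega_M$ once $T>B/\delta$). The paper instead argues by contradiction: it extracts a convergent subsequence of putative bad unit vectors $v_n$, shows the limit lies in $\cK_\xi$, and then decomposes $v_n=u_n+w_n$ relative to an auxiliary complement $\nu$ of $\cK_\xi$ in $\xi$ to expand $(\Omega_M+n\,\d\alpha)(v_n,J_\xi v_n)=\Omega_M(v_n,J_\xi v_n)+n\,\d\alpha(w_n,J_\xi w_n)$ and derive positivity. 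Your version buys two things: it avoids choosing the complement $\nu$ altogether (which in the paper is a non-constant-rank, non-closed generalized distribution that requires a parenthetical disclaimer), and it produces an explicit threshold $T>B/\delta$ rather than a pure existence statement. The ingredients you rely on --- closedness of $\cK_\xi$ (so that $S\xi\cap\cK_\xi$ is compact and $m>0$), the equality case of tameness (so that $a$ vanishes only on $\cK_\xi$), and $J_\xi$-invariance of $\cK_\xi$ (so the taming hypothesis on $\Omega_M\vert_{\cK_\xi}$ makes sense) --- are exactly the ones the paper uses, just deployed without the contradiction scaffolding. Your third paragraph correctly identifies where the upper semi-continuity of $\rk\cK_\xi$ could cause trouble and why it does not; that discussion is sound, though it essentially re-derives what the choice of $\cU$ in your second paragraph already handles.
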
 

\begin{remark}
    \label{rmk:cotamed_for_T_big_enough}
    An explicit computation shows that, if $\ker(\Omega_M+t\d\alpha)$ is independent of $t\geq 0$ (as it happens for stable Hamiltonian structures), say spanned by $R$ such that $\alpha(R)=1$, for $T>0$ big enough, $J_\xi$ as in \Cref{lem:cotamed_for_T_big_enough} can be extended to an almost complex structure $J$ on $(T-\epsilon,T]\times M$ that is tamed by  $\Omega=\Omega_M+\d(t\alpha)$ by requiring that $J\vert_\xi=J_\xi$ and that $J\partial_t$ is positively proportional to $R$.
\end{remark}

\begin{proof}
    Assume by contradiction that the conclusion is false, i.e.\ that for every natural number $n>0$ there is a vector $v_n\in \xi$ such that 
    \[
    (\Omega_M + n \, \d\alpha )(v_n , J_\xi v_n)\leq 0 \, .
    \]
   After normalizing $v_n$ with respect to any auxiliary Riemannian metric, up to passing to a subsequence, we can assume that $v_n\to v_\infty \neq 0$ when $n\to \infty$.
    By rescaling the left-hand side of the above inequality by $1/n$, we deduce that $\d\alpha(v_\infty , J_\xi v_\infty) \leq 0$.
    As $J_\xi$ is tamed by $\CpS_\xi$, this implies that $v_\infty \in \cK_\xi$.
    Fix now any complementary $\nu$ to $\cK_\xi$ inside $\xi$; such a $\nu$ is to be intended as a generalized (i.e.\ not constant rank) distribution on $M$.
    (Note also that $\nu$ is not closed as a subset of $TM$, but this is not an issue for what follows.)
    We then decompose the vectors as $v_n = u_n + w_n$ with $u_n \in \cK_\xi$ and $w_n\in \nu$. 
    Using that vectors in $\cK_\xi$ annihilate $\d\alpha$ and that $\cK_\xi$ is $J_\xi$-invariant, we then compute
    \begin{equation*}
                (\Omega_M +  n \, \d\alpha )(v_n , J_\xi v_n)  =  \; 
        \Omega_M (v_n , J_\xi v_n)  
        +  n \, \d\alpha (w_n , J_\xi w_n) \; .
    \end{equation*}
    The first term is positive for $n$ big enough, since we know that $\Omega_M (v_\infty , J_\xi v_\infty)>0$ using that $J_\xi$ is tamed by $\Omega_M$.
    The second term is moreover non-negative for all $n$. 
    In particular, for $n$ big enough, $(\Omega_M +  n \, \d\alpha )(v_n , J_\xi v_n) >0$; this contradicts the assumption that this quantity is $\leq 0$ for all $n>0$, thus concluding the proof.
\end{proof}

\bigskip

We now give an analogue of \cite[Lemma 2.10]{MNW}, that we will later use to change the form of the symplectic structure near the confoliated boundary:

\begin{lemma}
    \label{lem:deformation_lemma}
    Let $\xi=\ker\alpha$ be a confoliation on $M^{2n+1}$ and $\Omega_M$ be a closed two-form on $M$, non-degenerate on $\xi$ {and on $\cK_\xi$}, symplectically compatible with $\CpS_\xi$. 
    Consider also $\Omega_M'$ another closed two-form such that $\Omega_M' = \Omega_M + \d\beta$, for some one-form $\beta$ on $M$ such that 
    \begin{equation}
        \label{eqn:assumption_deformation_lemma}
        (\Omega_M + s\d\beta)\vert_{\cK_{\xi}} 
        \text{ is non-degenerate for all } s\in[0,1] \, ,
    \end{equation}
    \noindent
    Then, there are $T>0$ big enough and a symplectic form $\Omega$ on $[0,T]\times M$ which restricts to $\d(t\alpha)+\Omega_M$ near $\{0\}\times M$ and to $\d(t\alpha)+\Omega_M'$ near $\{T\}\times M$, and such that $\Omega\vert_{\{t\}\times M}$ on each level $\{t\}\times M$ induces a symplectic cone $\CS_{\xi,\Omega\vert_{\{t\}\times M}}$.

    \noindent 
    If moreover $\d\beta$ is supported on an open set $U$ where $\cK_\xi$ is of constant rank 
    and there is a complex structure $J_\xi$ on $\xi$ that is cotamed by $\CpS_\xi$ and $\Omega_M$, and in addition is weakly tamed by $\d\beta\vert_{\cK_\xi}$ (i.e.\ $\d\beta(v,J_\xi v )\geq 0$ for all $v\in \cK_\xi$), 
    then there is another complex structure $J_\xi'$ cotamed by $\CpS_\xi$ and $\Omega\vert_{\{T\}\times M}$.
    
\end{lemma}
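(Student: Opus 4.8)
The plan is to write down an explicit interpolating form and control its non-degeneracy by a compactness argument, the delicate point being the non-constant rank of $\cK_\xi$. Set
$\Omega := \Omega_M + \d(t\alpha + \rho(t)\beta)$ on $[0,T]\times M$, where $\rho\colon[0,T]\to[0,1]$ is a smooth cutoff with $\rho\equiv 0$ near $t=0$ and $\rho\equiv 1$ near $t=T$, to be pinned down below. Since $\Omega_M$ is closed and the remaining term is exact, $\Omega$ is closed, and by the choice of $\rho$ it restricts to $\d(t\alpha)+\Omega_M$ near $\{0\}\times M$ and to $\d(t\alpha)+\Omega_M'$ near $\{T\}\times M$. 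Writing $\Omega = \sigma_t + \d t\wedge\gamma_t$ with $\sigma_t := \Omega_M + \rho(t)\,\d\beta + t\,\d\alpha$ and $\gamma_t := \alpha + \rho'(t)\beta$, one has $\Omega\vert_{\{t\}\times M}=\sigma_t$ and $\Omega^{n+1} = (n+1)\,\d t\wedge\gamma_t\wedge\sigma_t^{\,n}$. The proof then reduces, for suitable $\rho$ and large $T$, to: (B) $\sigma_t\vert_\xi$ is symplectically compatible with $\CpS_\xi$ (so each slice directs a symplectic cone $\CS_{\xi,\sigma_t\vert_\xi}$), and (A) $\gamma_t\wedge\sigma_t^n$ is a positive volume form (so $\Omega$ is symplectic).

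The core is the following uniform claim: there is $\tau_0>0$ with $(\Omega_M + s\,\d\beta + \tau\,\d\alpha)\vert_\xi$ non-degenerate for all $p\in M$, $s\in[0,1]$ and $\tau\geq\tau_0$. Granting it and choosing $\rho\equiv 0$ on $[0,\tau_0]$, (B) follows: for $t\leq\tau_0$, $\sigma_t\vert_\xi + r\,\d\alpha = (\Omega_M + (t+r)\d\alpha)\vert_\xi$ is non-degenerate for $r>0$ since $\Omega_M$ is symplectically compatible (\Cref{def:symplectically_compatible}); for $t\geq\tau_0$, $\sigma_t\vert_\xi + r\,\d\alpha = (\Omega_M + \rho(t)\d\beta + (t+r)\d\alpha)\vert_\xi$ with $t+r\geq\tau_0$, hence non-degenerate by the claim; the same dichotomy gives $\sigma_t\vert_\xi$ itself non-degenerate. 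I would prove the claim by a compactness contradiction: a sequence of unit kernel vectors $v_n\in\xi_{p_n}$ for parameters $s_n\in[0,1]$, $\tau_n\to\infty$ subconverges to $v_*\neq 0$ over $p_*$; dividing the kernel relation by $\tau_n$ forces $v_*\in\cK_\xi$, and testing against characteristic directions (which pair trivially with $\d\alpha$) shows $v_n$ is $(\Omega_M + s_n\d\beta)$-orthogonal to $(\cK_\xi)_{p_n}$. The main obstacle is precisely that $\cK_\xi$ is only upper semi-continuous (\Cref{rmk:char_distr_is_closed}), so limiting characteristic vectors cannot be approximated from nearby points; the resolution is a sign argument. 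The top $\tau$-coefficient of $(\Omega_M + s\,\d\beta + \tau\,\d\alpha)^n\vert_\xi$ at a point of order $k$ is a positive multiple of $\Pf(\d\alpha\vert_\nu)\cdot\Pf((\Omega_M + s\,\d\beta)\vert_{\cK_\xi})$, which stays positive exactly because of hypothesis \eqref{eqn:assumption_deformation_lemma} — non-degeneracy of $(\Omega_M + s\,\d\beta)\vert_{\cK_\xi}$ for the \emph{whole} interval $s\in[0,1]$ — together with the positivity built into the symplectic compatibility of $\Omega_M$. This sign control prevents the degeneracy locus from escaping to $\tau=+\infty$ even though the leading coefficient collapses at rank jumps.

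For (A), with $\sigma_t\vert_\xi$ non-degenerate and positive, $\alpha\wedge\sigma_t^n$ is a positive volume form, so
$\gamma_t\wedge\sigma_t^n = \alpha\wedge\sigma_t^n + \rho'(t)\,\beta\wedge\sigma_t^n = (1 + \rho'(t)\,R_t)\,\alpha\wedge\sigma_t^n$, where $R_t := (\beta\wedge\sigma_t^n)/(\alpha\wedge\sigma_t^n)$. Numerator and denominator are polynomials in $t$ of the same degree (the pointwise order of $\xi$), and the uniform non-degeneracy from the previous step bounds the denominator's leading coefficient away from $0$, so $R_t$ is bounded uniformly in $p\in M$ and $t\in[0,T]$, independently of $T$. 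Choosing $\rho$ with $|\rho'|\leq C/T$ and $T$ large enough that $(C/T)\sup|R_t|<1$ yields $\gamma_t\wedge\sigma_t^n>0$, i.e.\ $\Omega^{n+1}>0$.

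For the final statement I would simply take $J_\xi':=J_\xi$. The relevant form at the top slice is $\Omega\vert_{\{T\}\times M}\vert_\xi = (\Omega_M' + T\,\d\alpha)\vert_\xi = (\Omega_M + \d\beta + T\,\d\alpha)\vert_\xi$. Apply \Cref{lem:cotamed_for_T_big_enough} with the closed two-form $\Omega_M'=\Omega_M+\d\beta$ in the role of ``$\Omega_M$'': its restriction to $\cK_\xi$ is non-degenerate by \eqref{eqn:assumption_deformation_lemma} at $s=1$ and tames $J_\xi\vert_{\cK_\xi}$, since for $v\in\cK_\xi\setminus 0$ one has $\Omega_M'(v,J_\xi v)=\Omega_M(v,J_\xi v)+\d\beta(v,J_\xi v)>0$, the first term being positive because $J_\xi$ is cotamed by $\Omega_M$ and the second being $\geq 0$ by weak tameness of $J_\xi$ with respect to $\d\beta\vert_{\cK_\xi}$ (with $\d\beta\equiv 0$ off the support $U$). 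The lemma then provides $T$ large so that $J_\xi$ is tamed by $(\Omega_M' + T\,\d\alpha)\vert_\xi = \Omega\vert_{\{T\}\times M}\vert_\xi$; as $J_\xi$ is already tamed by $\CpS_\xi$, it is cotamed by $\CpS_\xi$ and $\Omega\vert_{\{T\}\times M}$. Taking $T$ to be the maximum of the thresholds arising above completes the construction.
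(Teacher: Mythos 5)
Your construction of $\Omega$ is the paper's: the same interpolating form $\Omega_M+\d(t\alpha+\rho(t)\beta)$, the same split of $\Omega^{n+1}$ into an $\alpha$-term and a $\rho'\beta$-term, and the same mechanism for positivity of the $\alpha$-term (at a point where $\rk\cK_\xi=2k$ the relevant coefficient factors as a $\d\alpha$-volume on a complement times $((\Omega_M+s\d\beta)\vert_{\cK_\xi})^{\,k}$, whose sign is pinned down by \eqref{eqn:assumption_deformation_lemma} holding for the \emph{whole} interval $s\in[0,1]$). Your explicit ``uniform claim'' is precisely what the paper encodes in the requirement that $\rho$ be supported far from $t=0$. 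One caution: your stated resolution of the upper-semicontinuity difficulty --- that positivity of the leading $\tau$-coefficient ``prevents the degeneracy locus from escaping to $\tau=+\infty$'' --- is not, taken literally, sufficient: a polynomial such as $\epsilon^4\tau^2-\epsilon\tau+1$ has positive leading coefficient yet roots tending to infinity as $\epsilon\to 0$, which is exactly the shape of the danger at a rank-jump of $\cK_\xi$ where the top coefficient collapses. What actually closes the argument is continuity of the full top power in $(t,p)$ together with compactness (this is also all the paper offers at this point, so you are no less rigorous than the source); relatedly, your assertion that numerator and denominator of $R_t$ have the same degree in $t$ is not quite right ($\beta\wedge\sigma_t^n$ can have degree one higher than $\alpha\wedge\sigma_t^n$ when $\d\alpha$ has full rank on $T_pM$), but since $t\leq T$ the bound $|\rho'|\leq C/T$ still does the job.

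For the final statement you take a genuinely different and simpler route. The paper builds a new $J_\xi'$: it splits $\xi\vert_U=\cK_\xi\oplus\nu$ ($\nu$ the $\Omega_M$-orthogonal complement), forms $J_0=J_\xi\vert_{\cK_\xi}\oplus J_{0,\nu}$, interpolates from $J_\xi$ to $J_0$ using \Cref{lem:strictly_cotamed_contractible}, and only then invokes \Cref{lem:cotamed_for_T_big_enough}. You observe that the original $J_\xi$ already satisfies the hypotheses of \Cref{lem:cotamed_for_T_big_enough} applied to $\Omega_M+\d\beta$: it is $\xi$-tamed (so $\cK_\xi$ is $J_\xi$-invariant by \Cref{rmk:tamed_invariance_Kxi}), $(\Omega_M+\d\beta)\vert_{\cK_\xi}$ is non-degenerate by \eqref{eqn:assumption_deformation_lemma} at $s=1$, and it tames $J_\xi\vert_{\cK_\xi}$ since $\Omega_M(v,J_\xi v)>0$ and $\d\beta(v,J_\xi v)\geq 0$ for $v\in\cK_\xi\setminus 0$. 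This checks out against the statement and proof of \Cref{lem:cotamed_for_T_big_enough}, and the monotonicity remark (taming for $T$ implies taming for $T'\geq T$, since $\d\alpha(v,J_\xi v)\geq 0$) lets you reconcile the two thresholds. So your shortcut buys a cleaner argument at no cost for the lemma as stated; the paper's interpolation produces a $J_\xi'$ in split form near $\supp(\d\beta)$, which is extra structure not required by the conclusion.
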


Notice that it follows from the assumption \eqref{eqn:assumption_deformation_lemma} that $\Omega'_M$ is non-degenerate on $\cK_\xi$.
We also point out that it follows from the conclusion that the hyperplane distribution $\xi$ on each level set $\{t\}\times M$ has an induced c-symplectic confoliation structure given by the symplectic cone $\CS_{\xi,\Omega\vert_{\{t\}\times M}}$.

\begin{proof}
    Consider the two-form $\Omega=\d(t\alpha)+\Omega_M + \d(\rho(t) \beta)$ on $[0,T]\times M$, with $\rho\colon [0,T]\to [0,1]$ a non-decreasing function equal to $0$ near $0$ and to $1$ near $T$.
    We then have 
    $\Omega  = \d t \wedge (\alpha + \dot\rho \beta) + t \d\alpha + \Omega_M + \rho \d\beta$, which 
    hence brings to
    \begin{align*}
    \Omega^{n+1} = & \; 
    (n+1)\d t \wedge (\alpha + \dot\rho\beta) \wedge (t\d\alpha + \rho \d\beta + \Omega_M)^{n}
    \\
    = & \;\underbrace{(n+1)\d t \wedge \alpha  \wedge (t\d\alpha + \rho \d\beta + \Omega_M)^{n}}_{\Theta_0} 
    + \underbrace{(n+1) \dot\rho \d t \wedge  \beta \wedge (t\d\alpha + \rho \d\beta + \Omega_M)^{n}}_{\Theta_1} \, .
    \end{align*}
    Now, if we can prove that $\Theta_0>0$ everywhere, then by compactness of $M$ it is enough to choose $\dot\rho$ very small in order to guarantee that $\Theta_0$ dominates $\Theta_1$ and hence $\Omega^{n+1}>0$ as well.
    Hence, we just need to arrange $\Theta_0>0$.
    
    For this, at a point $(t,p)\in [0,T]\times M$ where $p$ is a point where the rank of $\cK_\xi$ is $2k$, the above formula becomes 
    \begin{align*}
    (\Theta_0)_{(t,p)} = 
    &(n+1) \binom{n}{k} \d t \wedge \alpha_p \wedge \big(t\d\alpha_p + \rho(t) \d\beta_p  + (\Omega_M)_p\big)^{n-k}  
    \wedge \big((\Omega_M)_p\vert_{\cK_{\xi}} + \rho(t)\, \d\beta_p\vert_{\cK_{\xi}}\big)^k \, ,
    \end{align*}
    because the differential (along $M$) of $\alpha$  
    vanishes on $\cK_{\xi}$ by definition.
    Now, using the assumption \eqref{eqn:assumption_deformation_lemma}, it is not hard to see that $T$ and $\rho$ can be chosen so that $\Theta^{0}_{(t,p)}$ is positive: namely, this requires that $\rho$ has support enough far away from $0$; hence, in particular, $T$ needs to be very large.
    By compactness of $M$ and the fact that $\Theta_{0}$ is continuous in $(t,p)$ (even though the last expression given is not, because $k$ is not a continuous function of $p$), this concludes positivity of $\Theta_{0}$. 

    The property about the symplectic cones on each $t$-slice follows simply by $\Theta_0$ being strictly positive.

    \medskip

    Lastly, for the existence of $J_\xi'$ as in the statement, one can argue as follows. First, consider the $\Omega_M$-orthogonal complement $\nu$ to $\cK_\xi$ in $\xi\vert_U$.
    Note that $\nu$ is not necessarily $J_\xi$-invariant, but \Cref{rmk:tamed_invariance_Kxi} guarantees that under our assumptions $\cK_\xi$ is preserved by $J_\xi$. We know that $(\Omega_M+t\d\alpha)\vert_{\cK_\xi}=\Omega_M\vert_{\cK_\xi}$ is non-degenerate for all $t>0$ that and $\iota_v\d\alpha=0$ for every $v\in\cK_\xi$ by definition of $\cK_\xi$. Furthermore $\Omega_M+t\d\alpha$ is non-degenerate on $\xi$ for all $t\geq 0$ by assumption, so it follows that $\nu$ is also the $(\Omega_M + t \d\alpha)$-orthogonal complement $\nu$ to $\cK_\xi$ for all $t\geq 0$. In particular, $(\Omega_M+t\d\alpha)\vert_{\nu}$ is also non-degenerate for all $t\geq 0$.
    
    Let $J_0$ be a complex structure on $\xi$ defined over $U$ as the split sum of the complex structure $J_{0,\cK_\xi} := J_\xi\vert_{\cK_\xi}$ on $\cK_\xi$ (which is tamed by $\Omega_M$ and weakly tamed by $\d\beta$), and of a complex structure $J_{0,\nu}$ on $\nu$ that is tamed by $(\Omega_M)\vert_\nu$ and $\d\alpha\vert_\nu$.
    The latter exists by \cite[Proposition 2.2]{MNW}, as both $\Omega_M$ and $\d\alpha$ are non-degenerate on $\nu$ and symplectically compatible.
    Notice that $J_0$ is tamed by $\CpS_\xi$ by construction.    
    
    Thanks to \Cref{prop:space_cotaming_Js_contractible}, we can then find a new complex structure $J_\xi'$ cotamed by $\CpS_\xi$ and $\Omega_M$ and weakly tamed by $\d\beta$, interpolating from the given $J_\xi$ on the complement $U^c$ of $U$ to $J_0$ over the support $\supp(\d\beta)$ of $\d\beta$ (that is compactly contained in $U$ by assumption).

    Lastly, the fact that $J_\xi'$ is cotamed by $\CpS_\xi$ and $\Omega|_{M\times\{T\}}=\Omega_M + \d\beta + T\d\alpha$ for a large enough $T$ follows from \Cref{lem:cotamed_for_T_big_enough} applied to the two-form $\Omega_M + \d\beta$ and to $J_\xi'$.
    This concludes the proof.
\end{proof}

\bigskip 

We now end the section with some further definitions, which will not be used in this paper but which are natural and serve to set down analogues of notions that are well-known in the contact case.

For instance, analogously to \Cref{def:symplectic_filling_sympl_confoliation}, one can naturally define the notion of symplectic cobordism between two c-symplectic confoliations:
\begin{definition}
\label{def:symplectic_cobord_sympl_confoliation}
    A symplectic manifold $(W,\Omega)$ with non-empty boundary $\partial W = M_+ \sqcup \overline{M_-}$ is said to be a \textbf{quasi-strong symplectic cobordism} from a c-symplectic confoliation $(\xi_-,\CS_{\xi_-,\mu_-})$ on $M_-$ to c-symplectic confoliation $(\xi_+,\CS_{\xi_+,\mu_+})$ on $M_+$ if $\Omega\vert_{\xi_\pm}\in\CS_{\xi_\pm,\mu_\pm}$ at each respective boundary component. 
    \textbf{Weak} and \textbf{strong symplectic cobordisms} are defined analogously.
\end{definition}

Lastly, we explicitly point out a further notion strictly related to symplectic fillings and cobordisms, even though strictly speaking it will not be used explicitly in what follows:

\begin{definition}
    \label{def:c-symplectic_confoliated_hypersurface}
    Let $M$ be a hypersurface in a symplectic manifold $(W,\Omega)$, and $(\xi,\CS_{\xi,\mu})$ a c-symplectic confoliation on $M$.
    Then $(M,\xi,\CS_{\xi,\mu})$ is called \textbf{c-symplectic confoliated hypersurface} in $(W,\Omega)$ if $\Omega\vert_M \in \CS_{\xi,\mu}$.
\end{definition}


\subsection{A construction of c-symplectic confoliations}
\label{sec:construction}

In this section, we give a proof of \Cref{prop:open_book_construction}, besides its  \Cref{item:open_book_construction__deformation} whose proof will be given later in \Cref{sec:contact_approx}. 
We give a general construction, inspired by \cite[Section 3.3]{C0}, using some functions that need to satisfy certain conditions. One can check that such functions can be chosen to satisfy these conditions, but to avoid leaving this to the reader, we exhibit a simple choice of functions that works right after the proof.

\medskip

Let $(W,\Omega_W)$ be a symplectic manifold of dimension $2n$, which is a symplectic filling of a confoliation $\xi_B$ on $B$. Namely, the pair $(\xi_B=\ker \alpha_B, \CS_{\xi_B,\omega_B})$ is a c-symplectic confoliation. 
If it is tamed, let $J_B$ be a complex structure on $\xi_B$ cotamed by $\omega_B$ and $\CpS_{\xi_B}$. 
Denote further by $\Omega_B = \Omega_W\vert_B$ the restriction of $\Omega_W$ to the boundary $B$ of $W$.

Let $\varphi\colon W\rightarrow W$ be a symplectomorphism of $W$ which is the identity near $B$.  
The abstract open book $(W,\varphi)$ defines a manifold $M$ of dimension $2n+1$, equipped with an embedded open book with binding $B$ and fibration $\pi\colon M \setminus B \to S^1$. 
We will construct a pair $(\alpha,\Omega)$ in $M$ such that $(\xi=\ker \alpha, \CS_{\xi,\Omega})$ defines a tame c-symplectic confoliation. 
As described, the construction will have an additional property: if $(\alpha_B,\Omega_B)$ is a stable Hamiltonian structure, then so is $(\alpha,\Omega)$.

\bigskip

First, on the mapping torus piece $W_\varphi = \R\times W / \sim$, where the identification is defined as $(\psi,p)\sim (\psi-1, \varphi(p))$, we simply consider $(\alpha = \d \psi , \Omega = \Omega_W)$, where $\d\psi$ is the pullback of the canonical one-form on $S^1$ via the natural projection $W_\varphi\to S^1$ induced by the projection of $\R\times W \to \R$.

Note that, as $\varphi$ is relative to a neighborhood of $\partial W=B$ in $W$, a neighborhood of the boundary of $W_\varphi$ can be identified with $S^1 \times (-\epsilon,0]_s\times B$.
Under this natural identification, according to \Cref{lem:weak_sympl_fill_near_boundary} we have that the just defined $\Omega=\Omega_W$ becomes simply $\Omega_B + \d (s \alpha_B)$.

Now, in order to see this ``from the binding's perspective'', we make the following change of coordinates, where the factors $[\delta, \delta+\varepsilon) \times S^1$ in the source are to be interpreted as a neighborhood of $\partial D^2_\delta$ inside the $2$-disk $D^2_\delta$ of radius $\delta$:
\[
[\delta,\delta+\epsilon)_r\times S^1_\theta \times B \to S^1 \times (-\epsilon,0]\times B \, ,
\quad 
(r,\theta,p) \mapsto (\theta , -r+\delta, p) \, .
\]
In the source of this orientation-preserving diffeomorphism, the pair $(\alpha,\Omega)$ then becomes $(\alpha = \d\theta, \Omega = \Omega_B + \d ((-r+1+\epsilon)\alpha_B))$.
Let us denote $\Omega_B' = \Omega_B + (1+\epsilon) \d\alpha_B$, so that the pair can be more simply rewritten as
$(\alpha = \d\theta, \Omega= \Omega_B' - \d(r\alpha_B))$.
We consider, from now on, $\delta$ small enough so that $J_B$ tames $\Omega_B'-r\d\alpha_B$ for all $r\in [0,\delta)$: this is possible thanks to the hypothesis that $(W,\Omega_W)$ is a symplectic filling of the conformal symplectic confoliation $(\xi_B,\CS_{\xi_B,\omega_B})$. To lighten the notation, we will denote $\Omega'_B$ by $\Omega_B$.

\medskip

We now want to extend this pair over a neighborhood $U=B\times D_\delta^2$ of the binding $B\subset M$.
For this, choose polar coordinates $(r,\theta)$ on $D^2_\delta$, that naturally extend the coordinates used in the source $[\delta,\delta+\epsilon)_r\times S^1_\theta \times B$ of the above defined diffeomorphism.

Consider the one-form
\[\alpha= f(r) \alpha_B + g(r) \d\theta,\]
where $f,g$ are non-increasing and non-decreasing functions, respectively, such that:
\begin{itemize}
\item[-] $f(r)=1$ near $r=0$, $f(r)=0$ near $r=\delta$,
\item[-] $g\geq 0$, $g(r)=0$ near $r=0$, $g(r)=1$ near $r=\delta$,
\item[-] $f^2+g^2\neq 0$ for all $r$,
\end{itemize}
And consider
\begin{align*}
\Omega&=\Omega_B + h(r)\d r\wedge \d\theta - \d(l(r) \alpha_B)\\
	&= \Omega_B + h(r)\d r\wedge \d\theta - l' dr \wedge \alpha_B - l(r)\d\alpha_B,
\end{align*}
where $l$ is a non-decreasing function and $h$ a non-negative function such that 
\begin{itemize}
\item[-] $h(r)=r$ near $r=0$, $h(r)=0$ near $r=\delta$,
\item[-] $l(r)=0$ near $r=0$, $l(r)=r$ near $r=\delta$,
\item[-] (non-degeneracy along $\xi$) $g\cdot l'\neq 0$ when $f\cdot h=0$. 
\end{itemize}
We have $\ker\alpha=\ker \alpha_B \oplus \langle \partial_r, -gR_B + f \partial_\theta \rangle$. 
We define $J$ as $J=J_B \oplus \hat J$, where $J_B$ cotames $\omega_B$ and $\CpS_{\xi_B}$, and $\hat J(\partial_r)= -gR_B + f \partial_\theta$. Let us check that $J$ cotames $(\ker\alpha, \Omega)$. Choose an arbitrary tangent vector $X=v_B + a \partial_r + b(-gR_B + f\partial_\theta)$ in $\xi$, with $v_B\in \xi_B$ and $a,b\in \mathbb{R}$. We have
\[\d\alpha(X, J(X))= \d\alpha_B (v_B, J_B(v_B))-a^2 (gf' + fg')+b^2(-gf'+fg')\geq 0,\]
and
$$\Omega(X, J(X))= (\Omega_B-l\d\alpha_B)(v_B, J_B(v_B)) + b^2 (gl'+ fh) + a^2(gl' + fh)>0$$
This shows that $J$ tames $\Omega$, and to make sure that $J$ tames $\CpS_{\xi}$ we need to impose furthermore that:
\begin{itemize}
\item[-] $g\neq 0$ when $f'\neq 0$,
\item[-] $f\neq 0$ when $g'\neq 0$.
\end{itemize}
One can check that we can choose these functions to satisfy all these conditions, we give an example below this proof. In addition notice that if $(\alpha_B, \omega_B)$ is a stable Hamiltonian structure, then  
$$\ker \Omega= \langle  hR_B + l'\partial_\theta  \rangle. $$
To make $(\alpha,\Omega)$ stable Hamiltonian structure, we need to impose that $\iota_R \d\alpha=0$, where $R= hR_B + l'\partial_\theta$. This is equivalent to $-hf'-l'g'=0$. An easy way to achieve this is, for example, doing the following steps one after one with respect to the ``time" parameter $r$:
\begin{enumerate}
\item we make $h$ equal to $\tfrac{1}{2}$ and $l$ equal to $\tfrac{1}{2} r$ while keeping $f$ and $g$ constant,
\item we make $f$ equal to $0$ and take $g=1-f$,
\item we make $h$ equal to $0$ and $l$ equal to $r$.
\end{enumerate}
This finishes the proof.
\hfill\qedsymbol

\begin{example}\label{ex:functions}
    An example of a choice of functions that satisfy all the conditions in the previous proof, including the ones making the pair $(\alpha, \Omega)$ a stable Hamiltonian structure, is the following. Choose in $(0,\delta)$ numbers $\delta_1<a<b<\delta_2$. We take $g=1-f$, and $f$ such that $f=0$ for $r\leq a$, $f=1$ for $r\geq b$, and $f$ is increasing in $[a,b]$. We choose $h$ such that $h=r$ for $r$ close to $0$, $h(r)=\tfrac{1}{2}$ for $r\in[a,b]$ and $h=0$ for $r\geq \delta_2$. Finally, we choose $l$ such that $l(r)=0$ near $0$, $l(r)=\tfrac{1}{2}r$ for $r\in [a,b]$ and $l(r)=r$ for $r\geq \delta_2$.
\end{example}


\section{Restrictions on $J$-curves from cotameness}
\label{sec:restrictions_J-hol_from_strict_cotameness}


\subsection{Weakly pluri-subharmonic functions and maximum principles}
\label{sec:weak_plurisubharm_max_princ}

In this section, we explain how maximum principles for $J$-holomorphic curves with or without boundary can be obtained from what we call \emph{tame} weakly pluri-subharmonic functions.
This is a straightforward extension to the setting that will later suit us of the presentation given in \cite[Section 3.1, page 192]{Book_Contact_Symplectic} for the weakly pluri-subharmonic case.

\bigskip 

Let $(X,J)$ be an almost complex manifold.
Given a smooth function $\psi\colon X\to \R$, let $\d^\C\psi \coloneqq \d\psi\circ J $.
Recall that $\psi$ is said to be \textbf{weakly pluri-subharmonic function} if, for every $v\in TM$, $- \d\d^\C\psi (v,Jv)\geq 0$.
(If $X$ is $2$-dimensional, the prefix ``pluri-'' will be omitted.)
In other words, this means that the symmetric $(2,0)$-tensor field $g_{\psi,J}$ on $M$ given by
\[
g_{\psi,J}(v,w)= \frac{1}{2}(- \d\d^\C\psi (v,Jw) - \d\d^\C\psi (w,Jv))
\]
is positive semi-definite.

This classical notion of weakly pluri-subharmonicity is, however, not enough for our purposes.
In short, the motivation is that it doesn't allow any topological control over the pseudo-holomorphic curves that are contained in level sets of $\psi$ (c.f.\ \Cref{rmk:lack_max_principle_non_cotamed} below).
This is, in fact, the main motivation for introducing the notion of tamed complex structures in \Cref{def:tamed}; we give here the equivalent stronger notion of weak pluri-subharmonicity:
\begin{definition}
    \label{def:tame_weakly_subharmonic}
    Let $(X,J)$ be an almost complex manifold. A smooth function $\psi\colon X \to \R$ is called \textbf{tame weakly pluri-subharmonic} if $J$ is \textbf{tamed} by $-\d\d^\C\psi$, i.e.\
    \[
    -\d\d^\C\psi(v,Jv)\geq 0 \, ,
    \quad
    \text{with equality if and only if }
    v\in \cK_\xi \, ,
    \]
    where $\cK_\xi$ is the characteristic distribution of $\xi= \ker(-\d^\C\psi)$.
\end{definition}

\noindent
We point out that, as in \Cref{rmk:tamed_invariance_Kxi}, this implies that $\cK_\xi$ is $J$-invariant.

\medskip

We will now need two classical yet fundamental results about weakly subharmonic functions on Riemann surfaces with possibly non-empty boundary. 
We give here just the statements; for their proofs, we invite the reader to consult for instance \cite[Theorems 3.2 and 3.3, pages 194--196]{Book_Contact_Symplectic}.
\begin{theorem}[Weak maximum principle]
    \label{thm:weak_maximum_principle}
    Let $(\Sigma,j)$ be a compact Riemann surface, and $\phi\colon \Sigma\to\R$ a weakly subharmonic function. 
    If $\phi$ attains its maximum in an interior point $z\in \Sigma\setminus \partial \Sigma$, then $\phi$ is necessarily constant.
\end{theorem}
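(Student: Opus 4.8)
The plan is to reduce the global statement to the classical strong maximum principle for subharmonic functions via a local coordinate computation, and then to propagate the maximum by a connectedness argument.

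First I would unravel the definition in a holomorphic chart. Choosing a conformal coordinate $z=x+iy$ on a coordinate disk, so that $j\partial_x=\partial_y$ and $j\partial_y=-\partial_x$, a direct computation gives $\d^\C\phi=\phi_y\,\d x-\phi_x\,\d y$ and hence $-\d\d^\C\phi=(\phi_{xx}+\phi_{yy})\,\d x\wedge\d y=(\Delta\phi)\,\d x\wedge\d y$. Evaluating the weak subharmonicity inequality on $v=\partial_x$, for which $jv=\partial_y$, then yields $-\d\d^\C\phi(\partial_x,\partial_y)=\Delta\phi\geq 0$. Thus in every such chart $\phi$ is subharmonic in the classical sense, $\Delta\phi\geq 0$.

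Next I would invoke the local strong maximum principle. If $\phi$ attains an interior maximum $M=\phi(p)$ at a point $p$ lying in a coordinate disk, then the sub-mean-value inequality for subharmonic functions, $\phi(p)\leq \tfrac{1}{2\pi r}\int_{\partial B_r(p)}\phi$, combined with $\phi\leq M$ everywhere, forces $\phi\equiv M$ on every small circle around $p$, and hence on a whole neighborhood of $p$. (Equivalently, one can cite E.\ Hopf's strong maximum principle, which applies to the non-strict inequality $\Delta\phi\geq 0$.) Consequently the set $U=\{q\in\Sigma\setminus\partial\Sigma : \phi(q)=M\}$ is open in the interior $\Sigma\setminus\partial\Sigma$.

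Finally I would globalize by connectedness. The set $\phi^{-1}(M)$ is closed in $\Sigma$ by continuity and, by hypothesis, meets the interior at the point $z$; hence $U$ is nonempty, as well as both open and closed in $\Sigma\setminus\partial\Sigma$. Since $\Sigma$ is a connected surface with boundary, its interior $\Sigma\setminus\partial\Sigma$ is connected, so $U$ must be the entire interior, i.e.\ $\phi\equiv M$ there. As $\Sigma=\overline{\Sigma\setminus\partial\Sigma}$, continuity of $\phi$ then gives $\phi\equiv M$ on all of $\Sigma$. The only genuinely analytic input is the local strong maximum principle for the \emph{non-strict} subharmonicity condition $\Delta\phi\geq 0$; this is the step where the mean-value inequality (or Hopf's lemma) is essential, whereas the remaining arguments are purely the coordinate computation and elementary point-set topology, and I would expect the connectedness of the interior to be the only other point requiring a word of care.
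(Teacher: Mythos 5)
Your proof is correct, and it is essentially the standard argument: the paper itself does not prove this statement but defers to \cite[Theorems 3.2 and 3.3]{Book_Contact_Symplectic}, where the same reduction to classical subharmonicity ($\Delta\phi\geq 0$) in conformal coordinates, followed by the mean-value inequality and an open-closed connectedness argument, is carried out. Your coordinate computation of $-\d\d^\C\phi=(\Delta\phi)\,\d x\wedge\d y$ and the observation that the interior of a connected surface with boundary is connected are both correct, so there is nothing to add.
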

\begin{theorem}[Boundary point lemma]
    \label{thm:boundary_point_lemma}
    Let $(\Sigma,j)$ be a compact Riemann surface with non-empty boundary, and $\phi\colon \Sigma\to\R$ a weakly subharmonic function. 
    If $\phi$ takes its maximum value at a point $z\in\partial \Sigma$, then $\phi$ is either constant or satisfies that, for any $v$ outwards pointing vector in $T_{z}\Sigma$, $\d_z\psi(v)>0$.
\end{theorem}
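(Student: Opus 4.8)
The plan is to recognize Theorem~\ref{thm:boundary_point_lemma} as a version of the classical Hopf boundary point lemma for subharmonic functions, and to prove it by a barrier (comparison-function) argument after a conformal reduction to the flat Laplacian. First I would pass to local conformal coordinates $(s,t)$ near $z$, in which $j\partial_s=\partial_t$. A direct computation gives $-\d\d^\C\phi = (\Delta\phi)\,\d s\wedge\d t$ with $\Delta=\partial_s^2+\partial_t^2$, so the weak subharmonicity $-\d\d^\C\phi(v,jv)\geq 0$ is exactly the classical subharmonicity $\Delta\phi\geq 0$. Since the positive conformal factor relating the flat Laplacian to the intrinsic one preserves the sign of $\Delta\phi$, and since ``outwards pointing'' is preserved by the orientation-preserving coordinate change, it suffices to prove the statement for a genuine subharmonic function $\phi$ on a planar domain with smooth boundary.

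Next, assuming $\phi$ is non-constant, I would invoke the strong maximum principle Theorem~\ref{thm:weak_maximum_principle}: if $\phi$ equalled its boundary maximum $M$ at any interior point it would be constant, so in fact $\phi<M$ throughout the interior. Smoothness of $\partial\Sigma$ provides an interior ball condition at $z$: there is a disk $B=B_R(y)$ lying (locally) in the interior of $\Sigma$ with $z\in\partial B$, and the outward normal of $\Sigma$ at $z$ equals $(z-y)/R$. On the annulus $A=B_R(y)\setminus\overline{B_{R/2}(y)}$ I would introduce the Hopf barrier
\[
v(x)=e^{-\alpha|x-y|^2}-e^{-\alpha R^2},
\]
whose Laplacian is $\Delta v = 2\alpha\,(2\alpha|x-y|^2-2)\,e^{-\alpha|x-y|^2}$; choosing $\alpha$ large makes $v$ strictly subharmonic on $A$, while $v$ vanishes on the outer circle $\partial B_R(y)$.

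The comparison step then runs as follows. On the inner circle $\partial B_{R/2}(y)$, which lies in the interior, compactness together with $\phi<M$ gives $\phi\leq M-\delta$ for some $\delta>0$; since $v$ is bounded, for $\epsilon>0$ small we get $\phi+\epsilon v\leq M$ there, while on the outer circle $\phi+\epsilon v=\phi\leq M$. As $\Delta(\phi+\epsilon v)>0$ on $A$, the function $\phi+\epsilon v$ is strictly subharmonic and hence attains its maximum over $\overline A$ on $\partial A$; combining the two boundary estimates yields $\phi+\epsilon v\leq M$ on all of $\overline A$, with equality at $z$. Thus $z$ is a boundary maximum of $\phi+\epsilon v$, so its outward normal derivative is non-negative, giving $\partial_n\phi(z)\geq -\epsilon\,\partial_n v(z)$. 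A direct computation gives $\partial_n v(z)=-2\alpha R\,e^{-\alpha R^2}<0$, whence $\partial_n\phi(z)>0$. Finally, any outward-pointing $w\in T_z\Sigma$ decomposes as a positive multiple of the outward normal plus a tangential part; since $\phi|_{\partial\Sigma}$ also attains its maximum at $z$, its tangential derivative vanishes, so $\d_z\phi(w)>0$, as claimed.

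The only genuinely delicate point I anticipate is the bookkeeping of the conformal reduction: one must verify that subharmonicity, the interior ball condition, and the signs of the normal and tangential derivatives all transform correctly under the coordinate change and under the passage from $\phi$ to its planar model. Once the problem is reduced to the flat model in this way, the barrier argument is entirely standard and self-contained, relying only on the strong maximum principle already recorded as Theorem~\ref{thm:weak_maximum_principle}.
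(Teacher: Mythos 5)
Your proof is correct: after the conformal reduction to $\Delta\phi\geq 0$ this is the classical Hopf boundary point lemma via the barrier $e^{-\alpha|x-y|^2}-e^{-\alpha R^2}$ on an interior annulus, and all the steps (strictness of $\phi<M$ in the interior from \Cref{thm:weak_maximum_principle}, the comparison on the two boundary circles, the sign of $\partial_n v$, and the vanishing of the tangential derivative) check out. The paper does not prove this statement itself but defers to the cited reference, where the argument given is exactly this standard barrier construction, so your route coincides with the intended one.
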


The two above results considerably restrict the behavior of $J$-holomorphic curves in the presence of a tame weakly pluri-subharmonic function.
More precisely, we have the following two consequences:
\begin{lemma}
    \label{lem:maximum_principle_curve}
    Let $(X,J)$ be an almost complex manifold and $\psi\colon X\to \R$ a tame weakly pluri-subharmonic function.
    Let also $\Sigma$ be a Riemann surface, and $u\colon \Sigma \to X$ a $J$-holomorphic map such that $\psi\circ u$ attains its maximum in the interior of $\Sigma$.
    Then, the image of $u$ is contained in a level set $M$ of $\psi$. More precisely, $u$ is tangent\footnote{As $\cK_\xi$ is a singular distribution, to avoid any ambiguity we specify once and for all that, by ``$u$ is tangent at each point to $\cK_\xi$'' we mean more precisely the following: for all $p\in \Sigma$, the image of $\d_p u$ is included in $(\cK_\xi)_{u(p)}$.} at each point to the characteristic distribution $\cK_{\xi}$ of the hyperplane field $\xi=\ker((-\d^\C\psi)\vert_M)\subset TM$.
\end{lemma}

\noindent
We point out that the conclusion of the lemma, of course, includes the case where $u$ is a constant map.

\bigskip

\begin{proof}
    Using $J$-holomorphicity of $u$, one can easily compute
    \begin{equation}
    \label{eqn:maximum_principle_bis}
    -u^*\d\d^\C\psi = - \d u^*(\d\psi \circ J) =
    -\d (\d_u \psi (J\d u \cdot))=
    -\d (\d_u \psi (\d u \circ j \cdot)) =
    - \d \d^j (\psi\circ u) \, . 
    \end{equation}

    On the other hand, using again that $u$ is $J$-holomorphic and also that $J$ is tamed by $-\d\d^\C\psi$, one gets the following: for all non-zero $v\in T_q\Sigma$,
    \begin{equation}
    \label{eqn:maximum_principle}
    \begin{split}
    (-u^*\d\d^\C\psi)_q(v,j_qv) &= 
    (-\d\d^\C\psi)_{u(q)}(\d_q u (v), \d_q u(j_q v))
    \\ &= 
    (-\d\d^\C\psi)_{u(q)}(\d_q u (v), J_{u(q)} \d_q u(v)) \geq 0 \, ,
    \end{split}
    \end{equation}
    where the equality happens exactly at those $q\in \Sigma$ such that $\d_q u(v)$ (or equivalently all the image of $\d_q u$ by $J$-invariance of $\cK_\xi$) is included in $\cK_\xi$. 
    (Note that this includes trivially the points $q$ such that $\d_q u = 0$.)
    
    The two above facts then mean that $\psi\circ u$ is weakly subharmonic on $\Sigma$.
    \Cref{thm:weak_maximum_principle} then gives that $\psi\circ u$ must be constant. 
    
    Now, the only way for $u$ to be tangent to a level set of $\psi$ is that the image of $\d u$ is contained in $\cK_\xi$.
    Indeed, by tame weak pluri-subharmonicity, otherwise there would be a $v\in T\Sigma$ such that $\d u (v) \notin \cK_\xi$ and by \eqref{eqn:maximum_principle} we would get that $u^*\d\d^\C\psi (v,jv) \neq 0$, which due to \eqref{eqn:maximum_principle_bis} is a contradiction with the fact that $\psi\circ u$ is constant.
    This concludes the proof.
\end{proof}

\begin{lemma}
    \label{lem:boundary_point_lemma_curve}
    Let $(X,J)$ be an almost complex manifold and $\psi\colon X\to \R$ a tame weakly pluri-subharmonic function.
    Let also $\Sigma$ be a Riemann surface with non-empty boundary, and $u\colon \Sigma \to X$ a $J$-holomorphic map such that $\psi\circ u$ attains its maximum in $p\in\partial\Sigma$.
    Then, one of the following alternatives holds:
    \begin{itemize}
        \item either $\d_p(\psi\circ u) (v)>0$ for every $v\in T_p\Sigma$ pointing outwards $\partial \Sigma$;
        \item or the image of $u$ contained in a level set $M$ of $\psi$;
        more precisely, $u$ is tangent at each point to the characteristic distribution $\cK_\xi$ of the hyperplane field $\xi=\ker((-\d^\C\psi)\vert_M)\subset TM$.
    \end{itemize}
\end{lemma}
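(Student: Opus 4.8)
The plan is to run the first part of the argument exactly as in the proof of \Cref{lem:maximum_principle_curve}, and only at the very end to replace the use of the weak maximum principle (\Cref{thm:weak_maximum_principle}) by the boundary point lemma (\Cref{thm:boundary_point_lemma}). First I would observe that $\psi\circ u$ is again weakly subharmonic on $\Sigma$: the identity $-u^*\d\d^\C\psi = -\d\d^j(\psi\circ u)$ uses only $J$-holomorphicity of $u$, so it is unaffected by the presence of a boundary, and combined with tameness of $J$ by $-\d\d^\C\psi$ it yields
\[
(-u^*\d\d^\C\psi)_q(v,j_q v) = (-\d\d^\C\psi)_{u(q)}(\d_q u(v), J_{u(q)}\d_q u(v)) \geq 0
\]
for every $q\in\Sigma$ and every nonzero $v\in T_q\Sigma$, with equality precisely when $\d_q u(v)\in\cK_\xi$. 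On a Riemann surface this positivity is exactly weak subharmonicity of $\psi\circ u$, and the latter is smooth up to $\partial\Sigma$ since both $u$ and $\psi$ are smooth.

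Since by hypothesis $\psi\circ u$ attains its maximum at the boundary point $z\in\partial\Sigma$, I would then invoke \Cref{thm:boundary_point_lemma}. This produces the dichotomy: either $\d_z(\psi\circ u)(v)>0$ for every outward-pointing $v\in T_z\Sigma$, which is exactly the first alternative in the statement; or $\psi\circ u$ is constant on $\Sigma$. It is worth emphasizing that the two cases of the boundary point lemma are precisely engineered to match the two alternatives of the lemma, so no additional bookkeeping is needed here.

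It then remains only to treat the constant case, and here the argument is verbatim the final step of \Cref{lem:maximum_principle_curve}: constancy of $\psi\circ u$ forces $(u^*\d\d^\C\psi)(v,jv)=0$ for all $v$, so by the equality case of the tameness inequality above we get $\d u(v)\in\cK_\xi$ for every $v$, and by $J$-invariance of $\cK_\xi$ the whole image of $\d u$ lies in $\cK_\xi$, i.e.\ $u$ is everywhere tangent to a level set of $\psi$. Over an open set $U$ where $\cK_\xi$ has constant rank it is a regular foliation (\Cref{rmk:corank_char_fol}), so $u\vert_{u^{-1}(U)}$ is tangent to $\cK_\xi$, which is the precise form of the second alternative. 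The genuinely new ingredient over \Cref{lem:maximum_principle_curve} is thus merely the substitution of the boundary point lemma for the interior maximum principle, and accordingly I do not expect any real obstacle; the only point deserving care is checking that $\psi\circ u$ is smooth up to the boundary so that \Cref{thm:boundary_point_lemma} genuinely applies.
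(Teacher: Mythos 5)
Your proposal is correct and follows exactly the route the paper intends: the paper itself states that the proof is the same as that of \Cref{lem:maximum_principle_curve} with \Cref{thm:boundary_point_lemma} substituted for \Cref{thm:weak_maximum_principle}, and leaves the details to the reader. Your write-up of the weak subharmonicity of $\psi\circ u$, the dichotomy from the boundary point lemma, and the tangency to $\cK_\xi$ in the constant case via the equality case of tameness matches that intended argument precisely.
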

As for the previous lemma, we point out that the second case in the conclusion of the lemma again includes, of course, the case where $u$ is a constant map.

The proof of \Cref{lem:boundary_point_lemma_curve} is analogous to the one of \Cref{lem:maximum_principle_curve}, but using \Cref{thm:boundary_point_lemma} instead of \Cref{thm:weak_maximum_principle}; the details are left to the reader.

\begin{remark}
\label{rmk:lack_max_principle_non_cotamed}
    In the case where $\psi\colon X\to \R$ is only weakly pluri-subharmonic and not tame, the conclusion in \Cref{lem:maximum_principle_curve,lem:boundary_point_lemma_curve} is weaker.
    Namely, one cannot conclude anymore that $u$ needs to be tangent to the distribution $\cK_\xi$, but only to the level sets of $\psi$.
    This is bad news if one wants to have any topological control over what the pseudo-holomorphic curves do inside the level sets of $\psi$, which will be the case later in the setup of symplectic fillings of confoliations.
\end{remark}

\subsection{Adapted $J$'s near the boundary of fillings}

Let $(W,\Omega)$ be a weak symplectic filling of a tame c-symplectic confoliation $(M=\partial W, \xi , \CS_{\xi,\mu})$.
Denote by $J_\xi$ the cotamed complex structure on $\xi$.

According to \Cref{lem:weak_sympl_fill_near_boundary}, for any choice of defining one-form $\alpha$ for $\xi$ and denoting $\omega:=\Omega\vert_{\partial W}$,  we have a tubular neighborhood of $M=\partial W$ in $(W,\Omega)$ of the form
\[
    ((-\epsilon,0]_t\times M , \Omega=\omega+\d(t\alpha)) \, .
\]

The following lemma guarantees that we can then apply the weak maximum principle from \Cref{lem:maximum_principle_curve,lem:boundary_point_lemma_curve} to the $t$-coordinate:

\begin{lemma}
    \label{lem:t_is_weak_plurisubharm}
    In the above situation, on the symplectic manifold $((-\epsilon,0]_t\times M , \Omega=\omega+\d(t\alpha))$ consider an almost complex structure $J$ such that $J\vert_\xi=J_\xi$ and $J\partial_t$  being either $R$ or $\frac{1}{t+1}R$, where $R$ is the vector field defined by the identities $\d t(R)=0$, $\alpha(R)=1$ and $\iota_R\omega=0$.
    Then, $-\d^\C t$ is equal to either $\alpha$ or $(t+1)\alpha$, depending on the two previous choices for $J\partial_t$.
    In both cases, the function $\psi(t,p)=t$ is a weakly pluri-subharmonic, with $\ker((-\d^\C \psi)\vert_{\{t\}\times M})=\xi$ for all $t\in(-\epsilon,0]$.
\end{lemma}

The two choices above for $J\partial_t$ correspond to the situations where $J$ is invariant under translation of respectively the $t$ coordinate, or the $s=\log(t+1)$ coordinate.

\begin{proof}
    The proof is a direct computation using the definition of $J$.
    We do the case $J\partial_t=R$ as an example. 
    In this case, we have $\d^\C t(\partial_t)=\d t (J \partial_t)=\d t (R)=0$, and $\d^\C t(R)=\d t(JR)=-\d t(\partial_t)=-1$.
    Similarly, for all $Z\in \ker\alpha\subset T((-\epsilon,0]\times M)$, we have
    $\d^\C t(Z)=-\d t (J Z)=0$ as $JZ=J_\xi Z\in \xi$.
    In other words, the one-forms $\d^\C t$ and $-\alpha$ evaluate equally on a basis of tangent vectors, i.e.\ they are equal. 
\end{proof}


\section{Confoliated bLob's and normal forms}
\label{sec:confoliated_bLob}

In \Cref{sec:def_confol_bLob}, we introduce the most relevant geometric object of this work: the confoliated bordered Legendrian open book. 
As in the contact setting, this is a $(n+1)$-dimensional submanifold $N$ in $M$ equipped with a codimension one submanifold with some singularities. 
Once defined, we establish in \Cref{sec:normal_form_near_singular_set} the normal forms for the involved geometric structures near the singularities of this foliation that will be needed to study pseudo-holomorphic curves in symplectic fillings.
In this context, we must point out straight away an additional complication with respect to the contact case where the desired normal forms only depend on the induced open book foliation on the bLob \cite{Nie06,NieHabilit,MNW}: namely, our confoliated setup naturally brings a foliation into the picture, namely $\cK_\xi$, whose dynamics near the confoliated bLob could a priori prevent even the existence of a unique \emph{smooth} model for $\cK_\xi$ near the singularities of the induced open book foliation on the bLob.
For this reason, such a smooth model will be asked to exist for a confoliated bLob: this will be explained in \Cref{sec:smooth_models_binding_boundary}.


\subsection{A prototype object}
\label{sec:def_preconfol_bLob}

{We start by recalling the following notion:}

\begin{definition}
    \label{def:bordered_open_book}
    Let $N$ be a manifold with non-empty boundary. A \textbf{bordered open book decomposition} of $N$ is a pair $(B,\theta)$ of a codimension $2$ closed submanifold $B$ of $N$, called the \textbf{binding}, together with a fibration $\theta\colon N \setminus B \to S^1$ such that its fibers meet $\partial N$ transversely, and that, on a normal neighborhood $U=D^2\times B\subset N$ of $B$ with coordinates $(x,y,b)$, one has $\pi^{-1}(\theta_0)\cap U=\{x=r\cos\theta_0,y=r\sin\theta_0 \mid r\geq 0\}$. 
    The fibers of $\theta$ in $N\setminus B$ (and sometimes also their closure in $N$ obtained by adding $B$, with an abuse of notation) are called \textbf{pages} of the bordered open books.
\end{definition}
Note that the above implies in particular that $\ker(\d\theta)\vert_U = \ker( \frac{1}{2}(xdy-ydx))$.

\medskip
To define the object of main interest, which will serve as an obstruction to fillability, we first introduce some preliminary object, which is a direct adaptation of the ``contact bLob'' from \cite{MNW} to our confoliated setup.

\begin{definition}
Let $(\xi,\CS_{\xi,\mu})$ be a c-symplectic confoliation on a manifold $M$ of dimension $2n+1$. 
A \textbf{prototype confoliated bordered Legendrian open book}, or \textbf{proto-confoliated bLob} in short, is a $(n+1)$ dimensional submanifold $N$ of $M$, with non-empty boundary and satisfying the following properties.
    \begin{enumerate}

        \item\label{item:confoliated_bLob_constant_rank} (Constant order) Points of $N$ are regular for $\cK_\xi$, i.e.\ $\rk(\cK_\xi)$ is constant near $N$.
        \item\label{item:confoliated_bLob_regular_leaves_lagrangian} (Open book foliation)
        There is a bordered open book decomposition $(B,\theta\colon N\setminus B \to S^1)$ of $N$ that is compatible with the singular distribution $\xi_N:=\xi \, \cap \, TN$ in the following sense: the pages of $(B,\theta)$ are tangent to $\xi_N$, and $B$ and $\partial N$ are tangent to $\xi_N$. 
        
        \item\label{item:confoliated_bLob_characteristic_leaves} (Characteristic foliation) The intersection of $\cK_\xi$ with $TN$ is a regular foliation $\cK_N$ on $N$, which satisfies the following properties: 
        \begin{enumerate}
            \item\label{item:confoliated_bLob_characteristic_leaves_in_xiN} (Compatibility with open book foliation) $\cK_N$ is tangent to $B$ and $\partial N$ (and hence tangent to the intersections of $\partial N$ with the pages of the open book decomposition defined by $\xi_N$, as $\cK_N\subset \xi_N$ necessarily; c.f.\ \Cref{rmk:conditions_confoliated_bLob});

            \item\label{item:confoliated_bLob_characteristic_leaves_lagrangian} (Lagrangian) $\cK_N$ is Lagrangian with respect to the restriction of $\mu$ 
            to $\cK_\xi$;
        
            \item\label{item:confoliated_bLob_characteristic_leaves_weakly_exact} (Generalized weak exactness) For any map $u\colon D^2\to M$ tangent to $\cK_{\xi}$ and whose boundary takes values in a leaf of $\cK_N$ (in particular, the image of $u$ is in $\cA(N)$), the following holds: if $u^*\mu\geq 0$, then $u^*\mu =0$.
        \end{enumerate}

    \end{enumerate}
\end{definition}

\begin{remark}
    \label{rmk:non-maximal_rank_near_bLob}
    \Cref{item:confoliated_bLob_constant_rank,item:confoliated_bLob_regular_leaves_lagrangian} together imply that $\rk\cK_\xi$ is strictly less than $2n$ near $N$.
    Indeed, if by contradiction the rank was equal to $2n$ near $N$, then $\xi$ would be a foliation near $N$.
    Now, for any point $b\in B$, one can find a sufficiently small closed loop $\gamma \subset N\subset M$ that is transverse to the induced bordered open book foliation, i.e.\ transverse to $\xi$. However, if such $\gamma$ is chosen small enough, it can be assumed to be as $C^0$-near to $b$ as desired, and hence it can be arranged to be contained in a foliated chart for $\xi$ centered at $b$.
    This is clearly absurd, as foliated charts are always taut, i.e.\ don't contain any closed loops transverse to the local foliation, as the projection onto the local leaf space, that is just an interval, must have critical points when restricted to any closed loop.
\end{remark}

\begin{remark}
\label{rmk:conditions_confoliated_bLob}
Some further comments concerning the conditions are in order.
First, notice that 
\Cref{item:confoliated_bLob_characteristic_leaves_in_xiN} implies in particular that the foliation $\cK_N$ is tangent to $B$, to $\partial N$ and to the pages, i.e.\ to every regular leaf of $\xi_N$; {more precisely, tangency to the pages simply follows from the fact that $\cK_\xi\subset \xi$}.
Moreover, \Cref{item:confoliated_bLob_regular_leaves_lagrangian,item:confoliated_bLob_characteristic_leaves_in_xiN,item:confoliated_bLob_characteristic_leaves_lagrangian} combined imply that $\xi_N/\cK_N$ is $\d\alpha$-Lagrangian in $\xi|_N / \cK_N$ (recall \Cref{rmk:transverse_contact}).  
As far as \Cref{item:confoliated_bLob_characteristic_leaves_lagrangian} is concerned, notice that $\CS_{\xi,\mu}$ on $\cK_\xi$ simply restricts as a multiple of $\mu$, {and it is hence non-degenerate on $\cK_\xi$ by \Cref{def:c-sympl_confoliation}}.
The condition that the leaves of $\cK_\xi \cap TN$ are Lagrangian with respect to $\mu$ is equivalent to them being Lagrangian with respect to the restriction of $\CS_{\xi,\mu}$ to $\cK_\xi$.

The conditions of generalized weak exactness can be understood as a generalization of weak exactness. 
Indeed, if each locally defined leaf of $\cK_{\xi}$ near $N$ that intersects $N$ extends to a global leaf, in the sense that the accessible set from a point in $N$ is an immersed submanifold of constant dimension, and $\mu$ is closed on each leaf of the family, then the conditions are equivalent to requiring that each of these leaves intersects $N$ along a weakly exact Lagrangian. 
The general conditions are, in general, harder to check, but they have the advantage of only depending on $\CS_{\xi,\mu}$ and not on a specific choice of $\mu$. 
Note also that they are of a non-local nature, as they concern the accessible set from $N$. As we will see in the proof of the main theorem, together with \Cref{lem:boundary_point_lemma_curve}, \Cref{item:confoliated_bLob_characteristic_leaves_weakly_exact} in fact also prevents eventual sphere bubbles from touching the confoliated boundary of a symplectic filling.

In conclusion, all the conditions that we just explained are indeed \emph{well-defined} for $(\xi,\CS_{\xi,\mu})$ and do not rely on any additional choice, such as that of an element in the symplectic cone.
\end{remark}

\medskip

We will now build towards a more specific object $N$, that we will refer to simply as ``confoliated bLob'', which additionally to the properties of the proto-confoliated bLob above, also behaves in a neighborhood of its binding and its boundary like a ``foliated" neighborhood of a contact bLob from \cite{MNW}, which transversely admits a symplectic foliation. 

The reason for this additional technicality with respect to the contact case is that, in our confoliated setup, foliated structures are also involved (namely, the characteristic foliation $\cK_\xi$), and their smooth behavior \emph{near} the proto-confoliated bLob $N$ could a priori be very complicated dynamically, hence potentially preventing a nice normal model near the binding and boundary (which will be needed in the later sections) already at the smooth level.

For this reason, we need to impose the existence of such a nice smooth semi-local model for $\cK_\xi$; this will be the addition needed to pass from a proto-confoliated bLob to a ``confoliated bLob'' (c.f.\ \Cref{def:confoliated_bLob}), which is the object that allows to obstruct fillability. 
In the next two sections, we will describe two explicit models that will be used as the good structure near the binding and the boundary.


\subsection{Smooth models near binding and boundary}
\label{sec:smooth_models_binding_boundary}

Even if asked to be regular near $N$, the characteristic foliation $\cK_\xi$ might have a complicated dynamics near $B$ and $\partial N$, which could a priori even prevent the existence of a smooth model for it.

We describe here two explicit constructions that will serve by definition as smooth models near the binding and boundary.
We will then later prove in \Cref{sec:normal_form_near_singular_set} that these essentially provide models also for the involved geometric structures; for this reason, we describe straight away the geometric structures on these model constructions, even if they won't be used until the next section.

\subsubsection{A bifoliated manifold associated to the binding}
\label{sec:bifoliated_binding}

Consider the binding $B$ of $N$.
Recall that, at each point of $B$, $\cK_N= TN\cap \cK_\xi$ is tangent to $TB$. 
For simplicity of notation, we denote by $\cK_B$ the foliation $\cK_N\vert_B$ of $B$.

We also assume the following. First, that there is another foliation on $B$, that we denote $\cF_B$, that is moreover transverse to $\cK_B$ at all points, so that $TB = \cF_B \oplus \cK_B$.
Second, that there is a Riemannian metric $g_B$ on $B$ which makes $\cF_B$ and $\cK_B$ orthogonal and which is invariant under the holonomy of both $\cF_B$ and $\cK_B$.
We point out that such a $g_B$ makes both $\cF_B$ and $\cK_B$ totally geodesic.

We would then like to define a model neighborhood of $B$ associated with the bifoliation pair $(\cF_B,\cK_B)$.

\medskip

Consider the two vector bundles\footnote{We use the cotangent bundle notation instead of the dual vector bundle notation to emphasize that these come naturally equipped with fiber-wise Liouville, symplectic, and almost complex structures, as described afterwards.} 
\[
        \pi^\cF_B\colon T^*\mathcal{F}_B\to B \; \text{ and }\; \pi^\cK_B\colon T^*\cK_B\to B \; ,
\]
with fibers over $b\in B$ respectively the duals $(\cF_B)_p^*$ and $(\cK_B)_p^*$ of $(\cF_B)_p$ and $(\cK_B)_p$. 
    Note that the total spaces of these bundles, as smooth manifolds, naturally come equipped with regular foliations, which we denote by $\widetilde\cF_B$ and $\widetilde\cK_B$ respectively, given by pullback under the projections of the respective foliations on $B$.
    More precisely, their leaves are the cotangent bundles of the leaves of $\cF_B$ and $\cK_B$, respectively.

    \smallskip
    
    There are then natural one-forms $\lambda_\cF^B$ and $\lambda_\cK^B$ on respectively $T^*\mathcal{F}_B$ and $T^*\mathcal{K}_B$, that are Liouville on the respective foliation $\widetilde\cF_B$ and $\widetilde\cK_B$, 
    and invariant under the holonomy of the other foliation, i.e.\ of $\widetilde\cK_B$ and $\widetilde\cF_B$ respectively. 
    Indeed, this can be verified explicitly in local bifoliated charts on $B$, that have bifoliated transition maps of the form $\R^n\times \R^m\to \R^n\times \R^m$, $(x,y)\mapsto (f(x),g(y))$, where the two foliations correspond in this local charts to those with leaves respectively given by $\{y=\mathrm{const}\}$ and $\{x=\mathrm{const}\}$. 
    In the corresponding local charts $\R^{n}\times (\R^n)^*\times \R^m$ and $\R^{n}\times  \R^m\times (\R^m)^*$ for $T^*\cD^B$ and $T^*\cK^B$ respectively, with coordinates $(x,p_x,y)$ and $(x,y,p_y)$, one has $\lambda_\cF^B=p_x\d x$ and $\lambda_\cK^B = p_y \d y$, which are indeed invariant under the natural coordinate changes induced from those of $B$, and hence well defined globally. 
    The intrinsic definition of $\lambda_\cF^B$ on a vector $v$ tangent to $T^*\cF_B$ is given by the evaluation of $v$ (that is, a cotangent vector on a leaf) on the vector given by the image of $v$ under the composition of first $\d\pi_\cF^B$ and then the projection $TB=\cF_B\oplus \cK_B\to \cF_B$; for $\lambda_\cK^B$, a similar description holds.
    
    In such local coordinates it is also clear that $\cK_B$ admits a lift to $T^*\cF_B$, that we denote $\cK_B^\cF$, such that $\d\pi_\cF^B\vert_{\cK_B^\cF}\colon \cK_B^\cF\to TB$ is injective, and more precisely an isomorphism onto $\cK_B\subset TB$.
    Moreover, again checking in local coordinates, we have that $\d\lambda_\cF^B$ has characteristic foliation exactly given by $\cK^\cF_B$; in other words, $\lambda_\cF$ is invariant under holonomy of $\cK^\cF_B$. 
    A similar reasoning holds for $\lambda_\cK^B$ and a lift $\cF^\cK_B$ of $\cF_B$ to $T^*\cK_B$.

    \smallskip
    
    We can now consider the fibered product
    \[
        T^*\mathcal{F}_B \times_B T^*\mathcal{K}_B\to B 
    \] 
    with respect to the natural projections to $B$ of the two factors.
    As a smooth manifold, the total space naturally comes with a pair of complementary foliations, denoted $\overline{\cF}_B$ and $\overline{\cK}_B$, which are naturally induced respectively by $\cF_B$ and $\cK_B$ on the first and second factors.
    (Again, this can easily be checked in local coordinates induced by bifoliated coordinates on $B$.)

    \smallskip

    The Riemmanian metric $g_B$ induces a \emph{degenerate} Riemannian metric\footnote{By this we mean symmetric bilinear tensors, with isotropic (or ``null'') directions given by a constant rank smooth foliation, invariant under the holonomy of the latter, and positive-definite transversely to it.} 
    $g_\cF^B$ on $T^*\cF_B$, that is positive definite on $\cF_B$ and has isotropic foliation $\cK_B$.
    Similarly, we get an induced degenerate Riemannian metric $g_\cK^B$ on $T^*\cK_B$ that is positive definite on $\cK_B$ and has isotropic foliation $\cF_B$.
    Let then $\widetilde g_\cF^B$ and $\widetilde g_\cK^B$ be the induced degenerate Riemannian metrics on $T^*\mathcal{F}_B$ and $T^*\cK_B$, with isotropic foliations respectively $\cK^\cF_B$ and $\cF^\cK_B$.
    (These are more precisely induced as explained e.g.\ in \cite[Appendix B]{Nie06}, by considering the induced metric on the dual bundles to $\cF_B$ and $\cK_B$, and then extending to the horizontal spaces, w.r.t.\ the Levi Civita connection, via lifts of the degenerate Riemannian metrics $g_\cF^B\oplus 0$ and $0\oplus g_\cK^B$ on the base itself, whose tangent bundle we see as split $TB=\cF_B\oplus \cK_B$.)
    
    Consider then $f_\cF^B\colon T^*\cF_B\to \R$ and $f_\cK^B\colon T^*\cK_B\to \R$ defined by $f_\cF^B(p_\cF)=\frac{1}{2}\widetilde g_\cF^B(p_\cF,p_\cF)$ and $f_\cK^B(p_\cK)=\frac{1}{2}\widetilde g_\cK^B(p_\cK,p_\cK)$ respectively. 
    (While not strictly necessary here, it will be useful later to have a distinct notation for the leafwise $p$-coordinates on $T^*\cF_B$ and on $T^*\cK_B$, so we do this already starting from this point.)
    
    Then, e.g.\ by a leafwise version of the argument \cite[Appendix B]{Nie06}, there are natural leafwise complex structures $J_\cF^B$ and $J_\cK^B$ on $\widetilde{\cF}_B$ and $\widetilde \cK_B$, tamed respectively by $\d\lambda_\cF^B$ and $\d\lambda_\cK^B$.
    Moreover, we can consider their natural extensions as ``degenerate almost complex structures'', still denoted the same way, to the total spaces $T^*\cF_B$ and $T^*\cK_B$ by posing $J_\cF^\partial(v)=0$ for all $v\in  \cK_B^\cF\subset T(T^*\cF_B)$, and similarly for $J_\cK^B$.
    Moreover, as argued in \cite[Appendix B]{Nie06}, these extensions can be seen to satisfy that $\d f_\cF^B \circ J_\cF^B=-\lambda_\cF^B$ and $\d f_\cK^B \circ J_\cK^B = - \lambda_\cK^B$.

    \smallskip

    We lastly point out that these leafwise Liouville, Riemannian, and almost complex structures give honest respective structures on the fiber product $T^*\mathcal{F}_B \times_B T^*\mathcal{K}_B$, {where we have induced foliations $\overline{\cK}_B$ and $\overline{\cF}_B$}.

    More precisely, $J_\cF^B$ and $J_\cK^B$ on $T^*\cF_B$ and $T^*\cK_B$ naturally induce an honest almost complex structure, which we denote by $J_\cF^B\oplus J_\cK^\partial$, on $T^*\mathcal{F}_B  \times_{B} T^*\mathcal{K}_B$.
    Moreover, as the latter foliations are transverse to each other, this gives an honest almost complex structure, which we denote by $J_\cF^B\oplus J_\cK^B$, on $T^*\mathcal{F}_B \times_B T^*\mathcal{K}_B$.
    
    Note also that, when seen on the latter fiber product, $\lambda_\cF^B$ is invariant under the holonomy of $\overline \cK_B$; the same goes for $\lambda_\cK^B$ with $\overline\cF_B$.
    Moreover, $\lambda_\cF^B+\lambda_\cK^B$ is a honest Liouville structure on $T^*\mathcal{F}_B \times_B T^*\mathcal{K}_B$.

    Lastly, one can check, for instance, in local coordinates that the Riemannian metric $g_\cF^B\oplus g_\cK^B$ is such that both foliations $\overline{\cF}_B$ and $\overline{\cK}_B$ are totally geodesic. {Now, an explicit computation in local coordinates (as essentially done in \cite[Theorem 13]{Nie06}) also tells us that {with respect to the flow of the leafwise-Liouville vector field $X_{\cF}^B=\nabla f_\cF^B$, the form $\lambda_{\cF}^B$ is $1$-homogenous, while $J_{\cF}^B, J_{\cK}^B$ and $\lambda_{\cK}^B$ are invariant, where all the objects are understood in the fiber product space.}}


\subsubsection{A bifoliated manifold associated to the boundary}
\label{sec:bifoliated_boundary}

Consider now the boundary $\partial N$ of $N$.
Recall that, at each point of $\partial N$, $\cK_N= TN\cap \cK_\xi$ is tangent to $T(\partial N)$, and in fact more specifically to $T(\partial N)\cap \ker \d\theta$, where $\theta\colon N \setminus B \to S^1$ described the bordered open book decomposition induced on $N$ by $\xi$. 
For simplicity of notation, we denote by $\cK_\partial$ the foliation $\cK_N\vert_{\partial N}$ of $\partial N$.

We also make the following assumptions.
First, that there is another foliation on $\partial N$, that we denote $\cF_\partial$, that is moreover transverse to $\cK_\partial$ at all points \emph{inside $T(\partial N)\cap \ker \d\theta$}, so that $T(\partial N)\cap \ker \d\theta = \cF_\partial \oplus \cK_\partial$.
(Note in particular that, contrary to the situation in the previous section, the pair of foliations $(\cF_\partial,\cK_\partial)$ is hence \emph{not} transverse in $\partial N$.)

Second, that there is a Riemannian metric $g_\partial$ on $\partial N$ which make $\cF_\partial$ and $\cK_\partial$ orthogonal and which is invariant under the holonomy of $\cF_\partial$, $\cK_\partial$, and $(\cF_\partial \oplus \cK_\partial)^{\perp_{g_\partial}}\subset T(\partial N)$.
For later reference, we denote by $Z$ a section of $T(\partial N)$ spanning $(\cF_\partial \oplus \cK_\partial)^{\perp_{g_\partial}}$.
We also point out that such a $g_\partial$ makes $\cF_\partial$, $\cK_\partial$, and $\langle Z \rangle$ totally geodesic.

As done in the previous section, we would now like to define a model manifold canonically associated to the pair $(\cF_\partial,\cK_\partial)$ on $\partial N$. 
This can be done in a completely analogous way to the previous section, and hence we will omit details where possible.

\medskip

Consider the two dual vector bundles
\[
        \pi^\cF_\partial\colon T^*\mathcal{F}_\partial\to \partial N \; \text{ and }\; \pi^\cK_\partial\colon T^*\cK_\partial\to B \; ,
\]
whose total spaces we again we see as equipped with regular foliations, that we denote by $\widetilde\cF_\partial$ and $\widetilde\cK_\partial$ respectively, given by pullback under the projections of the respective foliations on $\partial N$.
Their leaves are the cotangent bundles of the leaves of $\cF_\partial$ and $\cK_\partial$, respectively.

\smallskip
    
There are then natural one-forms $\lambda_\cF^\partial$ and $\lambda_\cK^\partial$ on respectively $T^*\mathcal{F}_\partial$ and $T^*\mathcal{K}_\partial$, that are Liouville on the respective foliations $\widetilde\cF_\partial$ and $\widetilde\cK_\partial$, 
and invariant under the holonomy of the other foliation, i.e.\ of $\widetilde\cK_\partial$ and $\widetilde\cF_\partial$ respectively. 
(This can be seen in local bifoliated charts as in the case of the binding in the previous section.)
The intrinsic definition of $\lambda_\cF^\partial$ on a vector $v$ tangent to $T^*\cF_\partial$ is given by the evaluation of $v$ (that is a cotangent vector on a leaf) on the vector given by the image of $v$ under the composition of first $\d\pi_\cF^\partial$ and then the projection $T(\partial N)=\langle Z\rangle \oplus \cF_\partial \oplus \cK_\partial\to \cF_\partial$, where $Z$ is the previously fixed section of $T(\partial N)$ spanning $(T(\partial N) \cap \ker(\d\theta))^{\perp_{g_\partial}}$. 
For $\lambda_\cK^B$, a similar description holds.
    
In local coordinates, one can also see the following.
First, $\cK_\partial$ admits a lift to $T^*\cF_\partial$, that we denote $\cK_\partial^\cD$, such that $\d\pi_\cF^\partial\vert_{\cK_\partial^\cD}\colon \cK_\partial^\cD\to T(\partial N)$ is injective, and more precisely an isomorphism onto $\cK_\partial\subset T(\partial N)$.
Second, $\d\lambda_\cF^\partial$ has characteristic foliation exactly given by $\cK^\cF_\partial$; in other words, $\lambda_\cF$ is invariant under holonomy of $\cK^\cF_\partial$. 
Analogous properties hold for $\lambda_\cK^\partial$ and a lift $\cF^\cK_\partial$ of $\cF_\partial$ to $T^*\cK_\partial$.

\smallskip
    
We now consider the fibered product
\[
    T^*\mathcal{F}_\partial \times_{\partial N} T^*\mathcal{K}_\partial\to \partial N
\] 
with respect to the natural projections to $\partial N$ of the two factors.
Again, the total space comes with a pair of foliations (this time \emph{not} complementary!), denoted $\overline{\cF}_\partial$ and $\overline{\cK}_\partial$, which are naturally induced respectively by $\cF_\partial$ and $\cK_\partial$ on the first and second factors.

\smallskip

The Riemmanian metric $g_\partial$ induces a degenerate Riemannian metric
$g_\cF^\partial$ on $T^*\cF_\partial$ that is positive definite on $\cF_\partial$ and has isotropic foliation $\langle X \rangle \oplus \cK_\partial$ (recall that $\langle X \rangle \oplus = (\cF_\partial \oplus \cK_\partial)^{\perp_{g_\partial}}\subset T(\partial N)$).
Similarly, we get an induced degenerate Riemannian metric $g_\cK^\partial$ on $T^*\cK_\partial$ that is positive definite on $\cK_\partial$ and has isotropic foliation $\langle Z \rangle \oplus \cF_\partial$.
Let then $\widetilde g_\cF^\partial$ and $\widetilde g_\cK^\partial$ be the induced degenerate Riemannian metrics on $T^*\mathcal{F}_\partial$ and $T^*\cK_\partial$, with isotropic foliations respectively $\langle Z^\cF \rangle \oplus \cK^\cF_\partial$ and $\langle Z^\cK \rangle \oplus \cF^\cK_\partial$, where $Z^\cF$ and $Z^\cK$ are the lifts of $Z$ to $T^*\mathcal{F}_\partial$ and $T^*\cK_\partial$.

Consider then $f_\cF^\partial\colon T^*\cF_\partial\to \R$ and $f_\cK^\partial\colon T^*\cK_\partial\to \R$ defined by $f_\cF^\partial(p_\cF)=\frac{1}{2}\widetilde g_\cF^\partial(p_\cF,p_\cF)$ and $f_\cK^\partial(p_\cK)=\frac{1}{2}\widetilde g_\cK^\partial(p_\cK,p_\cK)$ respectively. 

There are then natural leafwise complex structures $J_\cF^\partial$ and $J_\cK^\partial$ on $\widetilde{\cF}_\partial$ and $\widetilde \cK_\partial$, tamed respectively by $\d\lambda_\cF^\partial$ and $\d\lambda_\cK^\partial$.
Moreover, we can consider their natural extensions as ``degenerate almost complex structures'', still denoted the same way, to the total spaces $T^*\cF_\partial$ and $T^*\cK_\partial$ by posing $J_\cF^\partial(v)=0$ for all $v\in \langle X^\cF\rangle \oplus \cK_\partial^\cF\subset T(T^*\cF_\partial)$, and similarly for $J_\cK^\partial$.
Then, as argued in \cite[Appendix B]{Nie06}, these extensions satisfy that $\d f_\cF^\partial \circ J_\cF^\partial=-\lambda_\cF^\partial$ and $\d f_\cK^\partial \circ J_\cK^\partial=  - \lambda_\cK^\partial$.

\smallskip

We lastly point out that these leafwise Liouville, Riemannian and almost complex structures give honest respective structures on the fiber product $T^*\mathcal{F}_{\partial} \times_{\partial N} T^*\mathcal{K}_\partial$, {where we have induced foliations $\overline{\cK}_\partial$ and $\overline{\cF}_\partial$}.
    
More precisely, $J_\cF^\partial $ and $J_\cK^\partial$ on $T^*\cF_\partial$ and $T^*\cK_\partial$ naturally induce a degenerate almost complex structure, that we denote by $J_\cF^\partial\oplus J_\cK^\partial$, on $T^*\mathcal{F}_\partial  \times_{\partial N} T^*\mathcal{K}_\partial$, where the degeneracy lies just in the direction $\langle \widetilde Z\rangle$, where $\widetilde Z$ is the natural lift of $Z$ to $T^*\mathcal{F}_\partial  \times_{\partial N} T^*\mathcal{K}_\partial$.
    
Note also that, when seen on the latter fiber product, $\lambda_\cF^\partial$ is invariant under the holonomy of $\overline \cK_\partial$; the same goes for $\lambda_\cK^\partial$ with $\overline\cF_\partial$.
Moreover, $\lambda_\cF^\partial+\lambda_\cK^\partial$ is a Liouville structure transversely to $\langle \widetilde Z\rangle$ on $T^*\mathcal{F}_\partial \times_{\partial N} T^*\mathcal{K}_\partial$.

One can check for instance in local coordinates that the honest Riemannian metric $g_Z\oplus g_\cF^\partial\oplus g_\cK^\partial$, where $g_Z$ makes $\widetilde Z$ a unit vector and is degenerate along $\overline{\cF}_\partial\oplus \overline{\cK}_\partial$, is so that the three foliations $\overline{\cF}_\partial$, $\overline{\cK}_\partial$ and $\langle \widetilde Z \rangle$ are totally geodesic.
Moreover, as in the case of the binding, an explicit computation in local coordinates also gives {that with respect to the flow of the leafwise-Liouville vector field $X_{\cF}^\partial=\nabla f_{\cF}^\partial$, the form $\lambda_{\cF}^\partial$ is $1$-homogenous, while $J_{\cF}^\partial, J_{\cK}^\partial$ and $\lambda_{\cK}^\partial$ are invariant, where all the objects are understood in the fiber product space.}

\subsubsection{Confoliated bLobs}
\label{sec:def_confol_bLob}

After having introduced the explicit bifoliated manifolds in the previous two sections, we are finally ready to introduce the required smooth normal forms for $\cK_\xi$ near $B$ and $\partial N$, which we formulate in the form of the following definition.

\begin{definition}
    \label{def:confoliated_bLob}
    Let $(\xi,\CS_{\xi,\mu})$ be a c-symplectic confoliation on $M$.
    A \textbf{confoliated bLob} is a proto-confoliated bLob $N\subset M$ which satisfies, in addition, that:
    \begin{enumerate}
    \setcounter{enumi}{3}
        \item\label{item:confoliated_bLob_bifoliated_binding_boundary} $B$ and $\partial N$ satisfy the assumptions of \Cref{sec:bifoliated_binding,sec:bifoliated_boundary}, concerning the existence of the foliations $\cF_B$ and $\cF_\partial$, transverse to $\cK_\xi\cap TB = \cK_B$ and to $\cK_\xi\cap TN =\cK_\partial$ respectively, and of the metrics $g_B$ and $g_\partial$. (We hence use the notations from those sections in the following point.)

        \item\label{item:confoliated_bLob_smooth_normal_forms} There are
        neighborhoods $V_B$ of $B$ in $M$ and $V_\partial$ of $\partial N$, also in $M$, together with codimension $0$ embeddings 
        \[
            \psi_B \colon V_B \hookrightarrow \R\times \R^2\times T^*\cF_B \times_B T^*\cK_B \, ,
            \quad 
            \psi_\partial \colon V_\partial \hookrightarrow \R\times \R\times T^*\cF_\partial \times_{\partial N} T^*\cK_\partial \, ,
        \]
        such that: 
        \begin{itemize}
            \item $\psi_B(N\cap V_B)= \{0\} \times \R^2 \times B$ and $\psi_\partial(N\cap V_\partial) = \{0\} \times \R_{\leq 0} \times \partial N$ (where $B$ is seen here as the zero section of $T^*\cF_B \times_B T^*\cK_B\to B$, and similarly for $\partial N$ with $T^*\cF_\partial \times_{\partial N} T^*\cK_\partial$), 
            \item the bordered open book decomposition $(B,\theta\colon N\setminus B\to S^1)$ on $N$ is sent, respectively, by $\psi_B$ to the one defined by the angular coordinate on the $\R^2$-factor of $\{0\}\times \R^2 \times B$, and by $\psi_\partial$ to the one defined by $\theta\vert_{\R_{\leq 0}\times \partial N} \colon \R_{\leq 0}\times \partial N \to S^1$, where $\R_{\leq 0}\times \partial N$ is seen as a neighborhood of $\partial N$ in $N$, 
            \item and $(\psi_B)_*\cK_\xi = \overline\cK_B$ and $(\psi_\partial)_*\cK_\xi= \overline \cK _\partial$.
        \end{itemize}
    \end{enumerate}

\end{definition}

\begin{remark}
\label{rmk:technical_condition_nice_blob}
The above requirements are satisfied in ``product-like situations'' like those in the examples described in \Cref{sec:def_confol_bLob}. 
These will, in particular, be the situations in our later applications.
\end{remark}

\begin{remark}
    \label{rmk:metrics_bifoliation_totally_geodesic}
    Using the Riemannian metrics considered at the end of \Cref{sec:bifoliated_binding,sec:bifoliated_boundary}, 
    we can moreover construct Riemannian metrics $h_B$ on $V_B$ and $h_\partial$ on $V_\partial$ that make the respective transverse bifoliations $(\R\times \R^2\times \overline\cF_B,\overline \cK_B)$ and $(\R\times \R\times \overline  \cF_\partial,\overline \cK_\partial)$ both totally geodesic.
\end{remark}

We now give here some natural examples of confoliated bLob's (that are not contact bLob's as in \cite{MNW}), with the intention of showing that these are not that rare.
\begin{example}\label{exa:product_confoliated_blob}
    The first, and most relevant for us, example can be found in the product of an overtwisted contact manifold $(M,\xi_M=\ker \alpha)$ with a symplectic manifold $(W,\omega)$ admitting a weakly exact Lagrangian $L$. 
    In $M$, there is a contact bLob $N_c$ (for example a Plastikstufe) by \cite{BEM15}. 
    Consider in $M\times W$ the confoliation $\xi=\ker \alpha$ endowed with the compatible two-form $\mu=\omega$. Then $N=N_c\times L$ is a confoliated blob of $(\xi, \CS_{\xi,\mu})$. 
    Notice that in this case, each leaf of $\cK_{\xi}$ near $N$ intersecting $N$ extends as a copy of $(W,\omega)$, so \Cref{item:confoliated_bLob_characteristic_leaves_weakly_exact} follows from the fact that said leaf of $\cK_\xi$ intersects $N$ along the weakly exact Lagrangian $L$. {The bifoliation readily fits the conditions required by Items \ref{item:confoliated_bLob_bifoliated_binding_boundary} and \ref{item:confoliated_bLob_smooth_normal_forms}.}
\end{example}

\begin{example}
    \label{exa:implanting_confoliated_bLob}
    Let $(M,\xi,\CS_{\xi,\omega})$ be any confoliated manifold admitting a confoliated bLob, and $(N,\eta,\CS_{\eta,\mu})$ another confoliated manifold of the same dimension.
    Assume moreover that
    both the c-symplectic confoliations are honest contact near two given points $p\in M$ and $q\in N$, namely, near those points we have $\omega=0$ and $\mu=0$. Then, performing a contact connected sum near those points gives a c-symplectic confoliation on $M\# N$ which admits a confoliated bLob. 
    Notice that the accessible set from the confoliated bLob in $M$ has not changed by this process.
\end{example}

\begin{example}
    Another example of a c-symplectic confoliation with a confoliated bLob, that is not globally a trivial product, can be constructed as follows. Let $(M,\xi_M=\ker \alpha)$ be an overtwisted contact manifold.
    Let $\varphi:M\longrightarrow M$ be a contactomorphism of $(M,\xi_M)$ that is not isotopic to the identity. 
    The mapping torus $H=M\times [0,1]_t / \sim$, where we identify $(0,\varphi(p))$ with $(1,p)$, has an induced foliated contact form $\beta$. 
    The kernel of $\beta\wedge d\beta$ is a vector field transverse to the (contact) foliation $\ker \nu$ with monodromy $\varphi$, where $\nu$ is the closed one form obtained from $dt$. Consider in $H\times S^1$ the confoliation $\xi=\ker \beta$ and the two-form $\mu= \nu \wedge \d\theta$, where $\theta$ is a coordinate in $S^1$ which endows $\xi$ with a c-symplectic structure $\CS_{\xi,\mu}$.
    Let $N_c$ be a Plastiktufe in one of the leaves of $\ker \nu$ in $H$. 
    We claim that $N=(N_c\times \{0\})\times S^1$ is a confoliated bLob. 
    Indeed, we have that the leaves of $\cK_{N}$ are $\{\operatorname{pt}\}\times S^1$, which are Lagrangians in the symplectic leaves of $\cK_{\xi}$ that intersect $N$.
    These leaves are either tori or cylinders, depending on whether the orbit of $\varphi$ along the point in $N_c$ is periodic or not. 
    This shows that these regular leaves, which are the accessible set from $N$, are aspherical, and the Lagrangians are non-trivial in homology and hence weakly exact in particular. {Items \ref{item:confoliated_bLob_bifoliated_binding_boundary} and \ref{item:confoliated_bLob_smooth_normal_forms} are satisfied since locally, a neighborhood of $N$ is like in Example \ref{exa:product_confoliated_blob}}. This example is transversely-exact with respect to $\Omega=\d\beta+\mu$, as defined later in \Cref{def:transversely-exact_bLob}.
\end{example}

\begin{example}
    We describe here a last example: a confoliation that contains a confoliated bLob for which the bifoliation near the bLob is not a product foliation. 
    The ambient manifold, however, is open. 
    Consider a toric Plastikstufe $N_c= D^2\times T^{n-1}$ with $n\geq 3$, with trivial normal bundle, i.e. a contact neighborhood of $N_c$ is $(N_c\times (-\varepsilon,\varepsilon)^{n}, \xi=\ker \alpha)$,
    where $\alpha=\alpha_{OT} + \sum_{i=1}^{n-1} x_id\theta_i$, with $\alpha_{OT}$ being the standard overtwisted $3$-dimensional contact form on $D^2\times (-\varepsilon,\varepsilon)_{x_0}$, that has overtwisted disks $D^2\times \{x_0=c\}$. 
    Choose $\varphi$ to be a diffeomorphism of $N_c\times (-\varepsilon,\varepsilon)^{n}$ that swaps two $S^1$ factors of $T^{n-1}$ and the two corresponding $(-\varepsilon,\varepsilon)$ factors. 
    As $\varphi$ is a strict contactomorphism, $\xi$ on $N_c\times(-\epsilon,\epsilon)^n$ induces an even contact structure $\tilde{\xi}$ on the mapping torus $U$ of $\varphi$.
    One can then take the product $U \times (-\delta,\delta)$, and if $\mathcal{F}$ denotes the kernel foliation of $\tilde{\xi}$ (by circles) in $U$, we can equip $\mathcal{F}\oplus T(-\delta,\delta)$ with a canonical (Liouville-type) foliated symplectic form $d\lambda_0^s$. Then $\tilde \xi$ in $U\times (-\delta,\delta)$ equipped with $\CS_{\tilde{\xi}, \d\lambda_0}$ is a confoliation, and $N_c\times I / \sim$ is a confoliated bLob which is not a trivial product. Notice that the symplectic leaves are all cylinders.
\end{example}


\subsection{Transversely-exact confoliated bLobs}
\label{sec:transversely-exact_bLob}

If a c-symplectic confoliation admits a symplectic filling, in particular, there is a closed two-form that is non-degenerate along $\xi$ and the confoliation is strong symplectic in the sense of \Cref{def:sympl_confoliation}.
In the context of obstructing fillability, we will need this closed two-form to satisfy a technical condition, reminiscent of the exactness condition along a contact bLob \cite{MNW}.

\begin{definition}
    \label{def:transversely-exact_bLob}
    Let $(\xi,\CS_{\xi,\mu})$ be a c-symplectic confoliation on $M$, and $N$ a confoliated bLob.
    Let also $\Omega_M$ be a closed two-form on $M$ such that $\Omega_M|_{\cK_{\xi}}=f\mu|_{\cK_{\xi}}$ for some positive function $f\in C^\infty(M)$, and such that $[\Omega_M]$ and $\CS_{\xi,\mu}$ direct a symplectic cone in $\xi$.
    Then, we say that $N$ is \textbf{transversely-exact with respect to $\Omega_M$} if
    there is an open neighborhood $U$ of $N$, {open neighborhoods $V_B, V_\partial \subset U$ of $B$ and $\partial N$}, a splitting $TM|_U = \cK_\xi|_U \oplus \cF_U$, 
    and a form $\gamma$ on $U$ satisfying the following properties:
    \begin{enumerate}
        \item\label{item:transversely-exact_bLob__FU_reg_fol} $\cF_{U}$ is a regular foliation, and its restriction to $V_B$ and to $V_\partial$ respectively is mapped to $\R\times \R^2\times \overline\cF_B$ and to $\R\times \R\times \overline  \cF_\partial$ by $\psi_B$ and $\psi_\partial$ (where the latter are as in \Cref{item:confoliated_bLob_smooth_normal_forms} of \Cref{def:confoliated_bLob}).
        \item\label{item:characteristic_exact_bLob_forms} $\gamma = 0 $ on $\cK_\xi\vert_U$ (which implies $\d\gamma=0$ on $\cK_{\xi}\vert_U)$, and $(\Omega_M- \d\gamma)\vert_{\cF_U}=0 $.
        \item \label{item:characteristic_exact_bLob_basic-exact} The {$\cF_U$-basic} cohomology class $[\Omega_M-\d\gamma]\in H^2_{b}(V_B\cup V_\partial;\cF_U)$ vanishes.
    \end{enumerate}
\end{definition}

A few comments concerning this definition are in order.
First, \Cref{item:transversely-exact_bLob__FU_reg_fol} implies in particular that  $F_U$ intersects nicely with $TB$ and $T(\partial N)$.
More precisely, we have that: $\cF_U\cap TB$ is complementary to $\cK_\xi\cap TB$ in $TB$;
$\cF_U\cap T(\partial N)$ is complementary to $\cK_\xi\vert_{\partial N}$ in $T(\partial N)$; and $\cF_U\cap T(\Theta_\partial^{-1}(*))$ is complementary to $\cK_\xi\vert_{T(\Theta_\partial^{-1}(*))}$ in $T(\Theta_\partial^{-1}(*))$, where $T(\Theta_\partial^{-1}(*))$ is any fiber of the restriction of $\Theta\colon N\setminus B \to S^1$ defining the open book foliation to $\partial N$.

Secondly, the condition that $(\Omega_M-\d\gamma)\vert_{F_U}=0$ in \Cref{item:characteristic_exact_bLob_forms} above is equivalent to  $\Omega_M\vert_U = \omega_\cK + \d\gamma$, where $\omega_\cK$ is the unique two-form on $U$ such that  $\omega_\cK\vert_{\cF_U}=0$, and $\omega_\cK \vert_{\cK_\xi}=\Omega_M\vert_{\cK_\xi}$, that hence induces on $\cK_\xi$ the same conformal class as $\mu\vert_{\cK_\xi}$.
Moreover, $\omega_\cK$ is then automatically closed, as it is the difference of two closed two-forms, and is basic with respect to $\cF_U$, its kernel.
\Cref{item:characteristic_exact_bLob_basic-exact}, which requires the exactness of the basic cohomology class of $\omega_{\cK}$, implies in particular that over $V_B$ and $V_\partial$ the two-form $\omega_\cK$ admits a primitive that vanishes along {$\cF_U$}.

\medskip

The following is a direct consequence of \Cref{lem:deformation_lemma}.
\begin{lemma}
\label{lem:confol_bLob_can_assume_omega_only_char_fol}
    Let $(\xi,\CS_{\xi,\mu})$ be a c-symplectic confoliation, and $N$ a confoliated bLob which is transversely-exact with respect to a closed two-form $\Omega_M$ on $M$.
    Then, there are $T>0$ large enough and a symplectic form $\Omega$ on $[0,T]\times M$ which restricts to $\d(t\alpha)+\Omega_M$ near $\{0\}\times M$ and to $\d(t\alpha)+\omegacK$ near $\{T\}\times N\subset \{T\}\times M$, and for which each level $\{t\}\times M$ has an induced symplectic cone $\CS_{\xi,\Omega\vert_{\{t\}\times M}}$. 
    Furthermore, if $(\xi, \CS_{\xi,\mu})$ is tame, then there is a $J_\xi$ cotamed by $\CpS_{\xi}$ and $\Omega|_{\{T\}\times M}$. 
\end{lemma}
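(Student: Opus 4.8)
The plan is to reduce everything to an application of \Cref{lem:deformation_lemma}, using the one-form $\gamma$ provided by transverse-exactness to manufacture the deforming primitive. Recall from the discussion following \Cref{def:transversely-exact_bLob} that on the neighborhood $U$ of $N$ one may write $\Omega_M\vert_U = \omegacK + \d\gamma$, where $\gamma$ vanishes on $\cK_\xi$ and $\omegacK$ agrees with $\Omega_M$ on $\cK_\xi$. Thus the desired top form $\d(t\alpha)+\omegacK$ near $\{T\}\times N$ is exactly what one gets by deforming $\Omega_M$ across the collar by the exact correction $-\d\gamma$; the only wrinkle is that $\gamma$, and hence $\omegacK$, is defined merely on $U$.

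First I would globalize the primitive. Shrinking $U$ if necessary, I may assume $\cK_\xi$ has constant rank on $U$, which is possible by \Cref{item:confoliated_bLob_constant_rank} of \Cref{def:confoliated_bLob}. Pick a cutoff $\chi\colon M\to[0,1]$ with $\supp\chi$ compactly contained in $U$ and $\chi\equiv 1$ on a smaller neighborhood $U'$ of $N$, and set $\beta\coloneqq -\chi\gamma$ (extended by zero) and $\Omega_M'\coloneqq \Omega_M+\d\beta$. Both forms are closed, and on $U'$ one has $\d\beta=-\d\gamma$, so $\Omega_M'=\omegacK$ near $N$, which yields the prescribed behaviour at $\{T\}\times N$. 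The crucial verification is hypothesis \eqref{eqn:assumption_deformation_lemma}: writing $\d\beta=-\chi\,\d\gamma-\d\chi\wedge\gamma$, both terms vanish on $\cK_\xi$, since $\d\gamma\vert_{\cK_\xi}=0$ and $(\d\chi\wedge\gamma)(v,w)=\d\chi(v)\gamma(w)-\d\chi(w)\gamma(v)=0$ for $v,w\in\cK_\xi$ because $\gamma\vert_{\cK_\xi}=0$. Hence $\d\beta\vert_{\cK_\xi}=0$ identically, so $(\Omega_M+s\,\d\beta)\vert_{\cK_\xi}=\Omega_M\vert_{\cK_\xi}$ for all $s\in[0,1]$, which is non-degenerate because $\Omega_M\vert_\xi\in\CS_{\xi,\mu}$. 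With \eqref{eqn:assumption_deformation_lemma} checked, the first part of \Cref{lem:deformation_lemma} produces the symplectic $\Omega$ on $[0,T]\times M$ interpolating from $\d(t\alpha)+\Omega_M$ to $\d(t\alpha)+\Omega_M'$, with a symplectic cone on each slice; near $\{T\}\times N$ this is $\d(t\alpha)+\omegacK$.

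For the tame statement I would invoke the second part of \Cref{lem:deformation_lemma}. Since $(\xi,\CS_{\xi,\mu})$ is tame and $\Omega_M\vert_\xi\in\CS_{\xi,\mu}$, \Cref{def:cotamed_sympl_cone} supplies a complex structure $J_\xi$ cotamed by $\CpS_\xi$ and $\Omega_M$, which preserves $\cK_\xi$ by \Cref{rmk:tamed_invariance_Kxi}. The remaining hypothesis, that $J_\xi$ be weakly tamed by $\d\beta\vert_{\cK_\xi}$, holds trivially: for $v\in\cK_\xi$ one has $J_\xi v\in\cK_\xi$, so $\d\beta(v,J_\xi v)=0\geq 0$ by the vanishing above. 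As $\supp\d\beta\subset U$ with $\cK_\xi$ of constant rank there, the second part of \Cref{lem:deformation_lemma} then yields a $J_\xi'$ cotamed by $\CpS_\xi$ and $\Omega\vert_{\{T\}\times M}$.

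The only genuinely delicate point is the bookkeeping around the cutoff $\chi$: one must ensure that passing from the locally defined $\gamma$ to the globally defined $\beta$ spoils neither \eqref{eqn:assumption_deformation_lemma} nor the weak-taming condition. Both survive for the same structural reason — transverse-exactness forces $\gamma$, and therefore the entire correction $\d\beta$, to be \emph{invisible along $\cK_\xi$} — so the deformation never touches the symplectic data on the characteristic foliation, which is precisely where both hypotheses of \Cref{lem:deformation_lemma} live.
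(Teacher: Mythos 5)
Your proof is correct and follows essentially the same route as the paper's: globalize $\gamma$ by a cutoff, observe that the exact correction $\d(\chi\gamma)$ vanishes identically on $\cK_\xi$ so that hypothesis \eqref{eqn:assumption_deformation_lemma} holds, and then apply both parts of \Cref{lem:deformation_lemma}. If anything, your write-up is slightly more careful than the paper's (which writes the deformed form as $\omega_\cK+\d(\chi\gamma)$ rather than $\Omega_M-\d(\chi\gamma)$, a sign/bookkeeping slip you implicitly correct, and which leaves the verification of \eqref{eqn:assumption_deformation_lemma} and of the weak-taming hypothesis implicit).
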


\begin{remark}
    \label{rmk:still_bLob_after_deformation}
    The submanifold $\{T\}\times N$ in the slice $\{T\}\times M$ as in the conclusion of the lemma is a (transversely-exact) confoliated bLob for $\xi$ equipped with the symplectic cone $\CS_{\xi,\Omega\vert_{\{T\}\times M}}$.
\end{remark}

\begin{proof}
Let $U$ and $\gamma$ be respectively the open set and the one-form given by the transversely-exact property of $N$ as in \Cref{def:transversely-exact_bLob}, i.e.\ such that $\Omega_M\vert_U = \omega_\cK + \d\gamma$ and $\gamma=0$ on $\cK_\xi\vert_U$.
Consider then the two-form $\Omega_M'=\omega_{\cK} + \d(\chi\gamma)$, where $\chi$ is a cutoff function that is equal to $1$ on $N$ and to $0$ on the complement of a very small neighborhood of $N$.  This two-form is globally defined by extending $\chi\gamma$ as zero outside of $U$.
As $\gamma$ vanishes on $\cK_\xi$ by assumption, the restriction of $\d(\chi\gamma)= \d\chi \wedge \gamma + \chi \d\gamma$ to $\cK_\xi$ also vanishes identically (recall that $\cK_\xi$ is regular over $U$).
We are then under the assumptions of \Cref{lem:deformation_lemma} (with $\beta = \chi\gamma$ and $\Omega'_M=\omega_{\cK}$), whose application directly concludes the proof.
\end{proof}

\bigskip
We will say that \textbf{a symplectic filling $(X,\Omega)$} of a c-symplectic confoliated manifold \textbf{has a transversely-exact confoliated bLob $N$} if the confoliated bLob $N$ is transversely-exact with respect to $\Omega|_{TM}$.
In that context, up to applying \Cref{lem:confol_bLob_can_assume_omega_only_char_fol}, we will assume that $\Omega$ has a normal form of the type $d\hat \alpha + \omegacK + d(t\hat \alpha)$ on $(-\epsilon,0]\times U$, where $\hat \alpha=T\alpha$ is a defining form of $\xi$, where $U$ is a neighborhood of $N$ in $M$ and $(-\epsilon,0]\times U$ lies inside a collar neighborhood $(-\epsilon,0]\times M$ of $M=\partial X$.
The advantage of this normal form is that we are able to work with an explicit model near the confoliated bLob which is independent of the symplectic form on the filling.

\medskip

First, a general observation. 
Like for hyperplane distributions, given a closed two-form $\omega$ on $M$, one can consider its characteristic distribution, namely 
\[
\cK_\omega  \coloneqq \{\,v\in TM \; \vert \; 
    (\iota_v \omega) = 0 \,\} \, .
\]
\begin{remark}
\label{rmk:char_distr_omega_exterior_ideals}
{This definition also fits into the more general notion for exterior differential ideals described in \Cref{rmk:char_distr_xi_exterior_ideals}.}
\end{remark}

Exactly as for the case of $\cK_\xi$ stated in \Cref{lem:char_distr_involutive_and_invariant} (c.f.\ \Cref{rmk:char_distr_involutive_invariant}), we have that $\cK_\omega$ is involutive and that $\omega$ is invariant under the flow of any vector field tangent to $\cK_\omega$.

{
For later purposes, we also point out the following analogue of \Cref{lem:gray_stability}:
\begin{lemma}
    \label{lem:moser_stability}
    Let $M$ be a (possibly open) manifold, and $(\omega_t)_{t\in[0,1]}$ a smooth family of two-forms sharing the same characteristic foliation $\cK$, which is assumed to be regular.
    Assume that the $\cK$-basic cohomology class $[\omega_t]\in H^2_b(M;\cK)$ is constant in $t$.
    Let also $(\nu_t)_{t\in[0,1]}$ be any smooth family of sub-bundles of $TM$ that are complementary to $\cK$ for each $t\in[0,1]$.
    Then, there is a time-dependent vector field $X_t$ on $M$, which at time $t$ is tangent to $\nu_t\cap \xi_t$, and so that its flow (wherever defined) satisfies $\psi_t^*\omega_t=\omega_0$.

    Moreover, if $Q\subset M$ is a subset where $\xi_t\cap TM\vert_Q$ is independent of $t$, $\psi_t$ can be chosen to be the identity on $Q$.
\end{lemma}
The proof is completely analogous to that of \Cref{lem:gray_stability}, hence omitted. The only difference is that the \emph{basic} cohomology class must be constant in this case to guarantee that $\dot\omega_t$ has a primitive which is also invariant under holonomy of $\cK$.
We also point out the following properties.
\begin{remark}
    \label{rmk:moser_stability}
    The flow $\psi_t$ in \Cref{lem:moser_stability} preserves $\cK$.
    Moreover, if $\nu_t$ is independent of $t$ and a foliation, this is also preserved by the flow $\psi_t$.
\end{remark}
}

\bigskip

In our setup, let us focus on a neighborhood $U$ of $N$ in $M$ where the assumptions in \Cref{def:confoliated_bLob,def:transversely-exact_bLob} hold and where (up to applying \Cref{lem:confol_bLob_can_assume_omega_only_char_fol}) $\Omega\vert_M$ reads as $\d \alpha + \omegacK$, for some one-form $\alpha$ defining $\xi$.
In particular, $\cK_\xi$ and $\cK_{\omegacK}$ are both regular foliations on $U$.
In fact, following along the proof of \Cref{lem:confol_bLob_can_assume_omega_only_char_fol} one can easily see that $\cK_{\omegacK}$ is nothing else than $\cF_U$ from \Cref{def:transversely-exact_bLob}; in particular, it satisfies \Cref{item:transversely-exact_bLob__FU_reg_fol} of \Cref{def:transversely-exact_bLob}.
We know that the corank of $\cK_\xi$ is just given by twice the order of $p$ plus one, which is hence supposed to be constant on $U$.
The rank of $\cK_{\omegacK}$ is then complementary to $\rk \cK_\xi$, due to the assumption $\omega_\cK\vert_{\cF_U}=0$ in \Cref{def:transversely-exact_bLob}.
In other words, $\cK_\xi$ and $\cK_{\omegacK}$ are transverse to each other inside $TU$.
(Notice that this is \emph{not true} for $\cK_{\Omega_M\vert_{U}}$, which is of rank $1$.) 
Note also that $\omegacK$ is naturally leafwise symplectic on $\cK_\xi$, while $\xi$ is naturally contact on $\cK_{\omegacK}$.
Moreover, $\omegacK$ is invariant under flows of vector fields tangent to $\cK_{\omegacK}$ while $\xi$ is invariant under flows of vector fields tangent to $\cK_\xi$, by definition of these characteristic distributions.

\begin{notation}
\label{not:pair_of_foliations}
    To simplify the notation, we will denote from now on by $\cF_c$ and $\cF_s$ respectively the regular foliations $\cK_{\omegacK}$ and $\cK_\xi$ on a small neighborhood $U$ of $N$.
    (Here, $c$ stands for contact, while $s$ for symplectic; this refers to their induced leafwise structure.)
    Likewise, their intersection with $N$, with the binding $B$ and with the boundary $\partial N$ of $N$ will be denoted respectively by $\cF_c^N,\cF_s^N$, $\cF_c^B,\cF_s^B$ and $\cF_c^\partial,\cF_s^\partial$.
\end{notation}

Now, according to \Cref{item:confoliated_bLob_characteristic_leaves_in_xiN} of \Cref{def:confoliated_bLob}, $\cF_s^N=\cK_N = \cK_\xi\cap TN$ is a regular foliation tangent to $B$, $\partial N$ and to each regular leaf of $\xi_N$. 
In particular, $\cF_s^B=\cF_s^N\cap TB$, $\cF_s^\partial=\cF_s^N\cap T(\partial N)$, and in fact the rank of $\cF_s^B$ and of $\cF_s^\partial$ coincide with that of $\cF_s^N$.
The other foliation $\cF_c$ also satisfies compatibility conditions with respect to $B$ and $\partial N$ according to the hypothesis made on what was called $\cF_U$ in \Cref{def:transversely-exact_bLob}.
Moreover, the hypothesis made on $\cF_c$ implies that $\xi_N$ induces a bLob (in the contact sense as in \cite[Section 4]{MNW}, or also in the confoliated sense of \Cref{def:confoliated_bLob}) on each leaf of $\cF_c^N$, with binding, boundary and regular leaves contained respectively in those of $\xi_N$.

We summarize the whole situation in a lemma for later reference:
\begin{lemma}
\label{lem:pair_foliations}
    Assume $(X,\Omega)$ is a symplectic filling of a c-symplectic confoliated manifold $(M,\xi,\CS_{\xi,\mu})$ which admits a transversely-exact confoliated bLob $N$.
    Then, (up to attaching a topologically cylindrical end) there is a defining one-form $\alpha$ for $\xi$ and a neighborhood $U$ of $N$ inside $M = \partial X$ such that:
    \begin{enumerate}
        \item\label{item:pair_foliations__Omega_split} $\Omega\vert_U = \d \alpha + \omegacK$, where $\omegacK$ is closed, satisfies $\rk\cK_{\omegacK}+\rk\cK_\xi=\dim(M)$, and $\omegacK\vert_{\cK_\xi}=\Omega\vert_{\cK_\xi}$ over $U$;
        \item $(\cF_s=\cK_\xi,\cF_c=\cK_{\omegacK})$ is a pair of trasverse foliations on $U$, satisfying the following properties:
        \begin{enumerate}
            \item $\cF_c$ coincides with $\cF_U$ from \Cref{def:transversely-exact_bLob};
            \item $\xi\cap \cF_c$ is leafwise contact on $\cF_c$;
            \item $\omegacK$ is leafwise symplectic on $\cF_s$, and globally closed on $U$; 
            \item $\xi$ and $\omegacK$ are invariant under the flow of vector fields tangent, respectively, to $\cF_s$ and $\cF_c$;
            \item $(\cF_c,\cF_s)$ induce on $N$ a (regular) bifoliation $(\cF_c^N,\cF_s^N)$, with $\cF_s^N$ being $\omegacK$-Lagrangian and $\cF_c^N$ having leaves with a contact bLob structure naturally induced from the confoliated bLob induced by $(\xi,\CS_{\xi,\omega})$ on $N$;
            \item $\cF_s^N$ is tangent to the binding $B$, to the boundary $\partial N$, and to every fiber of $\Theta\colon N\setminus B \to S^1$ defining the open book foliation $\xi_N$ (and hence to every intersection $\Theta^{-1}(*)\cap \partial N$); 
            \item {on a neighborhood of $B$ and $\partial N$ in $U$, $\omega_\cK$ is {$\cF_c$-basic} exact,}
            \item $\cF_c^N$ intersects $TB$ in a foliation of constant rank equal to $\rk(\cF_c^N)-1$, is tangent to the boundary $\partial N$, and intersects every fiber $\Theta^{-1}(*)\cap \partial N$ in a foliation of constant rank equal to $\rk(\cF_c^N)-1$.
        \item\label{item:pair_foliations__complex_structure} There is a complex structure $J$ on 
        {$\xi\vert_U$} that is cotamed by $\CpS_{\xi}$ and $\Omega|_M$.
        \end{enumerate}
    \end{enumerate}
\end{lemma}

{Notice that the smooth normal forms in \Cref{item:confoliated_bLob_smooth_normal_forms} from \Cref{def:confoliated_bLob} and \Cref{item:transversely-exact_bLob__FU_reg_fol} from \Cref{def:transversely-exact_bLob} hold for the restrictions of $(\cF_c,\cF_s)$ to neighborhoods of $B$ and $\partial N$.
Note also that \Cref{item:pair_foliations__complex_structure} follows directly from \Cref{item:pair_foliations__Omega_split}; we still include it explicitly in the list to give a more comprehensive first-glance picture of the available geometric structures.}

According to \Cref{lem:pair_foliations}, it is then morally correct to think of a transversely exact confoliated bLob $N$ as a leafwise (with respect to $\cF_c$), and invariant (under flows of vector fields tangent to $\cF_s$) version of the contact bLob as defined in \cite[Section 4]{MNW}. 
\medskip

To finish this subsection, we give two simple criteria for certain bLob's to satisfy Item \ref{item:characteristic_exact_bLob_forms} in Definition \ref{def:transversely-exact_bLob}. 
These will be used later in the applications of our main theorem. The first one concerns strong fillings:
\begin{lemma}\label{lem:strfill}
    {Let $N$ be a confoliated bLob of a c-symplectic confoliation $(\xi,\CS_{\xi,\mu}$ where $\mu$ is such that $(\xi,\mu)$ is a strong confoliation and near $N$ the rank of $\mu$ (as a differential form) is constant and equal to $\frac{1}{2}\operatorname{rank} \cK_{\xi}$. If $\cF$ is the regular foliation given by the characteristic distribution of $\mu$ near $N$, any strong symplectic filling $(W,\Omega)$ of $(\xi,\mu)$ satisfies Item \eqref{item:characteristic_exact_bLob_forms} in Definition \ref{def:transversely-exact_bLob}.}
\end{lemma}
\begin{proof}
    By the definition of strong filling, we have
    $$ \omega=\Omega|_M= C\mu + d\alpha$$
    for some {constant $C>0$} and defining form $\alpha$ of $\xi$. 
    Let $U$ be a neighborhood of $N$ where the rank $k$ of $\cK_{\xi}$ is constant. The fact that the rank of $\mu$ is exactly $\frac{1}{2}\operatorname{rank} \cK_\xi$ in $U$ implies that the characteristic foliation $\mathcal{F}$ of $\mu$ satisfies $TM|_U=\cK_{\xi}|_U\oplus \mathcal{F}|_U$.  
    Notice that we have $$\omega|_{\mathcal{F}}=\d\alpha|_{\mathcal{F}}.$$ 
    Since $\alpha$ vanishes along $\cK_\xi$, this proves the lemma.
\end{proof}
The second one concerns quasi-strong fillings.
\begin{lemma}\label{lem:astrfill}
    Let $N$ be a confoliated bLob in $(M,\xi, \CS_{\xi,\mu})$, with $\dim M= 2n+1$. Suppose that near $N$ {we have both that $2n-\operatorname{rank}\cK_\xi>2$} and that $\mu$ is a closed form whose rank (as a closed differential form, i.e.\ half of the corank of its characteristic foliation) is constant and equal to $\frac{1}{2}\operatorname{rank} \cK_{\xi}$.
    If $\cF$ is the regular foliation given by the characteristic distribution of $\mu$ near $N$, any {quasi-strong} symplectic filling $(W,\Omega)$ of $(M,\xi,\CS_{\xi,\mu})$ {satisfies Item \ref{item:characteristic_exact_bLob_forms} in Definition \ref{def:transversely-exact_bLob}.}
\end{lemma}
\begin{proof}
    By the definition of {quasi-strong filling, we have
    $$ \Omega\vert_M= f \mu + gd\alpha + \alpha\wedge \gamma,$$
    where $f,g$ are positive functions and $\gamma$ is some one-form. Up to changing the defining form of $\xi$, we can assume that we have 
    $$ \Omega\vert_M= f \mu + d\alpha + \alpha\wedge \gamma.$$
    The characteristic foliation $\mathcal{F}$ of $\mu$, defined at points of a sufficiently small neighborhood $U$ of $N$, satisfies $TM|_U=\cK_{\xi}|_U\oplus \mathcal{F}$. Let us just denote by $\omega$ the form $\Omega|_U$. The form $\alpha$ defines in each leaf of $\mathcal{F}$ a contact form. Let $R$ be a leafwise Reeb field of $\alpha$: it satisfies $$\iota_Rd\alpha|_{\cK}=0, \iota_R\alpha=1$$
    and $$\iota_R\mu=0,$$
    since $R$ is tangent to $\cF$. Differentiating the expression of $\omega$, using that it is closed, we have
    $$0=df\wedge \mu + d\alpha\wedge \gamma - \alpha \wedge d\gamma.$$
    Along the foliation, this gives
    $$0 = d\alpha\wedge \gamma |_{\cF}- \alpha\wedge d\gamma |_{\cF}.$$
    Contracting with the foliated Reeb field, we get
    $$ d\gamma|_{\cF}=\alpha \wedge \iota_R d\gamma |_{\cF}.$$
    This implies that $d\gamma \wedge \alpha|_{\cF}=0$, and thus we conclude that $d\alpha\wedge \gamma|_{\cF}=0$. Since $d\alpha$ is non-degenerate in $\xi\cap \cF$, we must have $\gamma|_{\xi\cap \cF}=0$ (as long as $2n-\operatorname{rank}\cK_\xi>2$; this is the only step where this assumption is needed). 
    In particular, we must have $\alpha\wedge \gamma|_{\cF}=0$}. This shows that $$\omega|_{\mathcal{F}}=d\alpha|_{\mathcal{F}}.$$ 
    Since $\alpha$ vanishes along $\cK_\xi$, this shows that $N$ satisfies all the required properties.
\end{proof}


\subsection{Normal forms near the singularities}
\label{sec:normal_form_near_singular_set}
To effectively obstruct fillability, we will need some further compatibility of the confoliated bLob with respect to the c-symplectic confoliation, and then to arrange a special normal form near the core and the boundary.

\subsubsection{Uniqueness of germs near the singularities}
\label{sec:germs_near_singularities}

The desired normal form near the binding will follow from the following result. 

\begin{lemma}
    \label{lem:uniqueness_germ_near_binding_bLob}
    Consider $(M_1,\xi_1,\CS_{\xi_1,\mu_1})$ and $(M_2,\xi_2,\CS_{\xi_2,\mu_2})$, two c-symplectic confoliated manifolds which admit confoliated bLobs $N_1, N_2$. 
    {
    Assume moreover that the bifoliations on $B_1$ and $B_2$ from \Cref{def:confoliated_bLob} are diffeomorphic.}
    Then, there are neighborhoods $U_1$, $U_2$ of $B_1$ and $B_2$ inside $M_1$, $M_2$ respectively, and a diffeomorphism $\Psi \colon U_1 \to U_2$ such that $\Psi(N_1\cap U_1) = N_2\cap U_2$ and $\Psi^*\xi_2=\xi_1$.

    \noindent   
   {Moreover, if the two confoliated bLobs are transversely exact, up to attaching a cylindrical end as in \Cref{lem:pair_foliations} (and using the notations introduced in the paragraph right after), one can also arrange $\Psi^*\omega_{2,\cK_{\xi_2}}=\omega_{1,\cK_{\xi_1}}$.}
\end{lemma}

Albeit lengthy and technical, the following proof is an adaptation of \cite[Theorem I.1.3]{NieHabilit}.

\begin{proof}
We can pullback all the objects to the model neighborhood $V=\R\times \R^2\times T^*\cF_B\times_B T^*\cK_B$, we still denote them by $\xi_1,\xi_2$ and $\omega_1, \omega_2$. The characteristic foliation of both $\xi_1, \xi_2$ is then $\cK:= \overline{\cK}_B$ and the complementary foliation is $\cF= \R \times \R^2 \times \overline{\cF}_B$. The binding, that we still denote by $B$, is $\{0\}\times \{0\} \times B$ (where the last factor is the zero section), and the bLob intersected with $V$, denoted by $N$, is $\{0\}\times \R^2 \times B$.  We keep denoting by $\cK_B, \cF_B$ and $\cK_N, \cF_N$ the intersection of the foliations with $B$ and $N$ respectively.

    The first (lengthy) step is to find a semi-local (near $B$) diffeomorphism matching $\xi_1$ with $\xi_2$ near $N$; in particular, this diffeomorphism will preserve $\cK$.
    Afterwards, we will deal with the second part of the statement, in the transversely-exact case, where we can also match the two-forms $\omega_{1,\cK_{\xi_1}}$ and $\omega_{2,\cK_{\xi_2}}$ with one another.

    \paragraph*{Step 1: Matching the hyperplane distributions.} 
    Let 
    $$I\colon N \longrightarrow U$$
    denote the embedding of $N$ into its neighborhood $U$ in $M$.
    We first prove that there are defining one-forms $\alpha_1$ and $\alpha_2$ for the two confoliations such that
    $I^*\alpha_1 = I^*\alpha_2$ near $B$.
    In order to do so, we argue as follows.

    Consider any pair of defining one-forms $\alpha_1$ and $\alpha_2$.
    By definition of elliptic singularity of open book foliation (and the requirement in \Cref{item:confoliated_bLob_smooth_normal_forms} in \Cref{def:confoliated_bLob}), there are functions $f_1,f_2\colon D^2 \times B \to \R_{\geq 0}$ such that $I^*\alpha_j = f_j (x \d y - y \d x)$ on a neighborhood $D^2\times B \subset N$ of $B$, where $(x,y)\in D^2$.
    We then have $I^*\d\alpha_j = \d f_j \wedge (x\d y - y \d x) + 2 f_j \d x \wedge \d y$.
    Now, if $f_j$ was zero at any point $p\in B$, then $I^*\d\alpha_j$ would be zero on $T_p N$. 
    This would mean that $T_p N $ is a $(n+1)$-dimensional isotropic subspace of $(\xi_j\vert_N,\d\alpha_j)$.
    However, according to what is said in \Cref{rmk:conditions_confoliated_bLob}, a dimension count would then imply that $T_pN$ intersects $(\cK_N)_p$ in a subspace which has dimension bigger than half of that of $(\cK_N)_p$, which is not possible, as we know that the intersection is Lagrangian.
    We then deduce that $f_j \neq 0$ along $B$, and hence near it, so that we have (up to shrinking the disk factor) a well-defined positive function $h$ such that $f_1=hf_2$.
    Up to replacing $\alpha_2$ by $f\alpha_2$, we can then assume that $I^*\alpha_1=I^*\alpha_2=x\d y - y \d x$ in a neighborhood of $B$ inside $N$, as we claimed. 

    \medskip

    Now that we have arranged the defining one-forms to have the same restriction to $N$, we can move forward to arrange that they actually define the same $\xi$ near $N$.
    To this end, we pick the Riemannian $g:=h_B$ from \Cref{rmk:metrics_bifoliation_totally_geodesic}.

    By how the metric $g$ was constructed in the smooth model, the orthogonal vector sub-bundle $\cK^{\perp_{g}}\subset TM\vert_{V}$ is just equal to the foliation $\cF=\overline\cF_B$.

    Recall now that we arranged $I^*\alpha_1=I^*\alpha_2=x\d y - y \d x$ near $B$, and up to shrinking on all of $V\cap N$. We will denote $I^*\alpha_i$ as $\alpha_N$, and then define 
    \[
        F:=\{v\in \ker\alpha_N\subset TN\mid \d \alpha_N(v, \cdot)=0\} \; .
    \]
    Note that, by the explicit form of $\alpha_N$, this is a regular foliation with leaves diffeomorphic to $B$, even though the latter will not be relevant. 
    This foliation is not intrinsic to $\xi_i\cap TN$, but depends on the choice of $\alpha_N$.

    The distributions $\eta_1:=\xi_1\cap \cF$ and $\eta_2:=\xi_2\cap \cF$ are invariant under holonomy of $\cK$, and are both ``linear-contact structures on $\cF$'', by which we mean, for $i=1,2$, that $(\alpha_i\wedge \d\alpha_i^k)\vert_{\cF}>0$ at all points of $V$, where $k=(\rk \cF -1)/2$.
    In particular, one can define, for $i=1,2$, a section $R_i$ of $\cF\to V$ determined uniquely by $\alpha_i(R_i)=1$ and $(\iota_{R_i}\d\alpha_i)\vert_{\cF}=0$.

    Moreover, by the $\cK$-holonomy invariance property of the metric $g$, we can define complex structures $J_1$ and $J_2$ on respectively $\eta_1$ and $\eta_2$ by the identity $\d\alpha_i\vert_{\eta_i}(\cdot,J_i\cdot)=g\vert_{\eta_i}$, for $i=1,2$.
    Note that these $J_1$ and $J_2$ are invariant under the holonomy of $\cK$ by construction.

    Let now $F_\cF:=F\cap \cF$.
    Notice then that, for $i=1,2$, one must have $J_i F_\cF \cap TN=\{0\}$. 
    Otherwise, for $i$ equal to $1$ or $2$, there would be $v=J_iu\in TN$ with $u\in F_\cF$, and so $0=\d \alpha_N(u,v)=\d\alpha_i(u,J_iu)>0$, a contradiction.
    
    Moreover, for $i=1,2$, $R_i$ is transverse to $TN\oplus J_i F_\cF$. 
    Indeed, otherwise, for $i$ equal $1$ or $2$, at some point $p\in N$ (which we suppress from the notation for simplicity) one could write $R_i$ as $v+J_iu$ for some $v\in TN$ and $u\in F_\cF$.
    Then, because $\iota_{R_i}\d\alpha_i=0$, one gets $0=\d\alpha_i(R_i,u)=\d\alpha_i(v,u)+\d\alpha_i(J_iu,u)$.
    But now the first term vanishes as it is equal to $\d\alpha_N(v,u)$ and $u\in \ker\d\alpha_N$ by definition of $F$. By tameness of $J_i$, using the second term, we then get that $u=0$, i.e.\ we can in fact write $R_i=v\in TN$.
    This would imply $\d\alpha_N(R_i,\cdot)=0$ on $TN$, which would mean $R_i\in F$ at that point.
    However, as $F\subset \xi_i$, this is a contradiction.
    Hence, at each point of $N$, $R_i$ is transverse to $TN\oplus J_i F_\cF$ as desired.

    All this yields that, near the binding, we have a splitting
    \begin{equation}
        \label{eqn:splitting_tangent_near_binding}
            TM|_{N}= \langle R_i \rangle \oplus TN \oplus J_iF_\cF \oplus \, \nu = \underbrace{\langle R_i \rangle \oplus (TN\cap \cF) \oplus J_iF_\cF}_{\cF\vert_N}
     \oplus \, \underbrace{\cK_N\oplus  \nu}_{\cK\vert_N}  \, ,
    \end{equation}
    where $\nu$ is a complement distribution to $\cK_N$ inside $\cK$, and where we used that $TN=(TN\cap \cF)\oplus \cK_N$.
    We then define $\Phi\colon TM\vert_N \to TM\vert_N$ by
    \[
        \Phi(aR_1+v+J_1u+w) = aR_2 + v+J_2u + w \,, \text{ for all } a\in\R,v\in TN,u\in F_\cF, w\in \nu \, ,
    \]
    which is clearly an orientation-preserving bundle isomorphism, satisfying the following properties.
    First, $\Phi$ restricts to the identity on both $TN$ and $\cK\vert_N$.
    Second, $\Phi$ maps $\cF$ to itself isomorphically.
    Third, on $TM\vert_N$, $\alpha_2\circ \Phi$ coincides with $\alpha_1$.
    Fourth, the restriction of $\Phi$ to $F_\cF\oplus J_1F_\cF$ is complex linear, i.e.\ for all $v\in F_\cF$ one has $\Phi(J_1v)=J_2\Phi(v)$. 
    Lastly, as $J_1$ and $J_2$ are holonomy invariant, $\Phi$ maps the natural foliation with leaves given by the last two factors $\cK_N\oplus \nu = \cK\vert_N$ in the source space to the analogous one in the target space.
    
    \medskip

    At this point, we construct another Riemannian metric $g'$ on $V$ as follows. 
    On $\cK$, we require $g'\vert_{\cK}=g\vert_\cK$. 
    We then require $\cF$ to be $g'$-orthogonal to $\cK$, and $g'\vert_{\cF}$ to be any extension, invariant\footnote{It is possible to find such an extension thanks to the assumption that a $\cK$-saturated neighborhood of $N$ in $M$ has Hausdorff leaf space (so that one can find partitions of unity in the leaf space) and the fact that the space of such invariant metrics is contractible and non-empty (as it contains $g_1\vert_\cF$).} under holonomy of $\cK$, to $\cF\vert_{V}$ of the pullback $g(\Phi\cdot,\Phi\cdot)$ via $\Phi$ of $g\vert_\cF$, which is defined only on at points of $N$, i.e.\ on the fibers of $\cF\vert_{N}\to N$.
    Note that, by definition (or by an explicit check in local bifoliated coordinates for the smooth model manifold $M=\R\times \R^2\times T^*\cF_B\times_B \times T^* \cK_B$), for any such $g'$ we have that both $\cK$ and $\cF$ are still totally geodesic (recall $\cF=\overline \cF_B$), and moreover $\Phi$ is an isometry $(TM\vert_N,g')\to(TM\vert_N,g)$ preserving the splitting $\cF\vert_N\oplus\cK\vert_N$, that is orthogonal for both $g$ and $g'$ by construction.

    We can hence consider the codimension $0$ embedding $\Psi$, that we can assume to be defined on $V$ (up to shrinking) and valued on some neighborhood $V'$ of $N$, given by the composition 
    \[
       \Psi: V \xrightarrow{\exp_{g'}^{-1}} \nu^{g'}_M(N) \xrightarrow{\Phi} \nu_M^{g}(N)\xrightarrow{\exp_{g}}V' \, ,
    \]
    where, for a Riemannian metric $h$, we denote by $\exp_h$ its exponential map and by $\nu_M^{h}(N)$ the $h$-normal bundle of $N$ in $M$. 
    Note that, by the properties of $\Phi$ and of the auxiliary metrics, such $\Psi$ restricts to the identity on $N$, sends $\cK$ to $\cK$, and satisfies that, at each point of $N$, $\Psi^*\alpha_2=\alpha_1$ and $\Psi^*\CpS_{\xi_2}\vert_{F_\cF\oplus J_1F_\cF}$ and 
    $\CpS_{\xi_1}\vert_{F_\cF\oplus J_1F_\cF}$ are both tamed by $J_1$. To avoid burdening notation, \emph{we denote $\Phi^*\alpha_2$ simply by $\alpha_2$ from now on}.

    \medskip

    At this point, we can consider the linear interpolation $\alpha_t:=(2-t)\alpha_1+(t-1)\alpha_2$, for $t\in[1,2]$.
    As $\alpha_1$ and $\alpha_2$ share the same characteristic distribution $\cK$, the latter is contained in the characteristic distribution of $\alpha_t$ for all $t\in[1,2]$.

    Assume for the moment that we knew that $\alpha_t$ is a linear-contact form on $\cF$ for all $t\in[1,2]$ (meaning $(\alpha_t\wedge\d\alpha_t^k)\vert_{\cF}>0$ where $k=(\rk \cF-1)/2$).
    This would mean that the characteristic distribution of $\alpha_t$ is \emph{exactly} $\cK$ for all $t\in[1,2]$.

    Then, the adaptation of Gray's stability contained in \Cref{lem:gray_stability} allows to find a vector field $Z_t$ tangent to $\cF$ such that its flow $\psi_t$ satisfies $\psi_t^*\alpha_{t+1}=f_{t+1}\alpha_1$ for some smooth family of positive functions $(f_t)_{t\in[1,2]}$.
    In particular, $\psi_t^*\cK=\cK$ at all times.
    What's more, according to \Cref{rmk:gray_stability}, $\psi_t$ also preserves $\cF$.
    Note also that such a vector field $Z_t$ vanishes along $N$, as the two one-forms $\alpha_1 $ and $\alpha_2$ coincide at points of $N$, so its flow is well defined up to time $1$ on a neighborhood.

    We have then found the desired identification of $\xi_1$ with $\xi_2$ on a neighborhood of $B$ in $M$.

    \medskip
    
    In the above argument, the only thing left to prove is that the above family $\alpha_t$ is indeed linear-contact on $\cF$ for all $t\in[1,2]$. 
    This can be argued as follows. First, note that $\d\alpha_t\vert_{F_\cF\oplus J_1F_\cF}$ is non-degenerate and tamed by $J_1$, since both $\d\alpha_1\vert_{F_\cF\oplus J_1F_\cF}$ and $\d\alpha_2\vert_{F_\cF\oplus J_1F_\cF}$ are.

    Recall now that we arranged $\alpha_1=\alpha_2$ at points of $N$, i.e.\ on $TM\vert_N$; in particular, $(\xi_1\cap \cF)\vert_B=(\xi_2\cap \cF)\vert_B$, which we denote by $\eta_B$.
    Moreover, thanks to the fact that $\alpha_N=x\d y - y \d x$, the vector field $X=\partial_x$ in the local model $\R\times \R^2_{x,y}\times T^*\cF_B\times_B T^*\cK_B$, that we see as a section of $TN\vert_B\to B$, satisfies that $(TN\cap \cF)\vert_B=\langle X,J_1X\rangle \oplus F_\cF\vert_B $.

    Hence, $\eta_B$ is in fact just equal to $(TN\cap \cF)\vert_B\oplus J_1F_\cF\vert_B=\langle X,J_1X\rangle \oplus F_\cF\vert_B \oplus J_1 F_\cF\vert_B$.
    Up to shrinking neighborhoods, it is then enough to prove that $\d\alpha_t\vert_{\eta_B}$ is non-degenerate at each point of $B$.
    
    Assume then that there is $W=aX+bJ_1X + u+J_1v  \in \eta_B=\langle X,J_1X\rangle \oplus F_\cF\vert_B \oplus J_1 F_\cF\vert_B$ such that $\iota_W\d\alpha_t=0$ on $\eta_B$. 
    In particular, we get $\d\alpha_t(W,w)=0$ for all $w\in F_\cF$; recalling the definition of $F_\cF=\{v\in \ker(\alpha_N)\cap \cF \mid \iota_v\d\alpha_N=0 \text{ on } TN\}$ and noticing that $\alpha_t\vert_{TN}=\alpha_N$, choosing $w=J_1u$ and $w=v$ yields that the components $u\in F_\cF$ and $J_1v\in J_1F_\cF$ of $W$ satisfy $\d\alpha_t(u,J_1u)=\d\alpha_t(v,J_1v)=0$; as $\d\alpha_t$ is $J_1$-tamed on $F_\cF\oplus J_1F_\cF$, this implies $u=v=0$.
    Hence, $W$ is of the form $aX+bJ_1X$.

    However, again as $\alpha_t\vert_{TN}=\alpha_N=x\d y - y \d x$, and as $X=\partial_x$ and $J_1X=\partial_y$, this immediately gives $a=b=0$ as well.
    In other words, $W=0$, i.e.\ $\d\alpha_t$ is non-degenerate on $\eta_B$ for all $t\in[1,2]$, as desired.
    
    This proves that $\alpha_t$ is linear-contact on $\cF$ at all points in $M$ that are sufficiently close to $B$.
    As previously explained, thanks to the Moser trick, this allows us to conclude the first step, i.e.\ to identify the hyperplane distributions near the binding.

    \paragraph*{Step 2: Matching the two-forms.} {We now assume that we have attached cylindrical ends as in \Cref{lem:pair_foliations}, and denote $\omega_{1,\cK}$ and $\omega_{2,\cK}$ the resulting two-forms with characteristic foliation given by $\R\times \R^2\times \overline\cF_B$ in the smooth local model from \Cref{def:confoliated_bLob}.}

    To this end, note that any diffeomorphism that fixes each leaf of $\cK$ globally also preserves the hyperplane distribution itself.
    In particular, if we manage to match the leafwise symplectic structures by such a diffeomorphism, we would preserve the just achieved property that $\xi_1=\xi_2$ near $B$ in $M$.
    We then aim to find a diffeomorphism of this type.

    Notice also that all the previous identifications (i.e.\ those used to match the hyperplane distributions) preserve the property that $\cK_N$ is a foliation whose leaves are Lagrangian with respect to both $\omega_1$ and (the pullback under the various identifications of) $\omega_2$.

    With this preliminary observations made, matching $\omega_{1,\cK}$ with $\omega_{2,\cK}$ can be following the same strategy above used to match $\xi_1$ and $\xi_2$.
    The only differences in this case are the following.
    First, the fact that the first step of identifying these two-forms on $TM\vert_{N\cap U}$ is more direct, as $\cK_N$ and $\cK\vert_N$ are both regular foliations (contrary to $\xi_N$ that has singularities along $B$).
    Second, in the resulting linear interpolation of two-forms that are non-degenerate on $\cK\vert_N$ and sharing the same characteristic foliation, \Cref{lem:moser_stability} should be used instead of \Cref{lem:gray_stability}; here, basic-exactness of the two-forms {with respect to $\cF$} near $B$ is used in particular.
    
    Note that these two identifications are done by diffeomorphisms that preserve $\cK$ and $\xi$ (as they only move inside leaves of $\cK$) as well as $\cF$ (c.f.\ \Cref{rmk:moser_stability} and the fact that the metrics $g$ and $g'$ make both foliations totally geodesic).
    As anticipated, this allows us to match the two-forms $\omega_1$ and $\omega_2$ near $B$, while preserving $\xi$.
\end{proof}

\bigskip 

Similarly to the binding case above, the germ of a c-symplectic confoliation is uniquely determined near the boundary of a bLob.
Namely, we have the following analogue of \cite[Lemma 4.6]{MNW}.
\begin{lemma}
    \label{lem:uniqueness_germ_near_boundary_bLob}
        Consider $(M_1,\xi_1,\CS_{\xi_1,\mu_1})$ and $(M_2,\xi_2,\CS_{\xi_2,\mu_2})$, two c-symplectic confoliated manifolds which admit confoliated bLobs $N_1, N_2$. 
    Assume moreover that the bifoliations on $\partial N_1$ and $\partial N_2$ from \Cref{def:confoliated_bLob} are diffeomorphic.
    Then, there are neighborhoods $U_1$, $U_2$ of $\partial N_1$ and $\partial N_2$ inside $M_1$, $M_2$ respectively, and a diffeomorphism $\Psi \colon U_1 \to U_2$ such that $\Psi(N_1\cap U_1) = N_2\cap U_2$ and $\Psi^*\xi_2=\xi_1$.

    \noindent   
    Moreover, if the two confoliated bLobs are transversely exact, up to attaching a cylindrical end as in \Cref{lem:pair_foliations} (and using the notations introduced in the paragraph right after), one can also arrange $\Psi^*\omega_{2,\cK_{\xi_2}}=\omega_{1,\cK_{\xi_1}}$.
\end{lemma}

The proof is analogous to that of \Cref{lem:uniqueness_germ_near_binding_bLob}, and is an adaptation of \cite[Lemma 4.6]{MNW}.
We hence limit ourselves to shortly describing the differences in this case with respect to the previous proof.

\begin{proof}[Discussion of proof]
The main differences with respect to the previous proof are as follows.
\begin{itemize}
    \item[-] The smooth semi-local model $\R\times \R\times T^*\cF_\partial \times_{\partial N} T^*\cK_\partial$ should be used in this case, as given by \Cref{def:confoliated_bLob}.

    \item[-] In order to prove that $I^*\alpha_1 = I^*\alpha_2$ near $\partial N$ (after the identification of $N_1$ with $N_2$ to a same $N$), one can argue as follows. 
    By definition of open book foliation and the fact that they match in the above smooth model, there are functions $f_1,f_2\colon N \to \R_{\geq 0}$, vanishing exactly along $\partial N$ and such that $I^*\alpha_j = f_j \d \theta$, where $\theta$ is the fibration $N\setminus B\to S^1$ defining the open book foliation.
    We then have $I^*\d\alpha_j = \d f_j \wedge \d\theta$.
    Now, if $f_j$ has a critical point $p\in \partial N$, then $I^*\d\alpha_j$ would be zero on $T_p N$. 
    As before, one can see that this leads to a contradiction with dimensions of isotropic subspaces, hence proving that $\d f_j \neq 0$ along $\partial N$, and by the implicit function theorem we find a positive function $h$ such that $f_1=hf_2$, which up to rescaling concludes.
    In fact, as in the previous proof, we can also arrange the explicit normal form $\alpha_j=s\d\theta$ on a normal neighborhood $(-\epsilon,0]\times \partial N$ of $\partial N$ in $N$, where $s\in(-\epsilon,0]$.

    \item[-] A special local section $X$ of $TN\vert_{\partial N}\to \partial N$ (that replaces the $X$ along $B$ in the previous proof) is given, in the local model smooth model $\R\times \R_s\times T^*\cF_\partial \times_{\partial N} T^*\cK_\partial$ by $X=-\partial_s$.
    \qedhere
\end{itemize}
\end{proof}


\subsubsection{Model near the binding}
\label{sec:model_near_binding}

Thanks to the explicit construction in \Cref{sec:bifoliated_binding} above, we are now ready to construct the desired semi-local model of almost complex structure near the binding of the confoliated bLob in our symplectic filling. 

We will be consistent with the notation introduced in \Cref{not:pair_of_foliations}, which is more well-suited to the situation obtained \emph{after} applying \Cref{lem:pair_foliations}, where the two bifoliations are contact and symplectic w.r.t.\ the pair $(\xi,\omega_\cK)$ having complementary characteristic foliations.
In other words, we use here the notations $(\pi_c^B,\pi_s^B,\cF_c^B,\cF_s^B,J_c \oplus J_s,f_c,\lambda_c,\lambda_s,\overline\cF_c,\overline\cF_s)$ for the tuple $(\pi_B^\cF,\pi_B^\cK,\cF_B,\cK_B,J_\cF^B\oplus J_\cK^B,f_\cF^B,\lambda_\cF^B,\lambda_\cK^B,\overline\cF_B,\overline\cK_B)$.
(We omit in particular the $B$ in the notation for those geometric objects which should not create confusion with the following section.)

\medskip

Then, we will consider the fiber bundle
    \[
        \R^2 \times \R^2 \times (T^*\cF_c^B\times_B T^* \cF_s^B) \to B \, .
    \]
We will also equip it here with the one-form $\lambda:=\frac{1}{2}(u\d v-v\d u + x \d y - y\d x) + \lambda_c$, that is leafwise Liouville w.r.t.\ to the foliation $\overline \cF_c$, and also invariant under the holonomy of the foliation $\overline \cF_s$.
Lastly, we will consider the almost complex structure given by $J_{std}=i\oplus i \oplus J_c\oplus J_s$, where $i$ is the standard complex structure on $\R^2$.
Note that $i\oplus i\oplus J_c$ is tamed by $\d\lambda$, and so $J_{std}$ is tamed by $\Omega_{std}=\d\lambda + \d\lambda_s$.

\begin{lemma}
    \label{lem:normal_form_J_near_core} 
    Up to possibly attaching a topological cylindrical end (as in \Cref{lem:confol_bLob_can_assume_omega_only_char_fol}),
    there are a neighborhood $O$ of the binding $B$ of $N$ inside the filling $(X,\Omega)$, a $\Omega\vert_O$-tamed almost complex structure $J_O$ on $O$, and a pseudo-holomorphic embedding
    \[
    \Psi_O\colon (O,J_O) \rightarrow
     \left( \R^2\times \R^2 \times \left(T^*\mathcal{F}_c^B\times_B T^*\mathcal{F}_s^B\right), J_{std} = i\oplus i \oplus J_c \oplus J_s\right) .
    \]
    Moreover, $\Psi_O$ and $J_O$ satisfy the following properties.
    \begin{enumerate}
        \item\label{item:normal_form_J_manifold} $\Psi_O(M\cap O)=F^{-1}(1/2)$, where 
        \begin{equation}
        \label{eq:weakharmonicbinding}
        \begin{split}
           F\colon &\,   \R^2\times \R^2\times \left(T^*\mathcal{F}_c^B\times_B T^*\mathcal{F}_s^B \right) \to \R \\
           &\, (u,v; x,y; q,p_c,p_s)  \mapsto \frac{1}{2}(u^2+v^2+x^2+y^2)+f_c(p_c).
        \end{split}
        \end{equation}
        
        \item\label{item:normal_form_J_near_core_image} The image of $\Psi_O$ is given by a domain $U$ with corners, such that $\partial U=\partial_+U\cup\partial_-U$, with $\partial_+U\subset F^{-1}(1/2)$ and $\partial_-U\subset F^{-1}([1/2-\delta,1/2))$, for some $\delta>0$ small enough.

    \item\label{item:normal_form_J_near_core_bLob}  Near its binding, the confoliated bLob is given by
    \begin{equation}
    \label{eqn:bLob_in_local_model_binding}
    \Psi_O(D^2_\epsilon\times B) = 
    \{(u,0;x,y;q,0) \, \vert \,  u\geq 1-\delta' , \, u^2+x^2 +y^2 = 1 \,  \} \, .
    \end{equation}

    \item \label{item:normal_form_J_near_core_nbhd_bdry} A neighborhood $V$ {of the closure of $O$} inside $X$ is diffeomorphic to ${(-\varepsilon,0]}\times (M\cap {V})$, and there is a defining form $\alpha$ of $\xi$ such that in this neighborhood we have $\Omega=\omega_{\cK}+d\alpha+d(t\alpha)$, and such that the almost complex structure $J_O$ (understood on this split neighborhood via the identification with $V$) {is the restriction to $O$ of an almost complex structure $J_V$ on $V$ that} preserves $\xi$ on each $t$-slice, is cotamed by $\CpS_{\xi}$ and $\Omega|_{\xi}$, and satisfies that ${J_V}({(t+1)} \partial_t)=R$, where $R$ is the Reeb field determined by $\iota_R\Omega=dt(R)=0$ and $\alpha(R)=1$. 
    \end{enumerate}
\end{lemma}

Even though not relevant for what follows, as a side comment, we point out that the normal neighborhood of the boundary in \Cref{item:normal_form_J_near_core_nbhd_bdry} above can be shifted by the change of coordinates $r=t+1$ to reach a form that should be more familiar for experts in Symplectic Field Theory. Indeed, in this case we would then have $(1-\epsilon,1]_r\times M$ equipped with $\Omega = \omega_\cK + \d (r\alpha)$ and $J_V$ would send $r\partial_r$ to $R$.

\begin{proof}
Consider \[
\left( \R^2\times \R^2 \times \left(T^*\mathcal{F}_c^B\times_B T^*\mathcal{F}_s^B\right), \Omega_{std}, J_{std}\right) \, ,
\]
where $\Omega_{std}$ and $J_{std}$ are as explained above.

Now, according to what was said about all the involved geometric objects described in \Cref{sec:bifoliated_binding}, we have the following two properties.
First, the function $F$ defined in \eqref{eq:weakharmonicbinding} is in fact weakly pluri-subharmonic with respect to $J_{std}$, and more precisely we have $-\d F\circ J_{std}=\lambda$.
Second, $F$ and $\lambda$ are 1-homogeneous, and $J_{std}$ is invariant respectively, with respect to the (partially-)Euler vector field $X = u\partial_u + v\partial_v+x\partial_x + y\partial_y+\nabla f_c$.

Consider then the hypersurface $M_0\coloneqq F^{-1}(\frac{1}{2})$.
Then, $\alpha_0 \coloneqq \lambda\vert_{M_0}$ defines a confoliation $\xi_0$ on $M_0$, with characteristic foliation given exactly by $\overline\cF_s$ (which is indeed tangent to $M_0$).
Moreover, $\xi_0$ is exactly the hyperplane of $J_{std}$-complex tangencies of $M_0$, and $J_{\xi_0}:=J_{std}\vert_{\xi_0}$ is tamed by $\CS_{\xi_0,\mu_0}$, where $\omega_0=\d\lambda_s\vert_{M_0}$.
\medskip

The submanifold $N_0$ defined by the set in the right-hand side of \eqref{eqn:bLob_in_local_model_binding} is indeed a confoliated bLob for $(M_0,\xi_0,\CS_{\xi_0,\omega_0})$, that is transversely-exact.

Moreover, the induced bifoliation on its binding $B_0:=\{u=1\}\subset N_0$ is the same as that of the binding $B$ of the (transversely exact) confoliated bLob $N$ in $(M,\xi,\CS_{\xi,\mu})$.
Hence, by \Cref{lem:uniqueness_germ_near_binding_bLob} we have an identification $\Psi$ of a neighborhood $\cB_0$ of $B_0$ in $M_0$ with a neighborhood $\cB$ of $B$ in $M$, so that $\xi_0$ is matched to $\xi$ and $\omega_0+\d\alpha_0$ to $\omega$, where $\omega:=\Omega\vert_M$. 
Since (up to identifying with $\Psi$) the restrictions of both ambient symplectic forms coincide along $\cB\cong \cB_0$ (with the same induced coorientation on the kernel of the restriction), a relative Moser's trick argument (see e.g. \cite[Exercice 3.4.17]{McDuffSalamon_Intro}) shows that the diffeomorphism $\Psi$ extends to a symplectomorphism $\Psi_O$ of a neighborhood $O$ of $B$ with a neighborhood of $B_0$ in the model space.
Choosing $J_O:=\Psi_O^*J_{std}$, the map $\Psi_O$ satisfies the first three among the required properties (we will comment on the last one below), up to attaching a cylindrical end as in \Cref{lem:confol_bLob_can_assume_omega_only_char_fol} to make $\Psi_O$ tamed by $\Omega|_O$. 
(In fact, in order to match the desired domain with corners in \Cref{item:normal_form_J_near_core_image}, $O$ might need to be shrunk.)

{For the last item, notice that we have
$$J_{std}(X)= u\partial_v - v\partial_u + x\partial_y - y\partial_x + J_c\left( \nabla f_c \right),$$
and we claim that it is tangent to each level set of $F$ and belongs to the kernel of the restriction of $\Omega$ (i.e., it is parallel to the characteristic foliation of the restriction of $\Omega$, or parallel to what we called Reeb field by abuse of terminology)}. To prove it, first observe that the tangent space of a level set of $F$ is given by $\ker dF=\ker (udu+vdv+xdx+ydy+df_c)$. But $dF(J(X))=df_c(J_c(\nabla f_c))= -g_c(\nabla f_c, J_c(\nabla f_c))=-d\lambda_c(\nabla f_c, -\nabla f_c)=0$, so $J(X)$ is indeed tangent to the level sets of $F$ near $\{F=1/2\}$. The next step is to show that the pullback of $\iota_{J(X)}\Omega_{std}$ to any such level set vanishes, hence deducing that $J(X)$ is parallel to the corresponding Reeb field.
We have
\begin{align*}
    \iota_{J_{std}(X)}\Omega_{std}&=-udu-vdv-xdx-ydy+d\lambda_c(J_c(\nabla f_c), \cdot)\\
    &=-udu-vdv-xdx-ydy-df_c\\
    &=dF
\end{align*}
proving that $J_{std}(X)$ is parallel to the Reeb field. Lastly, observe that $\lambda(J_{std}(X))=u^2+v^2+x^2+y^2+\norm{\nabla f_c}^2=\norm{\nabla F}^2$. Using the properties of $X$, if we choose a coordinate $s$ such that $X=\partial_s$, it gives a neighborhood of $M_0$ which can be identified (say, by some diffeomorphism $G$) to $(-\delta,0]\times M_0$ where the geometric structures write as
$$\Omega_{std}=\omega_0+d(e^s\alpha_0) \qquad \text{and} \qquad J_{std}=J_{\xi}\oplus J_0,$$
where $J_0$ sends $\partial_s$ to the Reeb vector field $R$ determined by the equalities $\d s(R)=\omega_0(R,\cdot)=\d\alpha_0(R,\cdot)=0$ and $\alpha_0(R)=1$. By a change of the form $t=e^s-1$, we obtain a neighborhood of the form $(-\varepsilon, 0]\times M_0$, where 
$$\Omega_{std}=\omega_0+d\alpha_0+d(t\alpha_0),$$
and $X$ now writes as $(1+t)\pp{}{t}$.
Using $\Psi_O$ in a small enough neighborhood of $M\cap O$, we identify a neighborhood of $M\cap O$ (after shrinking $O$ slightly) with a domain $V$ diffeomorphic to $(-\varepsilon,0]\times (M\cap O)$, equipped with a one-form $\alpha$ defining $\xi$, given by the pullback of $\alpha_0$, such that $\Omega$ writes as $\omega_{\cK}+d\alpha+d(t\alpha)=\omega+d(t\alpha)$, and the almost complex structure {$J_V$ on $V$ (also given by pullback of the model $J_{std}$) indeed} satisfies the requirement. 
\end{proof}


\subsubsection{Model near the boundary}
\label{sec:model_near_boundary}

This section runs parallel to \Cref{sec:model_near_binding}, but using the explicit construction in \Cref{sec:bifoliated_boundary} to construct a semi-local model of almost complex structure near the boundary of the confoliated bLob. 
In particular, we will omit details whenever they are completely analogous to the ones in the previous section.

Notation-wise, we will again be consistent with \Cref{not:pair_of_foliations}.
More explicitly, we denote in this section $(\pi_c^\partial,\pi_s^\partial,\cF_c^\partial,\cF_s^\partial,Z,J_c \oplus J_s,f_c,\lambda_c,\lambda_s,\overline\cF_c,\overline\cF_s)$ the tuple $(\pi_\partial^\cF,\pi_\partial^\cK,\cF_\partial,\cK_\partial,Z,J_\cF^\partial\oplus J_\cK^\partial,f_\cF^\partial,\lambda_\cF^\partial,\lambda_\cK^\partial,\overline\cF_\partial,\overline\cK_\partial)$.
(As before, we omit the $\partial N$ in the notation for those geometric objects, hoping that this does not create confusion with the previous section.)

\medskip

Then, we will consider the fiber bundle
    \[
        \R^2_{x,y} \times \R_\tau \times (T^*\cF_c^\partial\times_{\partial N} T^* \cF_s^\partial) \to \partial N \, ,
    \]
and on it the one-form $\lambda:=\frac{1}{2}(x \d y - y\d x) + \tau \d \theta + \lambda_c$, where $\theta$ here is simply seen as the restriction to $\partial N$ of the bordered open book fibration map of $N\setminus B$ onto $S^1$. 
Note that $\lambda$ is leafwise Liouville w.r.t.\ to the foliation $\overline \cF_c$, and also invariant under the holonomy of the foliation $\overline \cF_s$.
Lastly, we will consider the almost complex structure given by $J_{std}=i\oplus J_0 \oplus J_c\oplus J_s$, where $i$ is the standard complex structure on $\R^2$ and $J_0$ is defined by $J_0\partial_\tau=Z$.
Moreover, $i\oplus J_0\oplus J_c$ is tamed by $\d\lambda$, and hence $J_{std}$ is tamed by $\Omega_{std}=\d\lambda + \d\lambda_s$.

\begin{lemma}
    \label{lem:normal_form_J_boundary}
    Up to possibly attaching a topological cylindrical end (from \Cref{lem:confol_bLob_can_assume_omega_only_char_fol}),
    there are a neighborhood $O'$ of $\partial N$ inside the filling $(X,\Omega)$, a $\Omega\vert_O$-tamed almost complex structure $J_{O'}$ on $O'$, and a pseudo-holomorphic embedding
    \[
    \Psi_{O'}\colon (O',J_{O'}) \rightarrow
     \left( \R^2\times \R \times \left(T^*\mathcal{F}_c^\partial\times_{\partial N} T^*\mathcal{F}_s^\partial\right), J_{std} = i\oplus J_0 \oplus J_c \oplus J_s\right) .
    \]
    Moreover, $\Psi_{O'}$ and $J_{O'}$ satisfy the following properties.
    \begin{enumerate}
        \item\label{item:normal_form_J_boundary_manifold} $\Psi_{O'}(M\cap {O'})=G^{-1}(1/2)$, where 
        \begin{equation}
        \label{eq:weakharmonicboundary}
        \begin{split}
           G\colon &\,   \R^2\times \R\times \left(T^*\mathcal{F}_c^\partial\times_{\partial N} T^*\mathcal{F}_s^\partial \right) \to \R \\
           &\, (x,y; \tau; q,p_c,p_s)  \mapsto \frac{1}{2}(x^2+y^2+\tau^2) +f_c(p_c)
        \end{split}
        \end{equation}

        \item\label{item:normal_form_J_near_boundary_image} The image of $\Psi_{O'}$ is given by a domain $U'$ with corners, such that $\partial U'=\partial_+U'\cup\partial_-U'$, with $\partial_+U'\subset G^{-1}(1/2)$ and $\partial_-U'\subset G^{-1}([1/2-\epsilon,1/2))$, for some $\epsilon>0$ small enough.

    \item\label{item:normal_form_J_near_boundary_bLob}  On a neighborhood $(-\delta,0]\times \partial N$ of $\partial N$ in $N$, the confoliated bLob has image
    \begin{equation}
    \label{eqn:bLob_in_local_model_boundary}
    \Psi_{O'}((-\delta,0]\times \partial N) = 
    \{(x,0;\tau;q,0,0) \, \vert \,  -\delta \leq \tau \leq 0 , \, x=\sqrt{1-\tau^2} \,  \} \, .
    \end{equation}

    \item \label{item:normal_form_J_near_boundary_nbhd_bdry} A neighborhood $V'$ of {the closure of $O$} inside $X$ is diffeomorphic to ${(-\varepsilon,0]}\times (M\cap {V'})$,  and there is a defining form $\alpha$ of $\xi$ such that in this neighborhood we have $\Omega=\omega_{\cK}+d\alpha+d(t\alpha)$, and such that the complex structure $J_{O'}$ (understood under the identification with $V$) {is the restriction to $O'$ of an almost complex structure $J_{V'}$ on $V'$ that} preserves $\xi$ on each $t$-slice, is cotamed by $\CpS_{\xi}$ and $\Omega|_{\xi}$, and satisfies that ${J_{V'}}({(t+1)} \partial_t)=R$, where $R$ is the Reeb field determined by $\iota_R\Omega=dt(R)=0$ and $\alpha(R)=1$.
    \end{enumerate}
\end{lemma}

The proof of the above normal form is completely analogous to that of \Cref{lem:normal_form_J_near_core}, we hence omit it.


\subsection{Pseudo-holomorphic curves near the singularities}
\label{sec:pseudo-hol_curves_near_singularities}

In this section, we use the special model almost complex structures near the binding and boundary of the confoliated bLob to find an explicit Bishop family of pseudo-holomorphic disks near the binding, as well as to prove the triviality of pseudo-holomorphic curves entering a small neighborhood of the binding and the boundary.


\subsubsection{Bishop family near the binding}
\label{sec:bishop_family}

According to \Cref{lem:normal_form_J_near_core}, we are allowed to work on the almost complex manifold $(D^2\times D^2\times \left(T^*\mathcal{F}_c^B\times_B T^*\mathcal{F}_s^B\right), J_{std})$, in a small enough neighborhood of the image of $B$ via $\Phi_O$. We will use this normal form to describe the desired local Bishop family of $J_{std}$-holomorphic disks near the binding.

Let $\delta>0$ be very small. For all $s_0\in[1-\delta,1)$ 
and $q_0\in B$, we have a $J_{std}$-holomorphic disk
\begin{equation}
\begin{aligned}
\label{eqn:bishop_disc_good_coordinates}
u_{s_0,q_0}
\colon D^2 &\longrightarrow D^2 \times D^2 \times \left(T^*\mathcal{F}_c^B\times_B T^*\mathcal{F}_s^B\right)  \, \\
\quad z &\longmapsto 
(\sqrt{1-s_0^2} \cdot z ; s_0 ; q,0,0)
\end{aligned}
\end{equation}
where both $D^2$ factors are seen here as disks in $\C$, and hence a complex coordinate is used (in particular, $s_0$ is to be interpreted as lying in the intersection of the real line with $D^2_\epsilon\subset \C$). 
Note that $  \image(u_{s_0,q_0})\subset \image(\Psi_O)$ for all $s_0\in[1-\delta,1)$ and $q_0\in B$.
Moreover, $\partial u_{s_0,q_0}$ takes values in the image of the confoliated bLob $N$ in this normal neighborhood.

\begin{definition}
    \label{def:bishop_family}
    The \textbf{Bishop family} of pseudo-holomorphic disks \textbf{stemming from $B$} is the family of $J_{std}$-holomorphic disks $(u_{s_0,q_0})_{(s_0,q_0)\in[1-\delta,1)\times B}$ with values in $\image(\Psi_O)\subset D^2 \times D^2 \times \left(T^*\mathcal{F}_c^B\times_B T^*\mathcal{F}_s^B\right)$.
    Each disk in the Bishop family is called \textbf{Bishop disk}.
\end{definition}

\begin{prop}
    \label{prop:bishop_discs_regular}
    Each Bishop disk $u_{s_0,q_0}$ is Fredholm regular, and its index is
    \[
    \ind(u_{s_0,q_0}) = n+3 \, .
    \]
\end{prop}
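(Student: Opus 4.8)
The plan is to reduce everything to the explicit model of \Cref{lem:normal_form_J_near_core} and to exploit the product form $J_{std}=i\oplus i\oplus J_c\oplus J_s$ together with the fact that $u_{t_0,q_0}$ is constant in the base and cotangent-fibre directions. Writing $w_1=u+iv$ and $w_2=x+iy$ for the two $D^2$-factors, the disk moves only in $w_2$, keeps $w_1$ constant, and sits over the single point $q_0\in B$ with vanishing cotangent coordinates. Consequently the pullback bundle pair $\big(u_{t_0,q_0}^*TX,\,(u_{t_0,q_0}|_{\partial D^2})^*TN\big)$ over $(D^2,\partial D^2)$ splits $J_{std}$-complex-linearly as a direct sum of
\[
E_0=u_{t_0,q_0}^*\,T(D^2\times D^2),\qquad E_1=u_{t_0,q_0}^*\,T\!\left(T^*\cF_c^B\times_B T^*\cF_s^B\right),
\]
of complex ranks $2$ and $n-1$ respectively (using $\rk\cF_c^B=q-1$, $\rk\cF_s^B=l$, $q+l=n$ and $\dim B=n-1$). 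Since $J_{std}$ is constant in the directions in which $du_{t_0,q_0}$ takes values, the zeroth-order term $\tfrac12(\nabla_\eta J)(u)\,du\circ j$ of the linearization vanishes, so $D_{u_{t_0,q_0}}$ is the plain $\overline\partial$-operator and is block-diagonal with respect to $E_0\oplus E_1$. It then suffices to treat the two blocks separately.

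For the index I would apply Riemann--Roch for bundle pairs on the disk, $\ind=(\text{complex rank})\cdot\chi(D^2)+\mu$ with $\chi(D^2)=1$. On $E_1$ the boundary condition is the zero section $T_{q_0}B=T\cF_c^B\oplus T\cF_s^B$ (totally real for $J_c\oplus J_s$, which sends the base to the fibre), a \emph{constant} loop, so $\mu(E_1)=0$ and $\ind=n-1$. On $E_0$ the totally real boundary condition is the tangent to the $2$-sphere $N\cap(D^2\times D^2)=\{w_1\in\R,\ |w_1|^2+|w_2|^2=1\}$; a direct frame computation (the complex determinant of a real frame equals $-i\,e^{i\theta}$, winding once around the origin) gives $\mu(E_0)=2$, hence $\ind=2\cdot1+2=4$. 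Summing yields $\ind(u_{t_0,q_0})=4+(n-1)=n+3$, equivalently $(n+1)\,\chi(D^2)+2$.

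For Fredholm regularity I would compute the kernel of each block and match its dimension against the index just found, forcing the cokernel to vanish. On $E_1$ the operator is $\overline\partial$ on a trivial $\C^{n-1}$-bundle with constant real boundary condition, whose kernel is the real constants $\R^{n-1}$ of dimension $n-1=\ind(E_1)$, so the cokernel is zero. On $E_0$, writing $\eta=(\eta_1,\eta_2)$ and $a=\operatorname{Re}w_1,\ b=|w_2|$ along $\partial D^2$ (with $b\neq0$), the boundary conditions read $\operatorname{Im}\eta_1(e^{i\theta})=0$ and $a\,\operatorname{Re}\eta_1(e^{i\theta})+b\,\operatorname{Re}\big(e^{-i\theta}\eta_2(e^{i\theta})\big)=0$. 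A holomorphic $\eta_1$ with real boundary values is a real constant by Schwarz reflection, and a Fourier analysis of the second condition forces $\eta_2=g_0+g_1z+g_2z^2$ with $g_0=-\overline{g_2}$ and $\operatorname{Re}g_1$ determined by $\eta_1$. This describes the kernel completely; it is exactly $4$-dimensional, matching $\ind(E_0)=4$, so the cokernel again vanishes. As $D_{u_{t_0,q_0}}$ is the direct sum of two surjective operators, it is surjective: $u_{t_0,q_0}$ is Fredholm regular, of index $n+3$.

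The step requiring the most care is the $E_0$-block: first checking that the two blocks genuinely decouple (no coupling zeroth-order term, which rests on $J_{std}$ being constant along $du_{t_0,q_0}$ and on $N$ respecting the product splitting near the binding granted by \Cref{lem:normal_form_J_near_core}), and then carrying out the explicit Maslov-index and kernel computation for the Bishop disk attached to the totally real $2$-sphere. This is the confoliated analogue of the classical Bishop-disk regularity computation (as in \cite{Nie06,MNW}); the only genuinely new point is verifying that the additional foliated cotangent directions contribute a trivial, constant, Maslov-zero summand, which follows from $N$ meeting these directions along the zero section.
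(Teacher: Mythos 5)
Your proof is correct and follows essentially the same route as the paper: reduce to the local model of \Cref{lem:normal_form_J_near_core}, observe that the foliated cotangent directions split off as a trivial Maslov-zero summand with constant totally real boundary condition (the paper does this by identifying $T^*\cF_c^B\times_B T^*\cF_s^B$ with $T^*\R^{n-1}$ in a bifoliated chart), and then run the classical Bishop-disk Maslov/kernel computation on the remaining $D^2\times D^2$ block. The only difference is that the paper delegates that last computation to \cite[Propositions 8 and 9]{Nie06}, whereas you carry it out explicitly, and your numbers ($\mu(E_0)=2$, four-dimensional kernel, total index $n+3$) agree with the cited source.
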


\begin{proof}
This proof follows closely that in \cite[Propositions 8 and 9]{Nie06}.
As this is rather technical, we limit ourselves to pointing out the initial difference coming from our foliated setup, and we invite the reader to consult the given reference for the rest of the proof in full detail.
\smallskip

Consider a Bishop disk $u_{s_0,q_0}$.
Fix a bifoliated chart in $B$ centered at $q_0$, namely an open neighborhood $U_{q_0}$ of $q_0$ in $B$ of the form $\R^{q-1}\times \R^l$, where $\cF_c^B$ and $\cF_s^B$ are given by the foliations by the first and second factors respectively; here, $2q+1$ is the rank of $\cF_c$ locally near $N$ and $2l$ the one of $\cF_s$, and $n+1=q+l+1$ is the dimension of $N$.
Then, one can naturally identify $F^B$ with $T^* \R^{q-1}\times T^* \R^{l}$ as bifoliated vector bundles over $U_{q_0}$.
Note also that $T^* \R^{q-1}\times T^* \R^{l} = T^*(\R^{q-1}\times \R^{l}) = T^* \R^{n-1}$.

Using this identification, we can rewrite the map with a simpler target manifold
\begin{equation}
\begin{aligned}
u_{s_0,q_0} \colon D^2 &\longrightarrow D^2 \times D^2 \times T^* \R^{n-1} \, \\
\quad z &\longmapsto (\sqrt{1-s_0^2} \cdot z ; s_0 ;0) \, 
\end{aligned}
\end{equation}
where the last $0$ can naturally be identified with the constant map to $q_0$ in the zero section of $F^B\vert_{U_{q_0}}$.

This local description of a neighborhood of a given Bishop disk is exactly the same (almost complex structure included) as the one given in the contact case in \cite[Propositions 8 and 9]{Nie06}. In particular, the computation of the relative Maslov index and proof of Fredholm regularity can then be carried out exactly as there.
\end{proof}


\subsubsection{Controlled behavior of disks near singularities}
\label{sec:behavior_near_singularities}

We describe here how the normal forms described in \Cref{sec:model_near_binding,sec:model_near_boundary} guarantee a nice control on $J$-holomorphic disks whose image reaches a neighborhood of $B$ or of $\partial N$.

\medskip

We first describe the case of the binding:
\begin{lemma}
    \label{lem:uniqueness_lemma}
    Let $O$ be the open neighborhood from \Cref{sec:bishop_family} where the Bishop family of disks is constructed.
    Any $J$-holomorphic disk $w\colon (D^2,i)\to (X,J)$ with boundary on the totally real submanifold $N\subset \partial X=M \subset X$ whose image intersects $O$ coincides, up to reparametrization in the domain, with a Bishop disk $u_{t,b}$. 
\end{lemma}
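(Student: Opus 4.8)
The plan is to transport the whole problem into the local model provided by \Cref{lem:normal_form_J_near_core}, i.e.\ to work inside $D^2\times D^2\times(T^*\cF_c^B\times_B T^*\cF_s^B)$ with the split structure $J_{std}=i\oplus i\oplus J_c\oplus J_s$, and to show that the only $J_{std}$-holomorphic disks with boundary on the model bLob are the explicit ones of \eqref{eqn:bishop_disc_good_coordinates}. The first step is a confinement argument: using that $M\cap O$ is the level set $F^{-1}(\tfrac12)$ of the tame weakly pluri-subharmonic function $F=\tfrac12(u^2+v^2)+\tfrac12(x^2+y^2)+\tfrac12\|p^c\|^2$ from \Cref{lem:normal_form_J_near_core}, and that the filling lies on the side $\{F\le\tfrac12\}$, I would apply \Cref{thm:weak_maximum_principle,thm:boundary_point_lemma} to $F\circ w$ to conclude that, once $w$ meets the small neighborhood $O$, the relevant component of its image cannot escape the model chart. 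In the ``contact-type'' variables $(u,v;x,y;p^c)$ this is exactly the confinement step carried out in \cite{Nie06,MNW}, and it transfers here since those variables enter $F$ in the same way.

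Once $w$ is confined to the model, the core of the argument is to kill the fibre coordinates and freeze the base point $q\in B$, after which everything collapses to a two–dimensional Bishop problem. For the contact cotangent directions this is as in the references: since $J_{std}$ is split and $J_c$ is tamed by $\d\lambda_0^c$, the function $\|p^c\|^2$ is pluri-subharmonic along the $T\cF_c^B\oplus T^*\cF_c^B$ factor, so $\|p^c\circ w\|^2$ is subharmonic on $D^2$, vanishes on $\partial D^2$ (the model bLob sits inside $\{p^c=0\}$ by \Cref{lem:normal_form_J_near_core}\eqref{item:normal_form_J_near_core_bLob}), and is therefore identically zero. Along the zero section $J_c$ interchanges $T\cF_c^B$ and $T^*\cF_c^B$, so with $p^c\circ w\equiv0$ the Cauchy--Riemann equations in this factor reduce to constancy of the $T\cF_c^B$-base component of $w$.

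The genuinely new point, and the one I expect to be the main obstacle, is the treatment of the symplectic cotangent directions $T^*\cF_s^B$: these are precisely the characteristic directions $\cK_\xi$, along which $F$ is only \emph{weakly} pluri-subharmonic, so no maximum-principle bound on $\|p^s\|$ is available and plurisubharmonicity alone cannot control the disk there. Instead I would use the split holomorphic projection $\pi_s$ to the symplectic fibre, giving a $J_s$-holomorphic disk $\pi_s\circ w$ with boundary on the zero section $\{p^s=0\}=T\cF_s^B$, which is Lagrangian for $\d\lambda_0^s$ and on which the tautological form $\lambda_0^s$ vanishes. By Stokes the symplectic area $\int(\pi_s\circ w)^*\d\lambda_0^s=\int_{\partial D^2}(\pi_s\circ w)^*\lambda_0^s$ vanishes, while $J_s$-tameness gives $\int(\pi_s\circ w)^*\d\lambda_0^s\ge0$ with equality only for a constant map; hence $\pi_s\circ w$ is constant and lands in the zero section, so that $p^s\equiv0$ and the $T\cF_s^B$-base component of $w$ is frozen as well. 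This is exactly the step where the tameness condition \eqref{eqn:strictly_tamed} and the Lagrangian/weak-exactness requirements of \Cref{def:confoliated_bLob} (here realised tautologically by the cotangent model) are indispensable, in the spirit of \Cref{rmk:maximum_principle_and_hypothesis_on_Jxi}; it has no counterpart in the purely contact setting of \cite{Nie06,MNW}.

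With $p^c=p^s=0$ and $q\equiv q_0$ constant, $w$ takes values in the slice $D^2\times D^2\times\{(q_0,0,0)\}\cong\C^2$ and has boundary on the totally real spherical cap $\{v=0,\ u^2+x^2+y^2=1,\ u\ge 1-\delta\}$ of \Cref{lem:normal_form_J_near_core}\eqref{item:normal_form_J_near_core_bLob}. At this point the statement reduces to the classical local analysis of holomorphic disks attached to a totally real surface near an elliptic (complex) tangency, carried out in \cite{Nie06} following \cite{Gro85,Eli90}, which shows that every such disk agrees, up to reparametrisation of the domain, with the explicit disk $u_{t,q_0}$ of \Cref{def:bishop_family}. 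Combining the four steps yields the asserted uniqueness, with $b=q_0$ the frozen base point and $t$ the parameter dictated by the size of $w$ in the $\C^2$-slice.
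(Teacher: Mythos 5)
Your proposal follows the same overall route as the paper's proof --- confine the disk to the model chart of \Cref{lem:normal_form_J_near_core}, kill the fibre coordinates of $T^*\cF_c^B\times_B T^*\cF_s^B$, and reduce to the explicit two-dimensional Bishop picture --- and the two fibre-killing steps you isolate are sound. The differences are in the mechanisms. For the fibres, you treat $p^c$ by subharmonicity of $\|p^c\|^2$ and $p^s$ by a Stokes/energy argument with Lagrangian boundary on the zero section; the paper does both at once, observing that $\pi_{F^B}\circ w$ is $(J_c\oplus J_s)$-holomorphic with boundary on the zero section of $F^B$, where a primitive of the exact, taming symplectic form vanishes, so its energy is zero and it is constant. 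Your identification of the $T^*\cF_s^B$ directions as the point where no convexity is available and one must instead use tameness together with the Lagrangian/exactness of the zero section matches the paper's intent. The one place where your stated tool does not do the job is the confinement step: applying \Cref{thm:weak_maximum_principle} to $F\circ w$ only bounds $F$ from above by its boundary values, and this does nothing to prevent the disk from leaving the chart through the side wall $\partial_- U$, where $F$ is small. What the paper actually uses is that the projection to the first $D^2$ factor is holomorphic, so $\pi_u\circ w$ and $\pi_v\circ w$ are harmonic; their extrema must then lie on $\partial_+\Sigma_O\subset w^{-1}(M)$, where \Cref{lem:boundary_point_lemma_curve} (with the tangency-to-$\cK_\xi$ alternative excluded by the generalized weak exactness in \Cref{def:confoliated_bLob}) combined with the Cauchy--Riemann equation forces $u+iv$ to be constant, and only then is the disk trapped in $U$. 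Since you explicitly defer this step to \cite{Nie06,MNW}, where the harmonic-coordinate argument is the one actually carried out, this is a looseness of formulation rather than a fatal gap, but as written the appeal to plurisubharmonicity of $F$ alone would not close it.
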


\begin{proof}
    Consider the restriction $w_{O}$ of $w$ to the part of the domain $\Sigma_{O} \subset D^2$ that is mapped to $O$.
    Then, assume by contradiction that the $u$ coordinate of $w$ is not constant. 
    By possibly shrinking the domain of $\Psi_O$ and thus its image $U$ in \Cref{item:normal_form_J_near_core_image} in \Cref{lem:normal_form_J_near_core}, we can assume that $w$ is transverse to $\partial_- U$, it is also transverse to $\partial_+U$ by \Cref{lem:boundary_point_lemma_curve} applied to the weakly pluri-subharmonic function given by Equation \eqref{eq:weakharmonicbinding}, and hence $\Sigma_{O}$ is a subsurface with boundary and corners of the domain $\Sigma$ of $w$. 
    Note that $\partial \Sigma_{O}$ gets then split into two subsets (each one being a union of connected components), namely $\partial_+ \Sigma_{O}$ and  $\partial_- \Sigma_{O}$, which get mapped by $w$ to $\partial_+ U$ and $\partial_- U$ respectively.

    The projections $\pi_u$ and $\pi_v$ onto the $u$- and $v$-coordinate (as in \Cref{lem:normal_form_J_near_core}) respectively are harmonic functions.
    In particular, any local extremum $p_0$ of $\pi_u\circ w_{O}$ and $\pi_v\circ w_{O}$ must lie in the boundary.
    What is more, if $p_0$ is a local extremum of $\pi_u\circ w_{O}$ then it must be in $\partial_+ \Sigma_O$.
    This follows from observing that $w$ is transverse to $\partial_- U$, which contains $\partial_- \Sigma_O$. 
    Since $\partial_+ \Sigma_{O}$ is mapped
    to $\partial_+ U\subset M=\partial X$, we deduce that $p_0$ is mapped to $M\cap O$.
    Now, by \Cref{lem:boundary_point_lemma_curve} either $w$ is tangent to a leaf of $\cK_\xi$ or $\pi_u\circ w_{O}$ has non-vanishing outward derivative at $p_0$. 
    If $w$ is contained in a leaf $L$ of $\cK_{\xi}$, then its image would be a pseudo-holomorphic disk $L$ whose boundary lies in a weakly exact Lagrangian by the definition of confoliated bLob. 
    This cannot happen, and thus we conclude that either $\pi_u\circ w_{O}$ is constant or it has a non-vanishing outward derivative at $p_0$.
    In the first case, we conclude directly that $\pi_u\circ w_{O}$ is constant in the connected component containing $p_0$;
    in the second case, the fact that $\pi_v\circ w_{O}$ vanishes on $\partial_- \Sigma_{O}$ together with the Cauchy--Riemann equation leads to a contradiction. This shows that $\pi_u\circ w_{O}$ must be constant. 
    Arguing analogously, one deduces that $\pi_v \circ w_{O}$ is constant and thus must be equal to $0$. This shows that the image of $w$ is completely inside $O$, and thus $w_O=w$.
    Lastly, identifying $O$ with its image by the embedding $\Psi_O$ if \Cref{lem:normal_form_J_near_core}, the projected map $\tilde w=\pi_{F^B}\circ w$ is pseudoholomorphic with respect to $J_c\oplus J_s$ and its boundary lies in the zero section of $F^B$, where a primitive of the symplectic form (which is exact in $U$) that vanishes. 
    Using Stokes theorem, we deduce that the energy of $\tilde w$ is zero and thus that $\tilde w$ is constant. We conclude that $w$ is a Bishop disk up to reparametrization.
\end{proof}

\medskip

As far as $\partial N$ is concerned, we have the following: 
\begin{lemma}
    \label{lem:boundary_is_wall}
    Let $O'$ and $J_{O'}$ be respectively the open neighborhood and the almost complex structure on it given by \Cref{lem:normal_form_J_boundary}.
    Then, any $J$-holomorphic map
    \[
    w\colon (D^2,S^1,i)\to (X,N,J)
    \]
    whose image intersects $O'$ must be constant.
\end{lemma}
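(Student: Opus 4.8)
The plan is to follow the strategy of \Cref{lem:uniqueness_lemma}, but working in the boundary model of \Cref{lem:normal_form_boundary}. Identifying a neighborhood $O'$ with its image $U'\subset D^2_{(x,y)}\times\R_\tau\times F^\partial$ via the pseudo-holomorphic embedding $\Phi_{O'}$, so that $J$ becomes $J_{std}=i_D\oplus i_0\oplus J_\fib$, I would use that by \Cref{item:normal_form_boundary_bLob} the submanifold $N$ sits inside $\{y=0,\ p^c=p^s=0,\ x=\sqrt{1-\tau^2}\}$ with $\tau\geq 0$, that $\partial_-U'\subset\{x=1-\delta\}$, and that $\partial_+U'\subset M=\partial X$ is a level set of the tame weakly pluri-subharmonic function $G$ of \Cref{item:normal_form_boundary_image_biholo}. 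Set $\Sigma'=w^{-1}(O')$, which is nonempty by hypothesis, and perturb $\delta$ slightly so that $w$ is transverse to the inner wall $\{x=1-\delta\}$.

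First I would dispose of the case in which $w$ is everywhere tangent to $\cK_\xi$: then $\partial w$ is tangent to $\cK_\xi\cap TN=\cF_s^N$, so the image of $w$ is a $J$-holomorphic disk contained in a leaf of $\cK_\xi$ whose boundary lies on a leaf of $\cF_s^N$, which is $\mu$-Lagrangian and weakly exact by \Cref{item:confoliated_bLob_characteristic_leaves_lagrangian,item:confoliated_bLob_characteristic_leaves_weakly_exact}; hence $\int w^*\mu=0$, and since $J$ tames $\mu$ along $\cK_\xi$ this forces $dw=0$, i.e.\ $w$ constant. The same weak-exactness input, combined with \Cref{lem:maximum_principle_curve} applied to $G$, also shows that $w$ cannot touch $M$ at an interior point of $D^2$ (such a point would be an interior maximum of $G\circ w$, forcing tangency to $\cK_\xi$ and hence constancy). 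Since by \Cref{item:normal_form_boundary_image_biholo} the boundary of $U'$ has only the two strata $\partial_\pm U'$, this means $\partial\Sigma'=\partial_+\Sigma'\sqcup\partial_-\Sigma'$, with $\partial_+\Sigma'\subset\partial D^2$ mapped into $N$ and $\partial_-\Sigma'\subset\mathrm{int}(D^2)$ mapped into $\{x=1-\delta\}$, exactly as in \Cref{lem:uniqueness_lemma}.

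The heart of the argument is to show that both holomorphic projections are constant. The projection $a=(x+iy)\circ w$ is $i_D$-holomorphic, and on $\partial_+\Sigma'$ one has $y\circ w\equiv 0$. The function $x\circ w=\operatorname{Re}a$ is harmonic, so its maximum over $\overline{\Sigma'}$ is attained on $\partial\Sigma'$; as its values $\sqrt{1-\tau^2}\geq\sqrt{1-\epsilon^2}$ on $\partial_+\Sigma'$ strictly exceed the value $1-\delta$ on $\partial_-\Sigma'$ (choosing $1-\delta<\sqrt{1-\epsilon^2}$), the maximum is attained at some $z_0\in\partial_+\Sigma'$. At $z_0$ the Cauchy--Riemann equations give $\partial_\nu(x\circ w)=\partial_t(y\circ w)=0$, the tangential derivative of the identically vanishing $y\circ w$ along the arc $\partial_+\Sigma'$; this contradicts the strict positivity in \Cref{thm:boundary_point_lemma} unless $x\circ w$ is constant on the component $C\ni z_0$. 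Since that constant exceeds $1-\delta$, the component $C$ meets neither $\partial_-\Sigma'$ nor (by the previous paragraph) $M$ in the interior, so $\partial C\subset\partial D^2$ and hence $C=D^2$. Thus $\Sigma'=D^2$, and $a$ equals a constant real value $c=\sqrt{1-\tau_0^2}$, so that $y\circ w\equiv 0$ and $\tau\circ w\equiv\tau_0$ on $\partial D^2$.

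Finally, the cylinder projection $b=(\tau+i\theta)\circ w$ is $i_0$-holomorphic, since by \Cref{item:normal_form_boundary_biholo} the relevant fibration is holomorphic; its real part $\tau\circ w$ is harmonic with constant boundary value $\tau_0$, hence $\tau\circ w\equiv\tau_0$ and $b$ is constant. Therefore $w$ takes values in a single fiber $F_0$, with $\partial w$ contained in the zero section, on which the primitive $\widehat\lambda_0^c+\lambda_0^s$ of the fiberwise symplectic form vanishes; Stokes' theorem gives zero energy, so $w$ is constant. The main obstacle, and the essential difference from \Cref{lem:uniqueness_lemma}, is precisely that near $\partial N$ the boundary condition in the disk factor is the \emph{real line} $\{y=0\}$ rather than a circle: this is what forces $a$ to be constant, and then the relation $x=\sqrt{1-\tau^2}$ on $N$ transfers constancy to $b$, leaving no room for a nontrivial Bishop-type disk. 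Care is needed in justifying that $\partial\Sigma'$ has no further strata and in the transversality perturbation of $\delta$, but both are handled exactly as in the model-dependent parts of \Cref{lem:uniqueness_lemma}.
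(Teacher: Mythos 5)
Your proof is correct and follows essentially the same route as the paper's: harmonicity of the $y$- and $\tau$-coordinates, combined with the Cauchy--Riemann equations and the totally real boundary condition $\{y=0,\ p=0\}$, forces the projection to $D^2\times T^*S^1$ to be constant, after which Stokes' theorem in the exact fiber kills the energy. The only organizational difference is that you treat the general case directly on $\Sigma'=w^{-1}(O')$ via the Hopf lemma, whereas the paper first assumes the image lies in $O'$ and then reduces to that case by citing the argument of \Cref{lem:uniqueness_lemma}; you are also more explicit about the final fiberwise Stokes step and the exclusion of curves tangent to $\cK_\xi$, which the paper's write-up of this lemma leaves implicit.
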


The proof follows very closely that of \cite[Propositions 3.24 and 3.25]{NieNotes}; the adaptations in our setup are minimal, but we include details for completeness.

\begin{proof}
    We first prove that every pseudo-holomorphic curve whose image is contained in $O'$ (and in particular, its boundary lies in $N\cap O'$) satisfies the conclusion of the Lemma.
    {Up to composing our curves with $\Psi_{O'}$, we can hence assume that we are working in the model 
    \[
        \left( \R^2\times \R \times \left(T^*\mathcal{F}_c^\partial\times_{\partial N} T^*\mathcal{F}_s^\partial\right), \Omega_{std}, J_{std}\right) \, .
    \]
    Let then $\pi\colon \R^2\times \R \times \left(T^*\mathcal{F}_c^\partial\times_{\partial N} T^*\mathcal{F}_s^\partial\right) \to \R^2\times \R\times S^1\simeq T^*S^1$ the map $\pi(x,y;\tau;q,p_c,p_s)=(x,y;\tau,\theta(q))$, where $\theta\colon N\setminus B \to S^1$ is the bordered open book fibration.
    }

    {If we equip the target $\R^2\times T^*S^1$ of $\pi$ with the standard complex structure $i\oplus i'$, where $i$ is from $\R^2=\C$ and $i'(\partial_\tau)=\partial_\theta$,} 
    the composition $\hat w\coloneqq \pi \circ w$ is then a pseudo-holomorphic map valued in $D^2\times T^*S^1$.
    Now, according to \Cref{item:normal_form_J_near_boundary_bLob} of \Cref{lem:normal_form_J_boundary}, $\partial \hat w$ takes values in the subset $\{y=0\}\subset D^2\times T^*S^1$.
    As the projection $\pi_y$ to the $y$-coordinate is a harmonic function for the given split almost complex structure, this means that $\hat w$ must take values in $\{y=0\}\subset D^2\times T^*S^1$ on all its domain.
    By the Cauchy--Riemann equation, this implies that the projection $\pi_x$ to the $x$ coordinate is also constant.
    Then, again according to \Cref{item:normal_form_J_near_boundary_bLob} of \Cref{lem:normal_form_J_boundary},
    we know that $\partial \hat w$ is mapped to 
    \[
    \left\{\left(\sqrt{1-r_0^2},0;r_0,\theta\right) \, \Big\vert \, \theta \in S^1 \right\} \subset D^2\times T^*S^1 \, ,
    \]
    for some $r_0>0$.
    Using now that the $r$-coordinate in the cotangent direction of $T^*S^1$ gives a harmonic function, we further deduce that the image of $\hat w$ must lie in the subset $\{y=0,x=0,r=r_0\}$. 
    Again, the Cauchy--Riemann equation implies that $\theta$ also needs to be constant.
    This concludes the proof of the claim.

    \medskip

    Let now $w$ be as in the statement; it is hence enough to prove that the image of $w$ must lie in $O'$, as then the claim will allow to conclude.
    For this, it is enough to prove that the $x$-coordinate of $w$ is constant, which would prevent $w$ from ever leaving $O'$.
    This can be argued exactly, almost word by word, as in the proof of \Cref{lem:uniqueness_lemma}, i.e.\ exactly as in \cite[Proposition 3.25]{NieNotes}.
    We hence omit the details and invite the reader to consult said reference.
\end{proof}


\section{Proof of the main result}
\label{sec:proof_obstruction_fillability}

In this section, we prove \Cref{thm:obstruction_fillability}. 
The strategy is analogous to that in \cite{MNW} for contact bLob's, i.e.\ to the one introduced in \cite{Nie06} for contact Plastikstufes.
More precisely, this will be a proof by contradiction: \emph{for the whole section we hence assume that a symplectic {(semi-)}filling $(W,\Omega)$ exists, and in the end we will derive a contradiction}.
{In fact, the differences in the proof between the case of $W$ a semi-filling and $W$ a filling are minimal; we will stick to the case of a filling for notational simplicity, and point out the required adaptations in \Cref{rmk:case_semifilling}.}

In \Cref{sec:arranging_boundary_data}, we start by first arranging the necessary almost complex structures on the boundary $M=\partial W$, and in a neighborhood inside $W$ of the binding and the boundary of the confoliated bLob, extended generically elsewhere. 
Then, in \Cref{sec:mod_space} we look at the moduli space of boundary-marked pseudo-holomorphic disks with totally real boundary condition on the bLob stemming from the Bishop family described above, and prove regularity, study its compactification, and prove that the associated evaluation map at the marked point is a ``pseudocycle''.
In \Cref{sec:proof_main_result}, we will then use all this information to reach a contradiction by looking at a carefully chosen $1$-dimensional subfamily of disks.


\subsection{Arranging nice boundary data}
\label{sec:arranging_boundary_data}

Consider the hypothetical semi-positive weak symplectic filling $(W,\Omega)$ of $M$ such that $N$ is transversely exact with respect to $\Omega$, and write $\omega$ for the restriction of $\Omega$ to $M$.
Up to attaching a cylindrical end as in \Cref{lem:pair_foliations}, according to \Cref{item:normal_form_J_near_core_nbhd_bdry} of \Cref{lem:normal_form_J_near_core} and \Cref{item:normal_form_J_near_boundary_nbhd_bdry} of \Cref{lem:normal_form_J_boundary}, 
there is a normal neighborhood $(-\varepsilon,0]\times M$ of $M=\partial W$ in $W$, and neighborhoods 
$V$ and $V'$
of the form $(-\varepsilon,0]\times {(M\cap V)}$ and $(-\varepsilon,0]\times {(M\cap V'}$) 
of $B,\partial N$ respectively such that $\Omega$ writes as $\omega_\cK+ d\alpha+\d (t\alpha)=\omega+d(t\alpha)$, and we have the model almost complex structures $J_V$ and $J_{V'}$

Consider then a {smooth identification of a neighborhood of the boundary $M=\partial W$ in $W$ with} $(-\varepsilon,0]\times M$, {so that it extends the identifications of $V$ and $V'$ with respectively $(-\varepsilon,0]\times (M\cap V)$ and $(-\varepsilon,0]\times (M\cap V')$. 
This model neighborhood then comes with two natural symplectic forms, namely $\Omega$ (induced from $W$) and the symplectic form $\omega+d(t\alpha)$ (recall that $\omega=\omega_\cK+\d\alpha$ in $V$ and $V'$). Moreover, as the identification extends those available on $V$ and $V'$, according to the properties of the latter, these two symplectic structures agree on $(-\varepsilon,0]\times (M\cap V)$ and $(-\varepsilon,0]\times (M\cap V')$.
Their restriction to $T(\{0\}\times M)$ also agrees, and they both induce the same orientation on $\xi=\ker\alpha$.}
This implies that the linear path between both forms is in fact among non-degenerate forms at all points of $M$. 
The Moser's path method (c.f.\ again \cite[Exercise 3.4.17]{McDuffSalamon_Intro}) relative to $M$ and {$V,V'$}, allows us to find a neighborhood of $M$ of the form (up to shrinking $\varepsilon$)
$$(-\varepsilon,0]\times M, \qquad \text{with} \qquad \Omega=\omega+d(t\alpha),$$
{that coincides with the previously existing identifications on $(-\varepsilon,0]\times(M\cap V)$ and $(-\varepsilon,0]\times(M\cap V')$.}
By the last items in \Cref{lem:normal_form_J_near_core,lem:normal_form_J_boundary},
the restrictions of {$J_{V}$ and $J_{V'}$} to $\xi$ on each slice $\{t\}\times M$ are $t$-invariant, leave $\xi$ invariant (and are cotamed), and each of them sends $(t+1)\partial_t$ to the Reeb vector field $R$ {(which in this neighborhood is determined by $\d t(R)=\iota_R\omega_\cK=\iota_R\d\alpha=0$ and $\alpha(R)=1$)}. {The aim is to extend $J_V$ and $J_{V'}$ to all of $(-\epsilon,0]\times M$, in such a way that the confoliation induced on each slice $\{t\}\times M$ is preserved and tamed by this extension.}

For this, let now $K\subset (M\cap V)$ and $K'\subset (M\cap V')$ be compact neighborhoods of respectively $B$ and $\partial N$ in $M$, and consider $[-\epsilon/2,0]\times (K\cup K')$ in $[-\epsilon/2,0]\times M \subset W$.
Consider also $J_\xi$ on $\xi$ cotamed by $\Omega\vert_M$ and $\CpS_\xi$, which exists by definition of tame c-symplectic confoliation.
Then, as $\cK_\xi$ is constant rank near $N$ (and in fact on $K\cup K'$, by \Cref{lem:normal_form_J_near_core,lem:normal_form_J_boundary}), according to \Cref{prop:space_cotaming_Js_contractible} we can find a complex structure $J_\xi'$ on $\xi$, in each slice $\{t\}\times M$ but invariant in $t$, satisfying the following properties:
\begin{itemize}
    \item[-] it coincides with {$(J_{V}\cup J_{V'})\vert_\xi$} on $[-\epsilon/2,0]\times(K\cup K')$ and with $J_\xi$ {on $[-\epsilon/2,0]\times{(M\setminus (V\cup V'))}$},
    \item[-] it is tamed by $\CpS_{\xi}$ on each $t$-slice,
    \item[-] its restriction to $\cK_\xi$ is tamed by $\omega_\cK$.
\end{itemize}

Moreover, we can extend $J_\xi'$ to an almost complex structure $J_1$ on $[-\epsilon/2,0]\times M$ by requiring $J_1\vert_\xi = J_\xi'$ and $J_1((t+1)\partial_t) = R$, where the latter is the Reeb vector field on $M$, determined by the identities $\alpha(R)=1$ and $\iota_R\Omega|_{TM}=0$, and $dt(R)=0$.
By the properties of {$J_V$} and {$J_{V'}$}, such $J_1$ extends {$J_V$} and {$J_{V'}$} as almost complex structures on $[-\epsilon/2,0]\times (K\cup K')$.

Note that, up to shrinking the $O$ and $O'$ given by \Cref{lem:normal_form_J_near_core,lem:normal_form_J_boundary} to smaller neighborhoods of the same type (i.e.\ satisfying all the conclusions in those lemmas), we can moreover assume that $O\subset [-\epsilon/2,0]\times K$ and $O'\subset [-\epsilon/2,0]\times K'$.
In particular, the conclusion of \Cref{lem:uniqueness_lemma,lem:boundary_is_wall}, w.r.t.\ these $O$ and $O'$, hold for the $J_1$ we just constructed.

As a last preparatory step, we attach an additional collar $[0,\delta)\times M$ to the symplectic filling, equipped with $\Omega=\omega+d(t\alpha)$ (recall that $\omega=\omega_\cK+\d\alpha$ on $M$), and extend $J_1$ on it in the obvious way.
Note that, according to \Cref{lem:t_is_weak_plurisubharm}, the function $\psi(t,p)=t$ defined on $[0,\delta)\times M$ is weakly pluri-subharmonic with respect to $J_1$, and thus the conclusions of \Cref{lem:maximum_principle_curve,lem:boundary_point_lemma_curve} hold for any $J_1$-holomorphic curve intersecting this region.


\subsection{The moduli space of of $J$-holomorphic disks}
\label{sec:mod_space}

Let $C$ be a smaller compact copy of $O\cup O'$ in $W$, that intersects $M=\partial W$ in compact neighborhoods of $B$ and $\partial N$ in $M$, and so that it is union of two connected components, each being a compact domain (that is closure of the ones) of the type described in \Cref{item:normal_form_J_near_core_image} of \Cref{lem:normal_form_J_near_core} and in \Cref{item:normal_form_J_near_boundary_image} of \Cref{lem:normal_form_J_boundary}.

According to classical regularity results (see e.g.\ \cite[Theorem 3.2.1 and Remark 3.2.3]{McDSalBook} or \cite[Theorem 4.49]{WenBook}), for a generic choice of $J$ which coincides with $J_1$ on the closed subset $C\cup [0,\delta)\times M$, Fredholm regularity holds for every $J$-holomorphic disk $u\colon D^2 \to W$ with boundary on the $J$-totally-real submanifold $N\subset \{0\}\times M = \partial W$ and with an injective point mapping to $W\setminus (\partial W \cup C)$. From now on, we choose one such $J$, and stick to it.

\medskip

Let now $\widehat\cM_*$ be the moduli space of unparametrized boundary-marked $J$-holomorphic disks, i.e.\ the set of equivalence classes of pairs $(u,z)$, where $u\colon (D^2,i)\to (W,J)$ is a $J$-holomorphic map with boundary on the confoliated bLob $N\subset M=\partial W$, and $z\in \partial D^2$ is a point in the boundary of the domain of $u$, under the equivalence relation $\sim$ given by $(u,z)\sim(u',z')$ if there is a biholomorphism $\varphi$ of the closed unit disk such that  $u=u\circ \varphi^{-1}$ and $z'=\varphi(z)$.

Let finally  $\cM_*$ be the the connected component\footnote{We will not consider the other connected components of $\widehat\cM_*$, which could potentially be disconnected.} of $\widehat\cM_*$ containing classes of the form $[u,z]$ where $u$ is a model Bishop disk (as defined in \Cref{sec:bishop_family}) near the binding $B$ of the confoliated bLob.
We also point out that there is a well-defined smooth \textbf{evaluation map}
\begin{equation}
    \label{eqn:evaluation_map}
    \ev\colon \cM_*\to N\, ,
    \quad 
    [u,z]\mapsto u(z) \, .
\end{equation}
This map will be useful later in the proof of the main result, and is the reason why we work with curves with a marked point on the boundary of the domain.

\subsubsection{Regularity of curves in $\cM_*$} 
We first establish two technical lemmas about the behavior of curves we are interested in, which essentially guarantee that we are dealing with somewhere injective maps, and that, moreover, the injective points are away from the regions where $J$ is non-generic.

\begin{lemma}
    \label{lem:boundary_disc_transverse_xiN}
    Let $u$ be a $J$-holomorphic disk in $W$ with boundary on the totally real part $N\setminus B$ of the confoliated bLob $N\subset  M=\partial W$.
    Then, either the boundary $\gamma$ of $u$ is transverse at every point to the open book foliation on $N$ given by $\xi_N$, or $\gamma$ and $u$ are tangent to $\cK_N$ and $\cK_\xi$ respectively. 
    Moreover, in the first case, if $[u,z]\in \cM_*$ for some $z\in \partial D^2$, $\gamma$ must intersect each of its leaves exactly once; in the second case, $u$ must be constant.
\end{lemma}

\begin{proof}
    We start by proving the first claim.
    If $\gamma$ is transverse at every point to the open book foliation on $N\setminus B$, there is nothing to prove. 
    Assume then that $\gamma$ intersects $\xi_N$ in a non-transverse way at a certain point, i.e.\ that $\d_p \gamma$ has image contained in $(\xi_N)_{\gamma(p)}$ for some point $p\in \partial D^2$.
    By $J$-holomorphicity of $u$ and our choice of $J$ for which $\xi$ is $J$-invariant, it follows that $\d_p u$ also has image in $\xi_N$.
    However, according to the choice of $J$ done in \Cref{sec:arranging_boundary_data}, \Cref{lem:boundary_point_lemma_curve} applies with the weakly pluri-subharmonic function $\psi(t,p)=t$ in $[0,\delta)\times M$, and hence we deduce that $u$ is tangent to $\cK_\xi$ itself, and in particular $\gamma$ is tangent to $\cK_N$.
    This concludes the proof of the first part of the lemma.

    \medskip

    Let's now move to the last claims. 
    Assume first that $\gamma$ and $u$ are tangent to $\cK_N$ and $\cK_\xi$; then, the generalized weak exactness condition in \Cref{def:confoliated_bLob} says that $u$ is constant.
    Assume now that $\gamma$ is positively transverse to $\xi_N$ at every point, and that $[u,z]\in \cM_*$ for some $z\in \partial D^2$. This means that the winding number of $\gamma$ around $B$ in $N$ must be strictly positive; now, the fact that $\cM_*$ is path-connected by definition and that it contains Bishop disks (equipped with a point in the boundary of the domain) implies directly that such winding number must be $1$, hence every leaf of the open book foliation given by $\xi_N$ can be intersected at most once, and so exactly once.
    This concludes the proof.
\end{proof}

\begin{lemma}
    \label{lem:injective_points_near_boundary}
    Let $[u,z]\in \cM_*$, with $u$ a \emph{non-constant} $J$-holomorphic disk.
    Then, there is a neighborhood of the boundary $\gamma$ of $u$ made of injective points.
\end{lemma}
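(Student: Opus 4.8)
The plan is first to show that the boundary loop $\gamma=u\vert_{\partial D^2}$ is an \emph{embedded} circle at which $u$ is immersed, and then to rule out, via a compactness argument, any double points of $u$ accumulating on $\partial D^2$. For the embeddedness of $\gamma$ I would argue as follows. By \Cref{lem:boundary_disc_transverse_xiN} the loop $\gamma$ is everywhere transverse to the open book foliation $\xi_N$, so its velocity never lies in $\xi_N$ and in particular never vanishes; since $d\gamma$ is the restriction of $du$ to $T(\partial D^2)$, this already gives $d_zu\neq 0$ for all $z\in\partial D^2$, i.e.\ $u$ is immersed at every boundary point. Transversality further lets me compose $\gamma$ with the open book map $\Theta$, and the statement of \Cref{lem:boundary_disc_transverse_xiN} that (for $u\in\cM$) $\gamma$ meets every page $\Theta^{-1}(\ast)$ exactly once shows that $\Theta\circ\gamma$ is a bijection of $S^1$, whence $\gamma$ is injective. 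Here one uses that $\gamma$ stays in the regular locus $N\setminus(B\cup\partial N)$ where $\Theta$ is defined: a non-constant disk of $\cM$ whose boundary reached a neighborhood of the binding would be a Bishop disk by \Cref{lem:uniqueness_lemma}, whose boundary circle avoids $B$, and by \Cref{lem:boundary_is_wall} it cannot meet a neighborhood of $\partial N$ at all. Thus $\gamma$ is an embedding.

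The core of the argument is then a standard contradiction. I would first record that $u(\mathrm{int}\,D^2)\subset\mathrm{int}\,X$: an interior point of the domain mapping to $\partial X$ would make a suitable function of the collar coordinate attain an interior maximum, forcing $u$ to map into $\partial X$ by \Cref{thm:weak_maximum_principle}, which is excluded. Now $du\neq 0$ on a whole neighborhood of $\partial D^2$ by continuity and compactness, so it remains to exclude nearby double points. Suppose that no neighborhood of $\partial D^2$ consisted of injective points; then there are sequences $z_n\to\partial D^2$ and $w_n\neq z_n$ with $u(z_n)=u(w_n)$. Passing to subsequences, $z_n\to z_\infty\in\partial D^2$ and $w_n\to w_\infty\in\overline{D^2}$ with $u(z_\infty)=u(w_\infty)$. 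Since $u(z_\infty)\in N\subset\partial X$ while interior points map to $\mathrm{int}\,X$, we must have $w_\infty\in\partial D^2$; injectivity of $\gamma$ then gives $w_\infty=z_\infty$. But $d_{z_\infty}u\neq 0$, so $u$ is a local embedding of the manifold-with-boundary $\overline{D^2}$ near $z_\infty$, and for $n$ large both $z_n$ and $w_n$ lie in this embedding neighborhood, contradicting $u(z_n)=u(w_n)$ with $z_n\neq w_n$. Hence a neighborhood of $\partial D^2$ consists of injective points.

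The step I expect to require the most care is the embeddedness of $\gamma$ near the singular set of the open book foliation, i.e.\ combining the ``meets each page once'' property with the local models of \Cref{lem:uniqueness_lemma,lem:boundary_is_wall} to be certain that $\gamma$ remains in the region $N\setminus(B\cup\partial N)$ where projecting by $\Theta$ makes sense and the counting of intersections has content. By contrast, the fact that the interior of the disk stays in the interior of $X$ is a direct consequence of the maximum principle set up in \Cref{sec:weak_plurisubharm_max_princ}, and once $\gamma$ is known to be embedded the concluding compactness argument is routine.
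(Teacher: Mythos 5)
Your proof is correct and follows essentially the same route as the paper's, which deduces from \Cref{lem:boundary_disc_transverse_xiN} that $u$ is immersed and injective along its boundary and concludes from there. You additionally spell out the compactness and maximum-principle step ruling out double points whose partner lies in the interior of the disk, a point the paper leaves implicit in the phrase ``embedded points are in particular injective.''
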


\begin{proof}
    According to \Cref{lem:boundary_disc_transverse_xiN}, $\gamma$ is transverse to the open book foliation $\xi_N$ on $N$ and intersects each of its leaves exactly once.
    In particular, the map $u$ restricted to $\partial D^2$ is injective. 
    {Moreover, by the same argument as in the proof of \Cref{lem:boundary_disc_transverse_xiN} above, we also have $\d u\neq 0$ at all points of $\partial D^2$ at those points.} 
    Thus, $u$ is an embedding near any point of $\partial D^2$. 
    By compactness of $D^2$, this implies that there is a sufficiently small neighborhood of $\partial D^2$ where $u$ is an embedding. 
    As embedded points are in particular injective, this concludes the proof.
\end{proof}

We are now ready to prove regularity:

\begin{prop}
    \label{prop:regularity_and_index}
    Let $[u,z]\in \cM_*$.
    Then, $u$ is Fredholm regular, with Fredholm index 
    \[
    \ind(u)=  n+3\, .
    \]
    In particular, $\cM_*$ is a smooth manifold of dimension $n+1$ around $[u,z]$.
\end{prop}

\begin{proof}
    We have already proven that Bishop disks are regular and calculated their index in \Cref{prop:bishop_discs_regular}. 
    Let us then assume that $u$ is not a Bishop disk; according to \Cref{lem:uniqueness_lemma,lem:boundary_is_wall}, this means that the image of $u$ doesn't intersect the neighborhood $C$ of the binding where the Bishop family is defined and where the uniqueness property holds. 
    (Recall the definition of $C$ from \Cref{sec:arranging_boundary_data}. This is in fact a neighborhood of $\partial N$ too, but there is no non-constant curve intersecting the neighborhood $C\cap O'$ of $\partial N$, where $O'$ is as in \Cref{lem:boundary_is_wall}.) 
    
    Note that $u$ cannot be tangent to $\cK_\xi$.
    This simply follows from \Cref{item:confoliated_bLob_characteristic_leaves} in {Section \ref{sec:def_preconfol_bLob}}.
    According to \Cref{lem:boundary_point_lemma_curve}, this means that $u$ must, near its boundary, enter the interior of $W$, and more precisely into $W \setminus (\partial W \cup C)$.
    By the genericity of $J$ in this open set, it is hence enough to prove that $u$ has an injective point mapped into $W \setminus (\partial W \cup C)$.
    But this follows directly from \Cref{lem:injective_points_near_boundary}, and hence we deduce regularity of $u$.

    \smallskip

    As far as the index computation is concerned, this simply follows from the fact that the index is locally constant on each connected component, and $n+3$ was exactly the Fredholm index of the Bishop disks according to \Cref{prop:bishop_discs_regular}.

    \smallskip
    
    At this point, the fact that $\cM_*$ is a smooth manifold of dimension $n+1$ follows from the classical arguments e.g.\ from \cite[Theorem 4.8]{WenBook}.
    This concludes the proof.
\end{proof}

\subsubsection{Compactification of the moduli space $\cM_*$} 
\label{sec:compactification_moduli_space}

Recall that the \textbf{symplectic energy} of a $J$-holomorphic $u\colon (D^2,i) \to (W,J)$ is defined to be 
\begin{equation*}
    E(u):= \frac{1}{2}\int_{D^2} |\d u|_g^2
\end{equation*}
where $|\cdot|_g$ denotes the norm induced by\footnote{Recall that in our case $J$ is only tamed by $\Omega$, and not necessarily compatible.} $g:= \frac{1}{2}(\Omega(\cdot, J\cdot) - \Omega(J\cdot,\cdot))$ and any auxiliary Riemannian metric on $\Sigma$ on the space of morphisms $T\Sigma \to TW$.

This quantity plays a central role in compactifying the moduli spaces under consideration: Gromov's compactness theorem guarantees that moduli spaces of pseudo-holomorphic disks with compact totally real boundary conditions and subject to uniform energy bounds can be compactified by adding nodal curves (c.f.\ \cite[Theorem 4.4]{Fra08} for an explicit proof in the case of a marked point at the boundary).

The first step is then to bound the energy of curves uniformly in $\cM_*$.

\begin{prop}
    \label{prop:energy_bounds}
    There is a real $K>0$ such that for every $[u,z]\in \cM_*$, one has $E(u)\leq K$.
\end{prop}

Recall that 
$E(u)=\int_{D^2}u^*\Omega$ for any $J$-holomorphic map $u$ (c.f.\ for instance \cite[Lemma 2.2.1]{McDSalBook}).
In order to establish uniform energy bounds, we then have to control the integral of $u^*\Omega$ over $\Sigma$.

\begin{proof}
    Let $[u,z]$ be an arbitrary element in $\cM_*$.
    The fact that $\cM_*$ is path-connected (by definition) and contains the Bishop family means that one can find a homotopy $(u_s)_{s\in [0,1]}$ with $u_0$ a constant Bishop disk and $u_1$ a reparametrization of $u$.
    This can be reinterpreted as a map $U\colon [0,1]\times D^2 \to W$, and by closedness of $\Omega$ and Stokes one has 
    \[
    0=\int_{[0,1]\times D^2} \d U^*\Omega = - \int_{\{0\}\times D^2} U^*\Omega + \int_{\{1\}\times D^2} U^*\Omega - \int_{[0,1]\times \partial D^2} U^*\Omega  \, ,
    \]
    i.e.\ $E(u) = \int_{D^2} u^*\Omega =  \int_{[0,1]\times \partial D^2} U^*\Omega$ as $u_0$ is a constant map.
    
    Now, note that $[0,1]\times \partial D^2$ is mapped to $W$ via the union of $\partial u_s$'s, all of which have boundary on $N$, where $\Omega = \d\alpha\vert_N + \omegacKU\vert_N = \d\alpha\vert_N$ as $\omegacKU$ vanishes on $N$ according to \Cref{item:confoliated_bLob_characteristic_leaves_lagrangian} of \Cref{def:confoliated_bLob} (and \Cref{lem:confol_bLob_can_assume_omega_only_char_fol}).
    Hence, Stokes' theorem implies that
    \[
    E(u) =  \int_{[0,1]\times \partial D^2} U^*\d\alpha =  \int_{\partial D^2} u^*\alpha \, .
    \]

    Let now $\theta \colon N\setminus B \to S^1$ be a fibration defining the open book foliation on $N$.
    Because $\xi_N$ and the foliation given by the fibers of $\theta$ coincide, there is a function $f\colon N \to \R_{> 0}$ such that $\alpha\vert_N = f \theta$.
    In particular, we get 
    \[
    E(u) = \int_{\partial D^2} f\circ u \cdot  u^*\theta  \leq \max_{p\in N}\vert f(p)\vert \int_{\partial D^2} u^* \theta \leq 2\pi \max_{p\in N} \vert f(p)\vert  \, ,
    \]
    where the last inequality follows from \Cref{lem:boundary_disc_transverse_xiN}.
\end{proof}

\bigskip

Now that we know that there is a global energy bound for marked curves in $\cM_*$, we would like to apply Gromov's compactness theorem as outlined above in order to get a compactification $\cMline_*$ of $\cM_*$.
However, this cannot be done directly for the following reason: if we consider the whole confoliated bLob $N$ as a boundary condition, this fails to be totally real at the binding $B$; if we remove the binding, $N\setminus B$ is indeed totally real, but is no longer compact.
This issue can circumvented thanks to our uniqueness lemma near the binding, as in the contact case in \cite{Nie06}, as follows.

According to \Cref{lem:uniqueness_lemma}, we know that any marked curve $[u,z]\in \cM_*$ whose boundary touches the open neighborhood $O$ (defined in the lemma) of $B$ in $N$ actually is a Bishop disk. 
If we denote $\cB_*$ the subset of $\cM_*$ made of marked Bishop disks with boundary in a compactly contained smaller copy $U$ of $O$, we then have that every curve in $\cM_*\setminus \cB_*$ has boundary on $N\setminus U$, which is both totally real and compact.

Then, since no curve can touch $\partial N$ according to our choice of $J$ and \Cref{lem:boundary_is_wall}, and since \Cref{prop:energy_bounds} gives uniform energy bounds for curves in $\cM_*\setminus \cB_*$, Gromov's compactness theorem says that there is a compact topological space $\overline{\cM_*\setminus \cB_*}$ such that $(\overline{\cM_*\setminus \cB_*})\setminus (\cM_*\setminus \cB_*)$ consists of marked nodal disks made of a main disk component with sphere and/or disk bubbles attached in a bubble-tree fashion, and with the marked point lying in the boundary of one of the disk components.
(See \cite[Theorem 4.4]{Fra08} for a detailed proof.)

As \Cref{lem:uniqueness_lemma} holds for curves with boundary touching $O$ and not just $U$, we moreover know that there is no nodal curve in $(\overline{\cM_*\setminus \cB_*})\setminus (\cM_*\setminus \cB_*)$ such that its main disk component has boundary touching $O$.
This means that $\overline{\cM_*\setminus \cB_*}$ can be glued (as a topological space) to $\cM_*^O$, made of those marked Bishop disks with boundary touching $O\subset N$ (that is naturally a smooth manifold of dimension $n+1$ according to \Cref{def:bishop_family} and \Cref{prop:bishop_discs_regular}), along the subfamily of Bishop disks with boundary touching $O\setminus U$.
What's more, according to \Cref{lem:uniqueness_lemma}, this glued moduli space can naturally be compactified by adding the constant Bishop disks with image contained in the binding of the confoliated bLob; we denote by $\cMline_*$ this compactification.

\bigskip

Our geometric setup allows, in fact, some additional control on nodal curves in $\cMline_*\setminus \cM_*$. 
More precisely, we first claim that there is no disk bubbling in our setup.
Indeed, for a nodal Gromov-limit curve $[u_\infty,z_\infty]\in \cMline_*$ of a sequence $[u_n,z_n]\in\cM_*$, we must have that the concatenation of the boundaries of all the disk components wind around the binding $B$ positively exactly once inside $N$, as this is true for the Bishop disks, $\cM$ is path-connected (by definition), and this winding number passes to the Gromov-limit. 
If by contradiction there is more than one (non-constant) disk component in $u_\infty$, this means that there is at least one of them, which we denote by $v_0$, whose winding number is non-positive.
But this contradicts the first part of \Cref{lem:boundary_disc_transverse_xiN}, which implies that the winding number of $\partial v_0$ around $B$ in $N$ is strictly positive. 
This proves that there cannot be any disk bubbling in $\cMline_*$.

Another restriction imposed by our geometric setup is that the image of every nodal closed component resulting from sphere bubbling must, in fact, lie in $W\setminus \partial W$.
This follows directly from the weak maximum principle in \Cref{lem:maximum_principle_curve} applied to our choice of $J$ and the assumptions in 
\Cref{item:confoliated_bLob_characteristic_leaves_weakly_exact}
of {Section \ref{sec:def_preconfol_bLob}}. Note that \Cref{item:confoliated_bLob_characteristic_leaves_weakly_exact} also rules out sphere bubbling touching the boundary, for the following reason: if this happens, then the sphere bubble itself is tangent to $M=\partial W$ by \Cref{lem:maximum_principle_curve}, which then necessarily imply, by induction on the nodes of the nodal tree, that all the nodal curve is tangent to it, and in particular also all of its disk components.

In conclusion, marked nodal curves in $\cMline_*$ are necessarily made of one main disk component, whose boundary is positively transverse to $\xi_N$ and has winding number exactly $1$ around $B$ in $N$, and possibly several bubble components, that lie completely in $W\setminus \partial W$.

\subsubsection{The evaluation map is a pseudocycle}
Let's now go back to the evaluation map $\ev\colon \cM_*\to N$, $[u,z]\mapsto u(z)$, considered in \eqref{eqn:evaluation_map}. 
Due to our assumption of semi-positivity for $(W,\Omega)$, we have the following key property:
\begin{prop}
    \label{prop:ev_pseudocycle}
    The evaluation map $\ev$ is a pseudocycle. 
\end{prop}
Here, \textbf{pseudocycle} is intended as in \cite[Definition 6.5.1]{McDSalBook}.
More explicitly, this means that $\ev(\cM_*)$ has compact closure and the \textbf{limit set}
\[
    \Omega_{\ev} := \bigcap_{{\footnotesize \begin{array}{c}K\subset \cM_* \\ K \text{ compact} \end{array}}} \overline{\ev(\cM_*\setminus K)}
\]
is contained in the image of a smooth map $f\colon X \to W$ defined on a smooth manifold of dimension at most $\dim \cM_* -2$.

\begin{proof}
    This can be proved exactly as in \cite[Proposition 12]{Nie06} (c.f.\ also \cite[Proposition 49]{NieNotes}).
    We hence omit the details and limit ourselves to describing the general idea.

    \smallskip

    The fact that $\ev(\cM_*)\subset N$ has compact closure follows from compactness of $N$, Gromov's compactness,
    and the fact that the (non-constant) curves of interest for us have boundary that stays away from $\partial N$ according to \Cref{lem:boundary_is_wall}.

    As far as the limit set $\Omega_{\ev}$ is concerned, Gromov's compactness theorem tells that $\cM_*$ can be compactified by adding once-boundary-marked nodal curves.
    Moreover, by the discussion in \Cref{sec:compactification_moduli_space}, in our setup, there is no disk bubbling at all, nor sphere bubbling where at least one bubbled component is tangent to $M=\partial W$.
    In other words, the only marked nodal curves we deal with are composed of a main disk component, with a marked point at its boundary, and sphere bubbles attached to its interior according to a modeling bubbling tree.
    In other words, $\cMline_*\setminus \cM_*$ is the union, over all bubbling trees $T$, of the moduli spaces $\cM_*^T$ made of those marked nodal curves that are of the following form: there is one main disk component, corresponding to a special vertex $v_0$ in $T$ and equipped with a marked point in its boundary, and attached to its interior we have a nodal curve made only of sphere components, each corresponding to a vertex of $T\setminus \{v_0\}$, and they are ``pairwise attached'' according to the edges of $T$ (where pairwise attached means that there are special additional marked points that are required to be mapped, under their associated evaluation map valued in $W$, to the same point, so that their images touch at those points).
    In particular, $\cM_*^T$ is simply a fiber product of the moduli spaces of each component, w.r.t.\ the evaluation maps associated to the additional nodal marked points (c.f.\ \cite{Fra08} for more details.)
    Hence, $\cM_*^T$ is a smooth manifold if all the moduli spaces involved in the product can be arranged to be smooth manifolds and the product of the evaluation maps at the nodal points in each component is transversal to the diagonal in the self product $W^{\times 2k}$ of $W$ with itself $2k$-times, where $k$ is the number of nodes.
    
    Now, recall that our $J$ is generic away from the boundary and from the region where the model almost complex structure is used, on which we have either the uniqueness \Cref{lem:uniqueness_lemma} or which we know that the boundary of no curve can approach according to \Cref{lem:boundary_is_wall}.
    Moreover, as already mentioned, every non-constant disk with boundary on $N$ enters the interior of $W$, i.e.\ in the region where $J$ is generic, near its boundary, i.e.\ where it has injective points by \Cref{lem:injective_points_near_boundary}; for bubbled spheres, it is also known (see e.g.\ \cite{McDSalBook}) that the set of somewhere injective points is dense in the domain.
    Because of this, we know that all the previously mentioned moduli spaces involved in the fiber products defining each $\cM_*^T$ are smooth manifolds, and also the involved evaluation maps at marked nodal points can be arranged to be transverse to the diagonal (up to slightly perturbing again $J$ away from the model regions, see \cite[Theorem 4.60]{WenBook}). 
    In other words, each $\cM_*^T$ is a smooth manifold.
    
    At this point, one can make (see \cite[Proposition 12]{Nie06} for details) an explicit virtual dimension count using the fiber product description and \emph{relying fundamentally on the semipositivity assumption} (that we have according to the assumptions of \Cref{thm:obstruction_fillability}) \emph{and on the fact that there is only sphere bubbling, that is moreover not tangent to $M$} (according to the previous discussion in \Cref{sec:compactification_moduli_space}).
    This computation shows moreover that the dimension of all the $\cM_*^T$'s that are non-empty is at most $\dim \cM_*-2$.

    Lastly, notice that each marked moduli space of marked nodal curves $\cM_*^T$ comes naturally with a smooth evaluation map $\ev_T$ at the marked point that is at the boundary of the domain of the unique (by the previous discussion) disk components of the nodal curve. 
    Hence, the choice of $X:=\sqcup_T \cM_*^T$ and $f:=\sqcup_T \ev_T$ shows that $\ev$ is a pseudocycle according to the above definition.
\end{proof}


\subsection{Obstruction to fillability from confoliated bLobs}
\label{sec:proof_main_result}
which
In this section, we aim to reach a contradiction working with all the information we gathered so far, that essentially relies on the assumption (by contradiction) that a symplectic filling $W$ exists. {(As announced in the outline of \Cref{sec:proof_obstruction_fillability}, we will list the differences for the case of semi-filling in \Cref{rmk:case_semifilling} below.)}
More specifically, we will use the boundary-marked moduli space $\cM_*$ together with the description of its Gromov compactification $\cMline_*$, and the evaluation map $\ev\colon \cM_*\to N$ in \eqref{eqn:evaluation_map}, which we know to be a pseudocycle.

\medskip

The first observation to make is that in our setting $\dim \cM_*=\dim N$; hence, according to \Cref{prop:ev_pseudocycle}, the limit set of the evaluation map $\Omega_{\ev}$ is covered by the image of smooth maps defined on manifolds which have dimension $\dim \cM_*-2=\dim N -2$.
(This is the meaning of the informal slogan ``under the assumption of semi-positivity, sphere bubbling is a codimension $2$ phenomenon''.)

Let then $\gamma \colon [0,1] \hookrightarrow N$ be an embedded curve, such that $\gamma^{-1}(B) = \{0\}$ and $\gamma^{-1}(\partial N) = \{1\}$.
Up to smooth perturbation of $\gamma$ relative to endpoints, we can moreover assume that $\ev$ is transverse to $\gamma$ and, by what we have just discussed in the previous paragraph, that $\ev^{-1}(\gamma)$ avoids honest-nodal curves (i.e.\ nodal curves having at least one sphere bubble) completely.

Let $\cM_*^\gamma$ be the connected component of the pullback $\ev^{-1}(\gamma)$ containing the model marked Bishop disks converging to the constant Bishop disk $[u_0,z_0]$ at $\gamma(0)$. 
Then, $\cM_*^\gamma$ is a smooth manifold by Thom's transversality theorem.
Moreover, since $\cM_*$ has dimension equal to that of $N$, the dimension of $\cM_*^\gamma$ is equal to that of $\gamma$, i.e.\ it is $1$-dimensional.
Now, it cannot be diffeomorphic to the circle: indeed, $\cM_*^\gamma$ is not compact as it contains non-constant marked Bishop disks converging to the constant one $[u_0,z_0]$ over $\gamma(0)$, but it does not contain $[u_0,z_0]$ itself (by definition of $\cM_*$).
Hence, $\cM_*^\gamma$ must be an open interval, say diffeomorphic to $(-1,1)$.

We then look at the compactification $\cMline_*^\gamma$ of $\cM_*^\gamma$ obtained by the Gromov compactness theorem and by adding the constant Bishop disk with image the point $\gamma(0)\in B\subset N$.
By construction $\cM_*^\gamma$ avoids sphere bubbling; hence $\cMline_*^\gamma\setminus \cM_*^\gamma$ can only contain the constant Bishop disk $[u_0,z_0]$. 
Now, by \Cref{lem:uniqueness_lemma}, this means that there are neighborhoods $(-1,-1+\delta_-)$ and $(1-\delta_+,1)$ of the two ends of $\cM_*^\gamma\simeq (-1,1)$ that in fact are identified.
But no matter the values of $\delta_\pm$, this contradicts the fact that $\cM_*^\gamma$ is a smooth manifold.
This concludes the proof.
\hfill\qedsymbol

\begin{remark}
    \label{rmk:case_semifilling}
    In the above proof, there are only few adaptations needed in the case where $W$ is a semi-filling. 
    Here, we denote by $M_0$ the connected component of $M$ containing the confoliated bLob, and $M_1:=\partial W \setminus M_0$ the other component(s).
    The first needed adaptation is in the choice of almost complex structure: along $M_0$ this should be arranged as explained in the previous sections, while along $M_1$ one can simply use $J_\xi$ extended to $TW\vert_{\partial W \setminus M_0}$ by sending the Reeb to the cylindrical coordinate, as the only property needed along $M_1$ is the weak maximum principle.
    The second adaptation is needed in the use of Gromov's compactness theorem in \Cref{sec:compactification_moduli_space}: in the semi-filling case, this is still possible as families of pseudo-holomorphic discs in $\cM_*$ cannot touch (in an interior point) $M_1$, thanks exactly to the weak maximum principle. 
    The rest of the proof goes through essentially unchanged.
\end{remark}


\section{Applications}
\label{sec:applications}

In this section, we prove all the consequences of \Cref{thm:obstruction_fillability} as stated in the introduction.
Namely, 
in \Cref{sec:fillings_products} we prove \Cref{cor:split_strong_sympl_conf}; in \Cref{sec:weak_fillings_bourgeois_manifolds} we describe how to deduce a non-trivial statement about contact manifolds, namely \Cref{cor:obstruction_weak_filling_bourgeois} concerning weak fillability of Bourgeois manifolds; 
lastly, in \Cref{sec:Reeb_orbits} we show how to deduce dynamical consequences as claimed in \Cref{thm:contractible_reeb_orbit}.


\subsection{Fillings of products of contact and symplectic manifolds}
\label{sec:fillings_products}

We prove in this subsection \Cref{cor:split_strong_sympl_conf}. 
Recall that in this context $(M,\eta)$ is a closed and connected $(2n+1)$-dimensional overtwisted contact manifold, and $(X,\Omega)$ a closed and connected symplectic $2k$-dimensional one which is symplectically aspherical and admits a weakly exact Lagrangian.

\medskip

The third item in the conclusion is an immediate consequence of the main \Cref{thm:obstruction_fillability}.
Indeed, any $N$ of the form $N_c\times L$ for $N_c$ a contact bLob in $M$ and $L$ a weak exact Lagrangian in $X$ is (as claimed in the conclusion itself) a confoliated bLob.
Moreover, such a product confoliated bLob always exists, as by the $h$-principle for overtwisted contact manifolds in \cite{BEM15} there is always a ``small'' (i.e.\ contained in a smooth ball) contact bLob in $M$ (and in fact even a small Plastikstufe in the sense of \cite{Nie06}).
The fact that $N$ is transversely exact w.r.t.\ $\Omega$ by assumption allows us to directly use \Cref{thm:obstruction_fillability}.

\medskip

As far as the other two points are concerned, notice that we still have a product confoliated bLob of the form $N=N_c\times L$ in the boundary of the hypothetical filling. 
What is not obvious in these two situations is that $N$ is transversely exact w.r.t.\ $\Omega$.

For this, first notice that, due to its product structure, $N$ satisfies Item \ref{item:transversely-exact_bLob__FU_reg_fol} of Definition \ref{def:transversely-exact_bLob}. 
By Lemmas \ref{lem:strfill} and \ref{lem:astrfill}, we then only need to check Item \ref{item:characteristic_exact_bLob_basic-exact} from Definition \ref{def:transversely-exact_bLob}. 

This follows from the following observation. 
Let $D$ and $U$ be tubular neighborhoods respectively of $N_c$ in $M$ and of $L$ in $X$.
Let also $\cF$ be the foliation by leaves of the form $D\times \{p\}$; note that $\cF$ is just the kernel of $\omega$. 
In particular, the closed two-form $\omega$ is basic w.r.t.\ $\cF$. 
This means that we can write it as $\omega=\pi_U^*\sigma$ with $\sigma\in \Omega^2(U)$ closed, and the $\cF$-basic de Rham $2$-cohomology class $[\omega]_{\cF}\in H^2_{dR,b}(D\times U , \cF)$ is just identified to $[\sigma]\in H^2_{dR}(U)$ via the isomorphism between $H^2_{dR,b}(D\times U,\cF)$ and $H^2_{dR}(U)$. 
Since $\omega$ vanishes on $D\times L$, we must have that $\sigma$ vanishes along $L$. 
As $U$ retracts onto $L$, by the relative Poincaré lemma we have that $\sigma$ is exact in $U$; hence, $\omega$ is $\cF$-basic exact on $D\times U$, as desired.
This concludes the proof.
\qedsymbol


\subsection{Weak fillings of Bourgeois contact manifolds}
\label{sec:weak_fillings_bourgeois_manifolds}

We prove in this subsection \Cref{cor:obstruction_weak_filling_bourgeois}. The proof is by contradiction.
Assume that there is a weak filling $(W^{2n+2},\Omega)$ of a Bourgeois contact $(2n+1)$-manifold $(M\times \T^2,\xi)$ with $[\Omega\vert_{\partial W} ] = [\mu]$, where $\mu$ is a volume form on $\T^2$ and $\xi$ is associated to an overtwisted contact structure on $M$.
Recall from \cite{Bou02} that $\xi = \ker \beta$ with $\beta = \alpha + \phi_1 \d x - \phi_2 \d y$, where $\phi = (\phi_1,\phi_2)\colon M \to \R^2$ is a map defining an open book decomposition $(B,\vartheta)$ supporting $\xi$, and $\alpha$ is a contact form for $\xi$ which is adapted to such open book.
Without loss of generality, we can assume $\mu = C \d x \wedge \d y$, for some constant $C>0$.

According to \cite[Lemma 2.10]{MNW} (or to the generalized \Cref{lem:deformation_lemma}), up to stacking on top of $X$ a smoothly trivial symplectic cobordism, we can assume that $\Omega = \d(e^t \beta) + \mu$ on a neighborhood $(T-\delta,T]\times M$ of the boundary of $W$.

\medskip

Let $\epsilon>0$ be a small number, to be determined later, and $\rho\colon [T,T+\ln(1/\epsilon)]\to \R_{\geq0}$ a non-increasing function such that $\rho(T)=1$ and $\rho(T+\ln(1/\epsilon))=0$, whose precise form will also be determined later.
Extend $\Omega$ on the extended cobordism {$[T,T+\ln(1/\epsilon)]\times M$ as: 
\begin{align*}
\Omega  =& \; \d(e^t \alpha + e^t\rho(t)\phi_1\d x - e^t\rho(t) \phi_2\d y) + \mu \\
= & \; e^t \d t \wedge\alpha +  e^t \d \alpha  + e^t(\rho+\rho') \d t \wedge(\phi_1 \d x - \phi_2 \d y)  \\
& + e^t\rho \,  (\d\phi_1\wedge \d x - \d\phi_2\wedge \d y) + C \d x \wedge \d y \, .
\end{align*}
}

\noindent
Similarly to the computations in \cite{Bou02}, we then have
\begin{align*}
    \Omega^{n+1} = & \; (n+1) e^t \d t \wedge [\alpha + (\rho'+\rho) \, (\phi_1\d x - \phi_2 \d y)] \\
    & \quad \wedge (e^t \d\alpha + e^t\rho \d \phi_1 \wedge \d x - e^t\rho  \d \phi_2 \wedge \d y + C \d x \wedge \d y)^{n-1} \\
     = & \; (n+1)n e^{nt} C \d t \wedge \alpha \wedge \d \alpha^{n}\wedge \d x \wedge \d y \\
     & \quad + (n+1) n(n-1) \rho^2(t) e^{(n+1)t}\d t \wedge \alpha \wedge \d\alpha^{n-2}\wedge \d\phi_1\wedge\d\phi_2\wedge \d x \wedge \d y\\
     & \quad + (n+1) n (\rho'(t)+\rho(t))\rho(t) e^{(n+1)t} \d t \wedge \d\alpha^{n-1} \wedge (\phi_1\d\phi_2 -\phi_2\d\phi_1) \wedge \d x \wedge \d y \, .
\end{align*}
Now call $\vol_M$ the volume form $\alpha\wedge\d\alpha^{n-1}$ on $M$, and let $f,g\colon M\to \R_{\geq0}$ such that
\[
\alpha\wedge\d\alpha^{n-2}\wedge\d\phi_1\wedge\d\phi_2 = f \vol_M \, ,
\quad 
(\phi_1\d\phi_2 - \phi_2\d\phi_1)\wedge \d\alpha^{n-1} = g \vol_M \, ;
\]
here, $f,g\geq 0$ follows from the fact that the open book $(\phi_1,\phi_2)\colon M \to \R^2$ supports the contact structure $\ker\alpha$.
Putting all terms together, we obtain
\[
\Omega^{n+1}= (n+1)ne^{nt}[C + f(n-1)e^t\rho^2 + g e^t (\rho'+\rho)\rho ] \vol_M \wedge\d x \wedge \d y \, .
\]
Where $g=0$, there is nothing to prove as all terms are then $\geq 0$; we hence work on $\{g> 0\}$.
Here, to prove that $\Omega^{n+1}$ is a volume form, it is enough to prove that the {sum of the first and third terms can be arranged to be positive. 
This amounts to arranging that} $F(t):=e^t(\rho'+\rho)\rho$ is strictly bigger than $D=-\frac{C}{\operatorname{max}(g)}$.

For this, we consider a function $\rho$ of the form 
\[
\rho = \frac{e^{-t+T}-\epsilon}{1-\epsilon} \, ,
\]
{that indeed satisfies the desired boundary conditions $\rho(T)=1$ and $\rho(T+\ln(1/\epsilon))=0$. Now, for this choice of $\rho$, a} direct computation shows that $F'>0$ {for $1\leq t<T+\ln(1/\epsilon)$}. Hence, $F$ attains its maximum at $t=T+\ln(1/\epsilon)$, where $F$ is simply zero, and its minimum at $t_0=T$, at which we have
\[
F(T)=-\epsilon\frac{e^{T}}{1-\epsilon} \, .
\]
For $\epsilon$ small enough we can achieve $F(T)>D$.

Lastly, note that $\rho(T)=1$, so $\Omega$ extends (via the normal neighborhood theorem given e.g.\ in \cite[Lemma 2.6]{MNW}) the symplectic form on 
$(T-\delta,T]\times M$ over $[T,T+\ln(1/\epsilon)]\times M$ as needed.
Moreover, $\rho(\ln(\epsilon)+T)=0$, so that $\Omega$ restricts at the new positive boundary $\{T+\ln(1/\epsilon)\}\times M$ as 
\[
\Omega\vert_{\{T+\ln(1/\epsilon)\}\times M} =  \frac{e^T}{\epsilon} \d \alpha + C \d x \wedge \d y \, .
\]

\medskip

The result of adding this smoothly-trivial cobordism is then a symplectic manifold $(X,\Omega)$ where, on a neighborhood (given again by the standard normal neighborhood theorem for hypersurfaces in e.g.\ \cite[Lemma 2.6]{MNW}) of the form $(-\delta,0]\times M\times \T^2$ of the boundary $M\times \T^2$ one can write $\Omega = \d (e^t \gamma) + \mu$, where $\gamma=\frac{e^T}{\epsilon}\alpha$.
This is a strong symplectic filling of the strong symplectic confoliation $(\eta = \ker\alpha,\mu)$, which cannot exist by \Cref{cor:split_strong_sympl_conf}. 
\hfill
\qedsymbol


\subsection{Closed Reeb orbits from confoliated bLobs}
\label{sec:Reeb_orbits}

The proof of \Cref{thm:contractible_reeb_orbit} is analogous to that in the contact case from \cite{AlbHof09}.
The only difference is that the theory of symplectic fillings of confoliations is needed to deal with the fact that the positive boundary of the cobordism is not contact.
Here are some additional details.

\medskip

Let us start by proving the first part of the statement. 
Assume by contradiction that the stable Hamiltonian structure $(\lambda,\omega)$ at the bottom of the cobordism has no contractible closed Reeb orbits.
As it is customary in SFT, we complete the cobordism at the negative end with a negative symplectization of $(\lambda,\omega)$.
We then use an almost complex structure $J$ that is as in the proof of \Cref{thm:obstruction_fillability} near the positive boundary, cylindrical at the negative end, and generic elsewhere.

As done in the proof of \Cref{thm:obstruction_fillability}, one can construct a local Bishop family of $J$-holomorphic disks stemming from the binding of the transversely-exact confoliated bLob.
This gives rise, as before, to a moduli space $\cM$ of unparametrized $J$-holomorphic disks, with boundary condition on the transversely-exact confoliated bLob (minus the binding and the boundary).

We now consider the energy of the curves in this moduli space, where we intend it here in the SFT sense at the negative end (as defined, e.g.\ in \cite[Section 9.2]{Wendl_SFTLectures}). 
For this type of energy, global energy bounds for the curves in the moduli space can be argued as in \Cref{thm:obstruction_fillability}.
Hence one can apply Gromov's and additionally SFT compactness \cite{BEHWZ} theorems. 

Now, exactly as argued in the proof of \Cref{thm:obstruction_fillability}, one can see that there cannot be any sphere- nor disk-bubbling. 
The main difference in this case, with respect to \Cref{thm:obstruction_fillability}, is that there could be, a priori, SFT-type breaking in pseudo-holomorphic buildings at the negative end. 
However, the assumption (by contradiction) of the lack of contractible closed Reeb orbits for the stable Hamiltonian structure $(\lambda,\omega)$ at the bottom of the cobordism actually prevents this from happening.
Indeed, any non-trivial SFT-type pseudo-holomorphic building arising as SFT-limit of $J$-holomorphic disks with boundary on the confoliated bLob at the positive boundary must involve at least one \emph{contractible} periodic orbit of a Reeb vector field at the negative boundary.

At this point, the same argument as in the proof of \Cref{thm:obstruction_fillability} allows us to reach a contradiction, and this originated exactly from the assumption that there are no contractible closed Reeb orbits at the negative end. This concludes the proof.
\hfill 
\qedsymbol


\section{Contact approximations}
\label{sec:contact_approx}

We finish this paper by introducing a new notion of convergence of contact structures to confoliations, which is natural from the perspective adopted in this paper, and we give some natural examples as evidence of the relevance of this new notion.

\subsection{Contact approximation of c-symplectic confoliations}

Before properly introducing the definition of contact approximation, let us first explain how one is led to such a definition in the context of c-symplectic confoliations. 

Given a c-symplectic confoliation $(\eta,\CS_{\eta,\omega})$, it is clear that just requiring that a sequence of contact structures $\xi_m=\ker \alpha_m$ converges in the $C^k$-topology to the hyperplane field $\eta$ completely neglects some important data.
Indeed, say for $C^k$-convergence of $\xi_m$ to $\eta$ in the space of hyperplane fields with $k\geq 1$, $[\d\alpha_m\vert_{\xi_m}]$ will $C^k$-converge to $[\d\beta\vert_{\eta}]$ wherever $\d\beta\neq 0$, with $\beta$ a $1$-form defining $\eta$.
In particular, both the data of $\CS_{\eta,\omega}$ and of $\cK_\eta$ are completely disregarded in the convergence as hyperplane fields.

The way that we solve this issue is, roughly, as follows.
One would like to require that $\d\alpha_m\vert_{\cK_\eta}$ gives a ``limit conformal class'' of two-forms on $\cK_\eta$ that is ``compatible'' with $\CpS_{\xi}$ and $[\omega]$. 
The way this limit conformal class will be obtained in practice is by writing down an expression for $\d\alpha_m\vert_{\cK_\eta}$, and considering its terms which go to zero at the slowest rate as $m\to\infty$.
Up to a rescaling, these terms will then give a well-defined (non-trivial) conformal class of two-forms on $\cK_\eta$ in the limit.

Then, a natural requirement is that this limit conformal class is ``compatible'' with the c-symplectic structure on $\eta$. 
The way we formalize this ``compatibility" condition is by requiring that the limit conformal class is symplectically compatible with $\mu$ along $\cK_\xi$, in a sense that is exactly analogous to \Cref{def:symplectically_compatible}.

\begin{remark}
The reason why we do not ask directly that this limit conformal class just coincides with $[\omega\vert_{\cK_\eta}]$ is that this stronger condition is not satisfied in some very natural examples that we will describe later in \Cref{sec:examples_contact_approximations}.  
\end{remark}

\medskip

The formalization of the idea described above is slightly technical, as the notion depends on the rank of $\cK_{\xi}$ at a given point. 
The sets where this rank is constant can be very wild. 
Thus, we first need to introduce some definitions and notations that we need to make precise what $C^k$-convergence means for objects defined on possibly non-open sets.

\bigskip

Let $M$ be a smooth manifold and $X\subset M$ be a subset.
By an \textbf{$l$-form $\mu$ on $X$} we will mean a section of the bundle $\Lambda^l(T^* M\vert_X)\to X$.
We will say that such a $\mu$ is \textbf{$C^k$-differential} if there is an open subset $U\subset M$ with $X\subset U$ and a differential $l$-form $\widetilde{\mu}$ on $U$ of regularity $C^k$ such that $\widetilde{\mu}\vert_X = \mu$ as sections of $\Lambda^k(T^* M\vert_X)\to X$.

Given a partition $M= \sqcup_{i\in I} X_i$ of $M$ into a finite number of subsets $X_i$'s (that are not necessarily open), we will call \textbf{$C^k$-differential partitioned-form $(\mu_i)_{i\in I}$} any collection of differential forms $\mu_i$ on each $X_i$ of regularity $C^k$, of degree $\deg(\mu_i)=d_i$ possibly depending on $i$.
We will simply talk about its \textbf{degree} whenever the latter is constant in $i$, and talk about \textbf{$k$-forms} if the degree is $k$. 
If the degree is $0$, we will simply talk about \textbf{$C^k$-differential partitioned-function}.
Moreover, whenever we want to specify the partition $\cP=\{X_i\mid i\in I\}$ of $M=\sqcup_{i\in I} X_i$, we will also say \textbf{$\cP$-partitioned-form}.

\medskip

A last notion that we will need is that of convergence of the conformal class of a sequence of (non-partitioned) differential forms to the conformal class of a partitioned differential form.
This is needed for us as we will need to consider ``limit conformal classes of two-forms on $\cK_\xi$'', and the latter is not regular. 
Here are the details.

Let $(\eta_i)_{i\in I}$ be a $C^k$-differential partitioned-form on $M$ with respect to a finite partition $M=\bigsqcup_{i\in I}X_i$.
Assume that, for all $ i \in I$, $\mu_i$ is non-zero at every point of $X_i$.
Let also $(\zeta_{m,i} \mid i\in I)_{m\in \N}$ be a sequence of $C^k$-differential partitioned-forms on $M$ with respect to the same partition, and such that $\deg(\zeta_{m,i})=\deg(\eta_i)$ for all $i$ and all $m$.
We then introduce the following notion of conformal convergence.

\begin{definition}
    \label{def:convergence_conformal_class_partitioned_forms}
    We say that the sequence $(\zeta_ {m,i} \mid i\in I)_m$ 
    \textbf{conformally $C^k$-converges} to $(\eta_i \mid i \in I)$, and denote $[(\zeta_{m,i})]\xrightarrow{C^k}[(\eta_i)]$, if there are 
    \begin{itemize}
        \item[-] for all $i \in I$, an open neighborhood $O_i$ of $X_i$,
        \item[-] and a sequence of smooth partitioned-functions $(f_{m,i}\colon O_i\to \R_{>0} \mid i \in I)_m$,
    \end{itemize}
    such that
    \[
    f_{m,i} \,\wdt{\zeta}_{m,i}  \xrightarrow{C^k} \wdt{\eta}_i \quad \text{on } O_i \, ,
    \quad \text{as } m\to \infty \, ,
    \]
    where $\wdt{\zeta}_{m,i}$ and $\wdt{\eta_i}$ are $C^k$-extensions of $\zeta_{m,i}$ and $\eta_i$ respectively over $O_i$ (that exist by definition of partitioned forms if each $O_i$ is a sufficiently small neighborhood of $X_i$).
\end{definition}
To be completely explicit, the symbol $\xrightarrow{C^k}$ above denotes the limit with respect to the $C^k$-topology on the space of $C^k$-differential forms on the open sets $O_i$'s.

{Below, we will in fact slightly abuse the above notation in the following way. 
Given a partition $\cP$ of $M$ and a finer partition $\cP'$ 
(i.e.\ one such that all subset in $\cP$ is union of subsets in $\cP'$), we will talk about sequences of $\cP'$-partitioned-forms converging to a $\cP$-partitioned-form. 
This makes sense as the latter can naturally be seen as a $\cP'$-partitioned-form as well, just by restricting each form defined on each element $X$ of $\cP$ to the subsets of $\cP'$ that give as a union $X$ itself.}

\begin{remark}
\label{rmk:conformal_convergence_forms}
Considering the limit above over the enlarged \emph{open} sets $O_i$ is necessary to make sense of $C^k$-convergence, which is only defined for differential forms defined on open subsets of manifolds. 
Moreover, introducing $\delta_i$ in the definition above is necessary because the limit is asked on a domain which is bigger than $X_i$ itself, and any chosen $C^k$-extension $\wdt{\zeta}_{m,i}$ of $\zeta_{m,i}$ over $O_i$ (that exists by definition of partitioned form) might have different ``dominating terms as $m\to\infty$'' 
on $O_i\setminus X_i$.
The following toy example illustrates this phenomenon.
\end{remark}

\begin{example}
    \label{exa:need_for_constant_form}
    Let $M=\R^2$ with coordinates $(x,y)\in \R^2$, 
    and $\zeta_m = x\, \d x+ \frac{1}{m}\d y$.
    Let then $X_1 = \{(x,y)\in\R^2 \mid x\neq 0 \}$, and $X_2 = M\setminus X_1 = \{(0,y)\in \R^2\}$, and define $\mu_1 = x \, \d x$ and $\mu_2 = \d y$, as $C^k$-differential forms respectively on $X_1$ and on $X_2$.
    In fact, $\mu_1,\mu_2$ are also naturally defined on the whole of $M$.
    Then, it is clear that over the open set $X_1$ the conformal class of $\zeta_m$ converges to that of $\mu_1$.
    However, as $X_2$ is not open, in order to talk about $C^k$ limits one needs to look at an open neighborhood $O_2$, and convergence of differential forms defined there.
    If we look at $\zeta_m$ and $\mu_2$, that are naturally defined over all of $M$, it is clear that $\mu_2$ is the conformal limit of $\tilde \zeta_m=\zeta_m-x \d x = \frac{1}{m} \d y$, which is indeed an extension of $\zeta_m\vert_{X_2}$ over a neighborhood $O_2$ of $X_2$, whereas $\zeta_m$ itself (that obviously also extends $\zeta_m\vert_{X_2}$) does not admit a $C^k$-differential form as a limit even up to rescaling on $O_2$.
\end{example}

\bigskip 

We are now ready to talk about contact approximations of c-symplectic confoliations.
More precisely, in what follows, we will consider differential partitioned $k$-forms with respect to partitions of our ambient confoliated manifold (or rather of the complement of the set of points where $\rk\cK_\xi$ is zero) obtained as follows.

Let $M$ be a smooth $(2n+1)$-dimensional manifold, and $(\eta, \CS_{\eta,\mu})$ a c-symplectic confoliation on it.
For $0\leq i \leq n$, consider the subset $U_i \subset M$ given by the locus of points $p\in M$ such that $\rk(\cK_{\eta})(p)< 2(i+1)$ (or, said otherwise, $\rk(\cK_\eta)(p)\leq 2i$); 
as this rank is integer-valued and upper semi-continuous in $p$ by \Cref{lem:charact_distr_vs_kernel_distr}, such $U_i$'s are open subsets of $M$.
We also define $U_0=U_{-1}=\emptyset$.
Note that $U_{n}=M$ and $U_{i}\supset U_{i-1}$, and denote, {for $i=0,\ldots,n$, $C_i := U_{i}\setminus U_{i-1}$,} which is nothing else than the set of points where $\rk(\cK_{\eta})=2i$.
On $M$, we then consider the partition $\cP = \{C_i\mid 0\leq i \leq n\}$.

\begin{definition}
    \label{def:contact_approximation_deformation}
    Let $k\in\{0,1,\ldots,\infty\}$.
    Given a c-symplectic confoliation $(\eta,\CS_{\eta,\mu})$, we call \textbf{contact $C^k$-approximation} of the latter a family of contact structures $\xi_m=\ker(\alpha_m)$ such that:
    \begin{enumerate}
        \item\label{item:contact_approximation_deformation__hyperplanes} $\xi_n$ $C^k$-converges to $\eta$ as hyperplane fields;

        \item\label{item:contact_approximation_deformation__C0_case} if $k=0$, we moreover require that $\CS_{\xi_m}$ on $\xi_m$  $C^k$-converges to $\CpS_{\eta}$ on $\eta$ at every point where the latter is non-trivial;

        \item\label{item:contact_approximation_deformation__char_distr} 
        {there exists a sub-partition $\cP'= \{C_{i,j}\mid 0\leq i \leq n \, , \; j\in J_i\}$ of $\cP=\{C_i \mid 0\leq i \leq n\}$ (i.e.\ $C_i=\cup_{j\in J_i} C_{i,j}$ for all $i=0,\ldots, n$) with $J_0=\{*\}$, and a $C^k$-differential partitioned-form $(\mu_{i,j} \mid 0\leq i \leq n, j\in J_i)$, where $\deg\mu_{i,j}$ is $2$ for $i\geq  1$ and $\mu_{0,*}$ is just the constant (on $C_{0,*}=C_0$) function equal to $1$, such that moreover:}
        
    \begin{enumerate}
        \item\label{item:contact_approximation_deformation__char_distr__non_zero_conf_class}   
        {for $i=1,\ldots, n$ and $j\in J_i$, $\beta \wedge \d \beta^{\, n-i}\wedge\mu_{i,j}$ is non-zero at every point of $C_{i,j}$;}
        \item\label{item:contact_approximation_deformation__char_distr__conformal_convergence} the $C^k$-differential $\cP'$-partitioned-form 
        {
        \[
            (\alpha_m\wedge \d\alpha_m^n \text{ on $C_{0,*}$ } \, ;\;
            \alpha_m\wedge \d\alpha_m^{n-i+1} \text{on $C_{i,j}$ , for } 1\leq i \leq n, j\in J_i)
        \]
        }
        conformally $C^k$-converges to {$(\beta\wedge\d\beta^{n-i}\wedge\mu_{i,j} \mid 0\leq i\leq n, j\in J_i)$;}

        \item\label{item:contact_approximation_deformation__char_distr__sympl_compat}  {for all $1\leq i \leq n$ and $j\in J_i$, the conformal class $[\mu_{i,j}]$} is symplectically compatible with the symplectic cone $\CS_{\eta,\mu}$ on $\cK_\eta\vert_{C_i}$, i.e.\ {(recalling $\d\beta=0$ on $\cK_\eta$) 
        \[
        \beta\wedge \d\beta^{n-i}\wedge (\mu + t \mu_{i,j})^i >0 \quad
        \text{for all } t>0 \text{, at all points of } C_{i,j}
        \, .\]}
    \end{enumerate}
    \end{enumerate}

    \noindent
    A \textbf{contact $C^k$-deformation} is defined analogously to a contact $C^k$-approximation, but with a parameter  $s\in[0,1]$ instead of $n$, with the family being $C^k$ in the parameter $s$ too, and $s\to 0$ instead of $n\to \infty$.
\end{definition}

\begin{remark} Some comments about this definition are in order.
    \label{rmk:contact_approx_remarks}
    \begin{enumerate}
        
        \item The property in \Cref{item:contact_approximation_deformation__C0_case} holds automatically for $k\geq 1$, as $\xi_n \xrightarrow{C^k} \eta$ just means that there are defining one-forms $\alpha_n$ and $\beta$ for $\xi_n$ and $\beta$ respectively such that $\alpha_n \xrightarrow{C^k} \beta$, so that $\CS_{\xi_n}=[\d\alpha_n]$ converges to $\CpS_\eta =[\d\beta]$ at all points where $\d\beta\neq 0$.

        \item Due to the fact that, for a given real vector space of dimension $2$, there is a unique conformal class of non-degenerate two-forms, 
        the notion in \Cref{def:contact_approximation_deformation} 
        reduces to that considered in \cite{ET_confoliations} in ambient dimension $3$. 
        In particular, any contact approximation in ambient dimension $3$ as in \cite{ET_confoliations} also fits \Cref{def:contact_approximation_deformation}. 

    \item {The need to use a partitioned form with respect to a sub-partition of $\cP$ in \Cref{item:contact_approximation_deformation__char_distr} comes from the fact that, inside a same $C_i$, the differential form $\alpha_m\wedge \d\alpha_m^{n-i+1}$ might have leading terms ``of different order as $m\to\infty$'' depending on the point in $C_i$.
    For this reason, it is sensible to allow for a partition of each $C_i$ to be considered in order to single out distinct leading terms for $m\to\infty$.}

    \item {\Cref{item:contact_approximation_deformation__char_distr__non_zero_conf_class} means that $\mu_{i,j}$ is a collection of two-forms that is non-vanishing on $\cK_\eta$.}

    \item\label{item:rmk_contact_approx_remarks__qualitative_meaning} 
    In \Cref{item:contact_approximation_deformation__char_distr__conformal_convergence}, conformal convergence on (the open subset) $C_0$ simply follows from \Cref{item:contact_approximation_deformation__hyperplanes,item:contact_approximation_deformation__C0_case}, as $C_0$ is exactly the set of points where $\eta$ is contact. The qualitative meaning of convergence on the other $C_i$'s is then
    that $\CS_{\xi_n}$ induces at the limit, {over those subsets as well,} a well-defined conformal symplectic structure on $\cK_\eta$.
More precisely, over $C_i$ where $\rk(\cK_\eta)=2i$, taking the wedge product with $\beta\wedge \d\beta^{n-i}$ has the effect of isolating $\mu$ \emph{exactly} on the directions of $\cK_\eta$.
Indeed, all $(2i+3)$-tuples of tangent vectors to $M$ at a given point $p\in C_i$ chosen from a basis extending one of $\cK_\eta$ must have at least two vectors tangent to $\cK$, and if these are more than three then $\beta\wedge \d\beta^{n-i}\wedge\mu_i$ evaluates trivially on the tuple.
The convergence condition in \Cref{item:contact_approximation_deformation__char_distr__conformal_convergence} then gives a canonical way (i.e.\ independent of the specific choice of defining forms $\alpha_m$ and $\beta$) of asking that $\d\alpha_m$ defines a conformal symplectic structure on $\cK_\eta$ ``at the limit''.

Note for instance that another ``more direct'' definition could have been to ask that 
the partitioned form {$(\beta\wedge\d\beta^n \, ; \; \beta\wedge\d\beta^{n-i}\wedge \d\alpha_m \mid 1\leq i\leq n) $} conformally $C^k$-converges to {$(\beta\wedge \d\beta^n \, ; \; \beta\wedge\d\beta^{n-i}\wedge\mu_i \mid 1\leq i\leq n)$;}
however, this is \emph{not canonical}, as it really depends (even up to rescaling) on the choice of $\alpha_m$ and not only on $\xi_m$.

\item The partitioned-form  $(\mu_i\mid 0\leq i \leq n)$ in \Cref{item:contact_approximation_deformation__char_distr} need not be unique. 
However, its restriction to $\cK_\eta$ is (up to conformal factor) uniquely determined by \Cref{item:contact_approximation_deformation__char_distr__conformal_convergence}, and this is the data that really matters for the properties required in \Cref{item:contact_approximation_deformation__char_distr__non_zero_conf_class,item:contact_approximation_deformation__char_distr__conformal_convergence,item:contact_approximation_deformation__char_distr__sympl_compat}.

        \item \label{item:practicalcheck}
        A practical way to check \Cref{item:contact_approximation_deformation__char_distr__conformal_convergence} is the following. 
        
        Let $\overline{X}$ be a multivector field on $M$ defined along $C_i$ and such that $\iota_{\overline{X}}(\beta\wedge\d\beta^{n-i}) = 1$.
        Then, we define $\mu_{m,i} = \iota_{\overline{X}}(\alpha_m \wedge\d\alpha_m^{n-i+1})$.
        Assume then that $\mu_{m,i}\to \mu_i$ for some section $\mu_i$ of $\Lambda^2(TM\vert_{C_i})\to C_i$.
        
        In explicit cases (e.g.\ those treated in \Cref{sec:examples_contact_approximations} below), such a $\mu_i$ can be naturally seen as defined on an open neighborhood $O_i$ of $C_i$, and hence naturally defines a $C^k$-differential partitioned two-form on $M$.
        
        Then, by what is said in \Cref{item:rmk_contact_approx_remarks__qualitative_meaning} of \Cref{rmk:contact_approx_remarks}, 
        \[
        [(1 \text{ for i=0 }; \;\mu_{m,i}\mid 1\leq i\leq n)]_m \xrightarrow{n\to \infty} [(1 \text{ for i=0 }; \;\mu_i\mid 1 \leq i \leq n)]
        \]
        implies \Cref{item:contact_approximation_deformation__char_distr__conformal_convergence} in \Cref{def:contact_approximation_deformation}.
    \end{enumerate}
\end{remark}

Before delving into the proof of \Cref{prop:examples_contact_deformation}, let us give a very simple example of contact approximation where the construction of suitable extensions of the restricted forms is necessary, in the sense that the naturally available extensions do not satisfy the property required to show convergence of conformal classes.

\begin{example}
Consider in $\mathbb{R}^5$ with coordinates $(z, x_1, x_2, y_1, y_2)$ the plane field $\xi=\ker \alpha_0$, with $\alpha_0= \d z + x_1^3\d y_1 + x_2 \d y_2$ equipped with the symplectically compatible two-form $\mu= \d x_1\wedge \d y_1$. This form endows the confoliation $\xi$ with the c-symplectic structure $\CS_{\xi,\mu}$. Consider the family of contact forms
$$\alpha_s= \d z + (x_1^3+s x_1)\d y_1 + x_2 \d y_2, $$
whose kernel converges to $\xi$. The partition induced by the confoliation is $C_1=\{x_1=0\}$ and $C_0=\{x_1\neq 0\}$. In $C_0$, which is open and where the rank of $\cK_{\xi}$ is zero, we simply need that $\alpha_s$ converges in the $C^\infty$-topology to $\alpha_0$, which is the case.
In $C_1$, we choose $\mu_1=\mu$, which satisfies \Cref{item:contact_approximation_deformation__char_distr__non_zero_conf_class}. 
As an extension of 
\[
[\alpha_s\wedge \d\alpha_s^2]\vert_{C_1}= 
[\left(4x_1^2+2s \right) \d z\wedge \d x_1 \wedge \d y_1 \wedge \d x_2 \wedge \d y_2]\vert_{C_1}
\]
to a neighborhood of $C_1$ we can simply choose
\[
\zeta_s= 2s \d z\wedge \d x_1 \wedge \d y_1 \wedge \d x_2 \wedge \d y_2 \, . 
\]
We choose 
$$\eta=\alpha_0\wedge \d\alpha_0 \wedge \mu= \d z\wedge \d x_1 \wedge \d y_1 \wedge \d x_2 \wedge \d y_2,$$ as an extension of $\alpha_0\wedge \d\alpha_0 \wedge \mu$ (restricted to $C_1$) to a neighborhood of $C_1$. 
Choosing as conformal factors $F_s:=\frac{1}{2s}$, we see that $F_s\zeta_s$ $C^\infty$-converges to $\eta$ in a neighborhood of $C_1$, which shows that \Cref{item:contact_approximation_deformation__char_distr__conformal_convergence} holds.
Since the c-structure is given by $\mu$, and we chose $\mu_i=\mu$, the symplectic compatibility (\Cref{item:contact_approximation_deformation__char_distr__sympl_compat} is trivially satisfied too. 
This shows that $\ker \alpha_s$ is a contact approximation of $\xi$ endowed with $\CS_{\xi,\mu}$. 
Notice that the most delicate point is that one cannot choose $\alpha_s\wedge \d\alpha_s^2$ as an extension of its restriction to $C_1$ to a neighborhood of it does not work, as then the form $F_s \alpha_s\wedge \d\alpha_s^2$ does not converge in a neighborhood of $C_1$. 
In the proof of \Cref{item:open_book_construction__deformation} of \Cref{prop:open_book_construction}, we will also need to construct specific extensions of the restricted forms.
\end{example}

On the other hand, a simple example of contact plane fields that converge to a c-symplectic confoliation but do not provide a contact approximation in the sense of \Cref{def:contact_approximation_deformation} can be constructed by choosing the limit conformal form to be non-compatible with the c-symplectic structure. For example, in $\mathbb{R}^5$, the family of contact forms $\alpha_s= \d z + sx_1 \d y_1 + sx_2 \d y_2$ does not provide a contact approximation of the foliation $\xi=\ker \d z$ endowed with the c-symplectic structure induced by $\mu=\d x_1\wedge \d x_2 + \d y_1\wedge \d y_2$.

\subsection{Examples of contact approximations}
\label{sec:examples_contact_approximations}

We now move to describe some natural examples of contact deformations in the sense of \Cref{def:contact_approximation_deformation}, starting with \Cref{prop:examples_contact_deformation}.
For simplicity, we divide the proof into several cases, one for each example.

\begin{proof}[Proof of \Cref{item:examples_contact_deformation__fiber_sum_branched_cover} in \Cref{prop:examples_contact_deformation}]
    We treat the case of branched covers, following the ideas in \cite[Section 2]{Gir20_constructions} (i.e.\ of \cite[Section 5.2]{GirThesis}).
    The case of fiber sums can be detailed in a completely analogous way, following the ideas in \cite[Section 5.3]{GirThesis}.

    \medskip

    Let $(M,\xi=\ker\alpha)$ be a contact $(2n+1)$-manifold, and $\pi\colon \hat M \to M$ a smooth branched cover over a codimension $2$ submanifold $B\subset M$.
    Denote also $\hat B = \pi^{-1}(B)$, and choose $\gamma$ any connection form on the sphere normal bundle $S\nu_{\hat B}^{\hat M}$ of $\hat B$ in $\hat M$.
    Let also $g\colon \hat M \to \R_{>0}$ be the {a distance function from $\hat B$, w.r.t.\ an auxiliary Riemannian metric. Up to rescaling the auxiliary metric, we assume that all values in $(0,1]$ are regular for $g$, and their preimages are hence smooth copies of $S\nu_{\hat B}^{\hat M}$ around $\hat B$.}

    Then $\hat\xi=\pi^*\xi$ is naturally a c-symplectic confoliation.
    Indeed, the conformal partial-symplectic structure $\CpS_{\hat \xi}=\pi^*(\CS_\xi)$ and the two-form $g\,\d g\wedge \gamma$ generate a cone of non-degenerate two-forms on $\hat\xi$.
    (Note that $\hat\xi$ was already proven to be a confoliation in the sense of \Cref{def:smooth_confoliation} and even of \cite{ET_confoliations} in \cite{Gir20_constructions}.)

    Consider now the family of (smooth) differential one-forms $\beta_s = \pi^*\alpha + s \epsilon \chi(g) g^2 \gamma$, where $\chi\colon \R \to [0,1]$ is $1$ near $0$ and $0$ near $1$.
    Denote also $\xi_s=\ker\beta_s$.
    The computations in \cite{Gir20_constructions} (analogous to the ones in \cite{Gei97}) show that, if $\epsilon>0$ is small enough then $\xi_s$ is a contact structure for every $s\in(0,1]$.

    The only non-trivial properties to prove in order to check that this family gives a contact $C^\infty$-deformation as defined in \Cref{def:contact_approximation_deformation} are \Cref{item:contact_approximation_deformation__char_distr__conformal_convergence,item:contact_approximation_deformation__char_distr__sympl_compat}.

    On $\hat M \setminus \hat B$, there is nothing to prove, as $\hat \xi$ is already contact, and $\xi_s\to\hat\xi$ for $s\to \infty$ {in norm $C^k$, for any $k\geq 1$}.
    We then restrict to $\hat B$, or more precisely to an open neighborhood $O$ of $\hat B$ in $\hat M$; we can in particular assume that $O$ is such that $\chi(g)=1$ over all of $O$.

    We can then check, at each point of $O$, that
    \[
    \d\beta_s = \pi^*\d\alpha + 2s \epsilon g \, \d g \wedge \gamma \, ,
    \]
    and hence that 
    \[
    \beta_s \wedge \d\beta_s^n = \pi^*(\alpha\wedge \d \alpha^n) + n s \epsilon [ 2\, \pi^*(\alpha \wedge \d\alpha^{n-1}) \wedge g \, \d g \wedge \gamma  + g^2\gamma \wedge \d\alpha^n] + O(s^2) \, .
    \]
    {Now, $2\,\pi^*(\alpha \wedge \d\alpha^{n-1})\wedge g \d g \wedge \gamma$ is strictly positive along $\hat B$, and dominates $g^2\gamma \wedge \d\alpha^n$ on all of $O$ if the latter is small enough.}
    In particular, 
    \begin{align*}
    \frac{1}{s}[\beta_s \wedge \d\beta_s^n - \pi^*(\alpha\wedge \d \alpha^n)]  
    = & \; n  \epsilon\, [ 2\pi^*(\alpha \wedge \d\alpha^{n-1}) \wedge g \, \d g \wedge \gamma  + g^2\gamma \wedge \d\alpha^n ] + O(s) \\
    & \xrightarrow{s\to 0}  2\pi^*(\alpha \wedge \d\alpha^{n-1})  \wedge g \, \d g \wedge \gamma  + g^2\gamma \wedge \d\alpha^n \, ,
    \end{align*}
    which at all points of $\hat B$ is compatible with $\CS_{\hat\xi,\omega}$. 
    {Note that $\pi^*(\alpha\wedge \d\alpha^{n})\vert_{\hat B}=0$, hence $\beta_s \wedge \d\beta_s^n - \pi^*(\alpha\wedge \d \alpha^n)$ can be seen as an extension of $(\beta_s \wedge \d\beta_s^n)\vert_{\hat B}$ over the open set $O$.}
    This concludes the proof of \Cref{item:contact_approximation_deformation__char_distr__conformal_convergence,item:contact_approximation_deformation__char_distr__sympl_compat}, as desired.
\end{proof}

\begin{proof}[Proof of \Cref{item:examples_contact_deformation__bourgeois} in \Cref{prop:examples_contact_deformation}]
    Consider a contact form $\alpha$ for $\xi$ adapted to any open book supporting $\xi$.
    Then, Bourgeois contact structures are defined in  \cite{Bou02} via a contact form $\beta$ of the form $\alpha + \phi_1 \d x - \phi_2 \d y$, where $(x,y)$ are coordinates on $\T^2$ and $(\phi_1,\phi_2)\colon M\to \R^2$ is a map defining the given open book decomposition on $M$.
    Let then  
    \[
    \beta_t = \alpha + t\phi_1 \d x - t \phi_2 \d y \, , \text{ for } t \in [0,1] \, .
    \]
    Now, $\ker\beta_t$ clearly $C^\infty$-converges to $\xi\oplus T\T^2$ for $t\to0$.
    
    Moreover, the explicit computation as in \cite{Bou02} then shows that 
    \begin{align*}
    \beta_t \wedge \d\beta_t^n = \; & t^2 n(n-1)\alpha\wedge \d\alpha^{n-1}\wedge\d\phi_1\wedge\d\phi_2 \wedge \d x \wedge \d y
    \\
    & + t^2 n \d\alpha^{n-2}\wedge (\phi_1\wedge \d\phi_2 - \phi_2\wedge \d\phi_1) \d x \wedge \d y \, .
    \end{align*}
    Now, again as explained in \cite{Bou02} (or, alternatively, as it follows from the fact that $\beta_1$ is indeed a contact form) that 
    \[
    \Omega := n(n-1)\alpha\wedge \d\alpha^{n-1}\wedge\d\phi_1\wedge\d\phi_2 
    + n \d\alpha^{n-2}\wedge (\phi_1\wedge \d\phi_2 - \phi_2\wedge \d\phi_1)
    \]
    is a volume form on $M$ by choice of functions $(\phi_1,\phi_2)$.
    We can hence rewrite 
    \[\beta_t\wedge \d\beta_t^n = t^2 \Omega\wedge \d x \wedge \d y \, . \]

    Denote now by $g$ the positive function such that $\alpha\wedge \d\alpha^n = g \Omega$.
    Notice that the characteristic distribution of $\xi\oplus T\T^2$ is of constant rank $2$ (i.e.\ the order of every point of $M$ is $n-1$). We choose $\mu=\d x\wedge \d y$, which satisfies \Cref{item:contact_approximation_deformation__char_distr__non_zero_conf_class} in \Cref{def:contact_approximation_deformation} trivially. The equality $f_t\beta_t\wedge \d\beta_t^n = \alpha \wedge \d\alpha^{n-1}\wedge \mu$, where $f_t=\frac{g}{t^2}$, shows that the conformal convergence in \Cref{item:contact_approximation_deformation__char_distr__conformal_convergence} is satisfied. Finally, any two area forms are symplectically compatible in $T^2$, and thus the symplectic compatibility \Cref{item:contact_approximation_deformation__char_distr__sympl_compat} is satisfied as well.
\end{proof}

\medskip

\begin{proof}[Proof of \Cref{item:examples_contact_deformation__mori} in \Cref{prop:examples_contact_deformation}]
    We use the same notations as in \cite[Section 3]{Mor19}, which we hence don't reintroduce explicitly here.
    (Also, we directly use $n=2$ in Mori's computations, as we are interested in $\S^5$. Note that dimension $5$ is the only ambient dimension where there are significant examples of symplectic foliations in \cite{Mit18,Mor19}.)

    A first observation to make is that $\rk(\cK_\eta)$ is constant everywhere (equal to $4$).
    In particular, we will exhibit a smooth differential form $\mu$ satisfying \Cref{def:contact_approximation_deformation} that is well defined everywhere, rather than a partitioned two-form.

    \medskip

    Checking that $\alpha_t$ satisfies \Cref{def:contact_approximation_deformation} away from the turbulization\footnote{A formal definition of this procedure to construct symplectic foliations can be found e.g.\ in \cite{TorThesis,TouThesis,GirTou24}.} region $N\times D^2$ is straightforward.
    We hence carry out explicitly the computations just in the region $N\times D^2$.

    There, we have
    \begin{align*}
    & \alpha_t = (1-t)f_0\nu + t f_t \eta + g_t \d\theta + (1-t)h \d \rho \\
    & \d\alpha_t  = \d\rho\wedge [(1-t)f_0' \nu + t f_t' \eta + g_t' \d\theta] + t f_t \d\eta \, ,
    \end{align*}
    so that we get 
    \begin{align*}
        \alpha_t\wedge \d\alpha_t = & \;
        (1-t)f_0 \nu \wedge [tf_t'\d\rho \wedge \eta + g_t' \d \rho\wedge \d\theta + t f_t\d\eta] \\
        & + t f_t\eta \wedge[ (1-t)f_0'\d\rho\wedge \nu + g_t'\d\rho\wedge\d\theta + tf_t\d\eta] \\
        & + g_t\d\theta\wedge [(1-t)f_0'\d\rho\wedge\nu + tf_t'\d\rho\wedge\eta + t f_t\d\eta] \\
        & + (1-t)thf_t\d\rho\wedge \d\eta \, .
    \end{align*}

Now, recall the very special geometry of the manifold $N$, namely that $\nu$ vanishes on the Reeb vector field $R$ of $\eta$, that $\d\nu = 0$, and that $\d\eta = \gamma \wedge \nu$ for some nowhere vanishing $\gamma$ vanishing on $R$ and such that $\eta\wedge\gamma\wedge\nu>0$; c.f.\ also \cite[Lemma 6.4.4]{TorThesis} and \cite[Section 3.1]{GirTou24}.
This implies that $\alpha_0 = f_0\nu+g_0\d\theta+h\d\rho$, so that $\alpha_0(f_0X+g_0\partial_\theta+h\partial_\rho) = f_0^2+g_0^2+h^2>0$, where $X$ is a vector field in $N$ such that $\nu(X)=1$ and $\eta(X)=0$, $\gamma(X)=0$. 

Then, using that $\nu\wedge\iota_X\d\eta = - \nu \wedge\gamma=\d\eta$, we can compute 
\begin{align*}
    \mu_t\coloneqq & \; \iota_{(f_0X+g_0\partial_\theta+h\partial_\rho)} (\alpha_t   \wedge\d\alpha_t ) \\
    = & \;
    (1-t)f_0^2[tf_t'\d\rho \wedge \eta + g_t' \d \rho\wedge \d\theta + t f_t\d\eta] 
    + t(1-t)f_0f_t \d\eta 
    \\
    & +t(1-t)f_0f_tf_0't\eta\wedge\d\rho 
    + t^2f_t^2\eta\wedge\gamma
    +(1-t)f_0g_tf_0'\d\theta\wedge\d\rho \\
    & + tg_tf_t\d\theta\wedge\gamma
    + (1-t)g_0f_0g_t'\nu \wedge \d\rho 
    + t g_0 f_t g_t'\eta\wedge\d\rho \\
    & + g_0g_t[(1-t)f_0'\d\rho\wedge\nu + tf_t'\d\rho\wedge\eta + t f_t\d\eta] \\
    & - (1-t)f_0h\nu\wedge(tf_t'\eta+g_t'\d\theta) 
    - tf_th\eta\wedge [ (1-t)f_0' \nu + g_t'\d\theta ] \\
    & - g_th\d\theta\wedge [(1-t)f_0'\nu + tf_t'\eta ] 
    + (1-t)th^2f_t\d\eta \\
    = & \;
    [(1-t)tf_0(f_0f_t'-f_tf_0')+tg_0(-f_tg_t'+g_tf_t')]\d\rho\wedge\eta \\
    & + t^2f_t^2\eta\wedge\gamma + tg_tf_t\d\theta\wedge\gamma \\
    & + (1-t)f_0[f_0g_t' -g_tf_0']\d\rho\wedge\d\theta \\
    & + [(1-t)tf_0^2f_t+tg_0g_tf_t+(1-t)h^2f_t+ t(1-t)f_0f_t]\d\eta \\
    & + (1-t)g_0[f_0g_t' - g_tf_0']\nu\wedge\d\rho \\
    & + (1-t)th[-f_0f_t'+f_0'f_t]\nu\wedge\eta \\
    & + (1-t)h[-f_0g_t'+g_tf_0']\nu\wedge\d\theta \, .
\end{align*}

Now, using that $f_0=bf$ hence $f_tf_0'-f_0f_t'=tb'f^2$, and the definitions of $L_t,K_t$ from \cite[Section 3]{Mor19}, we can rewrite the equality above as
\begin{align*}
    \mu_t = & \; -[(1-t)tf^3bb'+tg_0L_t]\d\rho\wedge\eta 
    + (1-t)f_0K_t\d\rho\wedge\d\theta \\
    & + [(1-t)tf_t(f_0^2+h^2) + tg_0g_tf_t + t(1-t)f_0f_t] \d\eta 
    + (1-t)g_0K_t \nu \wedge\d\rho \\
    & + (1-t)t^2hb'f^2 \nu\wedge\eta
    + (1-t)hK_t \d\theta\wedge \nu\, \\
    & + t^2f_t^2\eta\wedge\gamma + tg_tf_t\d\theta\wedge\gamma\, .
\end{align*}

This expression {(and the fact that $f_t,g_t,L_t,K_t$ are all polynomial in $t$, as is clear from their definition in \cite{Mor19})} shows that $\mu_t$ is polynomial in $t$.
In particular, there is a well defined 
$C^\infty$ limit conformal class on some open dense subset $U$ of $M$, which (as we know that $\mu_t\to0$ as $t\to 0$ because $\ker(\alpha_0)$ is a foliation \cite{Mit18,Mor19}) is given simply by looking, for each point $p$ of $M$, at the (two-form) coefficient of the $t$-term of $\mu_t$.
We still need to argue that this smooth limit defined on $U$ is at each point symplectically compatible with the two-form $\tau$ introduced in \cite{Mor19} and equips the foliation $\ker \alpha_0$ with a c-symplectic structure.
For this, it is enough to argue that $\mu_t$ is symplectically compatible with $\tau$ for all $t$.

Recalling $\alpha_0 = f_0\nu +g_0\d\theta+h\d\rho$ and 
\[
\tau = f'\d\rho\wedge\eta + g' \d\rho\wedge\d\theta + f \d\eta + \delta(h \d\theta\wedge\nu + \omega) \, ,
\]
we compute, for all $s>0$, that
\begin{align*}
    s\mu_t + \tau = & \; \{-s[(1-t)tf^3bb'+tg_0L_t] + f' \}\d\rho\wedge\eta 
    + [s(1-t)f_0K_t + g']\d\rho\wedge\d\theta \\
    & + \{s[(1-t)tf_t(f_0^2+h^2) + tg_0g_tf_t + t(1-t)f_0f_t] +f\} \d\eta 
    +s(1-t)g_0K_t \nu \wedge\d\rho \\
    & + s(1-t)t^2hb'f^2 \nu\wedge\eta
    + [s(1-t)hK_t + \delta h] \d\theta\wedge\nu  \\
    & +s t^2f_t^2\eta\wedge\gamma + stg_tf_t\d\theta\wedge\gamma \\
    & + \delta \omega \, .
\end{align*}
Using that $\d\eta = \gamma\wedge\nu$ as said above (see the explicit formulas in \cite[Section 3.1]{GirTou24}), we hence get the differential form of maximal degree (remember we are on $\S^5$)
\begin{align*}
    \alpha_0\wedge& (s\mu_t+\tau)^2 =
    2f_0 \delta [s(1-t)f_0K_t + g']\nu\wedge\d\rho\wedge\d\theta  \wedge\omega \\
    & + 2stg_tf_tf_0 \{-s[(1-t)tf^3bb'+tg_0L_t] + f' \} \nu \wedge\d\rho\wedge\eta\wedge \d\theta\wedge\gamma \\
    & + 2g_0 \d\rho\wedge\d\theta \wedge 
    \left(\{+s[(1-t)tf^3bb'+tg_0L_t] - f' \}\eta  +s(1-t)g_0K_t \nu  \right) \\
    & \quad  \quad\quad\quad\quad \wedge \left(\{s[(1-t)tf_t(f_0^2+h^2) + tg_0g_tf_t + t(1-t)f_0f_t] +f\}\d\eta + \delta \omega +s t^2f_t^2\eta\wedge\gamma\right) \\
    & + \delta h^2 [s(1-t)K_t + \delta ]  \d\rho \wedge \d\theta\wedge\nu \wedge \omega
\end{align*}
Now, we pose 
\begin{align*}
A_{s,t} =& \;  [s(1-t)f_0K_t + g'] \\
B_{s,t} =&\; s[(1-t)tf^3bb'+tg_0L_t] - f' \\
C_{s,t} =&\; s[(1-t)tf_t(f_0^2+h^2) + tg_0g_tf_t+ t(1-t)f_0f_t] +f \, ,
\end{align*}
and note that these are all everywhere strictly positive for all $s,t>0$, {by the properties of $f,g,f_t,g_t,L_t,K_t$ described in \cite{Mor19}}.
Moreover, $\alpha_0\wedge (s\mu_t+\tau)^2$ can be rewritten as
\begin{align*}
    \alpha_0\wedge& (s\mu_t+\tau)^2 =
    2f_0\delta A_{s,t}\d\rho\wedge\d\theta \wedge \nu \wedge\omega 
    + 2stg_tf_tf_0 B_{s,t} \d\rho\wedge\d\theta\wedge\eta\wedge \gamma\wedge \nu \\
    & + 2g_0 \d\rho\wedge\d\theta \wedge 
    \left(B_{s,t}\eta  +s(1-t)g_0K_t \nu  \right) 
    \wedge \left(C_{s,t}\d\eta + \delta \omega +s t^2f_t^2\eta\wedge\gamma\right) \\
    & + \delta h^2 [s(1-t)K_t + \delta ]  \d\rho \wedge \d\theta\wedge\nu
    \wedge\omega \\
    = & \; {2f_0\delta A_{s,t}\d\rho\wedge\d\theta \wedge \nu \wedge\omega }
    + 2stg_tf_tf_0 B_{s,t} \d\rho\wedge\d\theta\wedge\eta\wedge \gamma\wedge \nu \\
    & + 2g_0 B_{s,t} \d\rho\wedge\d\theta \wedge 
    \eta \wedge \left(C_{s,t}\d\eta + \delta \omega \right) \\
    & + 2g_0 s(1-t)g_0K_t \d\rho\wedge\d\theta \wedge 
    \left(\delta \nu \wedge\omega +s t^2f_t^2\nu \wedge\eta\wedge\gamma\right) \\
    & + \delta h^2 [s(1-t)K_t + \delta ]  \d\rho \wedge \d\theta\wedge\nu
    \wedge\omega \, .
\end{align*}
Here, every term is $\geq0$ for all $s,t>0$, and at each point of $M$ at least one of them is $>0$ for all $s,t>0$ (which one depends on whether the point is in the support of $f_0$, $g_0$ or $h$).
As said before, by the fact that the above expression is polynomial in $t$, this ensures that, for each $p\in M$, the term in $(\mu_t)_p$ going to zero at the slowest rate in $t$ is symplectically compatible with $\tau_p$, thus concluding.
\end{proof}

\medskip

We are now ready to deal with \Cref{item:examples_contact_deformation__bertelson_meigniez}, which concerns linear deformations of locally conformal symplectic foliations to contact structures.
In fact, this naturally holds for higher codimensional characteristic distributions as well, so we first generalize the discussion in \cite[Section 1.3]{BerMei23}.

Namely, let $\xi$ be a hyperplane field on $M^{2n+1}$ such that $\cK_\xi$ has constant rank $2k\leq 2n$, and consider also a defining one-form $\alpha$ for $\xi$.
Let moreover $\nu$ be the \textbf{holonomy form} associated to $\alpha$, i.e.\ the one-form such that $\d\alpha^{n-k+1}=\nu\wedge\alpha\wedge\d\alpha^{n-k}$;
such a form exists by the assumption that $\cK_\xi$ is of constant rank $2k$ (as it is always involutive).
Now assume that there is another one-form $\lambda$ on $M$, such that $\d_\nu\lambda\coloneqq \d\lambda - \nu \wedge\lambda$ is non-degenerate on $\cK_\xi$.
Then, a direct computation analogous to the in the corank $1$ case from \cite{BerMei23} shows that the linear deformation
$\alpha_t := \alpha + t \lambda$ is such that $\alpha_t$ defines a contact structure for all $t>0$ small enough.

We point out that $\nu$ really depends on $\alpha$, and if another one-form $\alpha'=f\alpha$ with $f$ a positive function is considered, then the associated holonomy form will be $\nu'= \nu + \d f $.
Then, one can consider the two-form $\d_{\nu'}(\lambda')$ with $\lambda'=e^f \lambda$, whose restriction to $\cK_\xi$ just coincides with that of $e^f\d_\nu\lambda$ hence will still be leafwise non-degenerate on $\cK_\xi$.

Going back to \Cref{item:examples_contact_deformation__bertelson_meigniez} of \Cref{prop:examples_contact_deformation}, we hence want to prove that this family of contact structures $(\ker\alpha_t)_{t\in(0,\epsilon]}$ is, for $t\to0$, a $C^\infty$-contact deformation of the confoliation $(\xi,\CS_{\xi,\d_\nu\lambda})$ on $M$.

\begin{proof}[Proof of \Cref{item:examples_contact_deformation__bertelson_meigniez} in \Cref{prop:examples_contact_deformation}]
We can compute 
\begin{align*}
\alpha_t\wedge \d\alpha_t^{n-k+1} = &\, (\alpha + t \lambda)\wedge (\d\alpha + t\d\lambda)^{n-k+1} \\
= & \, t \alpha \wedge \d\alpha^{n-k+1} \wedge \d\lambda + t \lambda \wedge \underbrace{\d\alpha^{n-k+1}}_{\nu\wedge\alpha\wedge\d\alpha^{n-k}} + o(t^2) \\
= & \, t \alpha \wedge \d\alpha^{n-k} \wedge \d_\nu\lambda \, +\,  o(t^2) \, .
\end{align*}
We choose $\mu = \d_\nu\lambda$, which satisfies \Cref{item:contact_approximation_deformation__char_distr__non_zero_conf_class} in \Cref{def:contact_approximation_deformation}, and the convergence $\frac{1}{t} \alpha\wedge \d\alpha_t^{n-k+1} \rightarrow \alpha\wedge \d\alpha^{n-k}$ shows that \Cref{item:contact_approximation_deformation__char_distr__conformal_convergence} is satisfied. The symplectic compatibility (\Cref{item:contact_approximation_deformation__char_distr__sympl_compat}) holds trivially in this case as $\mu$ is exactly the form induced by the c-symplectic structure in $\cK_{\xi}$. Hence $\ker\alpha_t$ is a contact deformation of the c-symplectic confoliation $(\xi,\CS_{\xi,\d_\nu\lambda})$, as desired.
\end{proof}

\medskip

We proceed to analyze the examples from \cite[Theorem A]{MNW}, generalizing the discussion in \cite[Section 2.3]{ET_confoliations} for the case of the $3$-torus:
\begin{proof}[Proof of \Cref{item:examples_contact_deformation__giroux_torsion} in \Cref{prop:examples_contact_deformation}]
    First, recall that $\xi_k$ on $M\times \T^2$ from \cite[Theorem A]{MNW} is defined by the one-form
    \[
    \alpha_k = \frac{1+\cos (ks)}{2} \alpha_+ + \frac{1-\cos (ks)}{2}\alpha_- + \sin(ks) \d t \, ,
    \]
    where $(s,t)$ are coordinates on $\T^2$ and $(\alpha_+,\alpha_-)$ is a Liouville pair on $M$.
    In fact, $M$ is the quotient of $\R^n\times \R^{n+1}$ under a certain co-compact lattice for the action
    \[
    (\tau_1,\ldots,\tau_n)\cdot(t_1,\ldots,t_n,\theta_0,\ldots,\theta_n) 
    := (t_1+\tau_1 ,\ldots, t_n + \tau_n , 
    e^{-(t_1+\ldots + t_n)}\theta_0, e^{\tau_1}\theta_1,\ldots, e^{\tau_n}\theta_n)\, ,
    \]
    and on this cover, one simply has
    \[
    \alpha_\pm = \pm e^{t_1+\ldots + t_n}\d\theta_0 +{e^{-t_1}\d\theta_1 + \ldots + e^{-t_n} \d\theta_n} \, .
    \]

    The first step is to change the contact structures $\xi_k$ by a contact isotopy to contact structures $\eta_k$ defined by 
    \[
    \beta_k = \d s + \alpha_k = \d s +  \frac{1+\cos (ks)}{2} \alpha_+ + \frac{1-\cos (ks)}{2}\alpha_- + \sin(ks) \d t \, .
    \]
    That $\eta_k$ is contact isotopic to $\xi_k$ simply follows from Gray stability, via a linear interpolation, which can be seen to be contact at all times using the expressions for $\alpha_\pm$ on $\R^n\times \R^{n+1}$.

    Now, for $r\in[0,1]$, consider the family 
    \[
    \beta_{k,r} = \d s + r\alpha_k = \d s +  r\left[\frac{1+\cos (ks)}{2} \alpha_+ + \frac{1-\cos (ks)}{2}\alpha_- + \sin(ks) \d t\right] \, .
    \]
    Note that at $r=0$ we simply have $\beta_{k,0} = \d s$ defining the foliation $\xi=\ker(\d s)$; on this, we consider the c-symplectic structure given by $\CS_{\xi,\omega}$ where $\omega$ is defined by the following expression on $\R^n\times \R^{n+1}$ (that is invariant under the action hence passes to the quotient $M$):
    \[
    \omega = e^{t_1+\ldots + t_n}\d\theta_0 \wedge \d t - \sum_{i=1}^n e^{-t_i} \d t_i \wedge \d\theta_i \, .
    \]
    We claim that $\xi_{k,r}=\ker(\beta_{k,r})$ gives, for all $k\geq 1$ and as $r\to 0$, the desired contact $C^\infty$-deformations of $(\xi,\CS_{\xi,\omega})$.

    For this, first note that on the cover $\R^n\times \R^{n+1}$ we have
    \[
        \frac{1+\cos (ks)}{2} \alpha_+ + \frac{1-\cos (ks)}{2}\alpha_- = 
        \cos(ks)e^{t_1+\ldots + t_n}\d\theta_0 + {e^{-t_1}\d\theta_1 + \ldots + e^{-t_n} \d\theta_n} \, ,
    \]
    \[
        \frac{1+\cos (ks)}{2} \d\alpha_+ + \frac{1-\cos (ks)}{2}\d\alpha_- =
        -\sum_{i=1}^n e^{-t_i}\d t_i \wedge\d\theta_i + \cos(s) e^{\sum_i t_i} \sum_{i=1}^n \d t_i \wedge \d\theta_0 \, ,
    \]
    \[
    - \alpha_+ + \alpha_- = - 2 e^{t_1+\ldots + t_n }\d\theta_0 \, .
    \]
    Then, we can compute 
    \begin{align*}
    \d\beta_{k,r} = & \,  r \d\alpha_k \\
    = & \,  r\frac{k}{2}\sin(ks) \d s \wedge (-\alpha_++\alpha_-) + r\frac{1+\cos (ks)}{2} \d\alpha_+ \\
    & + r\frac{1-\cos (ks)}{2}\d\alpha_- {+ rk\cos(ks)\d s \wedge \d t} \\ 
    = & \,  -  rk\sin(ks)e^{t_1+\ldots + t_n }\d s \wedge\d\theta_0 -r\sum_{i=1}^n e^{-t_i}\d t_i \wedge\d\theta_i  \\
    & + r\cos(s) e^{\sum_i t_i} \sum_{i=1}^n \d t_i \wedge \d\theta_0 {+ r k\cos(ks)\d s \wedge \d t} \, .
    \end{align*}

    Moreover, $\xi$ is a foliation so $\cK_\xi$ has rank $2n$ (and coincides with $\xi$), so according to \Cref{def:contact_approximation_deformation} we need to compute $\beta_{k,r}\wedge \d\beta_{k,r}$ and write it, up to multiplication by a positive function $f_r$, in the form $\d s \wedge \mu$ for some $\mu$.
    We can compute
    \begin{align*}
        \beta_{k,r}\wedge \d\beta_{k,r} = \; & 
        r \d s \wedge \d \alpha_k + O(r^2) \\
        = \; & r \d s \wedge \left[ 
             -  k\sin(ks)e^{t_1+\ldots + t_n }\d s \wedge\d\theta_0 -r\sum_{i=1}^n e^{-t_i}\d t_i \wedge\d\theta_i \right. \\
    & \left. + r\cos(s) e^{\sum_i t_i} \sum_{i=1}^n \d t_i \wedge \d\theta_0  + r k\cos(ks)\d s \wedge \d t \, 
        \right] + O(r^2) \\
        = \; & r \d s \wedge \left[ 
             -r\sum_{i=1}^n e^{-t_i}\d t_i \wedge\d\theta_i 
    + r\cos(s) e^{\sum_i t_i} \sum_{i=1}^n \d t_i \wedge \d\theta_0 \, 
        \right] + O(r^2) \, ,
    \end{align*}
    where the first equality comes from substituting $r\d\alpha_k$ from the previous computation of $\d\beta_{r,k}$, and the second equality comes from the fact that the terms containing $\d s$ in parenthesis contribute zero when wedged with the first $\d s$.
    Then, we can choose 
    \[
        \mu =  -\sum_{i=1}^n e^{-t_i}\d t_i \wedge\d\theta_i 
        + \cos(s) e^{\sum_i t_i} \sum_{i=1}^n \d t_i \wedge \d\theta_0 \, ,
    \]
    and the conformal convergence is satisfied by choosing $f_r=1/r$.
    This $\mu$ satisfies Items \ref{item:contact_approximation_deformation__char_distr__non_zero_conf_class} and \ref{item:contact_approximation_deformation__char_distr__conformal_convergence} in \Cref{def:contact_approximation_deformation}. The remaining property (\Cref{item:contact_approximation_deformation__char_distr__sympl_compat}) that we need to check is that $[\omega]$ and $[\mu]$ span a cone of symplectic structures on $\ker(\d s)$.

    This follows from the fact that, for $a,b\in\R_{>0}$, we have 
    \[
    a \omega + b \mu = -(a+b) \sum_{i=1}^n e^{-t_i}\d t_i \wedge\d\theta_i + a e^{t_1+\ldots + t_n}\d\theta_0 \wedge \d t  + b \cos(s) e^{\sum_i t_i} \sum_{i=1}^n \d t_i \wedge \d\theta_0 \, .
    \]
    As only terms containing a $\d t$ contribute non-trivially, we moreover get that
    \begin{align*}
    \d s \wedge (a \omega + b\mu)^{n+1} = & \,  (n+1)a(a+b)^{n-1} e^{t_1+\ldots + t_n} \d s \wedge  \d\theta_0\wedge\d t \wedge \left[
    - \sum_{i=1}^n e^{-t_i}\d t_i \wedge\d\theta_i
        \right]^{n}\\
    = & \,  (-1)^n(n+1)!\,a(a+b)^{n-1} \d s \wedge \d\theta_0\wedge \d t_1\wedge \d\theta_1 \wedge \ldots \wedge \d t_n \wedge\d\theta_n \, .
    \end{align*}
    Now, recall from \cite[Equation 8.1]{MNW} that $\R^n\times\R^{n+1}$ is intended with the orientation making $\alpha_+$ a positive contact form on it, and hence on $M$ after taking the quotient; an explicit computation shows that 
    \[
    \alpha_+\wedge\d\alpha_+^n =  (-1)^n C \d s \wedge \d\theta_0\wedge \d t_1\wedge \d\theta_1 \wedge \ldots \wedge \d t_n \wedge\d\theta_n 
    \]
    for some $C>0$.
    Hence, we do indeed have that, for all $a,b>0$, $\d s \wedge (a \omega + b\mu)^{n+1} >0$ on $\R^n\times \R^{n+1}$ (with the orientation making $\alpha_+$ positive contact), and hence on $M$ after passing to the quotient.
    But this means exactly that $\omega$ and $\mu$ span a cone of non-degenerate two-forms on $\xi=\ker(\d s)$, as desired.
\end{proof}

\medskip

Finally, we show that the confoliations constructed in \Cref{prop:open_book_construction} can be deformed to a contact structure adapted to then open book when the pages satisfy the natural condition for this to happen.

\begin{proof}[Proof of \Cref{item:open_book_construction__deformation} in \Cref{prop:open_book_construction}]
 \Cref{item:contact_approximation_deformation__hyperplanes} in \Cref{def:contact_approximation_deformation} will be immediate, so the only thing to do is to prove that \Cref{item:contact_approximation_deformation__char_distr} holds.

Recall that, given a Liouville domain $(W,d\lambda)$, we have an exact symplectomorphism $\varphi$ satisfying $\varphi^*\lambda = \lambda + \d U$ for some function $U$ that is constant away from a compact set. We can moreover assume $U$ to be positive everywhere, up to translation by a constant.
This yields a contact form $\alpha=\lambda+ \d\theta$, where we abuse notation and denote by $\lambda$ the $1$-form induced in the mapping torus. 
Near the boundary $(-\frac{\delta}{2},0] \times \partial W \times S^1$ it is of the form $\alpha=e^r\lambda_{\partial W} +\d\theta$, where $\lambda_{\partial W}=i^*\lambda$. 
Here $i$ is the natural inclusion of the boundary $\partial W$ inside $W$. 
We glue $U=\partial W \times D^2_\delta$ to the boundary of the mapping torus by a map 
\begin{align*}
    F: \partial W\times \{r\in (1/2 \delta,\delta)\} &\longrightarrow (-\frac{\delta}{2},0] \times \partial W \times S^1 \\
    (x, r, \theta) &\longmapsto (\delta/2-r, x , \theta).
\end{align*}
Then the contact form looks like $e^{\delta/2-r}\lambda_{\partial W} + \d\theta$ near the boundary of $B$.

Finally, we extend the contact form for $r\in [0,\delta/2]$
$$\alpha= f_1(r) \lambda_{\partial W} + g_1(r)\d\theta,$$
with $f_1$ is $1$ near $0$, $f_1'\leq 0$, $f_1(r)=e^{\delta/2-r}$ close to $r=\delta$, and $g_1(r)$ goes from $0$ to $1$ with $g_1'\geq 0$, and is quadratic near $0$.
Let $f_0$ be a non-increasing function such that $f_0(r)=1$ near $r=0$, and $f_0(r)=0$ near $r=\delta$, and $g_0=1-f_0(r)$, so that $f_0^2+g_0^2>0$ everywhere. 
We endow the confoliation $\xi=\ker \alpha_0$, where $\alpha_0=f_0\lambda_{\partial W} + g_0 \d\theta$ extended as $\d\theta$ away from $U$, with the c-symplectic structure $\CS_{\xi,\Omega}$ where
$$\Omega=\d\lambda_{\partial W} + h(r)\d r\wedge \d\theta - \d(l(r)\lambda_{\partial W}),$$
as in \Cref{sec:construction}, but $l(r)$ will have a different behavior near $r=\delta$ since we are considering a neighborhood of the boundary of the pages where the symplectic form looks like $\d(e^{\delta/2-r}\lambda_{\partial W})$ rather than {$\d((\delta-r)\lambda_{\partial W})$}.
We choose $h$ and $l$ in the lines of \Cref{ex:functions}, so that in particular $h(r)=\frac{1}{2}$ and $l(r)=\frac{1}{2}r$ whenever $f'(r)\neq 0$, 
i.e. in some interval $[a,b]\subset (0,\delta)$. 
We choose $l(r)$ to be strictly increasing away from a neighborhood of $r=0$, and such that $l(r)=1-e^{\delta/2-r}$ for $r$ close to $\delta$. 
In particular, $\Omega$ extends as $\d\lambda$ away from $U$, i.e. as a maximal rank two-form that induces a symplectic form on the pages of the open book.
 
\medskip

Define the family of one-forms
$$\alpha_s= f_s(r) \lambda_{\partial W} + g_s(r) \d\theta,$$ 
where $f_s(r)=(1-s) f_0(r) + s f_1(r)$ and likewise $g_s(r)=(1-s) g_0(r) + s g_1(r)$. 
Choosing conveniently $f_0,f_1,g_1$ among functions verifying the properties asked previously, we can moreover arrange that $f_s g_s' - g_s f_s'>0$ for all $s>0$.

Each such $\alpha_s$ extends as $s\lambda + \d\theta$ away from $U$, where $\lambda$ is here to be intended as the fiberwise Liouville form on the mapping torus as constructed by Giroux \cite{Gir02,Gir20}. 

Notice then that $\alpha_s$ is a contact form for each $s>0$.
For this, we compute 
$$\d\alpha_s= f_s \d\lambda_{\partial W} + f_s' \d r \wedge \lambda_{\partial W} + g_s' \d r \wedge \d\theta.$$
Then, the contact condition is obvious away from $U$, and near the binding one has 
\begin{equation}\label{eq:obcontact1}
\alpha_s\wedge (\d\alpha_s)^{n}=n!f_s^{n-1} \left( f_s g_s' - g_s f_s' \right) \lambda_{\partial W}\wedge (\d\lambda_{\partial W})^{n-1} \wedge \d r\wedge \d\theta,
\end{equation}
which is positive by the choice of the functions.
However, it vanishes for $s=0$ at all points where $f_0=0$ or $g_0=0$, which are two disjoint sets (by choice of $f_0,g_0$) over which $\alpha_0$ is not a contact structure, i.e.\ has a non-trivial characteristic distribution. 
In $\{g_0=0\}$ the characteristic distribution of $\xi$ has rank two, while in $\{f_0=0\}$ we have $\cK_\xi=\xi$. 

We can compute
 \begin{multline*}
    \alpha_0\wedge \d\alpha_0^{n-1}=(n-1)! \Big( f_0^{n} \lambda_{\partial W}\wedge (\d\lambda_{\partial W})^{n-1}  + f_0^{n-1}g_s' \lambda_{\partial W} \wedge (\d\lambda_{\partial W})^{n-2}\wedge \d r \wedge \d\theta\\ + g_s f_0^{n-1} \d\theta \wedge (\d\lambda_{\partial W})^{n-1} + g_s f_0^{n-2} f_0' \d\theta \wedge (\d\lambda_{\partial W})^{n-2} \wedge \d r \wedge \lambda_{\partial W}\Big).
 \end{multline*}
In $\{g_0=0\}$ (where we have $f_0=1$), we choose $\mu = n f_0 g_1'r \d r \wedge \d\theta$, then 
\begin{equation}\label{eq:obcontact2}
\alpha_0\wedge \d\alpha_0^{n-1}\wedge \mu = n!f_0^{n+1} g_1' r\lambda_{\partial W}\wedge \d \lambda_{\partial W}^{n-1} \wedge \d r\wedge \d\theta.
\end{equation}
The form $\mu$ clearly satisfies \Cref{item:contact_approximation_deformation__char_distr__non_zero_conf_class} of \Cref{def:contact_approximation_deformation}, as $\alpha_0\wedge (\d\alpha_0)^{n-1}\wedge \mu \neq 0$. We take as an extension \eqref{eq:obcontact2} to a neighborhood of $\{g_0=0\}$ the form
$$\eta=n!g_1' r\lambda_{\partial W}\wedge \d \lambda_{\partial W}^{n-1} \wedge \d r\wedge \d\theta.$$ 
Consider the extensions $\hat{f}_s= (1-s)+sf_1$ and $\hat g_s=sg_1$ of $f_s$ and $g_s$ to a neighborhood of $\{g_0=0\}$. Then we take an extension of \eqref{eq:obcontact1} of the form 
$$ \zeta_s= n! \hat f_s^{n-1} \left( \hat f_s \hat g_s' - \hat g_s \hat f_s' \right) \lambda_{\partial W}\wedge (\d\lambda_{\partial W})^{n-1} \wedge \d r\wedge \d\theta.$$
Notice that $ \hat f_s \hat g_s' - \hat g_s \hat f_s'= sg_1' + o(s)$.
Choosing as conformal factor $F_s\coloneqq \frac{r}{s}$, we see that $F_s \zeta_s$ $C^\infty$-converges to $\eta$, and thus that \Cref{item:contact_approximation_deformation__char_distr__conformal_convergence} is satisfied. 
Finally, notice that $\mu$ and $\Omega$ are symplectically compatible along the characteristic distribution of $\ker \alpha_0$, since $\cK_\xi|_{\{g_0=0\}}=\langle \partial_r, \partial_\theta \rangle$ and $\Omega|_{\cK_\xi}= h(r) \d r\wedge \d \theta$, and thus \Cref{item:contact_approximation_deformation__char_distr__sympl_compat} is satisfied too.\\

Over the set $\{f_0=0\}$, we have $\alpha_0 = \d\theta$, so that $\cK_\xi = \xi= \langle \partial_r \rangle \oplus T (\partial W)$.
Moreover, 
\[
\alpha_s \wedge \d\alpha_s = f_s^2\lambda_{\partial W} \wedge \d \lambda_{\partial W}
+f_s g_s' \lambda_{\partial W}\wedge \d r \wedge \d\theta 
+ g_s f_s \d\theta \wedge \d \lambda_{\partial W} 
+ g_sf_s'\d\theta \wedge \d r \wedge \lambda_{\partial W} \, .
\]
Doing the computation as in \Cref{item:practicalcheck} of \Cref{rmk:contact_approx_remarks}, we have 
\[
\mu_s \coloneqq \iota_{\partial \theta} (\alpha_s\wedge \d\alpha_s) = 
(f_sg_s'-g_sf_s')\lambda_{\partial W}\wedge \d r + g_s f_s \d \lambda_{\partial W} \, .
\]
Now, where $f_0=0$ one has and $g_0=1$ and hence $f_s' = (1-s)f_0'+sf_1'= sf_1'$ and $g_s'=(1-s)g_0'+sg_1'=sg_1'$. Thus $(f_sg_s'-g_sf_s')/s =-g_0f_1' + o(s)\to -f_1'>0$ and $f_sg_s/s=f_1g_0+o(s)\to f_1 >0$. 
Choose the two-form 
$$\mu:= -f_1'\lambda_{\partial W}\wedge \d r + f_1 \d \lambda_{\partial W},$$
defined in a neighborhood of $\{f_0=0\}$. It clearly satisfies that $\alpha_0\wedge \mu \neq 0$, i.e. \Cref{item:contact_approximation_deformation__char_distr__non_zero_conf_class} holds.
We choose $d\theta\wedge \mu$ as an extension of $\alpha_0\wedge \mu$ to a neighborhood of $\{f_0=0\}$. 
We cannot choose $\alpha_s\wedge \d\alpha_s$ itself as an extension of this form to a neighborhood of $\{f_0=0\}$:
indeed, doing so $\frac{1}{s} \alpha_s\wedge \d\alpha_s$ would not converge away from $\{f_0=0\}$ (as it has a non-zero constant term on the complement of $\{f_0=0\}$ itself). 
Instead, we choose $\tilde f_s= sf_1$ as an extension of $f_s|_{\{f_0=0\}}$, and $\tilde g_s= (1-s)+sg_1$ as an extension of $g_s$ to a neighborhood of $\{f_0=0\}$.
With these functions, we consider the form
$$ \zeta_s= \tilde f_s^2\lambda_{\partial W} \wedge \d \lambda_{\partial W}
+ \tilde f_s \tilde g_s' \lambda_{\partial W}\wedge \d r \wedge \d\theta 
+ \tilde g_s \tilde f_s \d\theta \wedge \d \lambda_{\partial W} 
+ \tilde g_s \tilde f_s'\d\theta \wedge \d r \wedge \lambda_{\partial W} \, $$
as an extension of $\alpha_s\wedge \d\alpha_s$ to a neighborhood of $\{f_0=0\}$. 
Choosing as conformal factors $F_s:= \frac{1}{s g_s}$, we have $\frac{1}{s}\zeta_s$ converges to $\d\theta \wedge \mu$ in a neighborhood of $\{f_0=0\}$, i.e. \Cref{item:contact_approximation_deformation__char_distr__conformal_convergence} is satisfied. 
In addition, $\mu$ is symplectically compatible with $\Omega$ on the characteristic distribution (that is just $\ker\alpha_0$) and thus \Cref{item:contact_approximation_deformation__char_distr__sympl_compat} is satisfied too. Indeed, we had $l'>0$ whenever $f_0=0$ and $l(r)$ is increasing and smaller than $1$ everywhere, so the relevant terms of $\Omega$ are $(1-l)\d\lambda_{\partial W} + l'\lambda_{\partial W}\wedge \d r$, a two-form compatible with $\mu$.
\end{proof}

\appendix

\section{The space of cotaming complex structures}
In this appendix, we establish the following proposition:
\begin{prop}
    \label{prop:space_cotaming_Js_contractible}
    Let $E\to M$ be a real vector bundle over a (possibly open) smooth manifold $M$.
    Consider also $\omega$ a linear alternating $2$-form on $E\to M$, and $\cK:=\ker\omega_1=\{v\in E \mid \iota_v \omega_1 =0\}$.
    Assume that $\cK\subset E$ is a \emph{honest} vector sub-bundle of $V$ (i.e.\ that \emph{$\ker\omega_1$ has constant rank}).
    Let finally $\mu$ be a fiberwise non-degenerate linear alternating $2$-form on $\cK$.
    Then, the space $\cJ_\tau(E,\omega,\mu)$ of linear complex structures $J$ on $E$ that are tamed by $\omega$ (in particular, $J\cK\subset \cK$) and so that $J\vert_{\cK}$ is tamed by $\mu$ is either empty or contractible.
\end{prop}

For its proof, we will need the following technical statement, whose proof is easy hence omitted.

\begin{lemma}
    \label{lem:tamed_iff_tamed_in_quotient}
    Let $E,M,\omega,\cK,\mu$ be as in \Cref{prop:space_cotaming_Js_contractible}.
    Consider also a linear complex structure $J$ on $E\to M$ such that $J\cK=\cK$. 
    Lastly, denote by $\baromega$ and $\barJ$ respectively the non-degenerate alternating $2$-form and complex structure on $E/\cK$ naturally induced by $\omega$ and $J$.
    Then, the following are equivalent:
    \begin{itemize}
        \item $J$ is tamed by $\omega$;
        \item $\barJ$ is tamed by $\baromega$. 
    \end{itemize}
\end{lemma}

\begin{proof}[Proof of \Cref{prop:space_cotaming_Js_contractible}]
    Let $E,M,\omega,\cK,\mu$ be as in \Cref{prop:space_cotaming_Js_contractible}.
    Fix also an auxiliary vector sub-bundle $F\subset E$ which is complementary to $\cK$ in $E$.

    Let $\pi\colon E \to F$ be the fiberwise projection onto $F$ parallel to $\cK$.
    In other words, $\pi(v)=v_F$ for $v=v_\cK\oplus v_F \in E = \cK\oplus F$.
    Consider then the smooth map $r\colon[0,1]\times E\to E$ given by $r(t,v)=(1-t)v+t\pi(v)=v_F+(1-t)v_\cK$ for $v=v_\cK\oplus v_F\in E=\cK\oplus F$.
    This is a fiberwise retraction of $E$ onto $F$.

    For any linear complex structure $J$ on $E\to M$, one can consider the family $J_t$ of endomorphisms of $E$ given by, for $v=v_\cK\oplus v_F\in E=\cK\oplus F$,
    \[
    J_t (v) := J v_\cK + r_t J v_F = [J v_\cK + (1-t)(J v_F-\pi Jv_F)]\oplus\pi Jv_F \in E = \cK\oplus F .
    \]
    
    Now, a direct computation gives that 
    \[
    J_t^2(v_\cK \oplus v_F) = [-v_\cK + (1-t)(v_F+\pi J \pi Jv_F)]\oplus \pi J \pi J v_F \, ,
    \]
    and the fact that $J_1=J$ is known to be a linear complex structure on $E$ directly implies that
    \[
    v_F+\pi J \pi Jv_F = 0 \, , 
    \quad
    J \pi J v_F = - v_F \, .
    \]
    In other words, $J_t$ is in fact a linear complex structure on $E\to M$ for all $t\in [0,1]$.

    It is immediate to check that $J_t \cK= \cK$ for all $t\in [0,1]$ as well.
    Moreover, as the composition of the projection onto the second factor $E=\cK\oplus F\to F$ with $F\to E/\cK$ is an isomorphism and $\overline{J_t}=\barJ$ on $E/\cK$ for all $t\in[0,1]$, according to \Cref{lem:tamed_iff_tamed_in_quotient} all of the $J_t$'s are tamed by $\omega$.

    In other words, this gives that $\cJ_\tau(E,\omega,\mu)$ retracts onto 
    \begin{equation}
    \label{eqn:space_split_Js}
    \{ J_\cK \oplus J_F \mid 
    J_\cK \text{ tamed by } \mu \, , \;
    J_F \text{ tamed by } \omega\vert_F \, \} 
    \subset 
    \cJ_\tau(E,\omega,\mu) \, .
    \end{equation}

    Now, the space in \eqref{eqn:space_split_Js} is just the direct sum of the space of linear complex structures on $\cK$ tamed by $\mu$ and of the one of linear complex structures on $F$ tamed by $\omega_F$.
    As $\mu$ and $\omega_F$ are non-degenerate on $\cK$ and $F$ respectively, each of the spaces of which we take the product is hence contractible by an argument completely analogous to the classical one e.g.\ in \cite[Section 2.2]{WenBook}.
    This concludes the proof.
\end{proof}


\bibliographystyle{alpha}
\bibliography{biblio}

@article{CV,
	title = {First steps in stable {Hamiltonian} topology},
	volume = {17},
	issn = {1435-9855},
	url = {https://ems.press/doi/10.4171/jems/505},
	doi = {10.4171/JEMS/505},
	abstract = {We study topological properties of stable Hamiltonian structures. In particular, we prove the following results in dimension three: The space of stable Hamiltonian structures modulo homotopy is there; stable Hamiltonian structures are generically Morse–Bott (i.e. all closed orbits are Bott nondegenerate) but not Morse; the standard contact structure on S3 is homotopic to a stable Hamiltonian structure which cannot be embedded in R4. Moreover, we derive a structure theorem for stable Hamiltonian structures in dimension three, study sympectic cobordisms between stable Hamiltonian structures, and discuss implications for the foundations of symplectic ﬁeld theory.},
	language = {en},
	number = {2},
	urldate = {2022-11-06},
	journal = {Journal of the European Mathematical Society},
	author = {Cieliebak, Kai and Volkov, Evgeny},
	year = {2015},
	pages = {321--404}
}

@article{C,
  title={Stability is not open or generic in symplectic four-manifolds},
  author={Cardona, Robert},
  journal={arXiv preprint arXiv:2305.13158 (version 2, to appear)},
  year={2023}
}

@proceedings {Book_Contact_Symplectic,
     TITLE = {Contact and symplectic topology},
    SERIES = {Bolyai Society Mathematical Studies},
    VOLUME = {26},
 BOOKTITLE = {Proceedings of the {CAST} {S}ummer {S}chools and {C}onferences
              held in {N}antes, {M}arch--{J}une 2011, and {B}udapest, {J}uly
              2012},
    EDITOR = {Bourgeois, Fr\'{e}d\'{e}ric and Colin, Vincent and Stipsicz,
              Andr\'{a}s},
 PUBLISHER = {J\'{a}nos Bolyai Mathematical Society, Budapest; Springer,
              Cham},
      YEAR = {2014},
     PAGES = {xiv+530},
      ISBN = {978-3-319-02035-8; 978-3-319-02036-5},
   MRCLASS = {53-06 (57-06)},
  MRNUMBER = {3235651},
       DOI = {10.1007/978-3-319-02036-5},
       URL = {https://doi.org/10.1007/978-3-319-02036-5},
}

@book {ExtDiffSyst_book,
    AUTHOR = {Bryant, Robert and Chern, Shiing-Shen and Gardner, Robert and
              Goldschmidt, Hubert and Griffiths, Phillip},
     TITLE = {Exterior differential systems},
    SERIES = {Mathematical Sciences Research Institute Publications},
    VOLUME = {18},
 PUBLISHER = {Springer-Verlag, New York},
      YEAR = {1991},
     PAGES = {viii+475},
      ISBN = {0-387-97411-3},
   MRCLASS = {58A15 (35N10 58-01 58-02 58A17 58G05)},
  MRNUMBER = {1083148},
MRREVIEWER = {Irl\ Bivens},
       DOI = {10.1007/978-1-4613-9714-4},
       URL = {https://doi.org/10.1007/978-1-4613-9714-4},
}

@article{F,
  title={Anosov flows in 3-manifolds},
  author={Fenley, S{\'e}rgio R},
  journal={Annals of Mathematics},
  volume={139},
  number={1},
  pages={79--115},
  year={1994},
  publisher={JSTOR}
}

@article{C0,
  title={Steady {Euler flows and Beltrami} fields in high dimensions},
  author={Cardona, Robert},
  journal={Ergodic Theory and Dynamical Systems},
  volume={41},
  number={12},
  pages={3610--3633},
  year={2021},
  publisher={Cambridge University Press}
}

@article{BEHWZ,
  title={Compactness results in symplectic field theory},
  author={Bourgeois, Fr{\'e}d{\'e}ric and Eliashberg, Yakov and Hofer, Helmut and Wysocki, Kris and Zehnder, Eduard},
  journal={Geometry \& Topology},
  volume={7},
  number={2},
  pages={799--888},
  year={2003},
  publisher={Mathematical Sciences Publishers}
}

@article{R,
  title={Existence of periodic orbits for geodesible vector fields on closed 3-manifolds},
  author={Rechtman, Ana},
  journal={Ergodic Theory and Dynamical Systems},
  volume={30},
  number={6},
  pages={1817--1841},
  year={2010},
  publisher={Cambridge University Press}
}

@book {ET_confoliations,
    AUTHOR = {Eliashberg, Yakov and Thurston, William},
     TITLE = {Confoliations},
    SERIES = {University Lecture Series},
    VOLUME = {13},
 PUBLISHER = {American Mathematical Society, Providence, RI},
      YEAR = {1998},
     PAGES = {x+66},
      ISBN = {0-8218-0776-5},
   MRCLASS = {53C15 (57N10 57R30 58F05)},
  MRNUMBER = {1483314},
MRREVIEWER = {Hansj\"{o}rg\ Geiges},
       DOI = {10.1090/ulect/013},
       URL = {https://doi.org/10.1090/ulect/013},
}

@article {MNW,
    AUTHOR = {Massot, Patrick and Niederkr\"{u}ger, Klaus and Wendl, Chris},
     TITLE = {Weak and strong fillability of higher dimensional contact
              manifolds},
   JOURNAL = {Invent. Math.},
  FJOURNAL = {Inventiones Mathematicae},
    VOLUME = {192},
      YEAR = {2013},
    NUMBER = {2},
     PAGES = {287--373},
      ISSN = {0020-9910,1432-1297},
   MRCLASS = {57R17 (53Cxx)},
  MRNUMBER = {3044125},
       DOI = {10.1007/s00222-012-0412-5},
       URL = {https://doi.org/10.1007/s00222-012-0412-5},
}

@article {Hof93,
    AUTHOR = {Hofer, Helmut},
     TITLE = {Pseudoholomorphic curves in symplectizations with applications
              to the {W}einstein conjecture in dimension three},
   JOURNAL = {Invent. Math.},
  FJOURNAL = {Inventiones Mathematicae},
    VOLUME = {114},
      YEAR = {1993},
    NUMBER = {3},
     PAGES = {515--563},
      ISSN = {0020-9910,1432-1297},
   MRCLASS = {58F05 (58E99 58F22)},
  MRNUMBER = {1244912},
MRREVIEWER = {Hansj\"{o}rg\ Geiges},
       DOI = {10.1007/BF01232679},
       URL = {https://doi.org/10.1007/BF01232679},
}

@article {AlbHof09,
    AUTHOR = {Albers, Peter and Hofer, Helmut},
     TITLE = {On the {W}einstein conjecture in higher dimensions},
   JOURNAL = {Comment. Math. Helv.},
  FJOURNAL = {Commentarii Mathematici Helvetici. A Journal of the Swiss
              Mathematical Society},
    VOLUME = {84},
      YEAR = {2009},
    NUMBER = {2},
     PAGES = {429--436},
      ISSN = {0010-2571,1420-8946},
   MRCLASS = {53D35 (37J45 53D42 57R17)},
  MRNUMBER = {2495800},
MRREVIEWER = {Hansj\"{o}rg\ Geiges},
       DOI = {10.4171/CMH/167},
       URL = {https://doi.org/10.4171/CMH/167},
}

@book {LibMarBook,
    AUTHOR = {Libermann, Paulette and Marle, Charles-Michel},
     TITLE = {Symplectic geometry and analytical mechanics},
    SERIES = {Mathematics and its Applications},
    VOLUME = {35},
      NOTE = {Translated from the French by Bertram Eugene Schwarzbach},
 PUBLISHER = {D. Reidel Publishing Co., Dordrecht},
      YEAR = {1987},
     PAGES = {xvi+526},
      ISBN = {90-277-2438-5},
   MRCLASS = {58F05 (53C57 70Hxx)},
  MRNUMBER = {882548},
MRREVIEWER = {I.\ Vaisman},
       DOI = {10.1007/978-94-009-3807-6},
       URL = {https://doi.org/10.1007/978-94-009-3807-6},
}

@article {Nie06,
    AUTHOR = {Niederkr\"uger, Klaus},
     TITLE = {The plastikstufe---a generalization of the overtwisted disk to
              higher dimensions},
   JOURNAL = {Algebr. Geom. Topol.},
  FJOURNAL = {Algebraic \& Geometric Topology},
    VOLUME = {6},
      YEAR = {2006},
     PAGES = {2473--2508},
      ISSN = {1472-2747,1472-2739},
   MRCLASS = {57R17 (53D35)},
  MRNUMBER = {2286033},
MRREVIEWER = {John\ Etnyre},
       DOI = {10.2140/agt.2006.6.2473},
       URL = {https://doi.org/10.2140/agt.2006.6.2473},
}

@incollection {NieNotes,
    AUTHOR = {Niederkr\"uger, Klaus},
     TITLE = {Higher dimensional contact topology via holomorphic disks},
 BOOKTITLE = {Contact and symplectic topology},
    SERIES = {Bolyai Soc. Math. Stud.},
    VOLUME = {26},
     PAGES = {173--244},
 PUBLISHER = {J\'anos Bolyai Math. Soc., Budapest},
      YEAR = {2014},
      ISBN = {978-3-319-02035-8; 978-3-319-02036-5},
   MRCLASS = {53D05 (32Q65 53D10)},
  MRNUMBER = {3220943},
MRREVIEWER = {Chris\ M.\ Wendl},
       DOI = {10.1007/978-3-319-02036-5\_5},
       URL = {https://doi.org/10.1007/978-3-319-02036-5_5},
}

@article {Hin97,
    AUTHOR = {Hind, Richard},
     TITLE = {Filling by holomorphic disks with weakly pseudoconvex boundary
              conditions},
   JOURNAL = {Geom. Funct. Anal.},
  FJOURNAL = {Geometric and Functional Analysis},
    VOLUME = {7},
      YEAR = {1997},
    NUMBER = {3},
     PAGES = {462--495},
      ISSN = {1016-443X,1420-8970},
   MRCLASS = {58D10 (32F25 58F05)},
  MRNUMBER = {1466335},
MRREVIEWER = {Vsevolod\ V.\ Shevchishin},
       DOI = {10.1007/s000390050016},
       URL = {https://doi.org/10.1007/s000390050016},
}

@phdthesis{TouThesis,
    author   = {Toussaint, Laurant},
    title    = {Contact structures and codimension one symplectic foliations},
    school   = {Utrecht University},
    year     = {2020},
    note      = {https://dspace.library.uu.nl/handle/1874/406708}
}

@book {McDSalBook,
    AUTHOR = {McDuff, Dusa and Salamon, Dietmar},
     TITLE = {{$J$}-holomorphic curves and symplectic topology},
    SERIES = {American Mathematical Society Colloquium Publications},
    VOLUME = {52},
   EDITION = {Second},
 PUBLISHER = {American Mathematical Society, Providence, RI},
      YEAR = {2012},
     PAGES = {xiv+726},
      ISBN = {978-0-8218-8746-2},
   MRCLASS = {53D45 (32Q65 53D35)},
  MRNUMBER = {2954391},
MRREVIEWER = {Mark\ Alan\ Branson},
}

@ARTICLE{WenBook,
       author = {{Wendl}, Chris},
        title = "{Lectures on Holomorphic Curves in Symplectic and Contact Geometry}",
      journal = {arXiv e-prints},
     keywords = {Mathematics - Symplectic Geometry, Mathematics - Analysis of PDEs, Mathematics - Differential Geometry, Mathematics - Geometric Topology, 32Q65 (Primary), 57R17 (Secondary)},
         year = 2014,
        month = may,
          eid = {arXiv:1011.1690v2},
        pages = {arXiv:1011.1690v2},
          doi = {10.48550/arXiv.1011.1690},
archivePrefix = {arXiv},
       eprint = {1011.1690},
 primaryClass = {math.SG},
       adsurl = {https://ui.adsabs.harvard.edu/abs/2010arXiv1011.1690W},
      adsnote = {Provided by the SAO/NASA Astrophysics Data System}
}

@article {BEM15,
    AUTHOR = {Borman, Matthew Strom and Eliashberg, Yakov and Murphy, Emmy},
     TITLE = {Existence and classification of overtwisted contact structures
              in all dimensions},
   JOURNAL = {Acta Math.},
  FJOURNAL = {Acta Mathematica},
    VOLUME = {215},
      YEAR = {2015},
    NUMBER = {2},
     PAGES = {281--361},
      ISSN = {0001-5962,1871-2509},
   MRCLASS = {57R17 (53D10)},
  MRNUMBER = {3455235},
MRREVIEWER = {Yang\ Huang},
       DOI = {10.1007/s11511-016-0134-4},
       URL = {https://doi.org/10.1007/s11511-016-0134-4},
}

@article {Bou02,
    AUTHOR = {Bourgeois, Fr\'ed\'eric},
     TITLE = {Odd dimensional tori are contact manifolds},
   JOURNAL = {Int. Math. Res. Not.},
  FJOURNAL = {International Mathematics Research Notices},
      YEAR = {2002},
    NUMBER = {30},
     PAGES = {1571--1574},
      ISSN = {1073-7928,1687-0247},
   MRCLASS = {53D35 (57R17)},
  MRNUMBER = {1912277},
MRREVIEWER = {Hansj\"org\ Geiges},
       DOI = {10.1155/S1073792802205048},
       URL = {https://doi.org/10.1155/S1073792802205048},
}

@ARTICLE{SSZ,
       author = {{Schmaltz}, Wolfgang and {Suhr}, Stefan and {Zehmisch}, Kai},
        title = "{Non-fillability of overtwisted contact manifolds via polyfolds}",
      journal = {arXiv e-prints},
     keywords = {Mathematics - Symplectic Geometry, 53D42, 53D40, 57R17, 53D45, 37J55, 34C25, 37C27},
         year = 2020,
        month = nov,
          eid = {arXiv:2011.02249},
        pages = {arXiv:2011.02249},
          doi = {10.48550/arXiv.2011.02249},
archivePrefix = {arXiv},
       eprint = {2011.02249},
 primaryClass = {math.SG},
       adsurl = {https://ui.adsabs.harvard.edu/abs/2020arXiv201102249S},
      adsnote = {Provided by the SAO/NASA Astrophysics Data System}
}

@article{Et,
  title={Contact structures on 3--manifolds are deformations of foliations},
  author={Etnyre, John},
  journal={Mathematical research letters},
  volume={14},
  number={5},
  pages={775--779},
  year={2007},
  publisher={International Press}
}

@article{LMN,
  title={On properties of Bourgeois contact structures},
  author={Lisi, Samuel and Marinkovi{\'c}, Aleksandra and Niederkr{\"u}ger, Klaus},
  journal={Algebraic \& Geometric Topology},
  volume={19},
  number={7},
  pages={3409--3451},
  year={2019},
  publisher={Mathematical Sciences Publishers}
}

@article {Eli89,
    AUTHOR = {Eliashberg, Yakov},
     TITLE = {Classification of overtwisted contact structures on
              {$3$}-manifolds},
   JOURNAL = {Invent. Math.},
  FJOURNAL = {Inventiones Mathematicae},
    VOLUME = {98},
      YEAR = {1989},
    NUMBER = {3},
     PAGES = {623--637},
      ISSN = {0020-9910,1432-1297},
   MRCLASS = {53C15 (53C57 58A15 58C27)},
  MRNUMBER = {1022310},
MRREVIEWER = {Yong-Geun\ Oh},
       DOI = {10.1007/BF01393840},
       URL = {https://doi.org/10.1007/BF01393840},
}

@article {Vog11,
    AUTHOR = {Vogel, Thomas},
     TITLE = {Rigidity versus flexibility for tight confoliations},
   JOURNAL = {Geom. Topol.},
  FJOURNAL = {Geometry \& Topology},
    VOLUME = {15},
      YEAR = {2011},
    NUMBER = {1},
     PAGES = {41--121},
      ISSN = {1465-3060,1364-0380},
   MRCLASS = {57R17 (57R30)},
  MRNUMBER = {2764113},
MRREVIEWER = {Vladimir\ Krouglov},
       DOI = {10.2140/gt.2011.15.41},
       URL = {https://doi.org/10.2140/gt.2011.15.41},
}

@article {Bow16c,
    AUTHOR = {Bowden, Jonathan},
     TITLE = {Contact structures, deformations and taut foliations},
   JOURNAL = {Geom. Topol.},
  FJOURNAL = {Geometry \& Topology},
    VOLUME = {20},
      YEAR = {2016},
    NUMBER = {2},
     PAGES = {697--746},
      ISSN = {1465-3060,1364-0380},
   MRCLASS = {53C12 (37D20 53C24 53D10)},
  MRNUMBER = {3493095},
MRREVIEWER = {Thilo\ Kuessner},
       DOI = {10.2140/gt.2016.20.697},
       URL = {https://doi.org/10.2140/gt.2016.20.697},
}

@article {Bow16a,
    AUTHOR = {Bowden, Jonathan},
     TITLE = {Approximating {$C^0$}-foliations by contact structures},
   JOURNAL = {Geom. Funct. Anal.},
  FJOURNAL = {Geometric and Functional Analysis},
    VOLUME = {26},
      YEAR = {2016},
    NUMBER = {5},
     PAGES = {1255--1296},
      ISSN = {1016-443X,1420-8970},
   MRCLASS = {53C12 (53C10)},
  MRNUMBER = {3568032},
MRREVIEWER = {Robert\ A.\ Wolak},
       DOI = {10.1007/s00039-016-0387-2},
       URL = {https://doi.org/10.1007/s00039-016-0387-2},
}

@article {Col02,
    AUTHOR = {Colin, Vincent},
     TITLE = {Structures de contact tendues sur les vari\'et\'es
              toro\"idales et approximation de feuilletages sans composante
              de {R}eeb},
   JOURNAL = {Topology},
  FJOURNAL = {Topology. An International Journal of Mathematics},
    VOLUME = {41},
      YEAR = {2002},
    NUMBER = {5},
     PAGES = {1017--1029},
      ISSN = {0040-9383},
   MRCLASS = {57R17 (53D35 57R30)},
  MRNUMBER = {1923997},
MRREVIEWER = {Hansj\"org\ Geiges},
       DOI = {10.1016/S0040-9383(01)00013-1},
       URL = {https://doi.org/10.1016/S0040-9383(01)00013-1},
}

@article {CKR19,
    AUTHOR = {Colin, Vincent and Kazez, William H. and Roberts, Rachel},
     TITLE = {Taut foliations},
   JOURNAL = {Comm. Anal. Geom.},
  FJOURNAL = {Communications in Analysis and Geometry},
    VOLUME = {27},
      YEAR = {2019},
    NUMBER = {2},
     PAGES = {357--375},
      ISSN = {1019-8385,1944-9992},
   MRCLASS = {53C12 (53D10)},
  MRNUMBER = {4003011},
MRREVIEWER = {Antonio\ De Nicola},
       DOI = {10.4310/CAG.2019.v27.n2.a4},
       URL = {https://doi.org/10.4310/CAG.2019.v27.n2.a4},
}

@incollection {ColHon22,
    AUTHOR = {Colin, Vincent and Honda, Ko},
     TITLE = {Foliations, contact structures and their interactions in
              dimension three},
 BOOKTITLE = {Surveys in differential geometry 2020. {S}urveys in 3-manifold
              topology and geometry},
    SERIES = {Surv. Differ. Geom.},
    VOLUME = {25},
     PAGES = {71--101},
 PUBLISHER = {Int. Press, Boston, MA},
      YEAR = {[2022] \copyright 2022},
      ISBN = {978-1-57146-419-4},
   MRCLASS = {57M50 (37C86 53C15)},
  MRNUMBER = {4479750},
}

@article {Vog16,
    AUTHOR = {Vogel, Thomas},
     TITLE = {On the uniqueness of the contact structure approximating a
              foliation},
   JOURNAL = {Geom. Topol.},
  FJOURNAL = {Geometry \& Topology},
    VOLUME = {20},
      YEAR = {2016},
    NUMBER = {5},
     PAGES = {2439--2573},
      ISSN = {1465-3060,1364-0380},
   MRCLASS = {57R30 (53D10 57R17)},
  MRNUMBER = {3556346},
MRREVIEWER = {P.\ G.\ Walczak},
       DOI = {10.2140/gt.2016.20.2439},
       URL = {https://doi.org/10.2140/gt.2016.20.2439},
}

@article {Gro85,
    AUTHOR = {Gromov, Mikhail},
     TITLE = {Pseudo holomorphic curves in symplectic manifolds},
   JOURNAL = {Invent. Math.},
  FJOURNAL = {Inventiones Mathematicae},
    VOLUME = {82},
      YEAR = {1985},
    NUMBER = {2},
     PAGES = {307--347},
      ISSN = {0020-9910,1432-1297},
   MRCLASS = {53C15 (32F25 53C57 57R15)},
  MRNUMBER = {809718},
MRREVIEWER = {Yakov\ Eliashberg},
       DOI = {10.1007/BF01388806},
       URL = {https://doi.org/10.1007/BF01388806},
}

@article {AltWu00,
    AUTHOR = {Altschuler, Steven and Wu, Lani},
     TITLE = {On deforming confoliations},
   JOURNAL = {J. Differential Geom.},
  FJOURNAL = {Journal of Differential Geometry},
    VOLUME = {54},
      YEAR = {2000},
    NUMBER = {1},
     PAGES = {75--97},
      ISSN = {0022-040X,1945-743X},
   MRCLASS = {53D35 (35K65 53C44)},
  MRNUMBER = {1815412},
MRREVIEWER = {Hansj\"org\ Geiges},
       URL = {http://projecteuclid.org/euclid.jdg/1214342147},
}

@article {DatKho12,
    AUTHOR = {Dathe, Hamidou and Khoul\'e, Cheikh},
     TITLE = {Sur les d\'eformations d'un feuilletage de codimension 1 en
              structures de contact},
   JOURNAL = {Afr. Diaspora J. Math.},
  FJOURNAL = {African Diaspora Journal of Mathematics},
    VOLUME = {13},
      YEAR = {2012},
    NUMBER = {2},
     PAGES = {100--107},
      ISSN = {1539-854X},
   MRCLASS = {53Cxx},
  MRNUMBER = {3006756},
       DOI = {10.18024/1519-5694/revuniandrade.v13n2p100-115},
       URL =
              {https://doi.org/10.18024/1519-5694/revuniandrade.v13n2p100-115},
}

@article {DatRuk04,
    AUTHOR = {Dathe, Hamidou and Rukimbira, Philippe},
     TITLE = {Foliations and contact structures},
   JOURNAL = {Adv. Geom.},
  FJOURNAL = {Advances in Geometry},
    VOLUME = {4},
      YEAR = {2004},
    NUMBER = {1},
     PAGES = {75--81},
      ISSN = {1615-715X,1615-7168},
   MRCLASS = {53D35 (57M30)},
  MRNUMBER = {2155366},
MRREVIEWER = {Jelena\ Kati\'c},
       DOI = {10.1515/advg.2004.008},
       URL = {https://doi.org/10.1515/advg.2004.008},
}

@ARTICLE{Ven,
       author = {{Venugopalan}, Sushmita},
        title = "{Novikov's theorem in higher dimensions?}",
      journal = {arXiv e-prints},
     keywords = {Mathematics - Symplectic Geometry},
         year = 2019,
        month = jul,
          eid = {arXiv:1907.05876},
        pages = {arXiv:1907.05876},
          doi = {10.48550/arXiv.1907.05876},
archivePrefix = {arXiv},
       eprint = {1907.05876},
 primaryClass = {math.SG},
       adsurl = {https://ui.adsabs.harvard.edu/abs/2019arXiv190705876V},
      adsnote = {Provided by the SAO/NASA Astrophysics Data System}
}

@article {Mar13,
    AUTHOR = {Mart\'inez Torres, David},
     TITLE = {Codimension-one foliations calibrated by nondegenerate closed
              2-forms},
   JOURNAL = {Pacific J. Math.},
  FJOURNAL = {Pacific Journal of Mathematics},
    VOLUME = {261},
      YEAR = {2013},
    NUMBER = {1},
     PAGES = {165--217},
      ISSN = {0030-8730,1945-5844},
   MRCLASS = {53C12 (53D12 53D17 57R17 57R30)},
  MRNUMBER = {3037564},
MRREVIEWER = {Gabriel\ Eduard\ V\^ilcu},
       DOI = {10.2140/pjm.2013.261.165},
       URL = {https://doi.org/10.2140/pjm.2013.261.165},
}

@ARTICLE{GNT,
       author = {{Gironella}, Fabio and {Niederkr{\"u}ger}, Klaus and {Toussaint}, Lauran},
        title = "{Vanishing cycles of symplectic foliations}",
      journal = {arXiv e-prints},
     keywords = {Mathematics - Symplectic Geometry, Mathematics - Geometric Topology, 57R17, 53D35, 57R30, 53C12},
         year = 2024,
        month = oct,
          eid = {arXiv:2410.07090},
        pages = {arXiv:2410.07090},
          doi = {10.48550/arXiv.2410.07090},
archivePrefix = {arXiv},
       eprint = {2410.07090},
 primaryClass = {math.SG},
       adsurl = {https://ui.adsabs.harvard.edu/abs/2024arXiv241007090G},
      adsnote = {Provided by the SAO/NASA Astrophysics Data System}
}

@incollection {Eli90,
    AUTHOR = {Eliashberg, Yakov},
     TITLE = {Filling by holomorphic discs and its applications},
 BOOKTITLE = {Geometry of low-dimensional manifolds, 2 ({D}urham, 1989)},
    SERIES = {London Math. Soc. Lecture Note Ser.},
    VOLUME = {151},
     PAGES = {45--67},
 PUBLISHER = {Cambridge Univ. Press, Cambridge},
      YEAR = {1990},
   MRCLASS = {53C23 (32D10 53C15 58E99)},
  MRNUMBER = {1171908},
MRREVIEWER = {Nikolai K. Smolentsev},
}

@article {Mor19,
    AUTHOR = {Mori, Atsuhide},
     TITLE = {A note on {M}itsumatsu's construction of a leafwise symplectic
              foliation},
   JOURNAL = {Int. Math. Res. Not. IMRN},
  FJOURNAL = {International Mathematics Research Notices. IMRN},
      YEAR = {2019},
    NUMBER = {22},
     PAGES = {6933--6948},
      ISSN = {1073-7928,1687-0247},
   MRCLASS = {57R30 (57R17)},
  MRNUMBER = {4032180},
MRREVIEWER = {Yang\ Huang},
       DOI = {10.1093/imrn/rnx321},
       URL = {https://doi.org/10.1093/imrn/rnx321},
}

@article {Mit18,
    AUTHOR = {Mitsumatsu, Yoshihiko},
     TITLE = {Leafwise symplectic structures on {L}awson's foliation},
   JOURNAL = {J. Symplectic Geom.},
  FJOURNAL = {The Journal of Symplectic Geometry},
    VOLUME = {16},
      YEAR = {2018},
    NUMBER = {3},
     PAGES = {817--838},
      ISSN = {1527-5256,1540-2347},
   MRCLASS = {53C12 (53D05)},
  MRNUMBER = {3882173},
MRREVIEWER = {P.\ G.\ Walczak},
       DOI = {10.4310/JSG.2018.v16.n3.a6},
       URL = {https://doi.org/10.4310/JSG.2018.v16.n3.a6},
}

@ARTICLE{GirTou24,
  title     = "Existence of conformal symplectic foliations on closed manifolds",
  author    = "Gironella, Fabio and Toussaint, Lauran",
  abstract  = "AbstractWe study the existence of symplectic and conformal
               symplectic (codimension-1) foliations on closed manifolds of
               dimension $$\textbackslashge 5$$ $\geq$ 5 . Our main theorem,
               based on a recent result by Bertelson--Meigniez, states that in
               dimension at least 7 any almost contact structure is homotopic
               to a conformal symplectic foliation. In dimension 5 we construct
               explicit conformal symplectic foliations on every closed,
               simply-connected, almost contact manifold, as well as honest
               symplectic foliations on a large subset of them. Lastly, via
               round-connected sums, we obtain, on closed manifolds, examples
               of conformal symplectic foliations which admit a linear
               deformation to contact structures.",
  journal   = "Math. Ann.",
  publisher = "Springer Science and Business Media LLC",
  month     =  feb,
  year      =  2024,
  copyright = "https://creativecommons.org/licenses/by/4.0",
  language  = "en"
}

@phdthesis{TorThesis,
    author   = {Osorno Torres, Boris},
    title    = {Codimension-one Symplectic Foliations : Constructions and Examples},
    school   = {Utrecht University},
    year     = {2015},
    note      = {https://dspace.library.uu.nl/handle/1874/323294}
}

@article {Gir20,
    AUTHOR = {Giroux, Emmanuel},
     TITLE = {Ideal {L}iouville domains, a cool gadget},
   JOURNAL = {J. Symplectic Geom.},
  FJOURNAL = {The Journal of Symplectic Geometry},
    VOLUME = {18},
      YEAR = {2020},
    NUMBER = {3},
     PAGES = {769--790},
      ISSN = {1527-5256,1540-2347},
   MRCLASS = {53D05},
  MRNUMBER = {4142486},
MRREVIEWER = {Fr\'ed\'eric\ Bourgeois},
       DOI = {10.4310/JSG.2020.v18.n3.a5},
       URL = {https://doi.org/10.4310/JSG.2020.v18.n3.a5},
}

@inproceedings {Gir02,
    AUTHOR = {Giroux, Emmanuel},
     TITLE = {G\'eom\'etrie de contact: de la dimension trois vers les
              dimensions sup\'erieures},
 BOOKTITLE = {Proceedings of the {I}nternational {C}ongress of
              {M}athematicians, {V}ol. {II} ({B}eijing, 2002)},
     PAGES = {405--414},
 PUBLISHER = {Higher Ed. Press, Beijing},
      YEAR = {2002},
      ISBN = {7-04-008690-5},
   MRCLASS = {53D35 (57M50 57R17)},
  MRNUMBER = {1957051},
MRREVIEWER = {Hansj\"org\ Geiges},
}

@article{McD,
  title={Symplectic manifolds with contact type boundaries},
  author={McDuff, Dusa},
  journal={Inventiones mathematicae},
  volume={103},
  number={1},
  pages={651--671},
  year={1991},
  publisher={Springer}
}

@ARTICLE{PreVen,
       author = {{Presas}, Francisco and {Venugopalan}, Sushmita},
        title = "{Symplectic foliated fillings of sphere cotangent bundles}",
      journal = {arXiv e-prints},
     keywords = {Mathematics - Symplectic Geometry},
         year = 2018,
        month = sep,
          eid = {arXiv:1809.10363},
        pages = {arXiv:1809.10363},
          doi = {10.48550/arXiv.1809.10363},
archivePrefix = {arXiv},
       eprint = {1809.10363},
 primaryClass = {math.SG},
       adsurl = {https://ui.adsabs.harvard.edu/abs/2018arXiv180910363P},
      adsnote = {Provided by the SAO/NASA Astrophysics Data System}
}

@article {MTdPP18,
    AUTHOR = {Mart\'inez Torres, David and del Pino, \'Alvaro and Presas,
              Francisco},
     TITLE = {The foliated {L}efschetz hyperplane theorem},
   JOURNAL = {Nagoya Math. J.},
  FJOURNAL = {Nagoya Mathematical Journal},
    VOLUME = {231},
      YEAR = {2018},
     PAGES = {115--127},
      ISSN = {0027-7630,2152-6842},
   MRCLASS = {57R30 (53C38 53D17)},
  MRNUMBER = {3845591},
MRREVIEWER = {Shigenori\ Matsumoto},
       DOI = {10.1017/nmj.2017.14},
       URL = {https://doi.org/10.1017/nmj.2017.14},
}

@ARTICLE{GirTou,
       author = {{Gironella}, Fabio and {Toussaint}, Lauran},
        title = "{Examples of symplectic non-leaves}",
      journal = {arXiv e-prints},
     keywords = {Mathematics - Symplectic Geometry, Mathematics - Geometric Topology, 57R17, 57R30},
         year = 2021,
        month = jun,
          eid = {arXiv:2106.02618},
        pages = {arXiv:2106.02618},
          doi = {10.48550/arXiv.2106.02618},
archivePrefix = {arXiv},
       eprint = {2106.02618},
 primaryClass = {math.SG},
       adsurl = {https://ui.adsabs.harvard.edu/abs/2021arXiv210602618G},
      adsnote = {Provided by the SAO/NASA Astrophysics Data System}
}

@unpublished{BerMei23,
    author = "Bertelson, Mélanie and Meigniez, Gaël",
    title = "Conformal Symplectic structures, Foliations and Contact Structures
",
    note = "To appear in Journal of Symplectic Geometry; also available at https://arxiv.org/abs/2107.08839 ."
}

@article {Gir20_constructions,
    AUTHOR = {Gironella, Fabio},
     TITLE = {On some examples and constructions of contact manifolds},
   JOURNAL = {Math. Ann.},
  FJOURNAL = {Mathematische Annalen},
    VOLUME = {376},
      YEAR = {2020},
    NUMBER = {3-4},
     PAGES = {957--1008},
      ISSN = {0025-5831,1432-1807},
   MRCLASS = {53D10 (53D35 57R17)},
  MRNUMBER = {4081107},
MRREVIEWER = {Jun\ Zhang},
       DOI = {10.1007/s00208-019-01895-4},
       URL = {https://doi.org/10.1007/s00208-019-01895-4},
}

@article {Wen10,
    AUTHOR = {Wendl, Chris},
     TITLE = {Strongly fillable contact manifolds and {$J$}-holomorphic
              foliations},
   JOURNAL = {Duke Math. J.},
  FJOURNAL = {Duke Mathematical Journal},
    VOLUME = {151},
      YEAR = {2010},
    NUMBER = {3},
     PAGES = {337--384},
      ISSN = {0012-7094,1547-7398},
   MRCLASS = {53D35 (53D45 57R17)},
  MRNUMBER = {2605865},
MRREVIEWER = {Tobias\ Ekholm},
       DOI = {10.1215/00127094-2010-001},
       URL = {https://doi.org/10.1215/00127094-2010-001},
}

@article {ACH05,
    AUTHOR = {Abbas, Casim and Cieliebak, Kai and Hofer, Helmut},
     TITLE = {The {W}einstein conjecture for planar contact structures in
              dimension three},
   JOURNAL = {Comment. Math. Helv.},
  FJOURNAL = {Commentarii Mathematici Helvetici. A Journal of the Swiss
              Mathematical Society},
    VOLUME = {80},
      YEAR = {2005},
    NUMBER = {4},
     PAGES = {771--793},
      ISSN = {0010-2571,1420-8946},
   MRCLASS = {53D35 (37J45)},
  MRNUMBER = {2182700},
MRREVIEWER = {Hansj\"org\ Geiges},
       DOI = {10.4171/CMH/34},
       URL = {https://doi.org/10.4171/CMH/34},
}

@Article{Gei97,
 Author = {Geiges, Hansj{\"o}rg},
 Title = {Constructions of contact manifolds},
 FJournal = {Mathematical Proceedings of the Cambridge Philosophical Society},
 Journal = {Math. Proc. Camb. Philos. Soc.},
 ISSN = {0305-0041},
 Volume = {121},
 Number = {3},
 Pages = {455--464},
 Year = {1997},
 Language = {English},
 DOI = {10.1017/S0305004196001260},
 Keywords = {57M50},
 zbMATH = {1052851},
 Zbl = {0882.57007}
}

@PHDTHESIS{GirThesis,
url = "http://www.theses.fr/2018SACLX045",
title = "On some constructions of contact manifolds",
author = "Gironella, Fabio",
year = "2018",
note = "Thèse de doctorat dirigée par Massot, Patrick Mathématiques fondamentales Université Paris-Saclay (ComUE) 2018",
note = "2018SACLX045",
url = "http://www.theses.fr/2018SACLX045/document",
}

@book{McDuffSalamon_Intro,
 author = {McDuff, Dusa and Salamon, Dietmar},
 title = {Introduction to symplectic topology},
 edition = {3rd edition},
 fseries = {Oxford Graduate Texts in Mathematics},
 series = {Oxf. Grad. Texts Math.},
 volume = {27},
 isbn = {978-0-19-879489-9; 978-0-19-879490-5},
 year = {2016},
 publisher = {Oxford: Oxford University Press},
 language = {English},
 keywords = {53-02,53D05,53D35,53D40,53C15,57R17,57R58,58E05},
 zbMATH = {6638013},
 Zbl = {1380.53003}
}

@article{Fra08,
 author = {Frauenfelder, Urs},
 title = {Gromov convergence of pseudoholomorphic disks},
 fjournal = {Journal of Fixed Point Theory and Applications},
 journal = {J. Fixed Point Theory Appl.},
 issn = {1661-7738},
 volume = {3},
 number = {2},
 pages = {215--271},
 year = {2008},
 language = {English},
 doi = {10.1007/s11784-008-0078-1},
 keywords = {53D12,53D45,32Q65},
 url = {nbn-resolving.org/urn:nbn:de:bvb:384-opus4-564380},
 zbMATH = {5352272},
 Zbl = {1153.53057}
}

@phdthesis{NieHabilit,
  TITLE = {{On fillability of contact manifolds}},
  AUTHOR = {Niederkr{\"u}ger, Klaus},
  URL = {https://theses.hal.science/tel-00922320},
  SCHOOL = {{Universit{\'e} Paul Sabatier - Toulouse III}},
  YEAR = {2013},
  MONTH = Dec,
  KEYWORDS = {high dimensional contact topology ; fillability ; holomorphic disks ; Legendrian foliations},
  TYPE = {Accreditation to supervise research},
  PDF = {https://theses.hal.science/tel-00922320v1/file/hdr-2013-12-11.pdf},
  HAL_ID = {tel-00922320},
  HAL_VERSION = {v1},
}

@ARTICLE{Wendl_SFTLectures,
       author = {{Wendl}, Chris},
        title = "{Lectures on Symplectic Field Theory}",
      journal = {arXiv e-prints},
     keywords = {Mathematics - Symplectic Geometry, 53D42 (Primary), 32Q65, 57R17 (Secondary)},
         year = 2016,
        month = dec,
          eid = {arXiv:1612.01009v2},
        pages = {arXiv:1612.01009v2},
          doi = {10.48550/arXiv.1612.01009v2},
archivePrefix = {arXiv},
       eprint = {1612.01009v2},
 primaryClass = {math.SG},
       adsurl = {https://ui.adsabs.harvard.edu/abs/2016arXiv161201009W},
      adsnote = {Provided by the SAO/NASA Astrophysics Data System}
}


\Addresses

\end{document}